\documentclass[11pt]{amsart}
\usepackage[margin=1in]{geometry}
\usepackage{hyperref}
\usepackage{amsmath}
\usepackage{amssymb}
\usepackage{amsthm}
\usepackage{graphicx}
\usepackage{graphics}
\usepackage{tikz-cd}
\usepackage{cite}
\usepackage[mathscr]{euscript}
\usepackage{shuffle}
\usepackage{physics}
\usepackage{enumerate}
\usepackage{enumitem}

\theoremstyle{plain}
\newtheorem{Th}{Theorem}[section]
\newtheorem{Lemma}[Th]{Lemma}

\newtheorem{Cor}[Th]{Corollary}
\newtheorem{Prop}[Th]{Proposition}

\theoremstyle{definition}
\newtheorem{Def}[Th]{Definition}

\newtheorem{Rem}[Th]{Remark}
\newtheorem{?}[Th]{Problem}
\newtheorem{Ex}[Th]{Example}

\newcommand{\Z}{\mathbb{Z}}

\DeclareMathOperator{\id}{id}
\DeclareMathOperator{\Hom}{Hom}
\DeclareMathOperator{\dep}{dp}
\DeclareMathOperator{\pr}{pr}
\DeclareMathOperator{\spr}{spr}
\DeclareMathOperator{\ad}{ad}

\DeclareMathOperator{\Li}{Li}
\DeclareMathOperator{\Sh}{Sh}

\DeclareSymbolFont{rsfs}{U}{rsfs}{m}{n}
\DeclareSymbolFontAlphabet{\mathscrsfs}{rsfs}

\newcommand{\dk}{\mathfrak{dk}}
\newcommand{\brun}{\mathfrak{brun}}
\newcommand{\tp}{\widetilde{\mathrm{pent}}}
\newcommand{\tc}{\widetilde{\mathrm{corr}}}

\newcommand{\pt}{\mathrm{pent}}
\newcommand{\pmix}{\mathrm{PB}_{n,m}(D)}
\newcommand{\bmix}{\mathrm{Brun}_{n,m}(D)}

\begin{document}
\title{Brunnian braids and the inclusion from double shuffle Lie algebra to Kashiwara-Vergne Lie algebra.}
\author{Muze Ren}
\address{Institut de Recherche Math\'ematique Avanc\'ee, UMR 7501, Universit\'e de Strasbourg et CNRS, 7
rue Ren\'e Descartes, 67000 Strasbourg, France}
\email{muze.ren@unige.ch}
\maketitle

\begin{abstract}
It is proved by L.~Schneps that the double shuffle Lie algebra $\mathfrak{dmr}_0$ injects to the Kashiwara-Vergne Lie algebra $\mathfrak{krv}_2$ in \cite{Schneps2012,Schneps2025}. 
We show that $\mathfrak{dmr}_0$ with the infinitesimal hexagon equation $[x,\varphi(-x,-y,x)]+[y,\varphi(-x-y,y)]=0$ injects to the symmetric Kashiwara-Vergne Lie algebra $\mathfrak{krv}^{\mathrm{sym}}_2$. The proof is based on the inclusion of brunnian braids group on different genus 0 surfaces which is different from the method of mould calculus in \cite{Schneps2012,Schneps2025}.

We generalize the inclusion in two directions, one using lower central series of brunnian Lie algebras and the other is to establish explicit links between the pentagon equation map, the stuffle coproduct, the divergence map and the necklace cobracket.
\end{abstract}

\tableofcontents

\section{Introduction and main results}
For a Riemann surface $M$, $\mathrm{Brun}_n(M)$ is the n strand brunnian braids group over $M$, it consists of the braids in the pure braid group $\mathrm{PB}_n(M)$ that becomes trivial braid by removing any of its strand. It was called the indecompoable braids by Levinson \cite{Levinson}. The Brunian groups of disk $D$  and sphere $S^2$ fit into a exact sequence relating the higher homotopy groups of sphere \cite{homotopy} and some Brunnian groups has surprising pseudo-Anosov properties \cite{Ansov}. In this note, we use the Brunnian braids to study the inclusion from the double shuffle Lie algebra to Kashiwara-Vergne Lie algebra.

The pure braid group $\mathrm{PB}_n(D)$ over the disk D gives rise to the category of parenthesized braid $\mathrm{PaB}$ in \cite{PaBPaCD}, similarly there is the notion of parenthesized Brunnian braid. For any $\Phi\in \Hom_{\mathrm{PaB}(3)}((12)3,1(23))$, the pentagon braid associated to $\Phi$ is 
\begin{align}
    \mathrm{Pent}(\Phi)=\Phi^{2,3,4}\Phi^{1,23,4}\Phi^{1,2,3}(\Phi^{12,3,4})^{-1}(\Phi^{1,2,34})^{-1},
\end{align}
it is an element in $\Hom_{\mathrm{PaB}(4)}\left((((12)3)4),(((12)3)4)\right)$, and it is in $\mathrm{Brun}_n(D)$ with parenthesization. With respect to the relative lower central series filtration of $\mathrm{Brun}_n(D)$, the associaed graded Lie algebra over the field $k$ is denoted by $\brun_n(D)$. In the four strand case, there are two variants $\brun_{1,3}(D)$, $\brun_{2,2}(D)$ which correspond to one and two punctured disk. The map of pentagon equation  is for any $g\in \mathfrak{t}_3$,
\begin{equation}\label{eq:intro_pent}
\pt(g):=g^{1,2,3}+g^{1,23,4}+g^{2,3,4}-g^{12,3,4}-g^{1,2,34}\in \brun_{4}(D)\subset \brun_{1,3}(D)\subset\brun_{2,2}(D)
\end{equation}

Originally, the pentagon equation in the form of \eqref{eq:intro_pent} is used to define the Grothendieck-Teichmuller Lie algebra $\mathfrak{grt}_1$, it is defined as the elements in the free Lie algebra $\mathbb{L}(t_{12},t_{23})\subset \mathfrak{t}_3$, such that $\pt(\psi)=0$. The double shuffle Lie algebra \cite{Racinet2002} in the theory of multiple zeta values and Kashiwara-Vergne Lie algebra in the Lie theory are closely related to each other.  It is proved by H.~Furusho in \cite{Furusho2011} that $\mathfrak{grt}_1$ injects to $\mathfrak{dmr}_0$. A.~Alekseev and C.~Torossian \cite{Alekseev2012} proves  that $\mathfrak{grt}_1$ injects to $\mathfrak{krv}_2$. The inclusion from $\mathfrak{dmr}_0$ to $\mathfrak{krv}_2$ was proved by \cite{Schneps2012} using Ecalle's mould theory and a conjecture of Ecalle, the conjecture is proved independently by L.~Schneps and B.~Enriquez and H.~Furusho recently \cite{Schneps2025,EF4}. 

Our first main result is to give a topological understanding of the above inclusions. Using the lower central series $\Gamma_k$ of $\brun_{4}(D),\brun_{1,3}(D)$ and $\brun_{2,2}(D)$ respectively  we introduced infinitely many vector spaces, the restricted Grothendieck-Teichmuller relations $\mathfrak{\pt}^k(\psi)=0$, the generalized double shuffle relations $\pt_{dmr}^k(\psi)=0$ and the generalized symmetric Kashiwara-Vergne relations $\pt^k_{krv}(\psi)=0$, where the map $\pt^k$ is composition of \eqref{eq:intro_pent} with the projection to $\brun_4(D)/\Gamma_k(\brun_4(D))$, similarly for $\pt^k_{dmr}$ and $\pt^k_{krv}$. Then we show that the equivalence of $\pt^1_{dmr}(\psi)=0$ with $\mathfrak{dmr}_0$ and equivalence of $\pt^1_{krv}(\varphi)=0$,$[x,\varphi(-x-y,x)]+[y,\varphi(-x-y,y)]=0$ with symmetric $\mathfrak{krv}_2$. Then our main result about the injection from $\mathfrak{dmr}_0$ with infinitesimal hexagon equation to symmetric $\mathfrak{krv}_2$ follows.

\begin{Th}[Same as Theorem \ref{th:relations}]
    For $\varphi\in \mathbb{L}^{\ge 3}(x,y)$ and any $k>1$, we have the relations

    \begin{enumerate}        
        \item $\pt(\varphi)=0\Rightarrow \pt^k(\varphi)=0\Rightarrow \pt^{k-1}(\varphi)=0.$
        \item $\pt^k_{dmr}(\varphi)=0\Rightarrow  \pt^{k-1}_{dmr}(\varphi)=0$.
        \item $\pt^k_{krv}(\varphi)=0\Rightarrow  \pt^{k-1}_{krv}(\varphi)=0$.
        \item $\pt^k(\varphi)=0\Rightarrow \pt^k_{dmr}(\varphi)=0 \Rightarrow \pt^k_{krv}(\varphi)=0.$
    \end{enumerate}
\end{Th}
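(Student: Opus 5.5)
The theorem is essentially formal once the maps are set up as compositions. Write $\pi_k\colon \brun_4(D)\to \brun_4(D)/\Gamma_k(\brun_4(D))$, and similarly $\pi^{dmr}_k$, $\pi^{krv}_k$ for $\brun_{1,3}(D)$ and $\brun_{2,2}(D)$. By definition
\[
\pt^k=\pi_k\circ \pt,\qquad \pt^k_{dmr}=\pi^{dmr}_k\circ \iota_{1}\circ\pt,\qquad \pt^k_{krv}=\pi^{krv}_k\circ\iota_2\circ\iota_1\circ \pt,
\]
where $\iota_1\colon\brun_4(D)\to\brun_{1,3}(D)$ and $\iota_2\colon\brun_{1,3}(D)\to\brun_{2,2}(D)$ are the Lie algebra maps displayed in \eqref{eq:intro_pent}. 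These maps are induced by the inclusions of Brunnian braid groups obtained by regarding a strand as a puncture (the disk with one puncture, then with two). All statements then reduce to two facts. First, lower central series are nested. Second, lower central series are functorial under Lie algebra homomorphisms.

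For (1)--(3) I would use the nesting $\Gamma_k(L)\subset\Gamma_{k-1}(L)$, which holds for any Lie algebra $L$ and follows from $\Gamma_k=[L,\Gamma_{k-1}]$ by induction. It gives a canonical surjection $L/\Gamma_k\to L/\Gamma_{k-1}$ compatible with the projections from $L$, so $\pi_{k-1}=q_k\circ\pi_k$. Applying this to $L=\brun_4(D)$, $\brun_{1,3}(D)$ and $\brun_{2,2}(D)$ yields $\pt^k(\varphi)=0\Rightarrow\pt^{k-1}(\varphi)=0$, and likewise for $dmr$ and $krv$. The first implication in (1), $\pt(\varphi)=0\Rightarrow\pt^k(\varphi)=0$, is immediate from $\pt^k=\pi_k\circ\pt$. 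I should check the indexing convention (whether $\Gamma_1=L$ or $\Gamma_1=[L,L]$), but nesting holds either way.

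For (4), any Lie algebra map $f\colon L\to L'$ satisfies $f(\Gamma_k L)\subset\Gamma_k L'$, again by induction on $k$. So $\iota_1$ descends to $\bar\iota_1\colon\brun_4(D)/\Gamma_k\to\brun_{1,3}(D)/\Gamma_k$ with $\pi^{dmr}_k\circ\iota_1=\bar\iota_1\circ\pi_k$. Hence $\pt^k_{dmr}(\varphi)=\bar\iota_1(\pt^k(\varphi))$, and it vanishes when $\pt^k(\varphi)$ does. The same argument with $\iota_2$ gives $\pt^k_{dmr}(\varphi)=0\Rightarrow\pt^k_{krv}(\varphi)=0$.

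The only real content, and the step I expect to need care, is confirming two points. First, that $\pt^k_{dmr}$ and $\pt^k_{krv}$ are defined as the composite of the same pentagon element with the inclusions $\iota_1,\iota_2$, rather than via some independent formula in $\brun_{1,3}$ or $\brun_{2,2}$. Second, that these inclusions are Lie algebra homomorphisms. The second point holds if they are the associated graded maps of group homomorphisms $\mathrm{Brun}_4(D)\to\mathrm{Brun}_{1,3}(D)\to\mathrm{Brun}_{2,2}(D)$ that respect the relative lower central filtrations, which follows because group homomorphisms map lower central series into lower central series. The implications hold whether or not $\iota_1,\iota_2$ are injective; injectivity is not needed.
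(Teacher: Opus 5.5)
Your proposal is correct and follows the paper's argument: the paper's proof is the single line "it follows from the inclusion $\brun_4(D)\subset\brun_{1,3}(D)\subset\brun_{2,2}(D)$", and you have spelled out the nesting $\Gamma_k\subset\Gamma_{k-1}$ and the containment $\Gamma_k(A)\subset\Gamma_k(B)$ for $A\subset B$ that this relies on. Both points you flagged are settled by the definitions. All three maps are $\mathrm{proj}\circ\pt(\varphi(t_{12},t_{23}))$ applied to the same element. The inclusions are literal inclusions of Lie subalgebras of $\mathfrak{t}_4$, namely intersections of kernels of the $\pr^i_4$, so no group-level argument is needed.
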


\begin{Th}[Same as Theorem \ref{th:dmr} and Theorem \ref{th:krv}]\label{th:intro_3}  Let $\varphi\in \mathbb{L}^{\ge 3}(x,y)$,
    \begin{enumerate}
        \item $\mathfrak{dmr}_0$ is equivalent to $\pt^1_{dmr}(\psi)=0$.
        \item The element $(\varphi(-x-y,x),\varphi(-x-y,y))$ is in $\mathfrak{krv}^{\mathrm{sym}}_2$ if and only if it satisfies the following two equations
\begin{itemize}
    \item $[x,\varphi(-x-y,x)]+[y,\varphi(-x-y,y)]=0$.
    \item $\pt^1_{krv}(\varphi)=0.$
\end{itemize}
\item Suppose that $[x,\varphi(-x-y,x)]+[y,\varphi(-x-y,y)]=0$, then $\psi\in \mathfrak{dmr}_0$ implies $\psi\in \mathfrak{krv}^{\mathrm{sym}}_2$.
    \end{enumerate}
\end{Th}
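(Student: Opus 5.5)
The plan is to prove the three parts of Theorem \ref{th:intro_3} separately, and to obtain (3) as a formal consequence of (1), (2) and the implication $\pt^1_{dmr}(\varphi)=0\Rightarrow\pt^1_{krv}(\varphi)=0$ from part (4) of the previous theorem. That implication will come from the inclusions $\brun_4(D)\subset\brun_{1,3}(D)\subset\brun_{2,2}(D)$, if these induce maps on the abelianizations $\Gamma_1$-quotients under which $\pt_{dmr}^1$ maps to $\pt^1_{krv}$. So the real content lies in identifying the abelianized pentagon maps with the defining equations of $\mathfrak{dmr}_0$ and of $\mathfrak{krv}_2^{\mathrm{sym}}$.

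For (1), I will first describe $\brun_{1,3}(D)/\Gamma_2$ explicitly. The one-punctured-disk Brunnian Lie algebra in four strands should abelianize to a space spanned by Lie words in two of the generators, modulo the brackets coming from the third. The likely model is $\mathbb{L}(x,y)$ modulo an ideal, or its image under the map to $k\langle x,y\rangle$ through the $y$-adic coaction. I will compute the image of each of the five terms $g^{1,2,3},g^{1,23,4},\dots$ under this projection. After the substitution $t_{12}\mapsto x$, $t_{23}\mapsto y$ and the usual change $y_n=x^{n-1}y$, the alternating sum should become $\Delta_*(\pi_y\psi)-\pi_y\psi\otimes1-1\otimes\pi_y\psi$. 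That is exactly the condition that $\pi_y(\psi)$ plus its linear correction is primitive for the stuffle coproduct. The shuffle half of $\mathfrak{dmr}_0$ is automatic because $\psi$ lies in $\mathbb{L}(x,y)$. The expected mechanism is that the strand-doubling maps $g\mapsto g^{12,3,4}$ and $g\mapsto g^{1,2,34}$ realize the stuffle coproduct, while the other terms cancel in the abelianization. I will match conventions using Racinet's formulation.

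For (2), I will carry out the analogous computation in $\brun_{2,2}(D)/\Gamma_2$. Here I expect the abelianization to be identified with cyclic words $\mathrm{tr}_2$ in $x,y$, via a trace or divergence type map. Under this identification, $\pt^1_{krv}(\varphi)$ should become
\[
\mathrm{div}\bigl(\varphi(-x-y,x),\varphi(-x-y,y)\bigr)-\bigl(\text{the }\mathrm{tr}(f(x)+f(y)-f(x+y))\text{ term}\bigr).
\]
This is the second Kashiwara--Vergne equation, taken in the symmetric normalization. The first equation, that $u=(\varphi(-x-y,x),\varphi(-x-y,y))$ annihilates $x+y$, is precisely the stated bracket condition $[x,a]+[y,b]=0$. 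Symmetry under $x\leftrightarrow y$ is built into the form of the pair. Together these give the equivalence with $\mathfrak{krv}_2^{\mathrm{sym}}$.

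The main obstacle is the two explicit computations of the abelianized Brunnian Lie algebras and of the pentagon image. I expect the $\brun_{2,2}$ case to be the harder one, since it requires showing that the abelianization is faithfully detected by cyclic words and that the five terms collapse to a divergence. My first approach is to use the Drinfeld--Kohno presentation, projecting onto the free factor generated by $t_{14},t_{24}$ (respectively $t_{34},t_{24}$) with strands $3,4$ (respectively $1,4$) treated as punctures. I will then use the known description of the divergence cocycle as the abelianized coboundary in $\mathfrak{t}_4$ (Alekseev--Torossian), checking the degree-one correction terms by hand.
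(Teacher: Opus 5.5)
Your derivation of (3) from (1), (2) and the map $\brun_{1,3}/[\brun_{1,3},\brun_{1,3}]\to\brun_{2,2}/[\brun_{2,2},\brun_{2,2}]$ induced by $\brun_{1,3}\subset\brun_{2,2}$ is exactly the paper's. The genuine gap is in (2), at the step you yourself flag: the abelianization is \emph{not} faithfully detected by cyclic words. $J=\brun_{2,2}$ is the ideal generated by $t_{34}$ in $\ker\pr^4_4=\mathbb{L}(t_{14},t_{24},t_{34})$, so Lazard elimination gives $J/[J,J]\cong k\langle t_{14},t_{24}\rangle\otimes t_{34}\cong k\langle X\rangle$. In degree $m$ this is the $2^{m-1}$-dimensional space of \emph{all} words of length $m-1$. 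It cannot be identified with, or injected into, cyclic words: for $m\ge 4$ that space is smaller, and $|\cdot|$ kills every commutator such as $x_0x_1-x_1x_0$. Accordingly $\mathrm{La}\circ\pt^1_{krv}(\varphi)$ is the non-cyclic element of Lemma \ref{lem:pent_J}. The divergence is only a shadow of it, obtained by multiplying by $x_1$, symmetrizing with $\sigma_{0,1}$, subtracting $\delta_{\mathrm{sym}}\circ(-)^{\mathrm{ab}}_J$ and projecting to $|k\langle X\rangle|$ (Theorem \ref{th:krv_commutatvie}). So, carried out correctly, your computation gives only the easy direction: the bracket condition together with $\pt^1_{krv}(\varphi)=0$ puts $(\varphi(-x-y,x),\varphi(-x-y,y))$ in $\mathfrak{krv}^{\mathrm{sym}}_2$. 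The converse requires lifting the cyclic KV equation back to the vanishing of an element of $k\langle X\rangle$. That is a nontrivial statement about elements of the form $\mathrm{La}\circ\pt^1_{krv}(\varphi)$, and it is exactly the content of Bar-Natan--Kuno's theorem that $\nu^{\mathrm{em}}:\mathfrak{grt}^{\mathrm{em}}_1\to(\mathfrak{krv}^{\mathrm{sym}})_{\ge 2}$ is an isomorphism (Theorem \ref{th:Kuno} with their Proposition 4.7). The paper imports it through $\brun_{2,2}=c_{2,2}$ and the substitution $\psi(t_{13},t_{23})=\varphi(-t_{13}-t_{23},t_{23})$; your plan has nothing in its place.

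For (1), the paper also does not compute from scratch. It combines three ingredients: the characterization of $\mathfrak{dmr}_0$ from \cite{howarth} via the two-variable polylogarithm words $l^{y,x}_{\mathbf a,\mathbf b}$ (Theorem \ref{th:HR}); the fact that these words are dual to the basis $p_{\mathbf a,\mathbf b}$ of $I/[I,I]\cong\mathcal{P}_{\ge1,\ge1}$ (Lemmas \ref{lem:basis_kernel} and \ref{lem:dual_basis}); and the automatic vanishing of $l^{y,x}_{(1,\dots,1),(1,\dots,1)}$ on $\pt(\psi)$. A direct route is conceivable, but your sketch misidentifies what it needs, in three places. First, $I/[I,I]$ is not a quotient of $\mathbb{L}(x,y)$; it is $\mathcal{P}_{\ge1,\ge1}\cong(k\langle X\rangle x_1)^{\otimes 2}$, and this tensor-square shape is what makes any comparison with $k\langle Y\rangle^{\otimes 2}$ possible. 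Second, of the five terms only $g^{2,3,4}$ lies in $I$, and $g^{12,3,4}$, $g^{1,23,4}$ are not even in $\ker\pr^2_4$ (only their difference is). So ``the image of each term under this projection'' is undefined; you need functionals on all of $U\mathfrak{t}_4$ (or $U\mathfrak{p}_5$ via $K_4$) that restrict to the abelianization on $I$, which is what the $l^{y,x}_{\mathbf a,\mathbf b}$ provide. With them, $K_4(g^{12,3,4})=\psi_{543}$ contributes nothing and $K_4(g^{1,2,34})=\psi_{215}$ only yields concatenations $l_{\mathbf a\mathbf b}(\psi)$. The stuffle comes from $K_4(g^{1,2,3}+g^{1,23,4})=\psi_{123}+\psi_{451}$ through Furusho's two-variable stuffle relation, not from the doubling maps you single out. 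Third, the pentagon class is not $\Delta_*(\pi_Y\psi)-\pi_Y\psi\otimes 1-1\otimes\pi_Y\psi$. It matches this only after applying $\lambda$ and subtracting $(-)^{\mathrm{ab}}_I$, which lives on the all-ones block where $\psi_{\mathrm{corr}}$ enters. Hence your two consecutive claims, about $\pi_Y\psi$ and about $\psi_*$, are different statements, and the all-ones components need the separate treatment supplied by Theorem \ref{th:HR}.
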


The first part of the Theorem \ref{th:intro_3} is a variant of the previous work \cite{kernel,howarth} and \cite{Nikita}  where they used skew-symmetric condition and Ihara-pentagon equation, we avoid the skew-symmetric condition and use the Drinfeld pentagon equation, see the remark 6.4 in \cite{kernel}. The second part is a reformulation of the result \cite{Kuno2025} in terms of the Brunnian braids and Brunnian Lie algebra, see also the study of the related emergent braid in \cite{GtKontevich_integral}.

Our next main result provides an alternative perspective on these relations. We consider the pentagon equation \eqref{eq:intro_pent} as a map and relate it to fundamental operations, namely the stuffle coproduct $\Delta_{*}$ in the theory of the double shuffle Lie algebra via $\brun_{1,3}$, and the divergence map $\mathrm{div}$ together with the neclace cobracket $\delta_{\mathrm{cobracket}}$ via $\brun_{2,2}$.

\begin{Th}[Same as Theorem \ref{th:shuffle coproduct}\footnote{An analogous result appears in an unpublished manuscript of B.~Enriquez and H.~Furusho.} and Theorem \ref{th:krv_commutatvie}]
The following diagrams commutes

\begin{equation}\label{dmr_diagram_2}
\begin{tikzcd}
\mathbb{L}(x_0,x_1) \arrow{r}{\pi_Y}\arrow{d}{(\mathrm{diag}^{123,124}+(\mathrm{pent}^1_{dmr}-(-)_I^{\mathrm{ab}})} &[10em] k\langle Y\rangle\arrow{dd}{\Delta_{*}}\\
\mathbb{L}(t_{12},t_{23},t_{24})/[I,I]\arrow{d}{\lambda\circ \pi} &\\
\mathcal{P}\arrow[r,"p"]& k\langle Y\rangle\otimes k\langle Y\rangle.
\end{tikzcd}
\end{equation} 

For a homogeneous element $\psi$ of degree $m$ in $\mathbb{L}^{\mathrm{sym}}(x_0,x_1)$, the following diagram commutes, the morphisms are $(\mathbb{L}^{\mathrm{sym}}(x_0,x_1),\langle-,-\rangle_{\mathrm{Ihara}})$ Lie algebra module map.
    \begin{equation*}
    \begin{tikzcd}
     \mathbb{L}^{\mathrm{sym}}(x_0,x_1)\arrow{r}{\mathrm{sd}}\arrow{d}{\tp_{\mathrm{sym}}}&\mathrm{sDer}(\mathbb{L}(x_0,x_1))\arrow{d}{\circ_+(\tp_1,\tp_0)}\arrow{r}{\id}&\mathrm{sDer}(\mathbb{L}(x_0,x_1))\arrow{d}{\mathrm{div}}\arrow{r}{\mathrm{H}}&\mid k\langle X\rangle \mid /k\cdot 1\arrow{d}{\delta_{\mathrm{necklace}}}\\[3em]
     k\langle X\rangle\arrow{r}{\id}&k\langle X\rangle \arrow{r}{\frac{1}{m}|-|}& \mid k\langle X\rangle\mid \arrow{r}{\widetilde{\Delta}}&\mid k\langle X\rangle \mid \wedge \mid k\langle X\rangle \mid
    \end{tikzcd}
    \end{equation*}
\end{Th}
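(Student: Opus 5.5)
The plan is to treat the two squares separately. In each case I reduce commutativity to a check on generators, using that every map is either linear or equivariant for the relevant module structure.

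For the first diagram (the stuffle coproduct), I use the explicit presentation of $\brun_{1,3}(D)$ as a semidirect product. The ideal $I$ is generated by the strands through the puncture, and $\mathbb{L}(t_{12},t_{23},t_{24})/[I,I]$ is an abelian extension of $\mathbb{L}(t_{23},t_{24})$. Its abelian part is then identified, via $\lambda\circ\pi$, with a space $\mathcal{P}$ of noncommutative polynomials in two sets of letters. The key steps are as follows.
\begin{enumerate}
\item Write $\pt^1_{dmr}(\psi)$ modulo $[I,I]$ as a sum of terms that are each linear in a single generator of $I$. Only the terms $\psi^{1,23,4}$, $\psi^{12,3,4}$ and $\psi^{1,2,34}$ contribute to this linear part.
\item Expand each of these terms using the standard derivative decomposition $\psi=\psi_0x_0+\psi_1x_1$ together with the substitutions $x_0\mapsto t_{12}$, $x_1\mapsto t_{23}+t_{24}$ (and the analogous substitutions for the other cablings). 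This produces words in $y_n=x_0^{n-1}x_1$ that are split between the two tensor factors.
\item Compare the result with $\Delta_*(\pi_Y\psi)$. The stuffle coproduct's diagonal term $y_{m+n}$ should come from the contraction $t_{23}t_{24}$, while the two-sided deconcatenation comes from the strands separating.
\end{enumerate}
The correction term $-(-)_I^{\mathrm{ab}}$ and the map $\mathrm{diag}^{123,124}$ are there to cancel the part of $\psi^{1,2,3}$ and $\psi^{2,3,4}$ that lies in $I^{\mathrm{ab}}$. I will verify this on a word-by-word basis, since all maps are linear.

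For the second diagram, I check the three squares in turn.
\begin{itemize}
\item The middle square, $\mathrm{div}=\frac1m|\circ_+(\tp_1,\tp_0)|$, comes from the definition of the divergence $\mathrm{div}(u)=\sum_i|x_i\,\partial_i(u_i)|$. Here I take the $\tp$-components to compute exactly the cyclic derivatives of the special derivation $\mathrm{sd}(\psi)$. The factor $\frac1m$ arises from the Euler identity for a homogeneous element of degree $m$.
\item For the right square, I invoke the known fact, due to Alekseev--Kawazumi--Kuno--Naef, that the necklace cobracket of the Hamiltonian equals the reduced coproduct $\widetilde\Delta$ applied to the divergence, up to a correction that vanishes for special derivations.
\item For the left square, I unwind $\tp_{\mathrm{sym}}$ in $\brun_{2,2}(D)$ as the cyclic word obtained by capping the two punctures. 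I then check that it agrees with $\circ_+(\tp_1,\tp_0)\circ\mathrm{sd}$ on generators.
\end{itemize}
Equivariance for the Ihara bracket follows because $\mathrm{sd}$ is a Lie morphism, $\mathrm{div}$ is a cocycle, and $\pt$ is a derivation-compatible map. Together these reduce the equivariance claim to the commutativity already established.

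I expect the main obstacle to be step 3 of the first diagram: matching signs and the diagonal stuffle terms exactly. To handle it, I will first test the identity on the depth-one and depth-two components $\psi=\mathrm{ad}_{x_0}^{n}x_1$ and brackets of two such elements. I will then use the Ihara-bracket equivariance, proved inductively, to propagate the identity to all of $\mathbb{L}(x_0,x_1)$.
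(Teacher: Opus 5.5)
\paragraph{The gap in the first diagram.}
The failure is at exactly the point you flag as the main obstacle. Both paths are \emph{linear} maps on all of $\mathbb{L}(x_0,x_1)$. Commutativity therefore has to be checked on a linear basis, or dually against every $l_{\mathbf a}\otimes l_{\mathbf b}$. Your plan, checking $\mathrm{ad}_{x_0}^n x_1$ and brackets of two such elements and then ``propagating by Ihara equivariance'', cannot achieve this, for two reasons.
\begin{enumerate}
\item You give no reason why $\Delta_*\circ\pi_Y$, or the left--bottom path, should intertwine $\{-,-\}_{\mathrm{Ihara}}$ for arbitrary $\psi$. The statement claims no such thing for this diagram. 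In Racinet's work, compatibility of $\Delta_*$ with the Ihara action is itself a property of elements of $\mathfrak{dmr}_0$, not of arbitrary Lie series.
\item Even granting equivariance, $(\mathbb{L}(x_0,x_1),\{-,-\})$ is not generated in low depth. Take $d_\psi(x_0)=0$, $d_\psi(x_1)=[x_1,\psi]$ and $\{f,g\}=[f,g]+d_f(g)-d_g(f)$. Then $\{x_0,-\}=\{x_1,-\}=0$. Since the weight-$2$ part is spanned by $[x_0,x_1]$ alone, the three-dimensional weight-$4$ part contains no nonzero Ihara bracket at all. For instance, the depth-$3$ element $[x_1,[x_1,[x_0,x_1]]]$ is never reached.
\end{enumerate}
What remains is steps 1--3, and they are a heuristic (``diagonal terms come from contracting $t_{23}t_{24}$''). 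They never say where the Lie property of $\psi$ is used, although the identity depends on it.

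You also misdescribe the roles of the correction maps. Since $t_{23}+t_{24}+t_{34}$ commutes with $t_{23}$ and $t_{34}$, we have $\psi^{2,3,4}=\psi(t_{23},t_{34})=\psi(t_{23},-t_{23}-t_{24})$. This term lies entirely in $I$, and its class is exactly $(-)^{\mathrm{ab}}_I(\psi)\in W_I$, so the correction simply deletes it. The map $\mathrm{diag}^{123,124}$ cancels nothing in $I/[I,I]$. Instead it supplies the primitive part $\pi_Y\psi\otimes 1+1\otimes\pi_Y\psi$, which lies outside $\mathcal{P}_{\ge1,\ge1}$.

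\paragraph{The missing mechanism, and the second diagram.}
What you need is an argument valid uniformly in $\psi$. The paper dualizes: $\langle\Delta_*(\pi_Y\psi),l_{\mathbf a}\otimes l_{\mathbf b}\rangle=\langle\psi,l_{\mathbf a}\shuffle_s l_{\mathbf b}\rangle$. It then evaluates Furusho's two-variable stuffle relation \eqref{eq:stuffle 2 variable} at $\psi_{451}+\psi_{123}$. That relation holds modulo products, hence on Lie series; this is where Lie-ness enters. Lemma \ref{lemma:polylogs-compilation} then does three things:
\begin{enumerate}
\item it rewrites the $l^{xy}$ and $l^{x,y}$ terms as one-variable coefficients $l_{\sigma(\mathbf a,\mathbf b)}(\psi)$;
\item it identifies $l^{y,x}(\psi_{215})$ with such coefficients;
\item it kills $l^{y,x}(\psi_{543})$, and kills $l^{y,x}(\psi_{432})$ away from the index pair $(1,\ldots,1),(1,\ldots,1)$.
\end{enumerate}
What remains is $\sum_\sigma l^{y,x}_{\sigma(\mathbf a,\mathbf b)}$ evaluated on $\mathrm{pent}(\psi)$. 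Lemma \ref{lemma:l-kerpr2}, together with $I/[I,I]\simeq\mathcal{P}_{\ge1,\ge1}$ (Lemma \ref{lem:basis_kernel}), converts this into $\lambda\circ\pi$. No induction is needed.

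For the second diagram, your right square is the paper's (Proposition \ref{Prop:kv_right}). For the left and middle squares, however, the concrete input is missing. It is the formula of Lemma \ref{lem:pent_J}, which expresses $\mathrm{La}\circ\mathrm{pent}^1_{krv}$ through $d^R_1$ and the reduced coaction $\mu$, combined with $\mathrm{div}(H(|a|))=\frac{1}{m-1}|\mu(N(|a|))|$ from Lemma \ref{lem:div_mu}. Neither ``capping the punctures'' nor an Euler identity produces $\mu$. Checking these linear squares ``on generators'' has the same defect as above.

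Finally, your equivariance argument does not work as stated: a $1$-cocycle is not a module map. Indeed $\mathrm{div}([u,v])=u\cdot\mathrm{div}(v)-v\cdot\mathrm{div}(u)$ carries the extra term $-v\cdot\mathrm{div}(u)$.
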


The Milnor invariant detects the Brunnian links \cite{ML}, motivated by this, in the last part, we want to address the question of using the finite type invariants to give some criterion to determine  for which k, $\pt^k(\varphi)=0,\pt^k_{dmr}(\varphi)=0,\pt_{krv}^k(\varphi)=0$, in the $k=1$ case, this would provide some finite type invariants description of $\mathfrak{dmr}_0$ and $\mathfrak{krv}_2$. We present some elementary results on the related n trivaity problem for the pentagon braid $\mathrm{Pent}(\Phi)$.

\begin{Th}[Same as Proposition \ref{prop:Magnus_expansion}, Proposition \ref{prop:Kontsevich_integral} and Proposition \ref{prop: Johnson_homo}].
    \begin{enumerate}
        \item  $\mathrm{Pent}(\Phi)$ is in $\Gamma_k(F_{m+n-1})\cap \bmix$ and not in $\Gamma_{k+1}(F_{m+n-1})\cap \bmix$ if and only if $\mathcal{M}(\mathrm{Pent}(\Phi))-1$ starts with term of degree $k$ element $\phi_k$ and $\phi_k$ in $\cap^{n+m-1}_{i=n+1}\ker \pr^i_{n+m}$, $\mathcal{M}$ is the magnus expansion.
        \item  $\mathrm{Pent}(\Phi)$ is in $\Gamma_k(\pmix)\cap \bmix$ and not in $\Gamma_{k+1}(\pmix)\cap \bmix$ if and only if $\mathcal{Z}(\mathrm{Pent}(\Phi))-1$ starts with term of degree $k$ element $\phi_k$ and $\phi_k$ is primitive and in $\brun_{n,m}(D)$, $\mathcal{Z}$ is the Kontevich integral.
        \item The associated graded of $\pmix$ with respect to the Johnson filtration induced by the Brunnian filtration injects to the associated graded eg derivaition Lie algebra .
    \end{enumerate}
\end{Th}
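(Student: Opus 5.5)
The three parts use the same mechanism: an element of a Brunnian subgroup lies in the $k$-th term of the ambient lower central series exactly when its universal expansion begins in degree $k$. The plan is to prove this once in a form that applies both to the free group with the Magnus expansion and to $\pmix$ with the Kontsevich integral. Part (3) is then its derivation-algebra shadow.

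\emph{Part (1).} $\mathrm{Pent}(\Phi)$ lives in $\bmix$. Removing the $m$ moving strands other than one identifies the relevant kernel with the free group $F_{m+n-1}$; this is the Fadell--Neuwirth fibration restricted to Brunnian braids.
\begin{enumerate}
\item Invoke the classical theorem of Magnus: for $g\in F_{m+n-1}$, $g\in \Gamma_k(F_{m+n-1})\setminus\Gamma_{k+1}(F_{m+n-1})$ if and only if $\mathcal{M}(g)-1$ starts in degree $k$. Its leading term $\phi_k$ is then the image of $g$ in $\Gamma_k/\Gamma_{k+1}\cong \mathbb{L}_k$.
\item Show that the forgetful maps $\pr^i_{n+m}$, $n+1\le i\le n+m-1$, are compatible with $\mathcal{M}$. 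Each forgetful map sends a generator $x_i$ to $1$, so it corresponds to the algebra map setting $X_i=0$.
\item Since $\mathrm{Pent}(\Phi)$ is Brunnian, each $\pr^i$ kills it. Hence $\pr^i(\mathcal{M}(g))=1$, so $\phi_k\in\ker\pr^i$ for every $i$.
\item Conversely, if $\mathcal{M}(g)-1$ starts with $\phi_k$, Magnus gives the filtration statement directly. The Brunnian condition is part of the hypothesis $g\in\bmix$.
\end{enumerate}
So this part is essentially bookkeeping on top of Magnus's theorem.

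\emph{Part (2).} Replace $\mathcal{M}$ by the Kontsevich integral $\mathcal{Z}:\pmix\to \exp\widehat{\mathfrak{t}}_{n,m}$.
\begin{enumerate}
\item Use the standard fact that $\mathcal{Z}$ induces an isomorphism $\mathrm{gr}_\Gamma\pmix\otimes k\cong \mathfrak{t}_{n,m}$, the Kohno--Drinfeld-type presentation for the punctured disk. It follows that $g\in\Gamma_k\setminus\Gamma_{k+1}$ if and only if $\log\mathcal{Z}(g)$ starts in degree $k$.
\item Since $\mathcal{Z}(g)$ is grouplike, the leading term of $\mathcal{Z}(g)-1$ coincides with that of $\log\mathcal{Z}(g)$. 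That term is therefore primitive.
\item The $\mathcal{Z}$ of Drinfeld-type associators is compatible with strand deletion. Hence $\mathcal{Z}(\mathrm{Pent}(\Phi))$ is killed by every deletion map, and the leading term lies in $\cap\ker$ of the deletions, which is $\brun_{n,m}(D)$ by definition.
\end{enumerate}
The main obstacle is that the relative lower central series of $\bmix$, which defines $\brun_{n,m}(D)$, may differ from $\Gamma_k(\pmix)\cap\bmix$. The statement is carefully phrased with the intersection, so I only need the leading term to land in $\brun_{n,m}(D)$ viewed as a subspace of $\mathfrak{t}_{n,m}$. That is exactly what deletion-compatibility gives.

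\emph{Part (3).} Let $\pmix$ act on $F_{m+n-1}$ via the Artin-type action. Define the Johnson filtration by $J_k=\{g: g(x)x^{-1}\in\Gamma_{k+1}\ \forall x\}$, intersected with the Brunnian filtration.
\begin{enumerate}
\item The associated graded embeds into derivations of $\mathrm{gr}F=\mathbb{L}$ by the usual map $g\mapsto (x\mapsto [g(x)x^{-1}])$. The step to check is that this map is a well-defined Lie homomorphism, which is the standard commutator computation.
\item Injectivity holds by the definition of the filtration: a class mapping to zero lies in the next filtration step.
\item Compatibility with the Brunnian condition follows by applying the forgetful maps as in Part (1).
\end{enumerate}
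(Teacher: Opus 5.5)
Your Parts (1) and (2) follow the paper's route. You use a universal expansion that detects the lower central series: Magnus's theorem for $F_{m+n-1}$, and the group-like Kontsevich integral for $\pmix$. You combine it with compatibility of that expansion with the strand-deletion maps, so the Brunnian hypothesis forces the leading term $\phi_k$ into $\bigcap_i\ker\pr^i_{n+m}$, resp.\ into $\brun_{n,m}(D)$. You also make explicit something the paper leaves implicit in (2): $\mathcal{Z}$ induces $\mathrm{gr}\,\pmix\otimes k\cong\dk$ with torsion-free graded pieces, and this is what turns ``$\log\mathcal{Z}(g)$ starts in degree $k$'' into ``$g\in\Gamma_k\setminus\Gamma_{k+1}$''. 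Two small corrections. First, $F_{m+n-1}$ is the kernel of forgetting the \emph{last} strand, and that kernel contains $\bmix$. Second, the ``main obstacle'' you raise in (2) does not arise, because in this paper the relative filtration of $\bmix$ is by definition $\Gamma_k(\pmix)\cap\bmix$.

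Part (3), however, proves a different statement than the one claimed.
\begin{itemize}
\item \emph{Wrong filtration.} Your $J_k=\{g: g(x)x^{-1}\in\Gamma_{k+1}\}$, with target $\mathrm{Der}(\mathbb{L}(F^{\mathrm{ab}}))$, is the classical Johnson filtration for the lower central series of $F_{m+n-1}$. The statement concerns the filtration induced by the Brunnian extended $N$-series $K_0=F_{m+n-1}\ge K_1=\bmix\ge K_j=\Gamma_j(K_1)$. These filtrations are incomparable. With two moving strands, $K_1$ is the normal closure of a single free generator, so $K_1\not\subset\Gamma_2(F_{m+n-1})$. Meanwhile a commutator of two fixed-strand generators lies in $\Gamma_2(F_{m+n-1})\setminus K_1$. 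Also, ``intersecting with the Brunnian filtration'' (a filtration of $F_{m+n-1}$) does not define a filtration of the acting group.
\item \emph{Wrong target.} $\mathrm{gr}(K_*)$ is not a Lie algebra but an eg-Lie algebra: the generally nonabelian group $K_0/K_1$ acting on the free Lie algebra on $K_1^{\mathrm{ab}}$. A degree-$m$ derivation is a pair $(d_0,d_1)$ subject to a compatibility condition. Here $d_0$ is a $1$-cocycle on $K_0/K_1$ valued in $\mathrm{Lie}_m(K_1^{\mathrm{ab}})$, and $d_1\colon K_1^{\mathrm{ab}}\to\mathrm{Lie}_{m+1}(K_1^{\mathrm{ab}})$. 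The analogue of your map $x\mapsto[g(x)x^{-1}]$ on $K_0$ only produces the cocycle $d_0$. The component $d_1$, which records the action on $K_1$, is absent.
\end{itemize}

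The missing idea is the Habiro--Massuyeau machinery the paper invokes.
\begin{itemize}
\item Set $G_m=\{g:[g,K_0]\subset K_m,\ [g,K_1]\subset K_{m+1}\}$.
\item Use that $K_1$ is a nonabelian free group, normal and preserved by the action, with each $K_j$ characteristic in $K_1$. Theorem 10.2 of \cite{JM} then gives $\mathcal{F}^{K_*}_m(G)=G_m=G^1_m$.
\item This is what makes $(G_m)$ an $N$-series and makes $\tau_m$ an eg-Lie morphism into $\mathrm{Der}_m(\bar K_\bullet)\cong D_m(\bar K_\bullet)$.
\item Injectivity of $\bar\tau_m$ is then immediate: $\tau_m(g)=0$ says exactly that $g\in G^0_{m+1}\cap G^1_{m+1}=G_{m+1}$.
\end{itemize}
A one-condition definition such as $G^0_m$ only satisfies $G_m\le G^0_m$, so it need not give the right filtration.
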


{\bf Acknowledgements.} The author is grateful to A.~Alekseev and R.~Kashaev for encouragement. The author thanks B.~Enriquez for nice suggestions on
the exposition, math and encouragement to write this down. The author also thanks D.~Bar-Natan, H.~Furusho, G.~Massuyeau, G.~Laplante-Anfossi and F.~Naef  for answering questions in their papers and suggestions. Part of this work was presented at the conference
\emph{"Homological, Quantum, and Computational Methods in Low-Dimensional Topology."}
The author gratefully acknowledges the organizers for the invitation.  The author is supported by the SNSF postdoc mobility grant  $P500PT\_230340$.

\section{Mixed braid group and Brunian filtrations}

Let $M$ be a Riemann surface (mainly disk $D$ or sphere $S^2$ in this note), the configuration space of $n$-th ordered points of $M$ is
\begin{equation*}
\mathrm{Conf}(M,n)=\{(x_0,x_1,\ldots,x_{n-1})\in M\times \ldots M\mid x_i\ne x_j,~ \mathrm{for}~i\ne j\}. 
\end{equation*} For $0\le i\le n$, the coordinate projection map
\begin{equation*}
    p[M]_n^i: \mathrm{Conf}(M,n)\to \mathrm{Conf}(M,n-1),\quad  p[M]_n^i(x_0,\ldots x_n)=(x_0,\ldots,x_{i-1},x_{i+1},\ldots ,x_n),
\end{equation*}
induces the group homomorphism 
\begin{equation*}
    \pi_1(p[M]_n^i): \pi_1(\mathrm{Conf(M,n}))\to \pi_1(\mathrm{Conf(M,n-1})).
\end{equation*}

In this note, we are interesed in the following groups related to the $\pi_1(\mathrm{Conf}(M,n))$.

\begin{enumerate}
    \item $\mathrm{PB}_n(M)$, the n strand pure braid group over $M$, it is the same as $\pi_1(\mathrm{Conf}(M,n))$, the map $\pi_1(p[M]_n^i)$ is the same as deleting the $i$ th strand of the pure braid group.\\
    \item  $\mathrm{Brun}_n(M)$ is the n strand Brunnian braids group on $M$, \[\mathrm{Brun}_n(M):= \ker \pi_1(p[M]_n^1)\cap \ker \pi_1(p[M]_n^2)\cap\dots \cap  \ker \pi_1(p[M]_n^n),\] those are the braids that become trivial after deleting any of the strand.\\
    \item $\mathrm{PB}_{m,n}(M)$ is the $n+m$ strand mixed pure braid group with $m$ fixed strands, it is defined as 
    \begin{equation*}
       \mathrm{PB}_{m,n}(M):=\ker \left( \pi_1(p[M]^{m+1}_{m+1})\circ \pi_1(p[M]^{m+2}_{m+2})\circ \ldots \pi_1(p[M]^{n+m}_{n+m})\right),
    \end{equation*}
    this is also the fundamental group of configuration space of n points on a $m$ punctured space $M$.\\
    \item $\mathrm{Brun}_{m,n}(M)$ is the $n+m$ strand mixed brunnian braids with $m$ fixed strand,
    \begin{equation*}
    \ker \pi_1(p[M]_{n+m}^{m+1})\cap \ker \pi_1(p[M]_{m+n}^{m+2})\cap\dots \cap  \ker \pi_1(p[M]^{m+n}_{m+n})).  
    \end{equation*}
\end{enumerate}

The relations of the groups are summarized into the following diagram
\begin{equation*}
    \begin{tikzcd}
        \mathrm{Brun}_{1,n}(M)\arrow[d,hook]\arrow[r,hook]& \mathrm{Brun}_{2,n-1}(M)\arrow[r,hook]\arrow[d,hook]& \mathrm{Brun}_{3,n-2}(M)\ldots\arrow[r,hook]\arrow[d,hook]&\mathrm{Brun}_{n-2,2}(M)\arrow[r,hook]\arrow[d,hook]&\mathrm{Brun}_{n,1}(M)\arrow[d,equal]\\
         \mathrm{PB}_{1,n}(M)\arrow[d,hook]& \mathrm{PB}_{2,n-1}(M)\arrow[l,hook]& \mathrm{PB}_{3,n-2}(M)\ldots\arrow[l,hook]&\mathrm{PB}_{n-2,2}(M)\arrow[l,hook]&\mathrm{PB}_{n,1}(M)\arrow[l,hook]\\
          \mathrm{PB}_{n}(M)& & &&
    \end{tikzcd}
\end{equation*}

Recall that for a group $G$, the descending lower central series is the filtration
\begin{equation*}
    G=\Gamma_1(G)\ge \Gamma_2(G)\ge \ldots \Gamma_n(G)\ge \ldots,
\end{equation*}
where $\Gamma_1(G)=G$, $\Gamma_{i+1}(G):=[G,\Gamma_i(G)],$ the associated graded over the field $k$ is a graded Lie algebra $\mathrm{gr}_k(G):=\left(\oplus_i\Gamma_i(G)/\Gamma_{i+1}(G)\right)\otimes k$. For a subgroup $H$ of $G$, the relative lower central series with respect to $G$ is
\begin{equation*}
    H\cap \Gamma_1(G)\ge H\cap \Gamma_2(G)\ge \ldots H\cap \Gamma_n(G)\ge \ldots,
\end{equation*}
the associated graded with respect to the relative filtration is $\mathrm{gr}^R_k(H):=(\oplus_i \Gamma_i(G)\cap H/\Gamma_{i+1}\cap H)\otimes k.$

The associated graded Lie algebra of the above group and the relative associated graded Lie algebra of the above subgroups are the following.

\begin{enumerate}
    \item $\mathrm{gr}_k(\mathrm{PB}_n(D))$, the associated graded Lie algebra of the disk pure braid group is the Drinfeld-Kohno Lie algebra \cite{Drinfeld1991}. The Lie algebra $\mathfrak{t}_n$ is generated by $t_{ij},1\le i,j\le n$, and $t_{ij}=t_{ji},t_{ii}=0$, with the following relations:
	\begin{align*}
		&[t_{ij},t_{kl}]=0,\quad \text{for $i,j,k,l$ distinct}\\
		&[t_{ij},t_{ik}+t_{jk}]=0, \quad \text{for $i,j,k$ distinct}.
	\end{align*}
    The group morphism $\pi_1[p(D)_n^i]$ induces the Lie algebra homomorphism,
    \begin{equation*}
        \pr^i_n:\mathfrak{t}_n\to \mathfrak{t}_{n-1}; t_{il}\mapsto 0, t_{kl}\mapsto t_{kl}, i\ne k,l.
    \end{equation*}
    \item $\mathrm{gr}_k(\mathrm{PB}_n(S^2))$, the associated graded Lie algebra of the sphere pure braid group is the spherical Drinfeld-Kohno Lie algebra \cite{Ihara1992}. $\mathfrak{p}_{n}$ is generated by $X_{ij} =X_{ji}$, for $1 \leq i,j \leq n$, subject to the relations
    \begin{equation*}
        \begin{split}
            & X_{ii} =0, \quad \forall i \in \{1, \ldots, n\};  \sum_{j=1}^{r} X_{ij} = 0, \quad \forall i \in \{1, \ldots, n\}; \\
            & [X_{ij},X_{kl}] = 0, \quad \text{if } \{i,j\} \cap \{k,l\} = \emptyset.
        \end{split}
    \end{equation*}
    The group morphism $\pi_1[p(S^2)_n^i]$ induces the Lie algebra homomorphism,
    \begin{equation*}
        \spr^i_n:\mathfrak{p}_n\to \mathfrak{p}_{n-1}; X_{il}\mapsto 0, X_{kl}\mapsto X_{kl}, \forall  k,l\ne i.
    \end{equation*}
    \item $\mathrm{gr}_k(\mathrm{PB}_{n,m}(D))$, the associated graded Lie algebra of the mixed braid group on disk, \begin{equation*}
    \dk_{m,n}(D):=\ker(\rm{pr}^{m+1}_{m+1}\circ \rm{pr}^{m+2}_{m+2}\dots \rm{pr}^{m+n}_{m+n}). 
     \end{equation*}
It is generated by the generators $t_{ij}$ with either $m< j\le m+n$ or $m< i\le m+n$. $\dk_{n,m}(D)$ is  introduced and studied in \cite{Kuno2025}, see also \cite{mixed_braid}. 
     \item $\mathrm{gr}^R_k(\mathrm{Brun}(D))$ is the Lie subalgebra of $\mathfrak{t}_n$. It is defined as
     \begin{equation*} \brun_n(D):=\cap^n_{i=1}\ker \pr_n^i,
     \end{equation*}
     it is proved in \cite{Brun_Lie} that $\brun_n(D)$ is the associated graded Lie algebra with respect to the relative filtration.
     
\item  $\mathrm{gr}^R_k(\mathrm{Brun}_{m,n}(D))$ is the Lie subalgebra of $\dk_{m,n}(D)$ and we define $\brun_{m,n}(D):=\cap^{m+n}_{i=m+1}\ker \pr^{i}_{m+n}$.
\end{enumerate}

Recall that an \emph{N-series} of a group $K$ is a descending series
\begin{equation*}
    K=K_1\ge K_2\ge \ldots \ge K_i\ge \ldots,
\end{equation*}
such that $[K_m,K_n]\le K_{m+n},$ for $m,n \ge 0$.  An \emph{extended} $N$ series $K_{*}=(K_m)_{m\ge 0}$ is a descending series
\begin{equation*}
    K_0\ge K_1\ge \ldots \ge K_k\ge \ldots
\end{equation*}
such that $[K_m,K_n]\le K_{m+n}$ for $m,n\ge 0$. For every extended N-series $(K_{*})=(K_m)_{m\ge 0}$, the subseries $K_{+}=(K_m)_{m\ge 1}$ is an $N$ series, this is introduced in \cite{JM}. We are mainly interested in the following two extended $N$ series.

\begin{Def}
The kernel of the projection $\pi_1(p[D]^{n+m+1}_{n+m+1})$ is the free group of rank $n+m$.
    The Brunnian filtration of the free group $F_{m+n}$ is the extended N-series
    \begin{align*}
        F_{m+n-1}\ge \mathrm{Brun}_{m,n}(D)\ge \Gamma_2(\mathrm{Brun}_{m,n}(D))\ge \ldots \Gamma_i(\mathrm{Brun}_{m,n}(D))\ge \ldots.
    \end{align*}

    The relative Brunnian filtration of the group $\mathrm{PB}_{m,n}(D)$ is the extended N-series
    \begin{align*}
        \mathrm{PB}_{m,n}(D)\ge \mathrm{Brun}_{m,n}(D)\ge \left(\Gamma_2(\mathrm{PB}_{m,n}(D))\cap \mathrm{Brun}_{m,n}(D)\right)\ge \ldots \left(\Gamma_k(\mathrm{PB}_{m,n}(D))\cap \mathrm{Brun}_{m,n}(D)\right)\ge \ldots.
    \end{align*}
\end{Def}

There exists the corresponding notion of extended N-series in the Lie algebra setting, let $\mathbb{L}(S)$ denote the free Lie algebra over some alphabet and $\mathbb{L}_{n+m}$ the rank $n+m$ free Lie algebra.

\begin{Def}
    The Brunnian filtration of the Lie algebra $\dk_{n,m}(D)$ is the following extended N-sereis
    \begin{equation*}
        \mathbb{L}_{n+m}\ge \brun_{n,m}(D)\ge [\brun_{n,m}(D),\brun_{n,m}(D)]\ge \Gamma_3(\brun_{n,m}(D))\ge \ldots \Gamma_n(\brun_{n,m}(D))\ge \ldots.
    \end{equation*}

    The relative Brunnian filtration of the Lie algebra $\dk_{m,n}$ is the following extended N-series
    \begin{equation*}
        \dk_{m,n}(D)\ge \brun_{n,m}(D)\ge (\Gamma_2(\dk_{n,m}(D))\cap \brun_{n,m}(D))\ge \ldots (\Gamma_n(\dk_{n,m}(D))\cap \brun_{n,m}(D))\ge \ldots.
    \end{equation*}
\end{Def}

\section{Pentagon equation and brunnian filtration}

In this section, we introduce the defect of pentagon equation and study how it interacts with the Brunnian filtration.

For a partially defined map $f:\{1,\ldots,m\}\to \{1,\ldots,n\}$, the Lie algebra morphism $\mathfrak{t}_n\to \mathfrak{t}_m:~g\mapsto g^f=g^{f^{-1}(1),\ldots,f^{-1}(n)}$ is uniquely defined by $(t_{ij})^{f}=\sum_{ i'\in f^{-1}(i),j'\in f^{-1}(j)}t_{i',j'}.$ 

For any element $g$ in $\mathfrak{t}_3$, the pentagon map $\pt:\mathfrak{t}_3\to \mathfrak{t}_4$ is defined to be
\begin{equation}
\pt(g):=g^{1,2,3}+g^{1,23,4}+g^{2,3,4}-g^{12,3,4}-g^{1,2,34}.
\end{equation}

 For $n=3$, $\mathfrak{t}_3$ is generated by $t_{12},t_{23},t_{13}$ and it has the decomposation into its center and a free Lie algebra generated by $t_{12},t_{13}$,
\begin{equation}
    \mathfrak{t}_3\simeq k(t_{12}+t_{13}+t_{23})\oplus \mathbb{L}(t_{12},t_{23}),
\end{equation}
it also have another decomposition 
\begin{equation*}
    \mathfrak{t}_3\simeq \mathbb{L}(t_{13},t_{23})\rtimes kt_{12}.
\end{equation*}

For $\varphi\in \mathbb{L}(x,y)$, we use the notation $\pt\left(\varphi(t_{12},t_{23})\right):=\pt|_{\mathbb{L}(t_{12},t_{23})}(\varphi)$ and similarly $\pt\left(\varphi(t_{13},t_{23})\right):=\pt|_{\mathbb{L}(t_{13},t_{23})}(\varphi)$, in those cases, more explicitly we have
\begin{multline}
\pt(\varphi(t_{12},t_{23}))=\varphi(t_{23},t_{34})-\varphi(t_{13}+t_{23},t_{34})+\\\varphi(t_{12}+t_{13},t_{24}+t_{34})-\varphi(t_{12},t_{23}+t_{24})+\varphi(t_{12},t_{23})    
\end{multline}
\begin{multline}
\pt(\varphi(t_{13},t_{23}))=\varphi(t_{13},t_{23})+\varphi(t_{14},t_{24}+t_{34})+\varphi(t_{24},t_{34})\\
-\varphi(t_{14}+t_{24},t_{34})-\varphi(t_{13}+t_{14},t_{23}+t_{24})
\end{multline}

\begin{Lemma}
 The relation $\pt(\varphi(t_{12},t_{23}))=\pt(\varphi(-t_{13}-t_{23},t_{23}))$ holds in $t_{4}$, for $\varphi\in \mathbb{L}^{>1}(x,y)$.
\end{Lemma}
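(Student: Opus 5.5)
The plan is to reduce the statement to an identity that already holds in $\mathfrak{t}_3$, before $\pt$ is applied. The map $\pt:\mathfrak{t}_3\to\mathfrak{t}_4$ is a single linear map defined on all of $\mathfrak{t}_3$. The notations $\pt(\varphi(t_{12},t_{23}))$ and $\pt(\varphi(t_{13},t_{23}))$ only indicate which free Lie subalgebra the argument is taken from. So it suffices to show that
\[
\varphi(t_{12},t_{23})=\varphi(-t_{13}-t_{23},t_{23})\quad\text{in }\mathfrak{t}_3,\qquad \varphi\in\mathbb{L}^{>1}(x,y).
\]
Note that the right-hand side lies in $\mathbb{L}(t_{13},t_{23})$, which is the form required by the notation $\pt(\varphi(t_{13},t_{23}))$ used above.

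\emph{Step 1: centrality.} Set $c=t_{12}+t_{13}+t_{23}$. This element is central in $\mathfrak{t}_3$, by the decomposition $\mathfrak{t}_3\simeq k\,c\oplus\mathbb{L}(t_{12},t_{23})$; directly, $[t_{12},c]=[t_{12},t_{13}+t_{23}]=0$ by the defining relation, and the same holds for $t_{13}$ and $t_{23}$. Hence $t_{12}=c+(-t_{13}-t_{23})$.

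\emph{Step 2: shifting the first variable by $c$.} For any Lie algebra $\mathfrak{g}$, any central $c\in\mathfrak{g}$, any $a,b\in\mathfrak{g}$, and any $\varphi\in\mathbb{L}^{>1}(x,y)$, we claim $\varphi(a+c,b)=\varphi(a,b)$. Since evaluation is linear in $\varphi$, we may take $\varphi$ to be a single bracket monomial $[u,v]$ of degree $\ge2$, with $u,v$ Lie monomials. We argue by induction on the bracket tree. Expanding $a+c$ multilinearly, every term other than $\varphi(a,b)$ has at least one leaf equal to $c$. Consider the innermost bracket containing such a leaf. Either it is $[c,w]$ or $[w,c]$, which vanishes since $c$ is central, or the leaf $c$ is the whole monomial, which happens only in degree $1$. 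Since every monomial of $\varphi$ has degree $\ge2$, all the extra terms vanish. Applying this with $a=-t_{13}-t_{23}$ and $b=t_{23}$ gives the displayed identity, and applying the linear map $\pt$ finishes the proof.

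\emph{Main obstacle.} The only point requiring care is the role of the hypothesis $\varphi\in\mathbb{L}^{>1}$. The degree-one part of $\varphi$ would contribute a multiple of $c$, and $\pt(c)$ need not vanish. That is exactly why the lemma excludes degree one, and the argument must make clear that $c$ only disappears inside brackets. The rest is bookkeeping, so I expect no genuine difficulty. If a hands-on check is desired, one could also verify the identity directly against the two explicit five-term formulas for $\pt$ in $\mathfrak{t}_4$, using that $t_{12}+t_{13}+t_{23}$ and its cabled images, such as $t_{12}+t_{13}+t_{14}+t_{23}+t_{24}+t_{34}$ restricted appropriately, are central in the relevant subalgebras. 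The conceptual route above avoids this computation.
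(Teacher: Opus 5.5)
Your proposal is correct and is essentially the paper's argument. Both use the centrality of $t_{12}+t_{13}+t_{23}$ to get $\varphi(t_{12},t_{23})=\varphi(-t_{13}-t_{23},t_{23})$ in $\mathfrak{t}_3$, then apply the linear map $\pt$. You additionally spell out the point the paper leaves implicit: a central shift of an argument vanishes inside brackets, which is why $\varphi$ must have degree at least $2$.
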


\begin{proof}
    For $\varphi$ in $\mathbb{L}(x,y)$, $(t_{12}+t_{13}+t_{23})$ is in the center, therefore it has the relation $\varphi(t_{12},t_{23})=\varphi(-t_{13}-t_{23},t_{23})$.
\end{proof}

\begin{Lemma}\label{lem:projection}
Let $\varphi\in \mathbb{L}^{>1}(x,y)$, we have
\[\pt(\varphi(t_{12},t_{23}))\in \ker \pr^1_4\cap \ker\pr^2_4 \cap \ker \pr^3_4 \cap \ker \pr^4_4\]  
\end{Lemma}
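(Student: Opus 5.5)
The plan is to compute each projection $\pr^i_4$ of the explicit five-term formula for $\pt(\varphi(t_{12},t_{23}))$ and check that the result cancels. The key observation is that $\pr^i_4$ acts on each term $g^{f}$ by composing $f$ with the deletion of the $i$-th point. Concretely, $\pr^i_4\circ (g\mapsto g^{f})$ equals $g\mapsto g^{f'}$, where $f'$ is $f$ restricted away from $i$ and the indices are relabeled; in cabling notation, we delete the symbol $i$ from the superscript. If a slot becomes empty, the corresponding $t$-variables in that slot are sent to $0$. After this reduction the five terms should collapse to $\varphi$ evaluated on elements of $\mathfrak{t}_3$ that cancel in pairs, possibly after using the centrality of $t_{12}+t_{13}+t_{23}$ (the previous lemma) and the fact that $\varphi\in\mathbb{L}^{>1}$ has no linear part.

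For $i=1$, apply $\pr^1_4$ to the terms in the order listed in the explicit formula. We get $\varphi(t_{23},t_{34})$ and $-\varphi(t_{23},t_{34})$ (since $t_{13}\mapsto0$). The third term becomes $\varphi(0,t_{24}+t_{34})$, which is $0$ because $\varphi$ has degree $\ge 2$ and $\varphi(0,y)=0$. The last two terms give $-\varphi(0,\cdot)=0$ and $\varphi(0,t_{23})=0$. So everything cancels.

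For $i=4$: the terms give $\varphi(t_{23},0)=0$, $-\varphi(t_{13}+t_{23},0)=0$, $\varphi(t_{12}+t_{13},0)=0$, $-\varphi(t_{12},t_{23})$, and $+\varphi(t_{12},t_{23})$, which cancel.

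For $i=2$: we get $\varphi(0,t_{34})=0$, then $-\varphi(t_{13},t_{34})$, then $\varphi(t_{13},t_{34})$, then $-\varphi(0,\cdot)=0$, then $\varphi(0,0)=0$. Again everything cancels.

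The case $i=3$ is the only one where the cancellation is not immediate. The terms become $\varphi(t_{2 4}\text{-type})$, namely $\varphi(0,0)=0$ from the first term, $-\varphi(0,0)=0$ from the second, $\varphi(t_{12},t_{24})$ from the third, $-\varphi(t_{12},t_{24})$ from the fourth, and $\varphi(t_{12},0)=0$ from the fifth. In the relabeled $\mathfrak{t}_3$ these cancel. Here one must be careful that the projection $\pr^3_4$ sends $t_{13}$ and $t_{23}$ to $0$ and leaves $t_{12},t_{24}$ intact.

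The main obstacle is therefore only bookkeeping: checking that the formula for $\pr^i_4$ of the cabled terms is correct, with indices renamed consistently. I would first verify the general principle that $\pr^i$ commutes with the cabling operations by checking it on generators $t_{jk}$, and then apply it termwise as above. The hypothesis $\varphi\in\mathbb{L}^{>1}(x,y)$ is used precisely to kill the terms $\varphi(0,y)$ and $\varphi(x,0)$, which vanish because every Lie word of degree $\ge2$ contains both letters.
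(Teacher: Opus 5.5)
Your proposal is correct and follows the same route as the paper: apply each Lie algebra map $\pr^i_4$ termwise to the explicit five-term formula, and use $\varphi(x,0)=\varphi(0,y)=0$ for $\varphi\in\mathbb{L}^{>1}(x,y)$ so that the surviving terms cancel in pairs. The paper writes out only $i=1$ and says the other cases are analogous, whereas you carry out all four; your computations for $i=2,3,4$ are right, the centrality of $t_{12}+t_{13}+t_{23}$ is never needed, and the case $i=3$ is just as immediate as the others.
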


\begin{proof}
We first apply $\pr^1_4$ to $\pt(\varphi(t_{12},t_{23}))$,
\begin{align*}
\pr^1_4(\pt(\varphi))=\varphi(t_{23},t_{34})-\varphi(t_{23},t_{34})=0,
\end{align*}
for other projections, the calculations are the same which only use the property that $\varphi$ is of degree $>1$.
\end{proof}

From the Lemma \ref{lem:projection}, the map has the property that
\begin{align}
    &\pt(\varphi(t_{12},t_{23}))\in \brun_4(D)\subset \mathfrak{t}_4,\quad \pt(\varphi(t_{12},t_{23}))\in \brun_{1,3}(D)\subset \dk_{1,3}(D)\\
    &\pt(\varphi(t_{12},t_{23}))\in \brun_{2,2}(D)\subset \dk_{2,2}(D).
\end{align}

If we denote the center of $\mathfrak{t}_n$ by $Z(\mathfrak{t}_n)$, it is one dimensional, $Z(\mathfrak{t}_n)=k\sum_{1\le i<j\le n}t_{ij}$. The map \begin{equation*}
	K_n: \mathfrak{t}_n\to \mathfrak{p}_{n+1}, t_{ij}\mapsto X_{ij},
\end{equation*}
induces an isomorphism $ \mathfrak{t}_n/(Z(\mathfrak{t}_n))\simeq \mathfrak{p}_{n+1}.$

By the relations of $\mathfrak{p}_n$, $\mathfrak{p}_4\simeq \mathbb{L}(x_0,x_1)$, where $x_0=X_{14}=X_{23}, x_1=X_{12}=X_{34}, -x_0-x_1=X_{13}=X_{24}$, the map $\pt: \mathfrak{p}_4\to \mathfrak{p}_5$ is defined to be
\begin{multline}   \pt(\varphi(X_{12},X_{23})):=\varphi(X_{23},X_{24})-\varphi(X_{13}+X_{23},X_{34})+\\
    \varphi(X_{12}+X_{13},X_{24}+X_{34})-\varphi(X_{12},X_{23}+X_{24})+\varphi(X_{12},X_{23})\in \mathfrak{p}_5
\end{multline}

For later use, we compare $\brun_{2,2}(D)$ and $\brun_{1,3}(D)$ with Lie algebras in $\mathfrak{p}_5$.

\begin{Prop}\label{prop:lambda_13}

The map $K_4$ identifies $\ker \mathrm{pr}^4_4$ and $\ker \spr^4_5$, it further identify the $\brun_{2,2}(D)$ with $ \ker \spr^4_5\cap\ker \spr^3_5$ and $\brun_{1,3}(D)$ with $ \ker \spr^2_5\cap \ker \spr^3_5\cap \ker \spr^4_5$, that is
    \begin{align*}
    &K_4: \ker\pr^4_4\cap \ker \pr^3_4\simeq \ker \spr^4_5\cap\ker \spr^3_5;\\ &K_4: \ker\pr^4_4\cap \ker \pr^3_4\cap \ker \pr^2_4\simeq \ker \spr^2_5\cap \ker \spr^3_5\cap \ker\spr^4_5
    \end{align*}
\end{Prop}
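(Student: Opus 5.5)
The plan is to work entirely at the level of generators and the explicit projection maps, using that $K_4:\mathfrak{t}_4\to\mathfrak{p}_5$ is surjective with kernel the center $Z(\mathfrak{t}_4)=k\,c$, where $c=\sum_{i<j}t_{ij}$. The first step is to check compatibility of $K_4$ with the projections for $i\le 4$:
\begin{equation*}
\spr^i_5\circ K_4 = K_3\circ \pr^i_4 .
\end{equation*}
Both sides kill every generator with an index $i$ and send every other $t_{kl}$ to $X_{kl}$ with the indices relabelled. The image of $K_3$ is $\mathfrak{p}_4$, and $K_3$ has kernel $k\sum_{i<j\le 3}t_{ij}$. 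Consequently, $K_4(g)\in\ker\spr^i_5$ if and only if $\pr^i_4(g)\in Z(\mathfrak{t}_3)$.

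Next I restrict to $\ker\pr^4_4$. This is the free Lie algebra $\mathbb{L}(t_{14},t_{24},t_{34})$, since $\mathfrak{t}_4=\mathbb{L}(t_{14},t_{24},t_{34})\rtimes\mathfrak{t}_3$. It meets the center trivially, because $\pr^4_4(c)=t_{12}+t_{13}+t_{23}\neq0$. Hence $K_4$ is injective on $\ker\pr^4_4$. For the image, let $X\in\ker\spr^4_5$ and lift it to any $g\in\mathfrak{t}_4$. Then $\pr^4_4(g)=\lambda\sum_{i<j\le3}t_{ij}$ for some scalar $\lambda$, and $g-\lambda c\in\ker\pr^4_4$ has the same image $X$. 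So $K_4:\ker\pr^4_4\to\ker\spr^4_5$ is an isomorphism. The reverse inclusion $K_4(\ker\pr^4_4)\subset\ker\spr^4_5$ is immediate from the displayed compatibility.

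For the refined statements, take $g\in\ker\pr^4_4$. The key claim is
\begin{equation*}
\pr^i_4(g)\in Z(\mathfrak{t}_3)\iff \pr^i_4(g)=0,\qquad i=2,3.
\end{equation*}
The reason is that $\pr^i_4$ maps $\mathbb{L}(t_{14},t_{24},t_{34})$ into the corresponding free factor $\mathbb{L}(t_{1,4},t_{j,4})$ of $\mathfrak{t}_3$ (after relabelling), and this factor meets the center trivially. Concretely, applying the remaining projection that deletes strand $4$ (relabelled) kills this free factor but not $\sum t_{ij}$. Therefore, for $g\in\ker\pr^4_4$, the conditions $K_4(g)\in\ker\spr^3_5$ and $\pr^3_4(g)=0$ coincide, and likewise for $i=2$. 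Combined with the bijectivity from the previous step, this gives
\begin{align*}
K_4&:\ \ker\pr^4_4\cap\ker\pr^3_4\xrightarrow{\ \sim\ }\ker\spr^4_5\cap\ker\spr^3_5,\\
K_4&:\ \ker\pr^4_4\cap\ker\pr^3_4\cap\ker\pr^2_4\xrightarrow{\ \sim\ }\ker\spr^2_5\cap\ker\spr^3_5\cap\ker\spr^4_5 .
\end{align*}
It remains to identify these intersections with $\brun_{2,2}(D)$ and $\brun_{1,3}(D)$. These are defined as $\ker\pr^3_4\cap\ker\pr^4_4$ inside $\dk_{2,2}$, and as $\ker\pr^2_4\cap\ker\pr^3_4\cap\ker\pr^4_4$ inside $\dk_{1,3}$. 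Each of these kernel intersections lies automatically in the appropriate $\dk$, since it lies in the kernel of the composite projection.

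The main obstacle is the index bookkeeping: verifying that $\pr^i_4$ on $\ker\pr^4_4$ really lands in a free factor avoiding the center, given the relabelling of strands after deletion. I would do this by directly tracking the generators $t_{14},t_{24},t_{34}$ under $\pr^2_4$ and $\pr^3_4$, which is routine but must be done carefully.
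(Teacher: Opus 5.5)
Your proof is correct. For the refined statements it takes a different route from the paper.

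\textbf{First identification.} Both arguments rest on the square $\spr^4_5\circ K_4=K_3\circ\pr^4_4$. You make explicit the center bookkeeping that the paper's ``follows from the commutative diagram'' leaves implicit. Injectivity holds because $Z(\mathfrak{t}_4)\cap\ker\pr^4_4=0$. Surjectivity comes from correcting an arbitrary lift by a multiple of $\sum_{i<j}t_{ij}$.

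\textbf{The intersections.} The paper describes both sides as ideals: $(t_{34})$ versus $(X_{34})$, and $[(t_{24}),(t_{34})]$ versus $[(X_{24}),(X_{34})]$. It then concludes because $K_4$ sends $t_{ij}$ to $X_{ij}$. You instead argue as follows:
\begin{enumerate}
\item You use the analogous squares for $i=2,3$, together with $\ker K_3=Z(\mathfrak{t}_3)$.
\item You note that $\pr^i_4(\ker\pr^4_4)\subset\ker\pr^3_3$, since $\pr^3_3\circ\pr^i_4=\pr^i_3\circ\pr^4_4$ for $i=2,3$.
\item $\ker\pr^3_3$ meets $Z(\mathfrak{t}_3)$ trivially, because $\pr^3_3(t_{12}+t_{13}+t_{23})=t_{12}\neq 0$.
\end{enumerate}
Hence for $g\in\ker\pr^4_4$ the condition $K_4(g)\in\ker\spr^i_5$ is equivalent to $\pr^i_4(g)=0$, and the refined isomorphisms follow from the first one.

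\textbf{What each route buys.} Your argument is self-contained. It avoids the ideal descriptions of the intersections, which the paper asserts without argument; on the $\mathfrak{p}_5$ side these are not obvious, and they are most easily deduced from a statement like yours. The paper's route, once those descriptions are justified, exhibits explicit ideal generators of $\brun_{2,2}(D)$ and $\brun_{1,3}(D)$. That is the form in which these algebras are used later, for example $J=(t_{34})$ and Lazard elimination for $J/[J,J]$.
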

\begin{proof}
	It is known that $\ker \mathrm{pr}^4_4$ is the free Lie algebra $\mathbb{L}(t_{14},t_{24},t_{34})$. 
	The equality of $\ker \pi^4_5=\mathbb{L}(X_{14},X_{24},X_{34})$ follows from the commutative diagram
	\[
	\begin{tikzcd}[ampersand replacement=\&, column sep=large]
		\mathfrak{t}_4 \arrow{r}{K_4}\arrow{d}{\pr^4_4}                                       \&  \mathfrak{p}_5\arrow{d}{\spr^4_5}                       \\
		\mathfrak{t}_3\arrow{r}{K_3}\& \mathfrak{p}_4
	\end{tikzcd},
	\]
    $\ker\pr^4_4\cap \ker \pr^3_4=(t_{34}),$$ \ker\spr^4_5\cap \ker \spr^3_5=(X_{34})$, therefore $K_4$ induces an isomorphism.

    Similarly $\ker\pr^4_4\cap \ker \pr^3_4\cap \ker \pr^2_4=[(t_{24}),(t_{34})]$ and $\ker \spr^2_5\cap \ker \spr^3_5\cap \ker \spr^4_5=[(X_{24}),(X_{34})]$, $K_4$ induces an isomorphism.
\end{proof}

The pentagon equation is initially used to define the \emph{Grothendieck-Teichmuller Lie algebra}.

\begin{Def}[\cite{Drinfeld1991}]
The \emph{Grothendieck-Teichmuller Lie algebra} $\mathfrak{grt}_1(k)$ consists of $\varphi\in \mathbb{L}(x_0,x_1)$, which satisfies the conditions
\begin{align*}
&\varphi(x_0,x_1)=-\varphi(x_1,x_0)\\
&\varphi(x_0,x_1)+\varphi(x_1,x_{\infty})+\varphi(x_{\infty},x_0)=0,\quad x_{\infty}:=-x_0-x_1\\
&\varphi(t_{12},t_{23}+t_{24})+\varphi(t_{13}+t_{23},t_{34})=\varphi(t_{23},t_{34})+\varphi(t_{12}+t_{13},t_{24}+t_{34})+\varphi(t_{12},t_{23})
\end{align*}
\end{Def}

For the genus zero disk case, we generalize the $\mathfrak{grt}_1$ relations in three directions.

For the filtrations $\brun_4(D)\ge \Gamma_1(\brun_4(D))\ge \ldots \Gamma_k(\brun_4(D))$, we introduce the map
    \begin{equation}
        \pt^k:\mathbb{L}(x,y)\to \brun_4(D)/\Gamma_k(\brun_4(D)),\quad \pt^k=\mathrm{proj}\circ \pt(\varphi(t_{12},t_{23}),
    \end{equation}
    where $\mathrm{proj}$ is the projection from $\brun_4(D)$ to $\brun_4(D)/\Gamma_k(\brun_4(D)).$

\begin{Def}[Restricted Grothendieck-Teichmuller relations] $\varphi\in \mathbb{L}(x,y)$ satisfies $k$-th restricted Grothendieck-Teichmuller relations if
\begin{align}
    \pt^k(\varphi)=0.
\end{align}    
\end{Def}

For the filtrations $\brun_{1,3}(D)\ge \Gamma_1(\brun_{1,3}(D))\ge \ldots \Gamma_k(\brun_{1,3}(D))$, we introduce the map
    \begin{equation}
        \pt_{dmr}^k:\mathbb{L}(x,y)\to \brun_{1,3}(D)/\Gamma_k(\brun_{1,3}(D)),\quad \pt^k_{dmr}=\mathrm{proj}\circ \pt(\varphi(t_{12},t_{23}),
    \end{equation}
    where $\mathrm{proj}$ is the projection from $\brun_{1,3}(D)$ to $\brun_{1,3}(D)/\Gamma_k(\brun_{1,3}(D)).$

\begin{Def}[Generalized double shuffle relations] $\varphi\in \mathbb{L}(x,y)$ satisfies $k$-th generalized double shuffle relations if
\begin{align}
    \pt_{dmr}^k(\varphi)=0.
\end{align}    
\end{Def}

For the filtrations $\brun_{2,2}(D)\ge \Gamma_1(\brun_{2,2}(D))\ge \ldots \Gamma_k(\brun_{2,2}(D))$, we introduce the map
    \begin{equation}
        \pt_{krv}^k:\mathbb{L}(x,y)\to \brun_{2,2}/\Gamma_k(\brun_{2,2}),\quad \pt^k_{krv}=\mathrm{proj}\circ \pt(\varphi(t_{12},t_{23}),
    \end{equation}
    where $\mathrm{proj}$ is the projection from $\brun_{2,2}$ to $\brun_{2,2}(D)/\Gamma_k(\brun_{2,2}(D)).$

\begin{Def}[Generalized symmetric Kashiwara-Vergne relations] $\varphi\in \mathbb{L}(x,y)$ satisfies $k$-th symmetric Kashiwara-Vergne relations if
\begin{align}
    \pt_{krv}^k(\varphi)=0.
\end{align}    
\end{Def}

Our main result in this section is the relations between those vector spaces. 
\begin{Th}\label{th:relations}
    For $\varphi\in \mathbb{L}^{>1}(x,y)$ and any $k>1$, we have the relations

    \begin{enumerate}        
        \item $\pt(\varphi)=0\Rightarrow \pt^k(\varphi)=0\Rightarrow \pt^{k-1}(\varphi)=0.$
        \item $\pt^k_{dmr}(\varphi)=0\Rightarrow  \pt^{k-1}_{dmr}(\varphi)=0$.
        \item $\pt^k_{krv}(\varphi)=0\Rightarrow  \pt^{k-1}_{krv}(\varphi)=0$.
        \item $\pt^k(\varphi)=0\Rightarrow \pt^k_{dmr}(\varphi)=0 \Rightarrow \pt^k_{krv}(\varphi)=0.$
    \end{enumerate}
\end{Th}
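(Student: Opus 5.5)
The plan is to reduce everything to two elementary facts about lower central series: the filtration $\Gamma_k(\mathfrak g)\supseteq\Gamma_{k+1}(\mathfrak g)$ is descending, and a Lie algebra map $f:\mathfrak g\to\mathfrak h$ satisfies $f(\Gamma_k(\mathfrak g))\subseteq\Gamma_k(\mathfrak h)$. By Lemma \ref{lem:projection}, $\pt(\varphi(t_{12},t_{23}))$ lies in $\brun_4(D)$. Since
\[
\brun_4(D)=\cap_{i=1}^4\ker\pr^i_4\subseteq \cap_{i=2}^4\ker\pr^i_4\subseteq\cap_{i=3}^4\ker \pr^i_4,
\]
it also lies in $\brun_{1,3}(D)$ and $\brun_{2,2}(D)$, and these inclusions $\brun_4(D)\hookrightarrow\brun_{1,3}(D)\hookrightarrow\brun_{2,2}(D)$ are inclusions of Lie subalgebras of $\mathfrak t_4$. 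I read $\brun_{1,3}(D)$ and $\brun_{2,2}(D)$ in the concrete form used in Proposition \ref{prop:lambda_13}, with strand indexing such that the surviving strands are the unfixed ones. I also use the convention implicit in the definitions: $\pt^k(\varphi)=0$ means $\pt(\varphi)\in\Gamma_k(\brun_4(D))$, and likewise for the other two maps.

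For items (1)--(3), the first implication in (1) is trivial, because $\pt(\varphi)=0$ lies in every $\Gamma_k$. For the second implications, note that $\Gamma_k(\mathfrak g)=[\mathfrak g,\Gamma_{k-1}(\mathfrak g)]\subseteq\Gamma_{k-1}(\mathfrak g)$; this is proved by induction on $k$, with base case $\Gamma_2\subseteq\Gamma_1=\mathfrak g$. It follows that the projection $\mathfrak g/\Gamma_k\to\mathfrak g/\Gamma_{k-1}$ is well defined, and $\pt^{k-1}$ is its composite with $\pt^k$. Hence vanishing of $\pt^k$ forces vanishing of $\pt^{k-1}$. The same argument applies verbatim with $\mathfrak g=\brun_4(D)$, $\brun_{1,3}(D)$ and $\brun_{2,2}(D)$.

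For item (4), let $\iota:\brun_4(D)\hookrightarrow\brun_{1,3}(D)$ be the inclusion. It is a Lie algebra homomorphism, so by induction on $k$ we have
\[
\iota\bigl(\Gamma_k(\brun_4(D))\bigr)=\bigl[\iota(\brun_4(D)),\iota(\Gamma_{k-1}(\brun_4(D)))\bigr]\subseteq\bigl[\brun_{1,3}(D),\Gamma_{k-1}(\brun_{1,3}(D))\bigr]=\Gamma_k(\brun_{1,3}(D)).
\]
Therefore $\iota$ descends to a map $\bar\iota:\brun_4(D)/\Gamma_k\to\brun_{1,3}(D)/\Gamma_k$. Since the same element $\pt(\varphi(t_{12},t_{23}))$ is projected in both cases, $\pt^k_{dmr}=\bar\iota\circ\pt^k$. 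So $\pt^k(\varphi)=0$ implies $\pt^k_{dmr}(\varphi)=0$. The identical argument for the inclusion $\brun_{1,3}(D)\hookrightarrow\brun_{2,2}(D)$ gives $\pt^k_{dmr}(\varphi)=0\Rightarrow\pt^k_{krv}(\varphi)=0$.

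The only step needing care is the bookkeeping in (4): we must confirm that the three Brunnian algebras are nested as subalgebras of the same ambient $\mathfrak t_4$, so that $\pt(\varphi)$ is literally the same element in each. This holds by the kernel descriptions in Proposition \ref{prop:lambda_13}, namely $\brun_{2,2}(D)=\ker\pr^3_4\cap\ker\pr^4_4$ and $\brun_{1,3}(D)=\ker\pr^2_4\cap\ker\pr^3_4\cap\ker\pr^4_4$. Apart from that check, the argument is formal functoriality of the lower central series, with no computation required. Note that $\Gamma_k$ here means the intrinsic lower central series of each subalgebra, not the relative filtration; the monotonicity argument works for either, but functoriality under inclusion is cleanest with the intrinsic one, which is the one used in the definitions.
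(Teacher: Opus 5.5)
Your proposal is correct and takes the same approach as the paper. The paper's proof consists only of the remark that the result follows from the inclusions $\brun_4(D)\subset\brun_{1,3}(D)\subset\brun_{2,2}(D)$; you spell out the two routine facts behind it, namely $\Gamma_k\subseteq\Gamma_{k-1}$ (for (1)--(3)) and that a Lie subalgebra inclusion maps $\Gamma_k$ into $\Gamma_k$ (for (4)), and your argument does not depend on the paper's somewhat shifted indexing of $\Gamma_k$ (it effectively uses $\pt^1$ for the projection to $\brun/[\brun,\brun]$).
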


\begin{proof}
    It follows from the inclusion $\brun_4(D)\subset\brun_{1,3}(D)\subset \mathfrak{brun}_{2,2}(D)$.
\end{proof}

\section{$\mathfrak{dmr}_0$ is equivalent to $\pt^1_{dmr}(\varphi)=0$}

In this section, we present a characterization of the stuffle coproduct in terms of the pentagon equation map $\pt^1_{dmr}$ and prove that $\pt^1_{dmr}(\psi)=0$ is the same as original $\mathfrak{dmr}_0.$ The tools we use are mainly the polylogarithms calculation developed in \cite{Furusho2011,CS,howarth}.

\subsection{The Lie algebra $\mathfrak{dmr}_0$}
Let $X=\{x_0,x_1\}$, $Y=\{y_n,n\in \mathbb{N}_{\ge 0}\}$, $k\langle X\rangle$, $k\langle Y\rangle$ are the free algebra over the alphabet $X$ and $Y$, in order to introduce the Lie algebra, we recall the notations \cite{Racinet2002}.

\[
\begin{tikzcd}
k\langle X \rangle \arrow[r, shift left=0.5ex, "\pi_Y"] 
  \arrow[r, shift right=0.5ex, "\mathrm{corr}" below] 
& k\langle Y\rangle \arrow[r, shift left=0.5ex, "\Delta_{*}"] 
  \arrow[r, shift right=0.5ex, "\mathrm{diag}" below] & k\langle Y\rangle^{\otimes 2}
\end{tikzcd}
\]

\begin{enumerate}
    \item $k\langle Y\rangle$ is a subalgebra of $k\langle X\rangle$ by the assignment $y_n\mapsto x_0^{n-1}x_1$, $y_0=1$, the map $\pi_Y$ is then determined by the conditions $\pi_Y(k\langle X\rangle x_0)=0$ and $k\langle Y\rangle\to k\langle X\rangle \xrightarrow{ \pi_Y} k\langle Y\rangle$ is the identity. 
    \item $\mathrm{corr}: k\langle X\rangle \to k\langle Y\rangle$ is defined to be the linear map taking the word $x^{n-1}_0x_1$ to $\frac{(-1)^{n-1}}{n}(y^n_1)$, in particular it takes 1 to 0.
    \item The map $\Delta_{*}:k\langle Y\rangle \to k\langle Y\rangle^{\otimes 2}$ is the unique algbera map determined by
    \begin{equation*}
    \Delta_{*}(y_n)=y_n\otimes 1+1\otimes y_n+\sum_{n'+n''=n}y_{n'}\otimes y_{n''}
    \end{equation*}
    \item The map $\mathrm{diag}: k\langle Y\rangle \to k\langle Y\rangle \otimes k\langle Y\rangle $ is the diagonal map
    \begin{equation*}
    \mathrm{diag}(a)=a\otimes a
    \end{equation*}
\end{enumerate}

The \textit{double shuffle Lie algebra} $\mathfrak{dmr}_0$ introduced by Racinet in \cite{Racinet2002} is defined to be the set of formal Lie series $\psi\in \mathbb{L}^{\ge 3}(x_0,x_1)$ satisfying  
\begin{equation} \label{eq:def-dmr0}
	 \Delta_{*}(\psi_{*})=1\otimes \psi_{*}+\psi_{*}\otimes 1,
\end{equation}
where $\psi_{*}=(\psi)_{\text{corr}}+\pi_{Y}(\psi)$.

\subsection{Coproduct in $\mathcal{V}(\mathcal{M}_{0,4})$}
Let $\textbf{a}=(a_1,\dots,a_k)\in \Z^k_{>0}$, its \textit{weight} and its \textit{depth} are respectively defined to be $\text{wt}(\textbf{a})=a_1+\dots+a_k$ and $\text{dp}(\textbf{a})=k$, $\bf{a}$ is said to be \textit{admissible} if $a_k>1$. The multiple polylogarithms associated to $\bold a$ is 
\begin{equation}
\label{eq:polylogs-1-2}
	\Li_{(a_1, \ldots, a_k)}(z) :=\sum_{0<m_1<\dots<m_k}\frac{z^{m_k}}{m^{a_1}_1\dots m^{a_k}_k}
\end{equation}

By the result of \cite{Brown2009}, there is the embedding $\rho: V(\mathcal{M}_{0,4})  \hookrightarrow I_{o}(\widehat{\mathcal{M}_{0,4}})$ from the reduce bar construction $\mathcal{V}(\mathcal{M}_{0,4})$ to the iterated integrals. $\Li_{\bold a}(z)$ are in the image, we denote the pre-image of $\Li_{\bold a}(z)$ by $l_{\bold a}$, $\mathcal{V}(\mathcal{M}_{0,4})$ is the graded dual of $k\langle X\rangle$, it is generated by $H^1(\mathcal{M}_{0,4})$ spanned by the forms  $\omega_0=\frac{dz}{z}$,  $\omega_1=\frac{dz}{z-1}$, and
\begin{equation*}
	l_{\bf{a}}=(-1)^k[\underbrace{\omega_0|\dots|\omega_0|}_{a_k-1}\omega_1|\underbrace{\omega_0|\dots |\omega_0}_{a_{k-1}-1}|\omega_1|\dots |\underbrace{\omega_0|\dots |\omega_0|}_{a_1-1}\omega_1],
\end{equation*}

The stuffle product of the polylogarithms functions $\Li_{\bold a}(z)$ can be defined for $l_{\bold a}$ through the injective map $\rho$ by \cite{Furusho2011}. For $\dep(\bold a)=k, \dep(\bold b)=l$, the stuffle product on $\mathcal{V}(\mathcal{M}_{0,4})$ is defined to be
\begin{align*}
    l_{\bold a}\shuffle_s l_{\bold b}:=\sum_{\substack{\sigma\in \text{Sh}^{\le(k,l)}, \\
        \sigma^{-1}(N)=k,k+l}} l_{\sigma(\bf{a},\bf{b})} 
        +\sum_{\substack{\sigma\in \text{Sh}^{\le(k,l)}, \\ \sigma^{-1}(N)=k+l}} l_{\sigma(\bf{a},\bf{b})}
        +\sum_{\substack{\sigma\in \text{Sh}^{\le(k,l)}, \\ \sigma^{-1}(N)=k}} l_{\sigma(\bf{a},\bf{b})}.
\end{align*}
\begin{align*}
    l_{\emptyset}\shuffle_s l_{\bold b}:=l_{\bold b},\quad l_{\bold a}\shuffle_s l_{\emptyset}:= l_{\bold a}.
\end{align*}

where 
\begin{equation*}
\Sh^{\leq(k,l)} = \bigcup_{N=1}^{\infty} \Biggl\{ 
       \begin{array}{l|cl}
                      & \sigma \text{ is onto;}\\
             \sigma : \{1, \ldots, k+l\} \to \{1, \ldots, N\} & \sigma(1) < \ldots < \sigma(k);\\
             & \sigma(k+1) < \ldots < \sigma(k+l)
        \end{array}
     \Biggr\},
\end{equation*}
and
\begin{equation*}
\begin{split}
    \sigma(x,y) & := \begin{cases}
        xy, \hfill & \text{ if } \sigma^{-1}(N)=k, k+l; \\
        (x,y), \hfill & \text{ if } \sigma^{-1}(N)=k+l; \\
        (y,x), \hfill & \text{ if } \sigma^{-1}(N)=k,
    \end{cases}\\
    \sigma(\textbf{a}, \textbf{b}) & :=((c_1,\ldots, c_j),(c_{j+1}, \ldots, c_N)), 
\end{split}
\end{equation*}
where
    \begin{align*}
        \{j,N\} & =\{\sigma(k), \sigma(k+l)\},\\
        c_i & = \begin{cases}
            a_s + b_{t-k}, \hfill & \text{ if } \sigma^{-1} = \{s,t\} \text{ with } s < t;\\
            a_s, \hfill & \text{ if } \sigma^{-1} = \{s\} \text{ with } s \leq k;\\
            b_{s-k}, \hfill & \text{ if } \sigma^{-1} = \{s\} \text{ with } s > k.\\
        \end{cases}
    \end{align*}

The vector subspace of $\mathcal{V}(\mathcal{M}_{0,4})$ spanned by $l_{\bold a}$ is dual to $k\langle Y\rangle$ and the coproduct $\Delta_{*}$ is dual to the stuffle product $\shuffle_s$,
\begin{equation}
    \langle \Delta_{*}(f),l_{\bold a}\otimes l_{\bold b}\rangle=\langle f, l_{\bold a}\shuffle_s l_{\bold b}\rangle, \quad f\in k\langle Y\rangle, \quad \forall\bold a,\bold b
\end{equation}

We now introduce the vector subspace $\mathcal{P}$ of $k\langle x,y,z\rangle$ which is defined as $k$ linear span of the basis 
 \begin{align*}
   &p_{\bold a, \bold b}=z^{a_k-1}xz^{a_{k-1}-1}z\ldots z^{a_1-1}xz^{b_n-1}y\ldots z^{b_1-1}y,\\
   &p_{\emptyset,\bold b}=z^{b_n-1}x\ldots z^{b_1-1}x,\quad p_{\bold a, \emptyset}=z^{a_k-1}yz^{a_{k-1}-1}y\ldots z^{a_1-1}y,\\
   &p_{\emptyset,\emptyset}=1.
 \end{align*}

 $\mathcal{P}$ is a graded vector space with respect to the degree of $x$ and $y$,
 \begin{align*}
     \mathcal{P}&=\mathcal{P}_{0,0}\oplus \mathcal{P}_{\ge 1,0}\oplus \mathcal{P}_{0,\ge 1}\oplus \mathcal{P}_{\ge 1,\ge 1}\\
     &=1\oplus \mathrm{span}(p_{\bold a,\emptyset})\oplus \mathrm{span}(p_{\emptyset,\bold b})\oplus \mathrm{span}(p_{\bold a,\bold b})\\
     &=1\oplus k\langle x,z\rangle x\oplus k\langle y,z\rangle y\oplus \mathcal{P}_{\ge 1,\ge 1}.
 \end{align*}

\begin{Lemma}
The vector space $\mathcal{P}$ is isomorphic to $(k\cdot 1+k\langle X\rangle x_1)\otimes (k\cdot 1+k\langle X\rangle x_1)$, for any $\bold a.\bold b$,
\begin{align*}
    p:(k\cdot 1+ k\langle X\rangle x_1)\otimes (k\cdot 1+k\langle X\rangle x_1)&\to \mathcal{P},\\
    x^{a_k-1}_0x_1\ldots x_0^{a_1-1}x_1\otimes x^{b_n-1}_0x_1\ldots x_0^{b_1-1}x_1&\mapsto p_{\bold a,\bold b},\\
    x^{a_k-1}_0x_1\ldots x_0^{a_1-1}x_1\otimes 1&\mapsto p_{\bold a,\emptyset},\\
    1\otimes x^{b_n-1}_0x_1\ldots x_0^{b_1-1}x_1&\mapsto p_{\emptyset,\bold b}.
\end{align*}
\end{Lemma}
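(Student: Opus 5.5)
The map $p$ is defined on a basis, so I will show that it sends a basis of the source bijectively onto a basis of $\mathcal{P}$; linear extension then gives the isomorphism.

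First, fix a basis of the source. Every word of $k\langle X\rangle x_1$ factors uniquely as $x_0^{c_1-1}x_1x_0^{c_2-1}x_1\cdots x_0^{c_r-1}x_1$ with $r\ge 1$ and $c_i\ge 1$, by cutting after each occurrence of $x_1$. Reading the exponents in reverse order, as in the indexing $x_0^{a_k-1}x_1\ldots x_0^{a_1-1}x_1$, gives a bijection between these words and nonempty tuples $\mathbf a\in\Z^k_{>0}$. Setting $w_\emptyset=1$, the elements $w_{\mathbf a}$ with $\mathbf a$ possibly empty form a basis of $k\cdot 1+k\langle X\rangle x_1$. Hence the tensors $w_{\mathbf a}\otimes w_{\mathbf b}$ form a basis of the source, and the four cases in the definition of $p$, together with $1\otimes 1\mapsto p_{\emptyset,\emptyset}=1$, cover every pair $(\mathbf a,\mathbf b)$ exactly once.

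Second, I show that the $p_{\mathbf a,\mathbf b}$ are pairwise distinct words in $k\langle x,y,z\rangle$. A basis of distinct words is automatically linearly independent, so these words form a basis of their span $\mathcal{P}$. The key observation is that each $p_{\mathbf a,\mathbf b}$ is a block of the form $z^{*}x$, repeated $k$ times, followed by a block of the form $z^{*}y$, repeated $n$ times.
\begin{itemize}
\item From the word one reads off $\mathbf a$ by cutting after each letter $x$ and recording the length of each piece.
\item One reads off $\mathbf b$ in the same way from the pieces ending in $y$.
\end{itemize}
Distinct pairs therefore give distinct words, so $p$ is injective on the basis. It is surjective onto the spanning set of $\mathcal{P}$ by definition, which makes $p$ a linear isomorphism.

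The only real obstacle is notational. As printed, $p_{\mathbf a,\mathbf b}$ has a stray middle letter $z$ instead of $x$. Also, $p_{\emptyset,\mathbf b}$ and $p_{\mathbf a,\emptyset}$ have their letters $x$ and $y$ swapped relative to the graded decomposition $k\langle x,z\rangle x\oplus k\langle y,z\rangle y$. I would adopt the convention that the $\mathbf a$-part uses $x$ and the $\mathbf b$-part uses $y$, consistently in all four cases. With this convention the decomposition $\mathcal{P}=\mathcal P_{0,0}\oplus\mathcal P_{\ge1,0}\oplus\mathcal P_{0,\ge1}\oplus\mathcal P_{\ge1,\ge1}$ matches the four summands of the source, since $(k\cdot1+V)^{\otimes2}=k\oplus V\otimes1\oplus1\otimes V\oplus V\otimes V$ with $V=k\langle X\rangle x_1$. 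The isomorphism then holds summand by summand: the map $x_0\mapsto z$, $x_1\mapsto x$ or $x_1\mapsto y$ identifies $V$ with $k\langle x,z\rangle x$ or with $k\langle y,z\rangle y$.
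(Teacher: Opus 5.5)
Your proposal is correct and matches the paper's approach: the paper states this lemma without proof because $\mathcal{P}$ is \emph{defined} as the span of the words $p_{\mathbf a,\mathbf b}$, and your basis-to-basis check (unique factorization of words ending in $x_1$ on the source side, pairwise distinctness of the words $p_{\mathbf a,\mathbf b}$ on the target side) is exactly the verification it leaves implicit. Your repair of the typos is sensible, and even with the literal swapped letters in $p_{\mathbf a,\emptyset}$ and $p_{\emptyset,\mathbf b}$ the lemma still holds, since the three nonconstant families of words remain pairwise disjoint; only the labels of the graded pieces $\mathcal{P}_{\ge 1,0}$ and $\mathcal{P}_{0,\ge 1}$ would be affected.
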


Through the identification $p$, the vector space $k\langle X\rangle x_1\otimes 1$ is isomorphic to $\mathcal{P}_{\ge 1,0}$, the vector space $1\otimes k\langle X\rangle x_1$ is isomorphic to $\mathcal{P}_{0,\ge 1},$ the vector space $k\cdot 1$ is identified with $\mathcal{P}_{0,0}$. We introduce the projection map
\begin{equation*}
p_x:\mathcal{P}\to \mathcal{P}_{0,\ge 0},\quad p_y:\mathcal{P}\to \mathcal{P}_{\ge 0,0}.    
\end{equation*}

\begin{Prop}
Suppose we equipe $\mathcal{P}$ with the algebra structure through the vector space isomorphism $p$, the following diagram commutes, the maps are algebra map,
\begin{equation}\label{diagram_1}
\begin{tikzcd}
& & k\langle Y\rangle\arrow[d,"p\circ \Delta_{*}" ]\arrow[rd,"p~\circ ~\mathrm{diag}"]&&\\
0\arrow[r]&\mathcal{P}_{\ge 1,\ge 1}\arrow[r]& \mathcal{P}\arrow{r}{(p_x,p_y)}& \mathcal{P}_{0,\ge 0}\oplus \mathcal{P}_{\ge 0,0}\arrow[r]&0\\
\end{tikzcd}
\end{equation}    
\end{Prop}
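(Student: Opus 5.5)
The plan is to transport everything through the vector space isomorphism $p$ and reduce the statement to two facts about $k\langle Y\rangle\otimes k\langle Y\rangle$. Set $V=k\cdot 1+k\langle X\rangle x_1$, identified with $k\langle Y\rangle$ via $y_n\mapsto x_0^{n-1}x_1$. Then $\mathcal{P}\simeq V\otimes V$ with the tensor product algebra structure; this is by definition the algebra structure pulled back along $p$.

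Under this identification the graded pieces read as follows:
\begin{itemize}
\item $\mathcal{P}_{0,0}=k\cdot 1\otimes 1$;
\item $\mathcal{P}_{\ge1,0}=V_{+}\otimes 1$ and $\mathcal{P}_{0,\ge1}=1\otimes V_{+}$, where $V_+=k\langle X\rangle x_1$;
\item $\mathcal{P}_{\ge1,\ge1}=V_+\otimes V_+$.
\end{itemize}
The projections $p_x,p_y$ become $\varepsilon\otimes\id$ and $\id\otimes\varepsilon$, where $\varepsilon$ is the counit (augmentation) of $k\langle Y\rangle$. The bottom row of \eqref{diagram_1} is then the statement that
\[
\ker(\varepsilon\otimes\id)\cap\ker(\id\otimes\varepsilon)=V_+\otimes V_+ ,
\]
and that $(\varepsilon\otimes \id,\id\otimes\varepsilon)$ maps onto the pieces $1\otimes V$ and $V\otimes 1$, which overlap only in $k\cdot1\otimes 1$. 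This part is linear algebra and follows from $V=k\oplus V_+$. I will also check that $\varepsilon\otimes\id$ and $\id\otimes\varepsilon$ are algebra maps, since $\varepsilon$ is, and that $V_+\otimes V_+$ is an ideal of $V\otimes V$.

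For the commutativity I interpret the triangle as saying that $p\circ\Delta_*$ and $p\circ\mathrm{diag}$ agree after composing with $(p_x,p_y)$, so that their difference lies in $\mathcal{P}_{\ge1,\ge1}$. I will verify this in two steps.
\begin{enumerate}
\item $\Delta_*$ is counital: $(\varepsilon\otimes\id)\Delta_*=\id=(\id\otimes\varepsilon)\Delta_*$. It suffices to check this on the algebra generators $y_n$, using that both sides are algebra maps. From the formula $\Delta_*(y_n)=y_n\otimes1+1\otimes y_n+\sum y_{n'}\otimes y_{n''}$, only the first two terms survive the counit, because the sum runs over $n',n''\ge1$.
\item Compute $\mathrm{diag}$ after projection: $(\varepsilon\otimes\id)(a\otimes a)=\varepsilon(a)a$. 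On words this agrees with the counit projection only after one reads the diagonal map as $a\mapsto a\otimes1+1\otimes a$ on generators, extended multiplicatively.
\end{enumerate}
So I will interpret $\mathrm{diag}$ as the algebra map induced by the primitive (deconcatenation-free) coproduct $\Delta_0(y_n)=y_n\otimes1+1\otimes y_n$. With that reading, $\mathrm{diag}$ and $\Delta_*$ agree modulo $V_+\otimes V_+$ on generators. Because both are algebra maps and $V_+\otimes V_+$ is an ideal, they then agree modulo this ideal on all of $k\langle Y\rangle$.

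The main obstacle is the step above: making the algebra structure on $\mathcal{P}$ and the meaning of $\mathrm{diag}$ consistent, since the literal $a\mapsto a\otimes a$ is not linear. I would first check the degree-one case $y_n$ by hand in the $x,y,z$ words, namely $p(y_n\otimes1)=z^{n-1}y$ and $p(1\otimes y_n)=z^{n-1}x$. This confirms that the mixed terms $y_{n'}\otimes y_{n''}$ land in $\mathcal{P}_{\ge1,\ge1}$. Then I extend the claim multiplicatively as described.
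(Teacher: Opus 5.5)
The paper states this proposition without proof; the only accompanying text is the remark about relative extensions. So there is nothing to compare against, and your reduction is the natural proof.

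Transport along $p$ to $V\otimes V$ with $V=k\langle Y\rangle$. Your identifications are right:
\begin{itemize}
\item $p_x\leftrightarrow\varepsilon\otimes\id$ and $p_y\leftrightarrow\id\otimes\varepsilon$;
\item $\mathcal{P}_{\ge1,\ge1}\leftrightarrow V_+\otimes V_+=\ker(\varepsilon\otimes\id)\cap\ker(\id\otimes\varepsilon)$.
\end{itemize}
They are also unaffected by the inconsistent $x/y$ labelling in the paper's bases of $\mathcal{P}_{\ge1,0}$ and $\mathcal{P}_{0,\ge1}$. The whole content of the triangle is counitality of $\Delta_*$. You check this correctly on the generators $y_n$, reading the sum in $\Delta_*(y_n)$ over $n',n''\ge1$, as one must.

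There are two corrections to the write-up.

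\textbf{The $\Delta_0$ detour is redundant.} The arrow $p\circ\mathrm{diag}$ lands in $\mathcal{P}_{0,\ge0}\oplus\mathcal{P}_{\ge0,0}$, so the sensible reading of $\mathrm{diag}$ is simply $a\mapsto(a,a)$. That is, $p\circ\mathrm{diag}(a)=(p(1\otimes a),p(a\otimes1))$. Your step~1 alone then gives $(p_x,p_y)\circ p\circ\Delta_*=p\circ\mathrm{diag}$. Your version is not wrong, since $(p_x,p_y)\circ p\circ\Delta_0$ is exactly this diagonal map.

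\textbf{The bottom row is not exact on the right.} Calling this ``linear algebra'' hides a genuine defect of the statement: if $\oplus$ is the direct sum, the row fails to be exact at its right end. The elements $p_x(w)$ and $p_y(w)$ always have the same component in $\mathcal{P}_{0,0}=k$. Hence the image of $(p_x,p_y)$ is the codimension-one subspace of pairs with equal constant terms. Already in weight $0$ the row reads $0\to0\to k\to k^2\to0$.

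Your remark that the two pieces overlap in $k\cdot1\otimes1$ is precisely the reason, but you should draw the conclusion. Replace the target by the fibre product $\mathcal{P}_{0,\ge0}\times_{\mathcal{P}_{0,0}}\mathcal{P}_{\ge0,0}$. This has the properties you need:
\begin{itemize}
\item it is a subalgebra of the product algebra;
\item it is isomorphic to $(V\otimes V)/(V_+\otimes V_+)$ via $u\otimes v\mapsto(\varepsilon(u)v,\varepsilon(v)u)$;
\item it contains the image of $\mathrm{diag}$, because both components of $(a,a)$ have augmentation $\varepsilon(a)$.
\end{itemize}
With that correction the row is a short exact sequence of algebra maps, and your argument proves the corrected proposition.
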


\begin{Rem}
 The diagram is an object in the universal enveloping algebra version of the category of relative extension defined in \cite{Rodrigo} for Lie algebra.    
\end{Rem}

\subsection{Coproduct in $\mathcal{V}(\mathcal{M}_{0,5})$}
For $\bold a,\bold b$ with $\dep(\bold a)=k$ and $\dep(\bold b)=l$, we could associated to the pair $(\bold a,\bold b)$ a two variable polylogarithms function,
\begin{equation}
\label{eq:polylogs-2}
    \Li_{(a_1, \ldots, a_k),(b_1, \ldots, b_l)}(x,y) :=\sum_{\substack{0<m_1<\dots< m_k\\ <n_1<\dots <n_l}}\frac{x^{m_k}y^{n_l}}{m^{a_1}_1\dots m^{a_k}_kn^{b_1}_1\dots n^{b_l}_l}.
\end{equation}
through the injective map $\rho: V(\mathcal{M}_{0,4})  \hookrightarrow I_{o}(\widehat{\mathcal{M}_{0,4}})$, the following bar words are introduced in \cite{Furusho2011},
\begin{align*}
    l^{x,y}_{\bf{a},\bf{b}} & :=\rho^{-1}(\Li_{\bold a,\bold b}(x,y)), \quad l^{y,x}_{\bf{a},\bf{b}} :=\rho^{-1}(\Li_{\bold a,\bold b}(y,x)), \\
    l^{x}_{\bf{a}} & :=\rho^{-1}(\Li_{\bold a}(x)), \quad l^{y}_{\bf{a}} :=\rho^{-1}(\Li_{\bold a}(y)), \\
    l^{xy}_{\bf{a}} & :=\rho^{-1}(\Li_{\bold a}(xy)).
\end{align*}

The reduced bar construction $\mathcal{V}(\mathcal{M}_{0,5})$ is generated by $H^1({\mathcal{M}_{0,5}})$ modulo the Chen's integrability relations, $H^1(\mathcal{M}_{0,5})$ is spanned by the forms $\omega_{45}$,$\omega_{12}$,$\omega_{23}$,$\omega_{24}$,$\omega_{34}$. $\mathcal{V}(\mathcal{M}_{0,5})$ is graded dual to $U\mathfrak{p}_5$ through the identification of the degree 1 part through the identification of the degree $1$ part $\omega_{45},\omega_{34},\omega_{24},\omega_{12},\omega_{23}$ with $X_{45},X_{34}, X_{24},X_{12}, X_{23}$, for more details see \cite{Furusho2011,howarth}. Similarly the stuffle product of $\Li_{\bold a,\bold b}(x,y)$ can be transfered to the words $l^{x,y}_{\bold a,\bold b}$, for us the following stuffle product modulo product is enough.

\begin{Lemma}[\cite{Furusho2011}, equation (3.2)] Let $\varphi\in \mathfrak{p}_5$ be a Lie series, then the following \emph{series shuffle relations (stuffle)} modulo product holds for any index $\bf{a},\bf{b}$ with $\dep(\bold a)=k, \dep(\bold b)=l,$
	\begin{equation}\label{eq:stuffle 2 variable}
		\sum_{\substack{\sigma\in \text{Sh}^{\le(k,l)}, \\
        \sigma^{-1}(N)=k,k+l}} l^{xy}_{\sigma(\bf{a},\bf{b})}(\varphi) 
        +\sum_{\substack{\sigma\in \text{Sh}^{\le(k,l)}, \\ \sigma^{-1}(N)=k+l}} l^{x,y}_{\sigma(\bf{a},\bf{b})}(\varphi) 
        +\sum_{\substack{\sigma\in \text{Sh}^{\le(k,l)}, \\ \sigma^{-1}(N)=k}} l^{y,x}_{\sigma(\bf{a},\bf{b})}(\varphi)=0.
	\end{equation}
\end{Lemma}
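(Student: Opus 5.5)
The plan is to derive \eqref{eq:stuffle 2 variable} from an honest product identity of multiple polylogarithms. I would transport that identity into the reduced bar construction $\mathcal{V}(\mathcal{M}_{0,5})$ through the injective map $\rho$. The final step is to use that a Lie series pairs to zero against every nontrivial product in a commutative Hopf algebra dual to $U\mathfrak{p}_5$.

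\textbf{Step 1: the product identity of power series.} In the region $|x|,|y|<1$, multiply $\Li_{\bold a}(x)=\sum_{0<m_1<\dots<m_k} x^{m_k}/\prod m_i^{a_i}$ by $\Li_{\bold b}(y)=\sum_{0<n_1<\dots<n_l} y^{n_l}/\prod n_j^{b_j}$. The double sum runs over pairs of chains, which I would reorganize by the relative order of all the indices $m_1,\dots,m_k,n_1,\dots,n_l$. Each such order, with coincidences allowed, is exactly a surjection $\sigma\in\Sh^{\le(k,l)}$. A coincidence $m_s=n_t$ merges the exponents into $a_s+b_t$, which gives the rule for $c_i$ in the definition of $\sigma(\bold a,\bold b)$.

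I would then split the sum according to where the two largest indices $m_k,n_l$ sit.
\begin{itemize}
\item If $m_k=n_l$ (that is, $\sigma^{-1}(N)=\{k,k+l\}$), the summand is $(xy)^{m_k}$ times a single chain, giving $\Li_{\sigma(\bold a,\bold b)}(xy)$.
\item If $m_k<n_l$ (that is, $\sigma^{-1}(N)=\{k+l\}$), the chain splits at $j=\sigma(k)$. Below $m_k$ the variable is $x$, and between $m_k$ and $n_l$ it is $y$. Up to the convention that the variable attached to the top of the first block is $x$, this is $\Li_{\sigma(\bold a,\bold b)}(x,y)$ in the sense of \eqref{eq:polylogs-2}.
\item If $m_k>n_l$, the same argument with the roles swapped gives $\Li_{\sigma(\bold a,\bold b)}(y,x)$.
\end{itemize}
Altogether $\Li_{\bold a}(x)\Li_{\bold b}(y)$ equals the three sums in \eqref{eq:stuffle 2 variable}, with $l$'s replaced by $\Li$'s.

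\textbf{Step 2: transport to the bar construction.} All the functions above are homotopy-invariant iterated integrals on $\mathcal{M}_{0,5}$, so they lie in the image of $\rho$ by Brown's result. Moreover $\rho$ is an algebra map sending the shuffle product of bar elements to the product of functions. Since $\rho$ is injective, the identity of Step 1 lifts to an identity in $\mathcal{V}(\mathcal{M}_{0,5})$:
\[
l^x_{\bold a}\cdot l^y_{\bold b}=\sum_{\sigma^{-1}(N)=\{k,k+l\}} l^{xy}_{\sigma(\bold a,\bold b)}+\sum_{\sigma^{-1}(N)=\{k+l\}} l^{x,y}_{\sigma(\bold a,\bold b)}+\sum_{\sigma^{-1}(N)=\{k\}} l^{y,x}_{\sigma(\bold a,\bold b)}.
\]

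\textbf{Step 3: evaluate at a Lie series.} Evaluate both sides at $\varphi\in\mathfrak{p}_5$ through the graded duality between $\mathcal{V}(\mathcal{M}_{0,5})$ and $U\mathfrak{p}_5$. Under this duality the product on $\mathcal{V}$ is dual to the coproduct of $U\mathfrak{p}_5$. A Lie series is primitive, so $\langle \varphi, u\cdot v\rangle=\langle\Delta\varphi,u\otimes v\rangle$ vanishes whenever $u,v$ both have positive degree. Here $u=l^x_{\bold a}$ and $v=l^y_{\bold b}$ with $k,l\ge1$, so the left side vanishes and \eqref{eq:stuffle 2 variable} follows.

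\textbf{Main obstacle.} The main difficulty is bookkeeping rather than conceptual. I need to check that the reorganization in Step 1 matches exactly the combinatorial definition of $\sigma(\bold a,\bold b)$. This includes the splitting index $j$ as $\{j,N\}=\{\sigma(k),\sigma(k+l)\}$, and which variable is attached to which block in $\Li_{\bold a,\bold b}(x,y)$ versus $\Li_{\bold a,\bold b}(y,x)$. I also need to confirm that the bar words $l^{x,y},l^{y,x},l^{xy}$ defined via $\rho^{-1}$ are well defined, with no branch or regularization issue, near the tangential base point used for $I_o$. I would handle this first by checking the depth-one case $k=l=1$ explicitly, where the identity reads $\Li_{a}(x)\Li_b(y)=\Li_{a,b}(x,y)+\Li_{b,a}(y,x)+\Li_{a+b}(xy)$, and then argue inductively in the same way.
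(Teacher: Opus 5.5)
Your proposal is correct and follows essentially the same route as the cited source, equation (3.2) of \cite{Furusho2011}; the paper gives no proof of its own. That route is: expand $\Li_{\mathbf{a}}(x)\Li_{\mathbf{b}}(y)$ as a series shuffle, lift the identity to $\mathcal{V}(\mathcal{M}_{0,5})$ through the injective algebra map $\rho$, and pair with a Lie series, where the product $l^x_{\mathbf{a}}\cdot l^y_{\mathbf{b}}$ contributes nothing by primitivity (this uses $k,l\ge 1$, which is the intended reading of the statement).
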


For $\psi\in \mathbb{L}(x_0,x_1)$, denote $\psi(X_{ij},X_{jk})\in \mathfrak{p}_5$ by $\psi_{ijk}$ for $1\le i,j,k\le 5$ and
$c_{w}(\psi)$ the coefficient of the word $w$ in $\varphi$. Recall the basic relations between two and one variable polylogarithms of Lemmas $4.1$ and $4.2$ from \cite{Furusho2011} and Lemma $3$ from \cite{CS}.

\begin{Lemma}[Lemma 4.1,4.2 \cite{Furusho2011},Lemma 3 \cite{CS}]
\label{lemma:polylogs-compilation}
Let $\psi \in \mathbb{L}(x_0,x_1)$ be a Lie series. Then,
\begin{enumerate}
    \item \label{lemma:543}$l^{y,x}_{\bf{a},\bf{b}}(\psi_{543}) =0,\quad \text{for any}~ {\bf{a},\bf{b}};$
    \item \label{lemma:215} $l^{y,x}_{\bf{a},\bf{b}}(\psi_{215}) =l_{\bf{a}\bf{b}}(\psi),\quad \text{for any}~ {\bf{a},\bf{b}}; $
    \item \label{lemma:432} $l^{y,x}_{\bf{a},\bf{b}}(\psi_{432}) =0,\quad \text{for}~ {\bf{a},\bf{b}} \ne (1,\dots,1), (1,\dots,1);$
    \end{enumerate}
\begin{enumerate}\setcounter{enumi}{3}
    \item \label{eq: 5 term_1} $l^{xy}_{\bf{a}}(\psi_{451}+\psi_{123})=l_{\bf{a}}(\psi),\quad \text{for any}~ {\bf{a},\bf{b}};$
    \item \label{eq: 5 term_2} $l^{x,y}_{\bf{a},\bf{b}}(\psi_{451}+\psi_{123})=l_{\bf{a}\bf{b}}(\psi),\quad \text{for any}~ {\bf{a},\bf{b}}.$
\end{enumerate}
\end{Lemma}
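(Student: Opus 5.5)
The plan is to reduce every item to a statement about the pullback or restriction of iterated integrals along maps between $\mathcal{M}_{0,4}$ and $\mathcal{M}_{0,5}$. A Lie morphism $\mathfrak{p}_4\to\mathfrak{p}_5$, $\psi\mapsto\psi_{ijk}=\psi(X_{ij},X_{jk})$, is dual to a Hopf algebra map $\mathcal{V}(\mathcal{M}_{0,5})\to\mathcal{V}(\mathcal{M}_{0,4})$. Hence $l(\psi_{ijk})=\langle j^*_{ijk}(l),\psi\rangle$, and it suffices to compute $j^*_{ijk}$ on the bar elements $l^{x,y}_{\bold a,\bold b}$, $l^{y,x}_{\bold a,\bold b}$ and $l^{xy}_{\bold a}$. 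Geometrically, $\psi\mapsto\psi_{ijk}$ sends the two generators of $\mathfrak{p}_4$ to loops lying on the boundary stratum of $\overline{\mathcal{M}}_{0,5}$ where the two complementary marked points $\{l,m\}=\{1,\dots,5\}\setminus\{i,j,k\}$ collide. That stratum is a copy of $\mathcal{M}_{0,4}$. So $j^*_{ijk}$ is the regularized restriction, i.e.\ the tangential base point limit in the sense of \cite{Brown2009}, of the iterated integral to the divisor $D_{lm}$.

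Next I fix the cubical coordinates $(x,y)$ on $\mathcal{M}_{0,5}$ used in \cite{Furusho2011}, under which $\omega_{12},\omega_{23},\omega_{24},\omega_{34},\omega_{45}$ are the forms $d\log$ of $x$, $y$, $1-x$, $1-y$, $1-xy$ in the appropriate matching. In these coordinates I identify each divisor:
\begin{itemize}
\item $D_{12}$ (for $\psi_{543}$) and $D_{14}$ (for $\psi_{432}$) are the lines $x=0$ and $y=0$, up to the correct matching;
\item $D_{34}$ (for $\psi_{215}$) is the line where the inner variable tends to $1$;
\item $D_{23}$ and $D_{45}$ (for $\psi_{451}$ and $\psi_{123}$) are the two components over which the product $xy$ serves as the coordinate of the stratum.
\end{itemize}
I then restrict the explicit series \eqref{eq:polylogs-2}. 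For item (1) the monomial $x^{m_k}y^{n_l}$ has a strictly positive power of the vanishing variable, so the regularized limit is $0$. For item (3) the same holds except that the regularization picks up the $\log$ terms. These survive only when every letter is the $d\log$ form with a pole on the divisor, which is exactly the case $\bold a=\bold b=(1,\dots,1)$. For item (2), setting the inner variable to $1$ turns the nested sum into $\sum_{0<m_1<\dots<n_l}z^{n_l}/(\cdots)=\Li_{\bold a\bold b}(z)$, with convergence handled by Abel's lemma. Pairing with $\psi$ then gives $l_{\bold a\bold b}(\psi)$.

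For items (4) and (5) I would not restrict to each divisor separately. Instead I would use the fact that $\psi_{451}+\psi_{123}$ is the image of $\psi$ under the map $\mathfrak{p}_4\to\mathfrak{p}_5$ induced by the morphism $(x,y)\mapsto xy$ composed with the appropriate forgetful map, modulo terms killed by Lie series. This is the two-term part of the five-term (pentagon) decomposition. Dually, pairing with this sum computes the pullback along $(x,y)\mapsto xy$, or the diagonal restriction. Under this pullback $\Li_{\bold a}(xy)$ goes to $\Li_{\bold a}(z)$, which gives (4). For (5), $\Li_{\bold a,\bold b}(x,y)$ restricted so that only the $xy$-dependence survives becomes $\Li_{\bold a\bold b}(z)$, since concatenating the two nested sums gives a single chain $m_1<\dots<m_k<n_1<\dots<n_l$.

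The main obstacle is the bookkeeping in (4) and (5). One must check that the two contributions $\psi_{451}$ and $\psi_{123}$ combine precisely into the pullback along the multiplication map, with no leftover terms. Only products of Lie-like pieces are allowed to drop, and they do because $\psi$ is primitive, so its pairing with shuffle products vanishes. I would settle this first by comparing both sides on degree-one generators via Chen's integrability relations and then extending multiplicatively. The sign and ordering conventions of $l_{\bold a}$ must be matched to those of $l^{x,y}_{\bold a,\bold b}$, as in \cite{CS,howarth}.
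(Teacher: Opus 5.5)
The paper does not prove this lemma; it quotes it from \cite{Furusho2011,CS}. Your divisor-restriction route therefore has to stand on its own, and as written the divisor bookkeeping behind items (3)--(5) is wrong. Use the coordinates $(z_1,\dots,z_5)=(0,xy,y,1,\infty)$ that match Lemma~\ref{lemma:l-kerpr2}, i.e.\ $\omega_{12}=d\log x$, $\omega_{45}=d\log y$, $\omega_{23}=d\log(x-1)$, $\omega_{34}=d\log(y-1)$, $\omega_{24}=d\log(xy-1)$. Then $D_{12}=\{x=0\}$, $D_{45}=\{y=0\}$, $D_{23}=\{x=1\}$ and $D_{34}=\{y=1\}$. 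The complement of $\{4,3,2\}$ is $\{1,5\}$, not $\{1,4\}$, and $D_{15}$ is the exceptional curve over $(1,1)$, where $1-x$, $1-y$, $1-xy$ all vanish. So the two lines through the base corner belong to $\psi_{543}$ and $\psi_{123}$, and your argument for (3) looks at the wrong stratum. It also cannot produce the all-ones exception, because on a line through the origin the pairing vanishes for all indices. Concretely, with the leftmost letter integrated last,
\[ l^{y,x}_{(1),(1)}=[\omega_{23}|\omega_{24}]+[\omega_{45}|\omega_{24}]-[\omega_{34}|\omega_{24}]+[\omega_{34}|\omega_{23}]. \]
Every word dies under the $\psi_{123}$-substitution ($\omega_{12}\mapsto\omega_0$, $\omega_{23}\mapsto\omega_1$, the rest $\mapsto 0$). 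The last word survives the $\psi_{432}$-substitution ($\omega_{34}\mapsto\omega_0$, $\omega_{23}\mapsto\omega_1$, the rest $\mapsto 0$). What proves (3) is a depth count. By induction on the differential equations of the family $\Li_{\mathbf a,\mathbf b}(y,x)$, $\Li_{\mathbf a}(xy)$, $\Li_{\mathbf a}(x)$, $\Li_{\mathbf a}(y)$, every word of $l^{y,x}_{\mathbf a,\mathbf b}$ has at most $k+l$ letters from $\{\omega_{23},\omega_{34},\omega_{24}\}$. A word surviving the $\psi_{432}$-substitution therefore has weight at most its depth, which forces all indices to be $1$. Item (1) is fine as you have it.

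For (4)--(5) the mechanism you propose does not exist. The map $(x,y)\mapsto xy$ is the forgetful map $f_3\colon\mathcal{M}_{0,5}\to\mathcal{M}_{0,4}$ (drop $z_3$), which induces $\spr^3_5\colon\mathfrak{p}_5\to\mathfrak{p}_4$, not a map $\mathfrak{p}_4\to\mathfrak{p}_5$. Moreover $\psi\mapsto\psi_{451}+\psi_{123}$ is not a Lie morphism, since $[X_{51},X_{12}]\neq 0$, so there is nothing to check on generators and ``extend multiplicatively''. A corrected version of your idea does give (4): $l^{xy}_{\mathbf a}=f_3^*l_{\mathbf a}$, so $l^{xy}_{\mathbf a}(\eta)=l_{\mathbf a}(\spr^3_5\eta)$. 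Here $\spr^3_5\psi_{451}=\psi$ and $\spr^3_5\psi_{123}=\psi(x_0,0)=c_{x_0}(\psi)x_0$, which pairs to zero with $l_{\mathbf a}$. But $l^{x,y}_{\mathbf a,\mathbf b}$ is not pulled back from $\mathcal{M}_{0,4}$, and a diagonal restriction gives $\Li_{\mathbf a,\mathbf b}(t,t)\neq\Li_{\mathbf a\mathbf b}(t)$, so (5) remains unproved. What works is the termwise restriction you set aside. For $\psi_{451}$ use $D_{23}=\{x=1\}$, where $\Li_{\mathbf a,\mathbf b}(1,y)=\Li_{\mathbf a\mathbf b}(y)$. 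For $\psi_{123}$ use $D_{45}=\{y=0\}$, where the series vanishes; note that on $D_{45}$ the coordinate is $x$ and $xy\equiv 0$, so your description of that stratum is also off. Finally, $\{x=1\}$, $\{y=1\}$ and $D_{15}$ do not contain the tangential base point at $(0,0)$. Your limits in (2) and (5) therefore compute $\langle l,G^{D}\cdot P\rangle$, where $G^D$ is the holonomy along $D$ and $P$ is the transport from the origin along $\{x=0\}$, resp.\ $\{y=0\}$. This $P$ is a $\Phi_{543}$-, resp.\ $\Phi_{123}$-type factor, which is also how the sum $\psi_{451}+\psi_{123}$, the linearization of $\Phi_{451}\Phi_{123}$, arises. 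You must still show that the first-integrated segments of $l$ pair trivially with $P$, and use injectivity of $\rho$ to pass from functions back to bar elements. Working directly with the letter substitution $\omega\mapsto\langle\omega,X_{ij}\rangle\omega_0+\langle\omega,X_{jk}\rangle\omega_1$ on the bar words avoids all of this analysis.
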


\begin{Prop}\label{prop:coproduct}
For any $\psi\in \mathbb{L}(x_0,x_1)$ and $\bold a,\bold b$ with $\dep(a)=k,\dep(b)=l$, we have the relation
\begin{equation*}
    l_{\bold a}\shuffle_s l_{\bold b}(\psi)=\sum_{\substack{\sigma\in \text{Sh}^{\le(k,l)}, \\ \sigma^{-1}(N)=k}} l^{y,x}_{\sigma(\bf{a},\bf{b})}(\psi_{451}+\psi_{123}-\psi_{215}-\psi_{543}).
\end{equation*}
\end{Prop}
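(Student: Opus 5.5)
The plan is to start from the two-variable stuffle relation \eqref{eq:stuffle 2 variable}, applied to the Lie series $\varphi=\psi_{451}+\psi_{123}\in\mathfrak{p}_5$, and then rewrite each of its three sums with Lemma \ref{lemma:polylogs-compilation}.

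\textbf{Step 1: the first sum.} Item (\ref{eq: 5 term_1}) converts $l^{xy}_{\sigma(\bold a,\bold b)}(\psi_{451}+\psi_{123})$ into $l_{\sigma(\bold a,\bold b)}(\psi)$.

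\textbf{Step 2: the second sum.} Item (\ref{eq: 5 term_2}) converts $l^{x,y}_{\sigma(\bold a,\bold b)}(\psi_{451}+\psi_{123})$ into $l_{\sigma(\bold a,\bold b)}(\psi)$. Here $\sigma(\bold a,\bold b)$ is read as the concatenated index $(c_1,\dots,c_N)$.

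\textbf{Step 3: compare with $l_{\bold a}\shuffle_s l_{\bold b}(\psi)$.} By the definition of $\shuffle_s$ on $\mathcal{V}(\mathcal{M}_{0,4})$, the sum of these two contributions, together with the analogous one-variable terms for $\sigma^{-1}(N)=k$, is exactly $l_{\bold a}\shuffle_s l_{\bold b}(\psi)$. So the relation gives
\[
l_{\bold a}\shuffle_s l_{\bold b}(\psi)-\sum_{\sigma^{-1}(N)=k} l_{\sigma(\bold a,\bold b)}(\psi)+\sum_{\sigma^{-1}(N)=k} l^{y,x}_{\sigma(\bold a,\bold b)}(\psi_{451}+\psi_{123})=0 .
\]
Two things need care here.
\begin{itemize}
\item For $\sigma^{-1}(N)=k$, the convention $\sigma(x,y)=(y,x)$ means the one-variable word attached to these terms is the concatenation with the roles swapped. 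I must check that this matches the $\shuffle_s$ definition term by term.
\item Sign conventions: Furusho's relation is stated with all terms on one side. I must decide whether the one-variable $\shuffle_s$ and the two-variable relation carry the same sign, so that the result has the stated form rather than its negative.
\end{itemize}

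\textbf{Step 4: the remaining one-variable terms.} I rewrite $\sum_{\sigma^{-1}(N)=k} l_{\sigma(\bold a,\bold b)}(\psi)$ using item (\ref{lemma:215}), which gives $l_{\sigma(\bold a,\bold b)}(\psi)=l^{y,x}_{\sigma(\bold a,\bold b)}(\psi_{215})$. I also add the vanishing term $l^{y,x}_{\sigma(\bold a,\bold b)}(\psi_{543})=0$ from item (\ref{lemma:543}). Collecting everything under the single sum over $\sigma^{-1}(N)=k$ then yields the stated combination $\psi_{451}+\psi_{123}-\psi_{215}-\psi_{543}$, up to moving terms across the equality.

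\textbf{Main obstacle.} The hard part is the bookkeeping in Step 3. I have to verify that the index splitting $\{j,N\}=\{\sigma(k),\sigma(k+l)\}$ for $\sigma^{-1}(N)=k$ makes $l_{\sigma(\bold a,\bold b)}$ in item (\ref{lemma:215}) the same concatenated word as in $\shuffle_s$. I also have to fix the overall sign convention, which may force replacing $l_{\bold a}\shuffle_s l_{\bold b}$ by its negative, or reading the $\sigma^{-1}(N)=k$ terms of $\shuffle_s$ accordingly. I would first test the identity on depth $k=l=1$, where only a handful of $\sigma$ occur, and then argue in general.

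Item (\ref{lemma:432}) is not needed, since $\psi_{432}$ does not appear in the statement.
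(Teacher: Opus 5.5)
Your route is the same as the paper's: evaluate the two-variable stuffle relation \eqref{eq:stuffle 2 variable} at $\psi_{451}+\psi_{123}$, turn the $l^{xy}$- and $l^{x,y}$-sums into one-variable words by items (4) and (5) of Lemma \ref{lemma:polylogs-compilation}, and handle the $\sigma^{-1}(N)=k$ sum with items (2) and (1). Your first worry is not an issue. The pair $\sigma(\mathbf a,\mathbf b)=((c_1,\dots,c_j),(c_{j+1},\dots,c_N))$ is the same object in $\shuffle_s$ and in the two-variable relation; the order $(y,x)$ only records which variable sits on which block. Item (2) sends exactly this pair to $l_{(c_1,\dots,c_N)}$.

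The gap is the last step, where you say the stated combination comes out ``up to moving terms across the equality''. The sign is not an open convention. The all-plus definition of $\shuffle_s$, the all-plus relation with right-hand side $0$, and items (1), (2), (4), (5) are all fixed, and your Step 3 identity, which is correct, already determines the sign. Solving it for $l_{\mathbf a}\shuffle_s l_{\mathbf b}(\psi)$ and applying (2) and (1) gives
\[
l_{\mathbf a}\shuffle_s l_{\mathbf b}(\psi)=\sum_{\substack{\sigma\in \Sh^{\le(k,l)}\\ \sigma^{-1}(N)=k}} l^{y,x}_{\sigma(\mathbf a,\mathbf b)}\bigl(\psi_{215}+\psi_{543}-\psi_{451}-\psi_{123}\bigr).
\]
This is the negative of the displayed statement, and rearranging terms cannot change an overall sign.

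The depth test you propose shows this directly. For $\mathbf a=(a)$, $\mathbf b=(b)$ the relation reads $l_{(a+b)}(\psi)+l_{(a,b)}(\psi)+l^{y,x}_{(b),(a)}(\psi_{451}+\psi_{123})=0$. On the other hand, $l_{(a)}\shuffle_s l_{(b)}(\psi)=l_{(a+b)}(\psi)+l_{(a,b)}(\psi)+l_{(b,a)}(\psi)$ and $l_{(b,a)}(\psi)=l^{y,x}_{(b),(a)}(\psi_{215})$. Hence $l_{(a)}\shuffle_s l_{(b)}(\psi)=l^{y,x}_{(b),(a)}(\psi_{215}-\psi_{451}-\psi_{123})$.

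So, with the conventions as recorded, the proposition holds only with an overall minus sign, or after flipping one of the conventions. The paper's one-line proof is your argument and contains the same slip. The slip is harmless wherever only vanishing is used (Theorem \ref{th:HR}, Lemma \ref{lem:proof_dmr}, Theorem \ref{th:dmr}). It does carry into the exact identities downstream: the next proposition, which expresses $\langle\Delta_*(\pi_Y\eta),l_{\mathbf a}\otimes l_{\mathbf b}\rangle$ through $\mathrm{pent}(\eta)$, and the diagram of Theorem \ref{th:shuffle coproduct}. You should therefore state and prove the signed version rather than leave the sign undecided.
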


\begin{proof}
We evaluate the equation \eqref{eq:stuffle 2 variable} at the element $\varphi=\psi_{451}+\psi_{123}$, then we have the relation
\begin{align*}
&\sum_{\substack{\sigma\in \text{Sh}^{\le(k,l)}, \\
        \sigma^{-1}(N)=k,k+l}} l^{xy}_{\sigma(\bf{a},\bf{b})}(\varphi) 
        +\sum_{\substack{\sigma\in \text{Sh}^{\le(k,l)}, \\ \sigma^{-1}(N)=k+l}} l^{x,y}_{\sigma(\bf{a},\bf{b})}(\varphi) 
        +\sum_{\substack{\sigma\in \text{Sh}^{\le(k,l)}, \\ \sigma^{-1}(N)=k}} l^{y,x}_{\sigma(\bf{a},\bf{b})}(\varphi)\\
&=\sum_{\substack{\sigma\in \text{Sh}^{\le(k,l)}, \\
        \sigma^{-1}(N)=k,k+l}} l_{\sigma(\bf{a},\bf{b})}(\psi) 
        +\sum_{\substack{\sigma\in \text{Sh}^{\le(k,l)}, \\ \sigma^{-1}(N)=k+l}} l_{\sigma(\bf{a},\bf{b})}(\psi) 
        +\sum_{\substack{\sigma\in \text{Sh}^{\le(k,l)}, \\ \sigma^{-1}(N)=k}} l^{y,x}_{\sigma(\bf{a},\bf{b})}(\psi_{451}+\psi_{123})=0        
\end{align*}

The results then follows from the equality (1) and (2) in Lemma \ref{lemma:polylogs-compilation}.
\end{proof}

\begin{Lemma}[\cite{howarth},Lemma 3.13]
\label{lemma:l-kerpr2}
For all indices $\textbf{a,b}=(a_1, \ldots, a_k),(b_1, \ldots, b_l)$,
\begin{equation}
    \label{eq:l-kerpr2}
	\eval{l^{y,x}_{{\bf{a}},{\bf{b}}}}_{\ker \spr^2_5}=(-1)^{k+l}w^{b_l-1}_{12}w_{23}\dots \omega_{12}^{b_1-1}\omega_{23}\omega^{a_k-1}_{12}\omega_{24}\dots \omega^{a_1-1}_{12}\omega_{24},
\end{equation}
which means that for any $\alpha\in \ker \rm{pr}_2$, \[l^{y,x}_{\bold a,\bold b}(\alpha)=(-1)^{k+l}w^{b_l-1}_{12}w_{23}\dots \omega_{12}^{b_1-1}\omega_{23}\omega^{a_k-1}_{12}\omega_{24}\dots \omega^{a_1-1}_{12}\omega_{24}(\alpha).\]
\end{Lemma}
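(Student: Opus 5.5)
The plan is to reduce the statement to a computation on the free Lie algebra $\ker\spr^2_5$. In that algebra, the basis elements $X_{12},X_{23},X_{24}$ (together with the remaining generators) control the words. We then compare both sides of \eqref{eq:l-kerpr2} as linear functionals on $U\mathfrak{p}_5$ restricted to $U(\ker\spr^2_5)$.

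\textbf{Step 1: the algebra $\ker\spr^2_5$.} By the same argument as in Proposition \ref{prop:lambda_13}, $\ker\spr^2_5$ is a free Lie algebra. Deleting the second point of $\mathcal{M}_{0,5}$ is a fibration whose fiber is a sphere minus four points, so its kernel is free on three of the elements $X_{21},X_{23},X_{24},X_{25}$, using the relation $\sum_j X_{2j}=0$. We choose the generators $X_{12},X_{23},X_{24}$. Then $U(\ker\spr^2_5)$ is the free associative algebra $k\langle X_{12},X_{23},X_{24}\rangle$. On this algebra, the duality between $\mathcal{V}(\mathcal{M}_{0,5})$ and $U\mathfrak{p}_5$ restricts to the standard duality with words in $\omega_{12},\omega_{23},\omega_{24}$. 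The point is that the Chen integrability relations become invisible after restricting to a free fiber.

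\textbf{Step 2: identify $l^{y,x}_{\bold a,\bold b}$ as an iterated integral along the fiber.} We use the coordinates $(x,y)$ on $\mathcal{M}_{0,5}$ in which $\Li_{\bold a,\bold b}(y,x)$ is written. We view it as the iterated integral over a path that first moves one variable and then the other. The series is the iterated integral
\[(-1)^{k+l}\int \omega_{12}^{b_l-1}\omega_{23}\cdots\omega_{12}^{a_1-1}\omega_{24}\]
in the forms $dt/t$, $dt/(t-1)$, $dt/(t-c)$, up to the identification of the one-forms with $\omega_{12},\omega_{23},\omega_{24}$. This is the standard expansion $\Li_{\bold a,\bold b}(y,x)=$ an iterated integral with the singular points $0$, $1$ and $1/x$ (or $x$).

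Pairing with $\alpha\in U(\ker\spr^2_5)$ only sees the image of the bar element in the quotient $\mathcal{V}$ dual to $U(\ker\spr^2_5)$. That image is the bar element of the restriction of the symbol to the fiber. The forms $\omega_{34},\omega_{45}$ and the components along the base restrict to zero, while $\omega_{12},\omega_{23},\omega_{24}$ restrict to the three fiber forms. This gives exactly the displayed word, with the sign $(-1)^{k+l}$ coming from the $\omega_1=dz/(z-1)$ convention, as in the one-variable formula for $l_{\bold a}$.

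\textbf{Main obstacle.} The hard step is making the restriction rigorous. The bar element $l^{y,x}_{\bold a,\bold b}$ is a sum of tensors involving all five forms. We must check that every term containing a base form pairs to zero with $U(\ker\spr^2_5)$, and that no integrability correction mixes in. To handle this, I would first compute the total differential (the symbol) of $\Li_{\bold a,\bold b}(y,x)$ recursively: $d\Li$ equals $\Li$ of a shorter index times $\omega$'s. I would then prove the formula by induction on weight, using that the last letter of the word determines the differential, and match it with the recursion for the right-hand side.
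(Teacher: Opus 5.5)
The paper gives no proof of this lemma; it is imported from \cite{howarth} (Lemma 3.13). Your proposal therefore has to stand as an independent proof, and in outline it does.

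Step 1 is right, and it does most of the bookkeeping. The forms $\omega_{12},\omega_{23},\omega_{24}$ are dual to $X_{12},X_{23},X_{24}$, and $U(\ker\spr^2_5)=k\langle X_{12},X_{23},X_{24}\rangle$. So the statement says exactly this: the part of $l^{y,x}_{\mathbf a,\mathbf b}$ made of words in these three letters, which is the pull-back of the bar element to a fibre of the map forgetting point $2$, is the displayed word. Compared with quoting the lemma, your route explains \emph{why} only one word survives. On that fibre, $\Li_{\mathbf a,\mathbf b}(y,x)$ is a one-variable multiple polylogarithm with three singular points; $\mathbf b$ contributes the letter $\omega_{23}$ and $\mathbf a$ the letter $\omega_{24}$.

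Two things need fixing.

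\textbf{First, the fibre variable.} Work in the coordinates $(0,xy,y,1,\infty)$, where $\Li_{\mathbf a,\mathbf b}(y,x)=\sum y^{m_k}x^{n_l}/(m_1^{a_1}\cdots n_l^{b_l})$. There the KZ form is $X_{12}\frac{dx}{x}+X_{23}\frac{dx}{x-1}+X_{24}\,d\log(1-xy)+X_{45}\frac{dy}{y}+X_{34}\frac{dy}{y-1}$. So the fibre $F$ of $\spr^2_5$ is the $x$-line at fixed $y$, and its singular points are $0,1,1/y$, not $1/x$. Your list of which forms restrict to zero is correct, but the singular points you name belong to the other fibration. Fixing $x$ and moving $y$ gives the fibre of $\spr^4_5$, and there the restricted symbol is not a single word. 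For example, $l^{y,x}_{(1),(1)}=[\omega_{23}|\omega_{24}]-[\omega_{34}|\omega_{24}]+[\omega_{34}|\omega_{23}]+[\omega_{45}|\omega_{24}]$ has two words in $\omega_{24},\omega_{34},\omega_{45}$. So the hedge ``$1/x$ (or $x$)'' is not cosmetic.

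\textbf{Second, the ``main obstacle'' is in the wrong place.} Terms containing $\omega_{34}$ or $\omega_{45}$ vanish on $U(\ker\spr^2_5)$ by duality. There are no integrability corrections, because this algebra is free. The real work is the recursion you mention at the end. It should replace the two-segment path picture, which would force you to handle tangential base points.

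Concretely, the recursion runs as follows.
\begin{itemize}
\item $x\partial_x$ lowers $b_l$ by one.
\item If $b_l=1$, then $\partial_x\Li_{\mathbf a,\mathbf b}(y,x)=\frac{1}{1-x}\Li_{\mathbf a,(b_1,\dots,b_{l-1})}(y,x)$. When $l=1$ this reads $\frac{1}{1-x}\Li_{\mathbf a}(xy)$.
\item $\partial_x\Li_{\mathbf a}(xy)$ is $\frac1x$ times the polylogarithm with $a_k$ lowered, or $\frac{y}{1-xy}\Li_{(a_1,\dots,a_{k-1})}(xy)$ if $a_k=1$.
\end{itemize}
Since $\frac{dx}{1-x}=-\omega_{23}|_F$ and $\frac{y\,dx}{1-xy}=-\omega_{24}|_F$, this produces the displayed word with sign $(-1)^{k+l}$.

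To turn these identities into $l^{y,x}_{\mathbf a,\mathbf b}=\sum_i[\omega_i|B_i]$ with the expected $B_i$, you need the coefficients $f_i$ in $df=\sum_i\omega_i f_i$ to be unique. Linear independence of the forms over functions does not give this. What does give it is Brown's independence of iterated integrals over $\mathcal{O}(\mathcal{M}_{0,5})$, the same fact that underlies the injectivity of $\rho$. Once that is stated, induction on weight finishes the proof.
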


\begin{Lemma}\label{lemma: 1-1}
For any $\psi\in \mathbb{L}(x_0,x_1)$,
\begin{align*}
    l^{y,x}_{\bold a,\bold b}(\mathrm{pent}(\psi(X_{12},X_{23}))=0,\quad \bold a=(1,\ldots,1),\bold b=(1,\ldots,1)
\end{align*}  
\end{Lemma}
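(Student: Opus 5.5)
The plan is to reduce the statement to a coefficient extraction and then check the five terms of the pentagon one by one. By Lemma \ref{lem:projection} and Proposition \ref{prop:lambda_13}, $\mathrm{pent}(\psi(X_{12},X_{23}))$ lies in $\ker\spr^2_5$; indeed it lies in the intersection of all $\ker\spr^i_5$, $2\le i\le 4$. So Lemma \ref{lemma:l-kerpr2} applies. With $\bold a=(1,\ldots,1)$ of depth $k$ and $\bold b=(1,\ldots,1)$ of depth $l$, all exponents $a_i-1,b_j-1$ vanish, and the functional becomes
\begin{equation*}
l^{y,x}_{\bold a,\bold b}\big(\mathrm{pent}(\psi)\big)=(-1)^{k+l}\,\omega_{23}^{\,l}\,\omega_{24}^{\,k}\big(\mathrm{pent}(\psi)\big).
\end{equation*}
This is, up to sign, the coefficient of the word $X_{23}^lX_{24}^k$ when $\mathrm{pent}(\psi)$ is expanded in the chosen basis $X_{45},X_{34},X_{24},X_{12},X_{23}$ of the degree one part of $\mathfrak{p}_5$. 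It therefore suffices to show that this coefficient vanishes.

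To extract the coefficient, I would use that a bar word in $\omega_{23},\omega_{24}$ alone, paired with a Lie series $F(A,B)$ whose arguments are linear combinations of the generators, depends only on the components of $A$ and $B$ along $X_{23}$ and $X_{24}$. Concretely, I would compose with the algebra map $U\mathfrak{p}_5\to k\langle u,v\rangle$ that kills $X_{45},X_{34},X_{12}$ and sends $X_{23}\mapsto u$, $X_{24}\mapsto v$. The only care needed is that this map respects the relations of $\mathfrak{p}_5$ in the form the pairing is defined (i.e. compatibly with Chen's integrability relations). I would justify this from the explicit duality used in Lemma \ref{lemma:l-kerpr2} (\cite{howarth}), or else restrict to $\ker\spr^2_5$, which is free, and project there.

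Under this map each term $\psi(A,B)$ of the pentagon becomes $\psi(\bar A,\bar B)$ with $\bar A,\bar B\in\mathrm{span}(u,v)$. I would first rewrite every $X_{ij}$ appearing in the five terms in the basis, using $\sum_j X_{ij}=0$ (for instance $X_{13}$, $X_{14}$, $X_{15}$ and $X_{25}$ are expressed through the basis elements). The aim is then to show that, for each of the five terms, either $\bar A$ and $\bar B$ are proportional, or the term coincides with another term up to sign. In the first case $\psi(\bar A,\bar B)=0$, because $\psi$ is a Lie series of degree $>1$ and all brackets of proportional elements vanish. In the second case the two terms cancel in the alternating sum. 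Terms such as $\psi(X_{12},X_{23})$ project to $\psi(0,u)=0$; terms involving $X_{34}$ in one slot project to something with one argument zero or proportional to the other. I expect at most one pair of terms to survive, for example $\psi(X_{12},X_{23}+X_{24})$ against $\psi(X_{12}+X_{13},X_{24}+X_{34})$, and those should cancel once $X_{13}$ is expanded in the basis.

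The main obstacle is this bookkeeping of basis expansions in $\mathfrak{p}_5$ together with the exact form of the five arguments. The pentagon formula in $\mathfrak{p}_5$ as typed has index slips, so I would first recompute it from $\mathrm{pent}$ on $\mathfrak{t}_4$ via $K_4$. If the naive projection leaves a nonvanishing pair, I would fall back on Lemma \ref{lemma:polylogs-compilation}. By relabeling, the surviving terms are $\psi_{543},\psi_{215},\psi_{432},\psi_{451}+\psi_{123}$; items (1), (2), (4) and (5) evaluate the all-ones indices explicitly, and I would check that the resulting one-variable values $l_{\bold a\bold b}(\psi)$ cancel.
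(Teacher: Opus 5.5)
Your first reduction is correct. For $\psi$ of degree $\ge 2$ (which is all that matters, since the functional is homogeneous of degree $k+l\ge 2$), $\mathrm{pent}(\psi)$ lies in $\ker\spr^2_5=\mathbb{L}(X_{12},X_{23},X_{24})$. By Lemma~\ref{lemma:l-kerpr2}, the all-ones functional is then $\pm$ the coefficient of $X_{23}^lX_{24}^k$ in $U(\ker\spr^2_5)$, the free associative algebra on $X_{12},X_{23},X_{24}$.

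The extraction step fails, however. $U\mathfrak{p}_5$ is not free on $X_{45},X_{34},X_{24},X_{12},X_{23}$, and your substitution ($X_{45},X_{34},X_{12}\mapsto 0$, $X_{23}\mapsto u$, $X_{24}\mapsto v$) is not a homomorphism. It sends $X_{13}=X_{45}-X_{12}-X_{23}$ to $-u$, so the relation $[X_{13},X_{24}]=0$ is sent to $-[u,v]\neq 0$.

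If you carry out your term-by-term computation anyway, four terms vanish and $\psi(X_{12}+X_{13},X_{24}+X_{34})$ becomes $\psi(-u,v)$. This is nonzero in general: for $\psi=[x_0,[x_0,x_1]]$ it is $[u,[u,v]]$, with coefficient $1$ on $u^2v$. So the method would ``disprove'' the lemma, and the cancellation you anticipate does not occur.

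Projecting inside the free algebra $\ker\spr^2_5$ is legitimate for the whole pentagon, but not term by term. The terms $\psi(X_{13}+X_{23},X_{34})$ and $\psi(X_{12}+X_{13},X_{24}+X_{34})$ are not in $\ker\spr^2_5$; both map to $\psi(X_{13},X_{34})$ under $\spr^2_5$.

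The fallback does not close the gap either. Items (4) and (5) of Lemma~\ref{lemma:polylogs-compilation} evaluate $l^{xy}$ and $l^{x,y}$, not $l^{y,x}$. Item (3) explicitly excludes the all-ones indices, which is exactly the case at hand.

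The missing idea is that ``set $X_{12}=0$'' on $\mathbb{L}(X_{12},X_{23},X_{24})$ is the restriction of $\spr^1_5$ (delete strand $1$). This is a genuine Lie homomorphism on all of $\mathfrak{p}_5$, mapping $\ker\spr^2_5$ onto the free Lie algebra on $X_{23},X_{24}$ in $\mathfrak{p}_4$. You listed only $\ker\spr^i_5$ for $2\le i\le 4$, but $\mathrm{pent}(\psi)\in\ker\spr^1_5$ as well. This is Lemma~\ref{lem:projection} transported by $K_4$; termwise, the three terms whose first argument involves $X_{12}$ die, and the remaining two both become $\psi(X_{23},X_{34})$ and cancel.

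Hence $\mathrm{pent}(\psi)$ lies in $\ker\spr^1_5\cap\ker\spr^2_5$, the ideal of $\mathbb{L}(X_{12},X_{23},X_{24})$ generated by $X_{12}$. Every word of its expansion contains $X_{12}$, so the $\omega_{12}$-free word $\omega_{23}^l\omega_{24}^k$ pairs to zero with it. That is the paper's argument, and it needs no term-by-term bookkeeping.
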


\begin{proof}
As $\mathrm{pent}(\psi(X_{12},X_{23}))\in \ker \spr^1_5\cap \ker \spr^2_5=(X_{12}) $, when restricting to the space $\bold a=(1,\ldots,1),\bold b=(1,\ldots,1)$, the words $l^{y,x}_{\bold a,\bold b}$ has no words in $\omega_{12}$ by the Lemma \ref{lemma: 1-1}, the results then follows.
\end{proof}

\begin{Prop}\label{prop:coproduct}
    For any Lie series $\eta\in \mathbb{L}(x_0,x_1)$, the following relation hold
    \begin{align}
    \langle\Delta_{*}(\pi_Y(\eta)), l_{\bold a}\otimes l_{\bold b}\rangle=
    \begin{cases}
    \langle \eta, l_{\bold b}\rangle,\quad \bold a=\emptyset;\\
    \langle \eta, l_{\bold a}\rangle, \quad \bold b=\emptyset;\\
    \langle \mathrm{pent}(\eta), \sum_{\substack{\sigma\in \text{Sh}^{\le(k,l)}, \\ \sigma^{-1}(N)=k}} l^{y,x}_{\sigma(\bf{a},\bf{b})}\rangle,\\
    \qquad \bold a,\bold b\notin \{(1,\ldots,1),(1,\ldots,1),\emptyset\};\\
    =\langle \mathrm{pent}(\eta), \sum_{\substack{\sigma\in \text{Sh}^{\le(k,l)}, \\ \sigma^{-1}(N)=k}} l^{y,x}_{\sigma(\bf{a},\bf{b})}\rangle- \langle \psi_{234}, \sum_{\substack{\sigma\in \text{Sh}^{\le(k,l)}, \\ \sigma^{-1}(N)=k}} l^{y,x}_{\sigma(\bf{a},\bf{b})}\rangle,\\
    \qquad \bold a,\bold b= (1,\ldots,1),(1,\ldots,1);
    \end{cases}   
    \end{align}.
\end{Prop}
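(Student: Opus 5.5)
The plan is to dualize the stuffle coproduct and split into cases by whether one of $\bold a,\bold b$ is empty.

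\textbf{Pairing.} By the duality between $\Delta_{*}$ and $\shuffle_s$ on $\mathcal{V}(\mathcal{M}_{0,4})$, we have $\langle\Delta_{*}(\pi_Y(\eta)),l_{\bold a}\otimes l_{\bold b}\rangle=\langle \pi_Y(\eta), l_{\bold a}\shuffle_s l_{\bold b}\rangle$. Each $l_{\bold c}$ is a bar word ending in $\omega_1$ on the right, which is dual to a word in $k\langle X\rangle x_1$. Since $\pi_Y$ only kills $k\langle X\rangle x_0$ and is the identity on $k\langle Y\rangle$, this equals $\langle \eta, l_{\bold a}\shuffle_s l_{\bold b}\rangle$.

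\textbf{Empty index.} If $\bold a=\emptyset$ or $\bold b=\emptyset$, the defining convention $l_{\emptyset}\shuffle_s l_{\bold b}=l_{\bold b}$ (resp. $l_{\bold a}\shuffle_s l_\emptyset=l_{\bold a}$) gives the first two cases immediately.

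\textbf{Both indices nonempty.} Apply Proposition~\ref{prop:coproduct} (the first one, with $\psi=\eta$) to get
\[\langle \eta, l_{\bold a}\shuffle_s l_{\bold b}\rangle=\sum_{\sigma^{-1}(N)=k} l^{y,x}_{\sigma(\bold a,\bold b)}(\eta_{451}+\eta_{123}-\eta_{215}-\eta_{543}).\]
Next, write $\mathrm{pent}(\eta)$ in $\mathfrak{p}_5$ through the five-cycle notation $\eta_{ijk}$. Using $X_{13}+X_{23}=X_{45}$, $X_{12}+X_{13}=X_{45}$-type identities coming from $\sum_j X_{ij}=0$, together with the centrality of $t_{12}+t_{13}+t_{23}$, the five terms should become, up to the cyclic/dihedral identifications, $\eta_{123}+\eta_{451}-\eta_{215}-\eta_{543}$ plus one term of the form $\pm\eta_{234}$ (or $\eta_{432}$). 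The expected decomposition is
\[\mathrm{pent}(\eta)=\eta_{451}+\eta_{123}-\eta_{215}-\eta_{543}+\eta_{234}.\]
This is the step I expect to be the main obstacle: matching each of the five terms $\varphi(X_{ij}+\dots,\dots)$ in the explicit $\mathfrak{p}_5$ pentagon with some $\eta_{ijk}$, with the correct signs and orientation. I would check it on generators using the relations of $\mathfrak{p}_5$ and the five-cyclic symmetry, in the same way as in \cite{Furusho2011,howarth}.

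Granting this decomposition, we get
\[\langle \eta,l_{\bold a}\shuffle_s l_{\bold b}\rangle=\Big\langle \mathrm{pent}(\eta)-\eta_{234},\sum_{\sigma^{-1}(N)=k} l^{y,x}_{\sigma(\bold a,\bold b)}\Big\rangle.\]
This is already the fourth case. For the third case, we need the $\eta_{234}$ contribution to vanish. By Lemma~\ref{lemma:polylogs-compilation}(3), $l^{y,x}_{\bold c,\bold d}(\eta_{432})=0$ unless both indices are all ones, and $\eta_{234}$ is related to $\eta_{432}$ by $\eta(X,Y)\mapsto\eta(Y,X)$ composed with the symmetry of $\mathfrak{p}_5$. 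When $\bold a,\bold b$ are not both of the form $(1,\dots,1)$, every $\sigma(\bold a,\bold b)$ contains an entry greater than $1$, so the $\eta_{234}$ term drops. This gives the third case.

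A consistency check is Lemma~\ref{lemma: 1-1}: in the all-ones case, $\mathrm{pent}$ pairs to zero, so the fourth formula reduces to $-\langle\eta_{234},\cdot\rangle$. This should agree with the known depth-$(1,\dots,1)$ correction term $\mathrm{corr}$. If the sign conventions are off, this is where I would detect and fix them.
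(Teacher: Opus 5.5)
This is essentially the paper's proof, which cites exactly the first coproduct proposition and Lemma~\ref{lemma:polylogs-compilation} and tacitly uses the same identity $\mathrm{pent}(\eta)=\eta_{451}+\eta_{123}+\eta_{234}-\eta_{215}-\eta_{543}$ that you isolate (it also appears unproved in the proof of Lemma~\ref{lem:proof_dmr}); your disposal of $\eta_{234}$ is right, namely applying Lemma~\ref{lemma:polylogs-compilation}(3) to $\eta(x_1,x_0)$ and noting that an entry $>1$ in $\mathbf{a}$ or $\mathbf{b}$ forces an entry $>1$ in every $\sigma(\mathbf{a},\mathbf{b})$. The step you flag is true, but not for the reason you suggest: $X_{13}+X_{23}=X_{45}$ is false in $\mathfrak{p}_5$, where instead $X_{12}+X_{13}+X_{23}=X_{45}$ and $X_{23}+X_{24}+X_{34}=X_{15}$, so each composite argument differs from a single generator by $X_{12}$, $X_{23}$ or $X_{34}$, which commutes with both arguments of that term and therefore drops out in degree $\ge 2$ (degree one is irrelevant here since $l_{\mathbf{a}}\otimes l_{\mathbf{b}}$ has weight $\ge 2$); also, your closing consistency check is off, because for $k\ge 2$ the contracted shuffles produce entries equal to $2$, and $\mathrm{pent}(\eta)$ need not pair to zero with those.
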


\begin{proof}
    For $\bold a,\bold b \notin \{(1,\ldots,1),(1,\ldots,1),\emptyset\}$, the results follows from the Lemma \ref{lemma:polylogs-compilation} and the Proposition \ref{prop:coproduct}.

    For $\bold a,\bold b= (1,\ldots,1),(1,\ldots,1)$, the results follows from Proposition \ref{prop:coproduct}.
\end{proof}

Because $\mathrm{pent}(\eta)\in \ker \mathrm{spr}_2$ and $\eta_{234}\in \ker \mathrm{spr}_2$, we have
\begin{align*}
 \langle \mathrm{pent}(\eta), \sum_{\substack{\sigma\in \text{Sh}^{\le(k,l)}, \\ \sigma^{-1}(N)=k}} l^{y,x}_{\sigma(\bf{a},\bf{b})}\rangle=\langle \mathrm{pent}(\eta), \sum_{\substack{\sigma\in \text{Sh}^{\le(k,l)}, \\ \sigma^{-1}(N)=k}} l^{y,x}_{\sigma(\bf{a},\bf{b})}|_{\ker \spr^2_5}\rangle,   
\end{align*}

\begin{align*}
 \langle \eta_{234}, \sum_{\substack{\sigma\in \text{Sh}^{\le(k,l)}, \\ \sigma^{-1}(N)=k}} l^{y,x}_{\sigma(\bf{a},\bf{b})}\rangle=\langle \eta_{234}, \sum_{\substack{\sigma\in \text{Sh}^{\le(k,l)}, \\ \sigma^{-1}(N)=k}} l^{y,x}_{\sigma(\bf{a},\bf{b})}|_{\ker \spr^2_5}\rangle,   
\end{align*}
therefore it is suffice to express the coproduct in $\ker \mathrm{spr}^2_5$ by Proposition \ref{prop:coproduct}, and by Proposition \ref{prop:lambda_13}, 
\[K_4: \ker \pr^2_4\simeq \mathbb{L}(t_{12},t_{23},t_{24})\to \ker \mathrm{spr}^2_5\simeq \mathbb{L}(X_{12},X_{23},X_{24}),\]
$K_4$ is an isomorphism, so it suffice to consider $\ker \pr^2_4$ and $\mathfrak{t}_4$ in the following.

For simplification of the notation, we denote 
\[I:=\brun_{1,3}=\ker \mathrm{pr}_2\cap \ker \mathrm{pr}_3\cap \ker \mathrm{pr}_4=[(t_{23}),t_{24})]\subset \ker \mathrm{pr}_2\] in the following. There are the following split short exact sequence associated to $I$ and the projections.
\begin{align*}
   & 0\to \mathbb{L}(t_{12},t_{23},t_{24})\to \mathrm{t}_4\xrightarrow{\mathrm{pr}_2} \mathfrak{t}_3\to 0,\\
   & 0\to I\to \mathbb{L}(t_{12},t_{23},t_{24})\xrightarrow{(\mathrm{pr}_3,\mathrm{pr}_4)}\mathbb{L}(t_{12},t_{24})\oplus \mathbb{L}(t_{12},t_{23})\to 0,\\
   &0\to I/[I,I]\to \mathbb{L}(t_{12},t_{23},t_{24})/[I,I]\xrightarrow{(\mathrm{pr}_3,\mathrm{pr}_4)}\mathbb{L}(t_{12},t_{24})\oplus \mathbb{L}(t_{12},t_{23})\to 0.
\end{align*}

The projection $\pi: k\langle t_{12}, t_{23},t_{24}\rangle \to \mathcal{P}, t_{12}\to z, t_{23}\to x,t_{24}\to y$, restricts to a map $\mathbb{L}(t_{12}, t_{23},t_{24})\to \mathcal{P}$. 
\begin{Lemma}[\cite{kernel}Theorem 6.4]\label{lem:basis_kernel}
     $\pi: I/[I,I] \simeq \mathcal{P}_{\ge 1, \ge 1}$.
\end{Lemma}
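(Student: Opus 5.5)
The statement to prove is Lemma~\ref{lem:basis_kernel}: $\pi$ induces an isomorphism $I/[I,I]\simeq \mathcal{P}_{\ge 1,\ge 1}$, where $I=[(t_{23}),(t_{24})]\subset \mathbb{L}(t_{12},t_{23},t_{24})$. I would argue inside the free Lie algebra $L=\mathbb{L}(z,x,y)$ with $z=t_{12}$, $x=t_{23}$, $y=t_{24}$, using the elimination theorem twice.

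\emph{Step 1: a free basis of $I$.} By Lazard elimination, the ideal $(x)$ is free on the elements $\ad_z^{a}\ad_y^{\cdot}\cdots x$. More simply, eliminate in two stages. First, the ideal $J_y$ generated by $y$ in $\mathbb{L}(z,x,y)$ is free on the elements $\ad_{u_1}\cdots\ad_{u_r}(y)$ with $u_i\in\{z,x\}$, that is, on a basis indexed by words in $\{z,x\}$. Then $I=(x)\cap(y)$ is the kernel of $J_y\to \mathbb{L}(z,y)\cap(y)$ (setting $x=0$). The latter is free on the elements $\ad_z^{n}y$. Hence $I$ is the ideal of the free Lie algebra $J_y$ generated by those basis elements $\ad_w y$ whose word $w$ contains at least one $x$.

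\emph{Step 2: pass to the abelianization.} For a free Lie algebra $F$ on $S$, and an ideal generated by a subset $S_1\subset S$, the ideal is free on the elements $\ad_{s_1}\cdots\ad_{s_m}(s)$ with $s\in S_1$ and $s_i\in S\setminus S_1$. Its abelianization is therefore the free $U(\mathbb{L}(S\setminus S_1))$-module on $S_1$. Applying this to $J_y$ with $S_1=\{\ad_w y: w\text{ contains }x\}$ and $S\setminus S_1=\{\ad_z^n y\}$, I obtain that $I/[I,I]$ has a basis
\[
\ad_{\ad_z^{n_1}y}\cdots\ad_{\ad_z^{n_m}y}\,\ad_w y,\qquad w \text{ containing at least one } x.
\]

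\emph{Step 3: compare with $\mathcal{P}_{\ge1,\ge1}$.} I would compute the image of each basis element under $\pi$, viewing Lie brackets as commutators in $k\langle x,y,z\rangle$ followed by projection to $\mathcal{P}$. The map $\pi$ is the projection onto the span of the words $p_{\mathbf a,\mathbf b}$, namely words ending in $x$ or $y$ in which every $x$ precedes every $y$ (the blocks are $z^{a-1}x$, then $z^{b-1}y$). The key claim is triangularity: the leading word of the basis element above is a monomial of the form $p_{\mathbf a,\mathbf b}$, and distinct basis elements have distinct leading words. For $\ad_w y$, the terms in which every $x$ lies left of $y$ are governed by the letters of $w$ placed to the left. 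The prefix $\ad_{\ad_z^{n}y}$ contributes words with $z^{n}y$ blocks to the right. Counting by bidegree and weight, both sides have the same graded dimension, namely pairs of nonempty compositions. So injectivity of the leading-term map suffices.

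The main obstacle is Step~3. The projection $\pi$ kills many words and mixes orders, so the leading-term bookkeeping has to be chosen carefully; for example, I would order by the position of the last $x$. If that fails, I would instead check the graded dimension count and prove surjectivity directly. Surjectivity would come from exhibiting preimages $\ad_z^{a_k-1}\ad_x\cdots$ applied to $\ad_z^{b_l-1}y$, which map to $\pm p_{\mathbf a,\mathbf b}$ plus lower terms.
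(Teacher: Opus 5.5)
\textbf{Verdict: the approach is sound, but Step~3 --- which carries the whole content of the lemma --- is not established, and the devices you propose for it fail.}

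Your Steps 1 and 2 are correct. Write $Y_i=\ad_z^{n_i}y$. Two rounds of Lazard elimination show that the classes of $\ad_{Y_1}\cdots\ad_{Y_m}\ad_w y$, with $m\ge 0$ and $w$ a word in $z,x$ containing $x$, form a basis of $I/[I,I]$. Your dimension count against $\mathcal{P}_{\ge1,\ge1}$ is also right. The paper gives no argument of its own here; it only imports Theorem 6.4 of the cited work. So your route would be a genuinely self-contained proof once Step~3 is completed.

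Two things are missing in Step~3.

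\begin{itemize}
\item \emph{You never check that $\pi$ vanishes on $[I,I]$.} Without this you do not know that $\pi$ descends to $I/[I,I]$. Evaluating $\pi$ on chosen representatives only controls a complement of $[I,I]$. The check is easy. Every monomial of an element of $I=(x)\cap(y)$ contains both $x$ and $y$. Hence every monomial of $uv$ and of $vu$, for $u,v\in I$, has a $y$ to the left of an $x$, and is killed by $\pi$.
\item \emph{The triangularity is only asserted, and your proposed order fails.} For $B=[y,[x,[z,y]]]$ one finds $\pi(B)=-xzyy+xyzy$. Both words have their last $x$ in position $1$, so ordering by the last $x$ cannot isolate a leading term. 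The fallback also fails: the preimages $\ad_z^{a_k-1}\ad_x\cdots(\ad_z^{b_l-1}y)$ contain a single $y$, so they only reach $l=1$.
\end{itemize}

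\textbf{How to complete Step~3.} Take $B=\ad_{Y_1}\cdots\ad_{Y_m}(C)$ with $C=\ad_w y$.

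\begin{enumerate}
\item Every term with some $Y_i$ to the left of $C$ has a $y$ before an $x$. Hence $\pi(B)=(-1)^m\pi(C\,Y_m\cdots Y_1)$.
\item Write $w=u_1\cdots u_r$ and expand $C=\sum_S(-1)^{r-|S|}u_S\,y\,\overline{u_{S^c}}$. Here $u_S$ is the subword on $S$ and $\overline{u_{S^c}}$ is the reversed subword on the complement. Only terms with $S^c$ consisting of $z$-positions can survive.
\item From $Y_1$ only the summand $z^{n_1}y$ survives, since the word must end in $y$.
\end{enumerate}
Therefore
\[
\pi(B)=(-1)^m\,w\,y\,z^{n_m}y\cdots z^{n_1}y+(\text{other $p$-words}).
\]
Each other word arises by moving some $z$'s from the left of a $y$ to its right. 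So its vector of $y$-positions is componentwise $\le$, and not equal to, that of the displayed word.

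Cutting a $p$-word at its first $y$ shows that
\[
(w;n_1,\dots,n_m)\mapsto w\,y\,z^{n_m}y\cdots z^{n_1}y
\]
is a bijection onto $\{p_{\mathbf a,\mathbf b}:\mathbf a,\mathbf b\neq\emptyset\}$. Now order $p$-words lexicographically by their $y$-positions. In each multidegree the matrix of $\pi$ is then triangular with diagonal entries $\pm1$. This proves the isomorphism, and the separate dimension count becomes unnecessary.
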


The vector space $I/[I,I]$ contains the vector subspace 
\begin{align*}
W_I:=\{f(\ad_{t_{23}},\ad_{t_{24}})[t_{23},t_{24}]\mid f\in k[x,y] \}\simeq k[x,y]\}.
\end{align*}

The vector space $W_I$ is spanned by the basis $\tilde{p}_{k,l}:=\ad(t_{23})^{k-1}\ad(t_{24})^{l-1}([t_{23},t_{24}])$ and its image under $\pi$ is
\begin{align*}
    &\pi: \tilde{p}_{k,l}\mapsto p_{\underbrace{(1,\ldots,1)}_{\dep=k},\underbrace{(1,\ldots,1)}_{\dep=l}}.
\end{align*}

The abelianization map is defined as
\begin{align*}
&(-)_I^{\mathrm{ab}}:\mathbb{L}(x_0,x_1)\to \mathfrak{t}_3\xrightarrow{(-)_{234}}W_I\subset I/[I,I]\\
&\psi\mapsto\psi(t_{12},-t_{13}-t_{12})\mapsto \psi(t_{23},-t_{23}-t_{24}).
\end{align*}

\begin{Lemma}\label{lem:dual_basis}
    The basis $p_{\bold a, \bold b}$ is dual to $l^{y,x}_{\bold a, \bold b}|_{k\langle x_{12},x_{23},x_{24}\rangle}$. 
\end{Lemma}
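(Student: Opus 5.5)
The plan is to reduce the duality to Lemma \ref{lemma:l-kerpr2} and then compute the pairing directly. We read $l^{y,x}_{\bold a,\bold b}|_{k\langle x_{12},x_{23},x_{24}\rangle}$ as the restriction of the functional $l^{y,x}_{\bold a,\bold b}$ to the subalgebra $U(\ker\spr^2_5)=k\langle X_{12},X_{23},X_{24}\rangle$. Via $K_4$ (Proposition \ref{prop:lambda_13}), this subalgebra is identified with $k\langle t_{12},t_{23},t_{24}\rangle$, and $\pi$ then sends it to $k\langle x,y,z\rangle$. The letters correspond as $z\leftrightarrow 12$, $x\leftrightarrow 23$ and $y\leftrightarrow 24$.

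By Lemma \ref{lemma:l-kerpr2}, on $\ker\spr^2_5$ the bar element $l^{y,x}_{\bold a,\bold b}$ equals the single word
\[
(-1)^{k+l}\,\omega_{12}^{b_l-1}\omega_{23}\cdots\omega_{12}^{b_1-1}\omega_{23}\,\omega_{12}^{a_k-1}\omega_{24}\cdots\omega_{12}^{a_1-1}\omega_{24}.
\]
Because $\mathcal V(\mathcal M_{0,5})$ is the graded dual of $U\mathfrak p_5$, and because on the free part $k\langle X_{12},X_{23},X_{24}\rangle$ the pairing of bar words with monomials is the standard one, this word pairs to $\pm1$ with exactly one monomial and to $0$ with all others. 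Under the letter dictionary, that monomial has the shape
\[
z^{b_l-1}x\cdots z^{b_1-1}x\; z^{a_k-1}y\cdots z^{a_1-1}y .
\]

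Next I compare this monomial with the basis of $\mathcal P$. The definition of $p_{\bold a,\bold b}$ carries its $x$-block indexed by one tuple and its $y$-block by the other. So I first have to check the indexing conventions. Matching $p_{\emptyset,\bold b}$, which is built from $x$'s, and $p_{\bold a,\emptyset}$, which is built from $y$'s, the monomial above is exactly $p_{\bold a,\bold b}$ as the paper intends, up to fixing the evident typos in the displayed definition. The sign $(-1)^{k+l}$ is absorbed either by rescaling or by reading "dual" up to a sign normalization.

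Having done this, the map $(\bold a,\bold b)\mapsto$ monomial is a bijection between pairs of index tuples (including empty ones) and monomials ending in $x$ or $y$ blocks of this form. Hence
\[
\langle p_{\bold a',\bold b'},\,l^{y,x}_{\bold a,\bold b}\rangle=(-1)^{k+l}\,\delta_{\bold a\bold a'}\delta_{\bold b\bold b'},
\]
where $p$ is viewed as its monomial representative in $k\langle x,y,z\rangle$. The empty cases reduce to the one-variable words: $l^{y,x}_{\emptyset,\bold b}$ and $l^{y,x}_{\bold a,\emptyset}$ are the corresponding one-block words, and $l_{\emptyset,\emptyset}=1$ pairs with $1$.

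The main obstacle is bookkeeping rather than mathematics. I have to make sure that the identification of $\omega_{ij}$ with $X_{ij}$, followed by $K_4^{-1}$ and then $\pi$, sends $\omega_{23}$ to $x$ and $\omega_{24}$ to $y$, which determines which block is indexed by $\bold a$. I also have to confirm that the reversed order of concatenation in bar words, compared with monomials, matches the order in $p_{\bold a,\bold b}$. I would settle both conventions on the lowest case $\bold a=(1)$, $\bold b=(1)$, checking them against Lemma \ref{lemma: 1-1} and the image $\pi(\tilde p_{1,1})$, and then extend to all $(\bold a,\bold b)$ by the same reasoning.
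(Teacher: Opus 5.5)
Your proposal is correct and is essentially the paper's argument. The paper's proof simply cites Lemma~\ref{lemma:l-kerpr2}: on $\ker\spr^2_5$ each $l^{y,x}_{\mathbf a,\mathbf b}$ is $(-1)^{k+l}$ times a single word in $\omega_{12},\omega_{23},\omega_{24}$, so it pairs nontrivially with exactly one basis monomial of $\mathcal P$. Your extra bookkeeping (reconciling the inconsistent labelling in the displayed definition of $p_{\mathbf a,\mathbf b}$ with the empty cases, and reading ``dual'' up to the sign $(-1)^{k+l}$) is what the paper's one-line proof leaves implicit.
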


\begin{proof}
This is proved in \ref{lemma:l-kerpr2}.
\end{proof}

We define the following linear map $\lambda$ on $\mathcal{P}_{\ge 1,\ge 1}$ by
    \begin{equation*}
        \langle \lambda(p_{\bold a, \bold b}),l^{y,x}_{\bold c, \bold d}\rangle=\langle p_{\bold a,\bold b}, \sum_{\substack{\sigma\in \text{Sh}^{\le(k,l)}, \\ \sigma^{-1}(N)=k}} l^{y,x}_{\sigma(\bf{c},\bf{d})}\rangle, \bold a,\bold b\ne \emptyset.
    \end{equation*}
By definition $\lambda(p_{\bold a,\bold b})$ is the sum of $p_{\bold c,\bold d}$ such that there exists $\sigma\in \text{Sh}^{\le(\dep(\bold c),\dep(\bold d)}$, $\sigma^{-1}(N)=\dep(\bold c)$ satisfying $\sigma(\bold c,\bold d)=(\bold a,\bold b)$.     
Suppose that
\begin{multline}
\Delta_{*}(x_0^{a_{k}-1}x_1x_0^{a_{k-1}-1}x_1\ldots x_0^{a_1-1}x_1)\\=\sum x_0^{a'_{k}-1}x_1x_0^{a'_{k-1}-1}x_1\ldots x_0^{a'_1-1}x_1\otimes x_0^{a''_{k}-1}x_1x_0^{a''_{k-1}-1}x_1\ldots x_0^{a''_1-1}x_1,    
\end{multline}
we introduce the notation $\Delta_{*}(\bold a)=\bold a'\otimes \bold a''$, then the operation $\lambda$ can be written as
\begin{equation*}
 \lambda (p_{\bold a,\bold b})=\sum p_{\bold a'\bold b,\bold a''}.   
\end{equation*}

\begin{Prop}\label{prop: stuff_diagram}
    The following diagram commutes
\begin{equation}\label{prop:dmr_diagram}
\begin{tikzcd}
\mathbb{L}(x_0,x_1) \arrow{r}{\pi_Y}\arrow{d}{(\mathrm{pent}^1_{dmr}-(-)_I^{\mathrm{ab}})} &[6em] k\langle Y\rangle\arrow{dd}{\Delta_{*}-1\otimes (-)-(-)\otimes 1}\\
I/[I,I]\arrow{d}{\lambda\circ \pi} &\\
\mathcal{P}\arrow[r,"p"]& k\langle Y\rangle\otimes k\langle Y\rangle.
\end{tikzcd}    
\end{equation}
\end{Prop}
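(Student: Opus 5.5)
The plan is to check commutativity by pairing both sides against the dual basis. By Lemma \ref{lem:dual_basis} and the isomorphism $p$, it suffices to show that for every pair of indices $(\bold a,\bold b)$ the coefficient of $p_{\bold a,\bold b}$ in $\lambda\circ\pi(\mathrm{pent}^1_{dmr}(\eta)-\eta_I^{\mathrm{ab}})$ equals
\[
\langle \Delta_{*}(\pi_Y(\eta)) - 1\otimes \pi_Y(\eta)-\pi_Y(\eta)\otimes 1,\; l_{\bold a}\otimes l_{\bold b}\rangle .
\]
The case split follows Proposition \ref{prop:coproduct}. When $\bold a=\emptyset$ or $\bold b=\emptyset$, the right-hand side is $\langle\eta,l_{\bold b}\rangle-\langle\eta,l_{\bold b}\rangle=0$ (and symmetrically in the other case). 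The left-hand side lies in $\mathcal{P}_{\ge1,\ge1}$, because $\pi$ maps $I/[I,I]$ isomorphically onto $\mathcal{P}_{\ge1,\ge1}$ by Lemma \ref{lem:basis_kernel}, and $\lambda$ preserves that subspace. So both sides vanish there.

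For $\bold a,\bold b$ both nonempty, I first note that $\mathrm{pent}(\eta)$ lies in $I=\brun_{1,3}$. Since pairings with the $l^{y,x}$ restricted to $\ker\spr^2_5$ only involve the linear words of Lemma \ref{lemma:l-kerpr2}, they factor through $I/[I,I]$. More precisely, words pairing nontrivially with $[I,I]$ need at least two letters from $\{x,y\}$ in each factor. Checking this is the first technical point: $[I,I]$ maps under $\pi$ into the span of commutators of words, but $\mathcal{P}$ is only a quotient. So I would verify that the projection of $k\langle z,x,y\rangle$ onto the span of the $p_{\bold a,\bold b}$ (the dual of the restricted $l^{y,x}$) kills $\pi([I,I])$. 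Then, by definition of $\lambda$ via its transpose,
\[
\langle \lambda\pi(\mathrm{pent}(\eta)), l^{y,x}_{\bold c,\bold d}\rangle=\Big\langle \mathrm{pent}(\eta), \sum_{\sigma^{-1}(N)=k} l^{y,x}_{\sigma(\bold c,\bold d)}\Big\rangle,
\]
which is the third case of Proposition \ref{prop:coproduct}. Here the identification of $\mathfrak{t}_4$ with $\mathfrak{p}_5$ goes through $K_4$ and Proposition \ref{prop:lambda_13}. For these indices the abelianization term contributes nothing: $\eta^{\mathrm{ab}}_I\in W_I$ is supported on $\tilde p_{k,l}$, whose images are $p_{(1..1),(1..1)}$. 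Also, $\lambda$ applied to such elements stays among all-ones indices, since $\Delta_*$ of $y_1^k$ only produces ones.

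For $\bold a,\bold b$ both all-ones, Proposition \ref{prop:coproduct} gives an extra term $-\langle\eta_{234},\sum l^{y,x}_{\sigma}\rangle$. I identify $\eta_{234}$ restricted to the $W_I$-part with $(-)_I^{\mathrm{ab}}$. Concretely, on the all-ones words (pure words in $\omega_{23},\omega_{24}$ by Lemma \ref{lemma:l-kerpr2}), only the abelian image of $\eta(t_{23},-t_{23}-t_{24})$ modulo $[I,I]$ is seen; pairing it with $\lambda$ reproduces that term with the correct sign. Lemma \ref{lemma: 1-1} ensures that $\mathrm{pent}(\eta)$ itself has no leftover contribution that would double count there.

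I expect the main obstacle to be the bookkeeping identification of the $\psi_{234}$ correction with $\pi(\eta^{\mathrm{ab}}_I)$. This covers the change of variables from $X_{ij}$ to $t_{ij}$ under $K_4$, the signs $(-1)^{k+l}$, and the ordering conventions in $p_{\bold a,\bold b}$ versus $l^{y,x}$. I would first test it in low degree ($k=l=1$, then weight 3) before writing the general argument.
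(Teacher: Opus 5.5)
Your proposal is correct and follows the paper's own argument. You use Proposition~\ref{prop:coproduct} to write $\langle\Delta_*(\pi_Y\eta),l_{\mathbf a}\otimes l_{\mathbf b}\rangle$ as a pairing of $\mathrm{pent}(\eta)-\eta_{234}$ with $\sum_{\sigma^{-1}(N)=k}l^{y,x}_{\sigma(\mathbf a,\mathbf b)}$, turn that pairing into $\pi$ on $I/[I,I]$ via Lemmas~\ref{lem:basis_kernel} and~\ref{lem:dual_basis}, and read the shuffle sum as the transpose $\lambda$; your treatment of the empty indices, of $\pi([I,I])=0$, and of the $W_I$-term only spells out what the paper's short proof leaves implicit.

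Two small remarks. First, the identification you expect to be the main obstacle is immediate: $t_{23}+t_{24}+t_{34}$ commutes with $t_{23}$ and $t_{34}$, so $\eta(t_{23},t_{34})=\eta(t_{23},-t_{23}-t_{24})$ for $\eta$ of degree $\ge 2$, and that is exactly the definition of $\eta^{\mathrm{ab}}_I$. Second, Lemma~\ref{lemma: 1-1} does not remove the $\mathrm{pent}(\eta)$ contribution in the all-ones case, because the shuffle sum there generally contains merged indices with entries $2$, which $\mathrm{pent}(\eta)$ does see; you do not need the lemma, since that contribution is identical on both sides.
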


\begin{proof}
    In the proposition \ref{prop:coproduct}, we reformulate the coproduct as pentagon equation, evaluated in $\sum_{\substack{\sigma\in \text{Sh}^{\le(k,l)}, \\ \sigma^{-1}(N)=k}} l^{y,x}_{\sigma(\bf{a},\bf{b})}$. By Lemma \ref{lem:basis_kernel} and Lemma \ref{lem:dual_basis}, evaluating at $l^{y,x}_{\bf{a},\bf{b}}$ dually is equivalent to taking values in $I/[I,I]$ and applying the map $\pi$. By the definition of $\lambda$, taking sums $\sum_{\substack{\sigma\in \text{Sh}^{\le(k,l)}, \\ \sigma^{-1}(N)=k}}$ is equivalent to applying the operation $\lambda$.
\end{proof}

Let us introduce the map
\begin{align*}
\mathrm{diag}^{123,124}: \mathbb{L}(x_0,x_1)\to \ker \mathrm{pr}_2,\psi(x_0,x_1)\mapsto \psi(X_{12},X_{23})+\psi(X_{12},X_{24}).    
\end{align*}

\begin{Th}\label{th:shuffle coproduct}
The following diagrams commutes

\begin{equation}\label{dmr_diagram_2}
\begin{tikzcd}
\mathbb{L}(x_0,x_1) \arrow{r}{\pi_Y}\arrow{d}{(\mathrm{diag}^{123,124}+(\mathrm{pent}^1_{dmr}-(-)_I^{\mathrm{ab}})} &[10em] k\langle Y\rangle\arrow{dd}{\Delta_{*}}\\
\mathbb{L}(t_{12},t_{23},t_{24})/[I,I]\arrow{d}{\lambda\circ \pi} &\\
\mathcal{P}\arrow[r,"p"]& k\langle Y\rangle\otimes k\langle Y\rangle.
\end{tikzcd}
\end{equation}
\end{Th}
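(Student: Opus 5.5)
The plan is to deduce Theorem \ref{th:shuffle coproduct} from Proposition \ref{prop: stuff_diagram} by adding back the two primitive terms $1\otimes(-)+(-)\otimes 1$. In Proposition \ref{prop: stuff_diagram} these terms are subtracted from $\Delta_{*}$. Write $\Delta_{*}=(\Delta_{*}-1\otimes(-)-(-)\otimes 1)+1\otimes(-)+(-)\otimes 1$ on $\pi_Y(\eta)$. Both sides of \eqref{dmr_diagram_2} are linear in $\eta$. It therefore suffices to prove that the left vertical composite, applied to the extra summand $\mathrm{diag}^{123,124}(\eta)$, yields exactly $p^{-1}$ of $1\otimes\pi_Y(\eta)+\pi_Y(\eta)\otimes 1$. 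For this we use the extension of $\pi$ and $\lambda$ from $I/[I,I]$ to $\mathbb{L}(t_{12},t_{23},t_{24})/[I,I]$ via the third split exact sequence. The components $\mathcal{P}_{\ge 1,0}$ and $\mathcal{P}_{0,\ge 1}$ are identified with $\mathbb{L}(t_{12},t_{24})$ and $\mathbb{L}(t_{12},t_{23})$ via $(\pr_3,\pr_4)$.

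\textbf{Step 1.} We check the bookkeeping on the quotient $\mathbb{L}(t_{12},t_{23},t_{24})/[I,I]\to \mathbb{L}(t_{12},t_{24})\oplus\mathbb{L}(t_{12},t_{23})$. The element $\eta(t_{12},t_{23})+\eta(t_{12},t_{24})$ projects under $\pr_3$ to $\eta(t_{12},t_{24})$ and under $\pr_4$ to $\eta(t_{12},t_{23})$. The Brunnian part $\pt^1_{dmr}(\eta)-(\eta)^{\mathrm{ab}}_I$ lies in $I/[I,I]$, so it projects to zero. Hence the $I/[I,I]$-component of the full vertical map is exactly the one treated in Proposition \ref{prop: stuff_diagram}. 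The only remaining task is to identify the two boundary components.

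\textbf{Step 2.} We show that $\pi$ sends $\eta(t_{12},t_{23})$ to $p(\pi_Y(\eta)\otimes 1)$ or $p(1\otimes\pi_Y(\eta))$, depending on the convention for $x,y$. Under $\pi$, $t_{12}\mapsto z$ and $t_{23}\mapsto x$, and $\mathcal{P}_{\ge1,0}$ only retains words ending in $x$. So $\pi$ restricted to $\mathbb{L}(t_{12},t_{23})$ is exactly the substitution $x_0\mapsto z$, $x_1\mapsto x$ followed by discarding words ending in $z$. This is $\pi_Y$ transported through $p$. The same holds for $\eta(t_{12},t_{24})$ with $y$. We also need $\lambda$ to act as the identity on $\mathcal{P}_{\ge1,0}\oplus\mathcal{P}_{0,\ge1}$. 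The natural definition from the shuffle-sum formula with $\bold a$ or $\bold b$ empty has only the trivial $\sigma$, which gives exactly this; we adopt that extension of $\lambda$. The first two cases of Proposition \ref{prop:coproduct} are consistent with this, since they give $\langle\eta,l_{\bold b}\rangle$ and $\langle\eta,l_{\bold a}\rangle$.

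\textbf{Step 3.} We combine the pieces. On the $\mathcal{P}_{\ge1,\ge1}$ component, Proposition \ref{prop: stuff_diagram} gives the reduced coproduct; its last case, involving $(1,\ldots,1)$ indices, is absorbed by the subtraction of $(-)^{\mathrm{ab}}_I$. On the boundary components, Step 2 gives the primitive terms. The constant component $\mathcal{P}_{0,0}$ is zero on both sides because $\eta$ has no constant term. Summing the three components yields \eqref{dmr_diagram_2}.

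The main obstacle is the matching of conventions in Step 2: which of $x,y$ (that is, $t_{23},t_{24}$) corresponds to the left versus the right tensor factor under $p$, and the reversed word order in $p_{\bold a,\bold b}$ compared with $l_{\bold a}$. I would settle this by testing a low-degree example, for instance $\eta=[x_0,[x_0,x_1]]$, against Lemma \ref{lemma:l-kerpr2}.
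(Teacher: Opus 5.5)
Your proposal is correct and follows the paper's route. The paper simply deduces the theorem from Proposition \ref{prop: stuff_diagram}. You supply the bookkeeping it leaves implicit: with $\lambda$ extended by the identity on $\mathcal{P}_{\ge 1,0}\oplus\mathcal{P}_{0,\ge 1}$, the composite $\lambda\circ\pi$ sends $\mathrm{diag}^{123,124}(\eta)$ to the element corresponding under $p$ to $\pi_Y(\eta)\otimes 1+1\otimes\pi_Y(\eta)$, which restores the primitive terms. The convention worry you raise at the end is harmless, because $\mathrm{diag}^{123,124}(\eta)$ is symmetric under $t_{23}\leftrightarrow t_{24}$, so the sum comes out the same whichever tensor factor $x$ and $y$ are attached to.
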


\begin{proof}
The theorem follows from the Proposition \ref{prop: stuff_diagram}.

\end{proof}

Let us introduce the map,
\begin{align*}
   &\tc:\mathbb{L}(t_{12},t_{23},t_{24})/[I,I]\simeq  \mathbb{L}(t_{12},t_{24})\oplus \mathbb{L}(t_{12},t_{23})\oplus I/[I,I]\to\mathcal{P}\\
    &(a,b,c)\mapsto p\circ \Delta_*((a)_{\mathrm{corr}}+(b)_{\mathrm{corr}})/2
\end{align*} 

\begin{Cor}
The following diagram commutes
\begin{equation}\label{dmr_diagram_3}
\begin{tikzcd}
\mathbb{L}(x_0,x_1) \arrow{r}{\pi_Y+(-)_{\mathrm{corr}}}\arrow{d}{(\mathrm{diag}^{123,124}+(\mathrm{pent}^1_{dmr}-(-)_I^{\mathrm{ab}})} &[10em] k\langle Y\rangle\arrow{dd}{\Delta_{*}}\\
\mathbb{L}(t_{12},t_{23},t_{24})/[I,I]\arrow{d}{\lambda\circ \pi+\tc} &\\
\mathcal{P}\arrow[r,"p"]& k\langle Y\rangle\otimes k\langle Y\rangle.
\end{tikzcd}
\end{equation}
\end{Cor}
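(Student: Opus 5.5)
The plan is to reduce the corollary to Theorem \ref{th:shuffle coproduct} by linearity, treating the two summands of the top horizontal map $\pi_Y+(-)_{\mathrm{corr}}$ separately. For $\pi_Y(\psi)$, the composite $\Delta_*\circ\pi_Y$ is already expressed by Theorem \ref{th:shuffle coproduct} as $p\circ\lambda\circ\pi$ applied to $\mathrm{diag}^{123,124}(\psi)+(\mathrm{pent}^1_{dmr}(\psi)-(\psi)^{\mathrm{ab}}_I)$. What remains is to show that $\Delta_*((\psi)_{\mathrm{corr}})$ coincides with the contribution of $\tc$ on the same left-hand element, and that $\tc$ does not alter the $\pi_Y$ part. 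Here I read $p\circ(\lambda\circ\pi+\tc)$ as the map into $k\langle Y\rangle^{\otimes 2}$, with $\tc$ already landing in $\mathcal{P}$.

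The first step is to decompose the image of $\psi$ in $\mathbb{L}(t_{12},t_{23},t_{24})/[I,I]\simeq \mathbb{L}(t_{12},t_{24})\oplus\mathbb{L}(t_{12},t_{23})\oplus I/[I,I]$ via the split sequence with $(\mathrm{pr}_3,\mathrm{pr}_4)$.
\begin{itemize}
\item The terms $\mathrm{pent}^1_{dmr}(\psi)$ and $(\psi)^{\mathrm{ab}}_I$ both lie in $I/[I,I]$, by Lemma \ref{lem:projection} and the definition of $W_I$. Hence $\tc$ kills them.
\item $\mathrm{diag}^{123,124}(\psi)=\psi(t_{12},t_{23})+\psi(t_{12},t_{24})$ has components $a=\psi(t_{12},t_{24})$ and $b=\psi(t_{12},t_{23})$, each a copy of $\psi$ under the identification $t_{12}\mapsto x_0$ and $t_{2j}\mapsto x_1$.
\end{itemize}
Therefore $\tc$ of the whole element equals $p\circ\Delta_*\big((\psi)_{\mathrm{corr}}+(\psi)_{\mathrm{corr}}\big)/2=p\circ\Delta_*((\psi)_{\mathrm{corr}})$. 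Here $p$ is read as the identification of $\mathcal{P}$ with the tensor square, so the composite lands correctly.

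The second step is to check that the added correction term does not interfere with the $\lambda\circ\pi$ part. The map $\lambda\circ\pi$ sees only the components through $\pi$; the $\mathrm{diag}$ components are exactly what produce $1\otimes\pi_Y(\psi)+\pi_Y(\psi)\otimes 1$ in Theorem \ref{th:shuffle coproduct}. So no cross terms appear. Summing the two identities gives the commutativity of the diagram.

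The main obstacle is bookkeeping rather than mathematics. One must verify that, under the identifications $\mathbb{L}(t_{12},t_{2j})\cong\mathbb{L}(x_0,x_1)$, the element $(a)_{\mathrm{corr}}$ is meant as $\mathrm{corr}$ applied to the corresponding series in $x_0,x_1$. One must also check that $\mathrm{corr}$, which only sees coefficients of words $x_0^{n-1}x_1$, is unaffected by the quotient by $[I,I]$, since $[I,I]$ contains no such words of depth one. If the normalization by $1/2$ or the placement of $p$ turns out inconsistent, I would adjust by applying $\Delta_*$ directly to $(\psi)_{\mathrm{corr}}=\frac{(-1)^{n-1}}{n}c_{x_0^{n-1}x_1}(\psi)\,y_1^n$, which is explicit.
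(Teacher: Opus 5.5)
Your proof is correct and matches the argument the paper leaves implicit; the paper states this corollary without proof, directly after defining $\tc$, as an immediate consequence of Theorem \ref{th:shuffle coproduct}. By linearity it reduces to that theorem together with two facts: $\tc$ kills the $I/[I,I]$-terms, and $\tc$ sends $\mathrm{diag}^{123,124}(\psi)$ to $p\circ\Delta_*(\psi_{\mathrm{corr}})$, since the two copies of $\psi_{\mathrm{corr}}$ cancel the factor $1/2$. Your ``second step'' is superfluous, because both the top and left maps are sums and no cross terms can arise; reading the bottom $p$ as the inverse of the identification used inside $\tc$ is the right way to resolve the paper's inconsistent direction of $p$.
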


\subsection{Proof of $\mathfrak{dmr}_0$ is equivalent to $\pt^1_{dmr}$}

We first recall the theorem.

\begin{Th}[\cite{howarth},Theorem B]\label{th:HR}
	Let $\psi\in \mathbb{L}(x_0,x_1)$ be such that $c_{x_0}(\psi)=c_{x_1}(\psi)=0$, then the following two conditions are equivalent:
	\begin{enumerate}[label=(\roman*)] 
		\item $\psi\in \mathfrak{dmr}_0$;
		\item $l^{y,x}_{\bf{a},\bf{b}}(\psi_{451}+\psi_{123}-\psi_{432}-\psi_{215}-\psi_{543})=0,$ for ${\bf{a},\bf{b}}\ne (1,\dots,1),(1,\dots,1)$.
	\end{enumerate}
\end{Th}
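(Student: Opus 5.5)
The strategy is to rewrite both conditions as linear conditions on the same family of numbers, the values $l^{y,x}_{\bold c,\bold d}(F)$ with
\[F:=\psi_{451}+\psi_{123}-\psi_{432}-\psi_{215}-\psi_{543},\]
and then compare them. The link between the two sides is Proposition \ref{prop:coproduct} together with Lemma \ref{lemma:polylogs-compilation}.

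\emph{Step 1: rewrite (i) as a duality statement.} By definition, $\psi\in\mathfrak{dmr}_0$ means that $\Delta_*(\psi_*)-1\otimes\psi_*-\psi_*\otimes 1=0$. Since $\Delta_*$ is dual to $\shuffle_s$, this is equivalent to
\[\langle \psi_*, l_{\bold a}\shuffle_s l_{\bold b}\rangle=0 \quad\text{for all nonempty } \bold a,\bold b.\]
The correction term $(\psi)_{\mathrm{corr}}$ is supported on the words $y_1^n$. It can therefore only contribute when $l_{\bold a}\shuffle_s l_{\bold b}$ contains $l_{(1,\dots,1)}$, and this happens exactly when $\bold a$ and $\bold b$ are both of the form $(1,\dots,1)$.

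\emph{Step 2: split into two ranges of $(\bold a,\bold b)$.}
\begin{itemize}
\item For $(\bold a,\bold b)\neq((1,\dots,1),(1,\dots,1))$, condition (i) reduces to $\langle\psi,l_{\bold a}\shuffle_s l_{\bold b}\rangle=0$. By Proposition \ref{prop:coproduct} this equals $\sum_{\sigma}l^{y,x}_{\sigma(\bold a,\bold b)}(\psi_{451}+\psi_{123}-\psi_{215}-\psi_{543})$, the sum running over $\sigma\in\Sh^{\le(k,l)}$ with $\sigma^{-1}(N)=k$. In this range every $\sigma(\bold a,\bold b)$ still contains an entry $>1$, or has a nonempty non-unit component. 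So by Lemma \ref{lemma:polylogs-compilation}(3) we may add $-\psi_{432}$ at no cost, and replace the argument by $F$.
\item For the all-ones range, I would check that the dmr-condition is automatic for a Lie series with $c_{x_0}(\psi)=c_{x_1}(\psi)=0$. This is exactly what Racinet's correction term is designed to achieve: the restriction of $\psi_*$ to $k[y_1]$ is the primitive element $\sum_n\frac{(-1)^{n-1}}{n}c_{x_0^{n-1}x_1}(\psi)y_1^n$ plus $\pi_Y$-terms that cancel by the shuffle relations of the Lie series $\psi$.
\end{itemize}
After Step 2, (i) is equivalent to
\[S_{\bold a,\bold b}:=\sum_{\sigma}l^{y,x}_{\sigma(\bold a,\bold b)}(F)=0 \quad\text{for all } (\bold a,\bold b)\neq((1,\dots,1),(1,\dots,1)).\]

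\emph{Step 3: prove $S\equiv 0$ is equivalent to (ii).} One direction is immediate: if all the individual values $l^{y,x}_{\bold c,\bold d}(F)$ vanish off the all-ones index, then all the sums $S_{\bold a,\bold b}$ vanish. For the converse I would invert the linear system $S_{\bold a,\bold b}=\sum_\sigma l^{y,x}_{\sigma(\bold a,\bold b)}(F)$ by induction on the depth $l$ of $\bold b$, and within fixed total weight on the depth of the second component.
\begin{itemize}
\item For $\dep(\bold b)=1$, $S_{\bold a,(b)}$ expresses $l^{y,x}_{\bold a,(b)}(F)$ plus terms in which $b$ is inserted or stuffed into $\bold a$. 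Those terms have second component $(\ldots,a_k)$ or $(\ldots,a_k+\ast)$ of depth $\ge1$ and first component of strictly smaller total length.
\item In general, the term with $\sigma(k+l)=N-?$ minimal should serve as a leading term. Terms are ordered by the length of the first component (largest first), so the system is unitriangular, and the individual values $l^{y,x}_{\bold c,\bold d}(F)$ can be solved for from the $S$'s.
\end{itemize}
The main obstacle is here: choosing an order on pairs $(\bold c,\bold d)$ that makes the shuffle–stuffle sum genuinely triangular, while staying inside the range of non-all-ones indices. I would first try ordering by $(\dep(\bold d),-\dep(\bold c))$ lexicographically within fixed weight. I would then verify that the only term of maximal order in $S_{\bold a,\bold b}$ is $l^{y,x}_{\bold a,\bold b}$ itself, namely the $\sigma$ placing all of $\bold b$ after $\bold a$ with no stuffing. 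If triangularity fails, I would fall back on the duality picture of Proposition \ref{prop: stuff_diagram}, where summing over these $\sigma$ is the map $\lambda$, and show directly that $\lambda$ is injective on $\mathcal{P}_{\ge1,\ge1}$ modulo the $W_I$-directions.
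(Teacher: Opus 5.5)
Your Step 2, second bullet, is false, and the direction (ii)$\Rightarrow$(i) depends on it: the all-ones conditions are not automatic. Take $\psi=[x_0,[x_0,x_1]]=x_0^2x_1-2x_0x_1x_0+x_1x_0^2$, which has $c_{x_0}(\psi)=c_{x_1}(\psi)=0$. Then $\pi_Y(\psi)=y_3$ and $(\psi)_{\mathrm{corr}}=\tfrac13y_1^3$, so
\[
\Delta_*(\psi_*)-\psi_*\otimes1-1\otimes\psi_*=y_1\otimes y_2+y_2\otimes y_1+y_1^2\otimes y_1+y_1\otimes y_1^2 .
\]
The $((1,1),(1))$ condition therefore fails. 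In weight 3 this condition reads $c_{x_0^2x_1}+c_{x_1x_0x_1}+c_{x_0x_1^2}=0$. For $\psi=a[x_0,[x_0,x_1]]+b[x_1,[x_0,x_1]]$ it becomes $a+b=0$, which is the whole of $\mathfrak{dmr}_0$ in weight 3. Your heuristic is also off. $y_1^n$ is not primitive for $n\ge2$. The all-ones conditions also involve coefficients of words containing $y_2$, since $y_1\otimes y_1$ occurs in $\Delta_*(y_2)$. As written, you get (i)$\Rightarrow$(ii), but from (ii) you only recover the non-all-ones stuffle conditions.

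The missing idea is that the all-ones conditions are \emph{consequences} of the others; this is exactly what Racinet's correction is built for. Let $f=\langle\psi_*,-\rangle$ on the quasi-shuffle algebra $(k\langle Y\rangle,*)$ dual to $\Delta_*$. Under $y_{i_1}\cdots y_{i_r}\mapsto M_{(i_1,\dots,i_r)}$, $y_n$ is the power sum $p_n$ and $y_1^n$ is $e_n$. So Newton's formula $e_n=\sum_{\lambda\vdash n}\varepsilon_\lambda z_\lambda^{-1}p_\lambda$ holds with $p_\lambda=y_{\lambda_1}*\cdots*y_{\lambda_r}$. Hence, for $k,l\ge1$, $y_1^k*y_1^l$ is a combination of $p_\lambda$ with at least two parts. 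If such a $\lambda$ has a part $m\ge2$, $f(p_\lambda)$ is a combination of values $f(y_m*w)$ with $w$ a nonempty word, and these vanish by the non-all-ones conditions. If $\lambda=(1^n)$ with $n\ge2$, Newton gives $p_1^{*n}=n!\,y_1^n-(-1)^{n-1}(n-1)!\,y_n+(\text{terms of the previous type})$. Moreover $f(y_1^n)-\tfrac{(-1)^{n-1}}{n}f(y_n)=c_{x_1^n}(\psi)=0$, because $\psi$ is Lie. Therefore $f(y_1^k*y_1^l)=0$. With this replacing your second bullet, Step 2 correctly shows that (i) holds if and only if $S_{\mathbf a,\mathbf b}=0$ for all non-all-ones $(\mathbf a,\mathbf b)$.

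Step 3 then goes through, but you have misidentified the leading term. Since $\sigma^{-1}(N)=k$, $a_k$ stays last, and every $b_j$ sits at or before position $\sigma(k+l)$. So the second component of $\sigma(\mathbf a,\mathbf b)$ is a tail $(a_{s+1},\dots,a_k)$ with $0\le s\le k-1$. The unique term with $s=0$ is $l^{y,x}_{\mathbf b,\mathbf a}$, with all of $\mathbf b$ placed \emph{before} $\mathbf a$; placing $\mathbf b$ after $\mathbf a$ is excluded. Writing $S_{\mathbf d,\mathbf c}=l^{y,x}_{\mathbf c,\mathbf d}(F)+(\text{terms whose second component has smaller depth})$, induct on the depth of the second component. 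If $(\mathbf c,\mathbf d)$ is non-all-ones, every term appearing is non-all-ones, so the induction stays in range and the fallback to $\lambda$ is unnecessary.
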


\begin{Lemma}\label{lem:proof_dmr}
    $\psi\in \mathfrak{dmr}_0$ if and only if $l^{y,x}_{\bold a,\bold b}(\pt(X_{12},X_{23}))=0 $ for all $\bold a,\bold b$.
\end{Lemma}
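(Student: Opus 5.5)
The plan is to derive the lemma from Theorem \ref{th:HR}, after rewriting $\pt(\psi(X_{12},X_{23}))$ as the five-term combination appearing there and handling the all-ones indices with Lemma \ref{lemma: 1-1}.

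First I would identify the five terms of $\pt(\psi(X_{12},X_{23}))\in\mathfrak{p}_5$ with the elements $\psi_{ijk}=\psi(X_{ij},X_{jk})$ used in Theorem \ref{th:HR}. The tool is the relations of $\mathfrak{p}_5$: $X_{13}+X_{23}=X_{45}+X_{12}$ and $X_{12}+X_{13}=X_{45}+X_{23}$, together with $\sum_j X_{ij}=0$. These should give, up to the symmetries $X_{ij}=X_{ji}$,
\begin{equation*}
\pt(\psi(X_{12},X_{23}))=\psi_{451}+\psi_{123}-\psi_{432}-\psi_{215}-\psi_{543}.
\end{equation*}
The sign conventions may force an application of the $\mathfrak{S}_5$-action, or of the identity $\psi(a,b)=\psi(a,-a-b-c)$ modulo a central element, to match each term exactly.

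This matching is the main obstacle. If the identity only holds after relabeling, I would fall back on Proposition \ref{prop:coproduct}. That proposition already expresses $l_{\bold a}\shuffle_s l_{\bold b}(\psi)$ through $\psi_{451}+\psi_{123}-\psi_{215}-\psi_{543}$. The discrepancy with $\pt$ is then only the term $\psi_{432}$, which by Lemma \ref{lemma:polylogs-compilation}(3) is invisible on $l^{y,x}_{\bold a,\bold b}$ except when $\bold a,\bold b$ are both all-ones.

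Granting the identity, the proof splits into two cases.

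\emph{Case $(\bold a,\bold b)\neq((1,\dots,1),(1,\dots,1))$.} Theorem \ref{th:HR} gives directly that $\psi\in\mathfrak{dmr}_0$ is equivalent to $l^{y,x}_{\bold a,\bold b}(\pt(\psi(X_{12},X_{23})))=0$ for all such indices. This uses the hypothesis $c_{x_0}(\psi)=c_{x_1}(\psi)=0$, which holds since $\psi\in\mathbb{L}^{\ge 3}$.

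\emph{Case $(\bold a,\bold b)=((1,\dots,1),(1,\dots,1))$.} By Lemma \ref{lemma: 1-1}, $l^{y,x}_{\bold a,\bold b}(\pt(\psi(X_{12},X_{23})))=0$ holds automatically for every $\psi$. The justification is that $\pt(\psi)\in\ker\spr^1_5\cap\ker\spr^2_5$, and by Lemma \ref{lemma:l-kerpr2} these words restricted to $\ker\spr^2_5$ contain no letter $\omega_{12}$. So they only detect the part of $\pt(\psi)$ in the free Lie algebra on $X_{23},X_{24}$ modulo the ideal generated by $X_{12}$. That part vanishes because membership in the kernel of the projection deleting strand $1$ kills it. I would double-check this last point, perhaps via $K_4$ and Proposition \ref{prop:lambda_13}.

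Combining the two cases: the condition for all $\bold a,\bold b$ is equivalent to the condition for non-all-ones indices, which is equivalent to $\psi\in\mathfrak{dmr}_0$.
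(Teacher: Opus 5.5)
Your overall strategy is the paper's: rewrite $\pt(\psi(X_{12},X_{23}))$ in terms of the $\psi_{ijk}$, discard the extra term away from the all-ones indices with Lemma \ref{lemma:polylogs-compilation}, treat the all-ones indices with Lemma \ref{lemma: 1-1}, and conclude with Theorem \ref{th:HR}. However, the identity on which your first case rests is false.

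Use $X_{12}+X_{13}+X_{23}=X_{45}$ and its analogues for the other triples. This gives $X_{13}+X_{23}=X_{45}-X_{12}$ and $X_{12}+X_{13}=X_{45}-X_{23}$. Your signs are off, but harmlessly, since the shifting generator commutes with the other arguments. The five terms then give
\begin{equation*}
\pt(\psi(X_{12},X_{23}))=\psi_{451}+\psi_{123}+\psi_{234}-\psi_{215}-\psi_{543}.
\end{equation*}
The term $g^{2,3,4}=\psi(X_{23},X_{34})=\psi_{234}$ enters with a plus sign. By contrast, $-\psi_{432}=-\psi(X_{34},X_{23})$ has the arguments of $\psi$ swapped. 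The two agree only if $\psi(x,y)=-\psi(y,x)$, which is not available in the ``if'' direction; the paper deliberately avoids skew-symmetry.

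For a concrete check, take $\psi=[x,[x,y]]$ and rewrite in the free Lie algebra $\mathbb{L}(X_{23},X_{24})$ via $X_{34}\equiv -X_{23}-X_{24}$ (the difference $X_{15}$ commutes with everything involved). This gives $\psi_{234}=-[X_{23},[X_{23},X_{24}]]$, but $-\psi_{432}=[X_{23},[X_{23},X_{24}]]+[X_{24},[X_{23},X_{24}]]$. No relabeling or central shift reconciles these. Consequently ``Theorem \ref{th:HR} gives directly'' is not justified. Your fallback also names the wrong discrepancy: relative to the four-term sum $\psi_{451}+\psi_{123}-\psi_{215}-\psi_{543}$, the extra term of $\pt$ is $\psi_{234}$, not $\psi_{432}$.

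The repair is short, and it is exactly what the paper's chain of equalities uses. For $(\mathbf a,\mathbf b)$ not both all-ones, $l^{y,x}_{\mathbf a,\mathbf b}$ must kill two terms:
\begin{itemize}
\item $\psi_{432}$, to pass from the five-term expression of Theorem \ref{th:HR} to the four-term one. This is Lemma \ref{lemma:polylogs-compilation}(3).
\item $\psi_{234}$, to pass from the four-term expression to $\pt$. This follows from the same lemma applied to the Lie series $\psi'(x,y):=\psi(y,x)$, since $\psi'_{432}=\psi_{234}$.
\end{itemize}
Alternatively for the second term: $\psi_{234}=\psi(X_{23},-X_{23}-X_{24})$ lies in $\mathbb{L}(X_{23},X_{24})\subset\ker\spr^2_5$. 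By Lemma \ref{lemma:l-kerpr2}, the word representing $l^{y,x}_{\mathbf a,\mathbf b}$ on $\ker\spr^2_5$ contains the letter $\omega_{12}$ unless all indices equal $1$, so it vanishes on $\psi_{234}$. With this correction your two cases combine exactly as in the paper.
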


\begin{proof}
By the Lemma \ref{lemma:polylogs-compilation},  for ${\bf{a},\bf{b}}\ne (1,\dots,1),(1,\dots,1)$  
\begin{align*}
&l^{y,x}_{\bf{a},\bf{b}}(\psi_{451}+\psi_{123}-\psi_{432}-\psi_{215}-\psi_{543})=l^{y,x}_{\bf{a},\bf{b}}(\psi_{451}+\psi_{123}-\psi_{215}-\psi_{543})\\
&=l^{y,x}_{\bf{a},\bf{b}}(\psi_{451}+\psi_{123}+\psi_{234}-\psi_{215}-\psi_{543})=l^{y,x}_{\bold a,\bold b}(\pt(X_{12},X_{23})).
\end{align*}

By Lemma \ref{lemma: 1-1}, when $\bold a,\bold b=(1,\ldots,1),(1,\ldots,1)$, $l^{y,x}_{\bold a,\bold b}(\pt(X_{12},X_{23}))=0$ is always zero for any $\psi\in \mathbb{L}(x_0,x_1)$, the result then follows from the Theorem \ref{th:HR}.
\end{proof}

\begin{Th}\label{th:dmr}
    $\psi\in \mathfrak{dmr}_0$ if and only if $\pt^1_{dmr}(\psi)=0$
\end{Th}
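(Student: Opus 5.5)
Given Lemma \ref{lem:proof_dmr}, it suffices to show the following: for $\eta:=\pt(\psi(X_{12},X_{23}))$, the vanishing of all $l^{y,x}_{\bold a,\bold b}(\eta)$ is equivalent to the vanishing of the class of $\eta$ in $\brun_{1,3}(D)/\Gamma_2(\brun_{1,3}(D))=I/[I,I]$. I would transport everything to $\mathfrak{t}_4$ via $K_4$, using Proposition \ref{prop:lambda_13}. This identifies $\ker\pr^2_4\simeq\ker\spr^2_5$ and $I=\brun_{1,3}(D)$ with $\ker\spr^2_5\cap\ker\spr^3_5\cap\ker\spr^4_5$. By Lemma \ref{lem:projection}, $\eta$ lies in $I$, so $\pt^1_{dmr}(\psi)$ is the image of $\eta$ in $I/[I,I]$.

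The key step is to show that the functionals $l^{y,x}_{\bold a,\bold b}$, restricted to $I$, vanish on $[I,I]$ and induce an isomorphism $I/[I,I]\to \mathcal{P}_{\ge1,\ge1}$. By Lemma \ref{lemma:l-kerpr2}, on $\ker\spr^2_5\simeq\mathbb{L}(t_{12},t_{23},t_{24})$ the functional $l^{y,x}_{\bold a,\bold b}$ is, up to the sign $(-1)^{k+l}$, the coefficient of the word $z^{b_l-1}x\cdots z^{b_1-1}x\,z^{a_k-1}y\cdots z^{a_1-1}y$ under $\pi$. Lemma \ref{lem:dual_basis} then says these functionals are dual to the basis $p_{\bold a,\bold b}$ of $\mathcal{P}$.

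Vanishing on $[I,I]$: an element of $[I,I]$, viewed in $k\langle t_{12},t_{23},t_{24}\rangle$, is a combination of products $uv-vu$ with $u,v\in I$. Each of $u$ and $v$ contains both letters $t_{23}$ and $t_{24}$ in every monomial, since $I=[(t_{23}),(t_{24})]$. Hence every word of the product contains a $t_{23}$ (resp.\ $t_{24}$) lying after a $t_{24}$ (resp.\ $t_{23}$) in a pattern that is not of the form "all $x$-letters, then all $y$-letters" ending appropriately. Such words are killed by $\pi$ into $\mathcal{P}$, which is exactly the mechanism behind Lemma \ref{lem:basis_kernel}. So the induced map on $I/[I,I]$ is $\pi$ up to signs. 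Lemma \ref{lem:basis_kernel} then gives that $\pi:I/[I,I]\to\mathcal{P}_{\ge1,\ge1}$ is an isomorphism.

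Combining these steps: $\pt^1_{dmr}(\psi)=0$ iff $\pi(\eta)=0$ in $\mathcal{P}_{\ge1,\ge1}$, iff $\langle \eta,l^{y,x}_{\bold a,\bold b}\rangle=0$ for all $\bold a,\bold b$ nonempty. For $\bold a$ or $\bold b$ empty, the functional pairs trivially with $I$ because the words lack $x$ or $y$; the case of all ones is covered by Lemma \ref{lemma: 1-1}. Lemma \ref{lem:proof_dmr} then concludes.

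The main obstacle is the compatibility claim that the $l^{y,x}_{\bold a,\bold b}$ factor through $I/[I,I]$ and agree with $\pi$ there. I would first try to read it directly off the proof of \cite{kernel} Theorem 6.4, checking that the $\pi$ used there is the same coefficient-extraction map.
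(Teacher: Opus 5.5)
Your proposal is correct and follows essentially the same route as the paper. You reduce via Lemma~\ref{lem:proof_dmr}, transport to $\ker\pr^2_4\simeq\ker\spr^2_5$ by $K_4$ (Proposition~\ref{prop:lambda_13}), read the functionals $l^{y,x}_{\mathbf a,\mathbf b}$ there as coefficient extraction dual to the $p_{\mathbf a,\mathbf b}$, and use Theorem~6.4 of \cite{kernel} (Lemma~\ref{lem:basis_kernel}) to conclude that joint vanishing of these functionals on $I$ is exactly vanishing in $I/[I,I]$. Your explicit checks — that $\pi$ kills $[I,I]$ (every word of $uv$ with $u,v\in I$ has a $t_{23}$ after a $t_{24}$), that the empty-index functionals vanish on $I$, and that $\pt^1_{dmr}$ lands in $I/[I,I]$ despite the paper's shifted $\Gamma$-indexing — are details the paper leaves implicit in its citation.
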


\begin{proof}
    By the Theorem 6.4 in \cite{kernel}, we have that for $\eta\in \ker \spr^2_5=\mathbb{L}(X_{12},X_{23},X_{24})$, the kernel of the polylogarithms $l^{y,x}_{\bold a,\bold b}$ is $I/[I,I]$, which means that $l^{y,x}_{\bold a,\bold b}(\eta)=0$, for any $\bold a,\bold b$,  if and only if $\eta\in I/[I,I]$. By the Proposition \ref{prop:lambda_13}, $\pt(\psi(X_{12},X_{23}))=0$ in $I/[I,I]$ is equivalent to $\brun(\psi(t_{12},t_{23}))=0$ in $\brun(D)/\Gamma_1(\brun(D))$, the results then follows from the previous Lemma \ref{lem:proof_dmr}.
\end{proof}

\section{$\mathfrak{krv}$ and $\pt^1_{krv}$}
In this section, we characterize the divergence map and the necklace cobracket in terms of the pentagon equation $\pt^1_{krv}$. We then show that the conditions $\pt^1_{krv}(\psi)=0$ and
\[
[\psi(-x_0-x_1,x_1),x_1] + [\psi(-x_0-x_1,x_0),x_0] = 0
\]
are equivalent to the statement that $(\psi(-x_0-x_1,x_0), \psi(-x_0-x_1,x_1))$ lies in the symmetric Kashiwara–Vergne Lie algebra $\mathfrak{krv}^{\mathrm{sym}}_2$.

\subsection{Reformulating the divergence and cobracket}

Let $\mathrm{Der}(k\langle X\rangle)$ be the Lie algebra of derivations of the algebra $k\langle X\rangle$, a derivation $u\in \mathrm{Der}(k\langle X\rangle)$ is called a \emph{tangential derivation} if there exits $a_1,a_2\in k\langle X\rangle$ such that $u(x_0)=[x_0,a_1], u(x_1)=[x_1,a_2]$, it is denoted by $(\mathrm{tDer}(k\langle X\rangle),[-,-])$, a tangential derivation $u=(a_1,a_2)$ is a called a special derivation if $u(x_{\infty})=0$ for $x_{\infty}=-x_0-x_1$, we denote the Lie algebra of special derivations by $(\mathrm{sDer}(A),[-,-])$. Similarly, we define the tangential derivation and special derivation of the free Lie algebra $\mathrm{tDer}(\mathbb{L}(x_0,x_1))$ and $\mathrm{sDer}(\mathbb{L}(x_0,x_1))$.

Let $|k\langle X\rangle|=k\langle X\rangle/[k\langle X\rangle,k\langle X\rangle]$ be the vector space of cyclic words, derivations of $k\langle X\rangle$ induces linear endormorphism of $|k\langle X\rangle|$, the projection map
    \[|-|: k\langle X\rangle \to |k\langle X\rangle|\]
is a $(\mathrm{Der}(A),[-,-])$ Lie algebra module morphism.

Let $N$ denote the symmetrization map $|k\langle X\rangle|\to k\langle X\rangle$, it is a linear map, for each homogeneous element $s_1\ldots s_m$ of degree $m$ with $s_i\in \{x_0,x_1\}$,
\begin{equation*}
N:|s_1\ldots s_m|\mapsto \sum^m_{i=1}s_i\ldots s_{i-1+m}.
\end{equation*}
Notice that $|N(|a|)|=m|a|$. An element $b$ in $k\langle X\rangle$ is called \emph{cyclic invariant} if it is in the image of $N$.

The vector space $|k\langle X\rangle|$ is a Lie bialgebra with the necklace Lie bracket $\{-,-\}_{\mathrm{necklace}}$ and  necklace cobracket $\delta_{\mathrm{necklace}}$, see \cite{GTgenus0} for the definition of the Lie bracket and corbracket.

\begin{Lemma}[\cite{GTgenus0}, Lemma 8.3]
The map 
\begin{align*}
        H: |A| & \to \rm{sDer}(A), a \mapsto (d^R_{x_0}N(|a|), d^R_{x_1}N(|a|))
\end{align*}
induces an isomorphism between the Lie algebras $(|A|/k\cdot 1$,$\{-,-\}_{\rm{necklace}})$ and $(\rm{sDer}(A),[-,-])$. The inverse map $H^{-1}$ maps $u=(a_1,a_2)$ to its Hamiltonian function $|x_0a_1+x_1a_2|$.
\end{Lemma}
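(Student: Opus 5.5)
The plan is to prove the lemma in three steps: $H$ lands in $\mathrm{sDer}(A)$, $H$ is bijective modulo constants with the stated inverse, and $H$ intertwines the brackets. Throughout, $A=k\langle X\rangle$. I use the convention that for $b\in A$ without constant term one writes $b=x_0\,\partial_0 b+x_1\,\partial_1 b$, and $d^R_{x_i}$ is the corresponding partial derivative. If the convention of \cite{GTgenus0} instead strips letters on the right, the argument is mirror-symmetric.

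\emph{Step 1: $H(a)$ is special.} Fix a homogeneous $a$ of degree $m$ and put $b=N(|a|)$. The key observation is that moving the first letter of each word to the end permutes the cyclic rotations summed in $N$. Hence
\[
x_0\,d^R_{x_0}b+x_1\,d^R_{x_1}b \;=\; b \;=\; (d^R_{x_0}b)\,x_0+(d^R_{x_1}b)\,x_1 .
\]
Set $u=H(a)=(a_1,a_2)$ with $a_i=d^R_{x_i}b$. Then
\[
u(x_0+x_1)=x_0a_1-a_1x_0+x_1a_2-a_2x_1=0 ,
\]
so $u(x_\infty)=0$ and $u\in\mathrm{sDer}(A)$. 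Constants are sent to $0$, so $H$ descends to $|A|/k\cdot 1$.

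\emph{Step 2: bijectivity.} For injectivity, compute
\[
|x_0a_1+x_1a_2| \;=\; |b| \;=\; |N(|a|)| \;=\; m|a| .
\]
So $|x_0a_1+x_1a_2|$ recovers $|a|$ up to the degree factor $m$, which is invertible in characteristic $0$. I would therefore check whether the inverse in the statement is meant degreewise with this normalization (equivalently, $H$ composed with the Euler-type rescaling). Either way injectivity on $|A|/k\cdot 1$ follows.

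For surjectivity, take $u=(a_1,a_2)\in\mathrm{sDer}(A)$. The components $a_i$ are determined only modulo the centralizer $k[x_i]$, so I first fix a normalization of $a_i$ killing that ambiguity, for instance matching the convention that $H$ produces. The special condition reads
\[
x_0a_1+x_1a_2=a_1x_0+a_2x_1 .
\]
The element $c=x_0a_1+x_1a_2$ is therefore invariant under the map sending each word to its rotation by one letter. In each degree this forces $c$ to be a combination of full rotation orbits, i.e.\ $c$ is cyclic invariant and $c=N(|c|)/m$. Reading off $d^R_{x_i}c=a_i$ shows $H(|c|/m)=u$. The main obstacle is the bookkeeping of the $k[x_i]$ ambiguity and of the degree-one words $x_i$ themselves. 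I would handle it by working degree by degree and comparing with the normalization fixed for $H$.

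\emph{Step 3: brackets.} It remains to show $H\{\alpha,\beta\}_{\mathrm{necklace}}=[H\alpha,H\beta]$. I would establish the intermediate identity
\[
H(\alpha)\cdot|c| \;=\; \{\alpha,|c|\}_{\mathrm{necklace}} \qquad\text{for all } |c|\in|A| ,
\]
where the left side is the induced action of derivations on cyclic words. This is a direct check from the definition of the necklace bracket in \cite{GTgenus0}: one sums over letters of $\alpha$, and differentiating a letter $x_i$ of $c$ by $[x_i,d^R_{x_i}N(\alpha)]$ reproduces exactly the necklace pairings. Given this identity, the Jacobi identity for $\{-,-\}$ yields
\[
[H\alpha,H\beta]\cdot|c| \;=\; H(\{\alpha,\beta\})\cdot|c| \qquad\text{for all } |c| .
\]
Two special derivations inducing the same endomorphism on $|A|$ agree; this follows by testing on the cyclic words $|x_i^{n}|$ and $|x_i^n x_j|$. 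Hence $[H\alpha,H\beta]=H\{\alpha,\beta\}$.
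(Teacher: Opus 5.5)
The paper does not prove this lemma; it only quotes it from the literature, so I assess your argument on its own. Step 1 and the rotation-invariance computation in Step 2 are correct. You are also right that the stated inverse is off by the degree: $|x_0a_1+x_1a_2|=|N(|a|)|=m|a|$, so in degree $m$ the inverse is $\frac1m|x_0a_1+x_1a_2|$.

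The genuine gap is the last sentence of Step 3. It is false that two special derivations inducing the same endomorphism of $|A|$ coincide.

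\begin{itemize}
\item Every inner derivation acts by zero on $|A|$.
\item $\ad_{f(x_0+x_1)}$ is special, since it kills $x_0+x_1$.
\item For instance $u=\ad_{x_0+x_1}$, i.e.\ the pair $(-(x_0+x_1),-(x_0+x_1))$, is a nonzero special derivation annihilating $|A|$. In fact $u=-\tfrac12 H(|(x_0+x_1)^2|)$, so it lies in the image of $H$.
\item Testing on $|x_i^n|$ gives no information, because every tangential derivation kills it.
\end{itemize}

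So your Jacobi argument only shows that $[H\alpha,H\beta]-H\{\alpha,\beta\}$ acts trivially on $|A|$. That does not rule out a discrepancy of the form $H(|f(x_0+x_1)|)$.

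The fix is to run your intermediate identity in $A$ rather than in $|A|$. The necklace bracket comes from the double bracket $\{\!\{x_i,x_j\}\!\}=\delta_{ij}(x_i\otimes 1-1\otimes x_i)$, which satisfies double Jacobi. For $\alpha\in|A|$, the map $\{\alpha,-\}\colon A\to A$ is a derivation. The computation you sketch gives $\{\alpha,x_i\}=[x_i,d^R_{x_i}N(\alpha)]$, i.e.\ $\{\alpha,-\}=H(\alpha)$ as derivations of $A$; this is where the sign convention of the necklace bracket must match. Van den Bergh's Loday identity $\{a,\{b,c\}\}=\{\{a,b\},c\}+\{b,\{a,c\}\}$ for $c\in A$ then gives $[H\alpha,H\beta]=H\{\alpha,\beta\}$ as derivations, with no faithfulness issue.

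A second problem is that Steps 2 and 3 silently use two different meanings of $\mathrm{sDer}(A)$.

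\begin{itemize}
\item \emph{As derivations.} $H(|x_i^n|)$, the pair with $n x_i^{n-1}$ in slot $i$ and $0$ elsewhere, is the zero derivation. Hence $H$ is not injective on $|A|/k\cdot 1$. Your injectivity argument reads $|a|$ off the pair $(a_1,a_2)$. Your proposal to normalize away the $k[x_i]$-ambiguity would kill exactly these classes. With derivations, the correct statement is $|A|/\mathrm{span}\{1,|x_0^n|,|x_1^n|\}\cong\mathrm{sDer}(A)$.
\item \emph{As pairs.} This is the reading consistent with the statement, and with this paper's $\mathrm{div}$, which is defined on pairs and sees the $k[x_i]$-parts. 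Here $\mathrm{sDer}(A)$ is the space of special pairs with bracket $[u,v]_i=u(v_i)-v(u_i)+[u_i,v_i]$. Your rotation argument then gives a linear isomorphism $|A|/k\cdot 1\cong \mathrm{sDer}(A)$ with no normalization at all.
\end{itemize}

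In the pair reading, matching the brackets requires more than equality of the underlying derivations: you must also match the components in $k[x_i]$. This works because neither $[H\alpha,H\beta]_i$ nor $H(\{\alpha,\beta\})_i$ contains any pure power of $x_i$. Indeed, a tangential derivation never produces a pure power of $x_i$ from a word containing another letter, and it sends $x_i^p$ to $[x_i^p,u_i]$.
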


For any $x\in k\langle X\rangle$, there are unique presentations
	\begin{equation*}
    x =\varepsilon(x)+ x_0 d^R_0(x) + x_1 d^R_1(x) =\varepsilon(x)+ d^{L}_0(x) x_0 + d^{L}_1 (x)x_1.
	\end{equation*}

The divergence map is
\begin{align*}
\mathrm{div}:&\mathrm{tDer}(k\langle X\rangle)\to |k\langle X\rangle|\\
&u=(a_0,a_1)\mapsto |x_0 d^R_0(a_0)+x_1d^R_1(a_1)|.
\end{align*}

\begin{Lemma}[\cite{Alekseev2012}]
    The divergence map is a $(\mathrm{tDer}(A),[-,-])$ Lie algebra 1 cocycle.
\end{Lemma}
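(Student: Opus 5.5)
The plan is to prove the identity
\[
\mathrm{div}([u,v])=u\cdot \mathrm{div}(v)-v\cdot \mathrm{div}(u),\qquad u,v\in \mathrm{tDer}(k\langle X\rangle),
\]
where derivations act on $|k\langle X\rangle|$ through the module structure recalled above. The intended route is to realise $\mathrm{div}$ as a cyclic trace of a Jacobian matrix built from the right Fox derivatives $d^R_0,d^R_1$. The cocycle identity then becomes a consequence of the chain rule together with the vanishing of the cyclic trace of a commutator.

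\emph{Step 1 (Fox calculus).} Record the Leibniz-type rule coming from the unique decomposition $x=\varepsilon(x)+x_0d^R_0(x)+x_1d^R_1(x)$, namely
\[
d^R_i(fg)=d^R_i(f)\,g+\varepsilon(f)\,d^R_i(g).
\]
Iterating it gives a chain rule for a derivation $w$:
\[
d^R_j(w(f))=\sum_i d^R_j\bigl(w(x_i)\bigr)\ast d^R_i(f)+w\bigl(d^R_j(f)\bigr).
\]
Here $\ast$ stands for the natural operation on $A\otimes A^{\mathrm{op}}$-valued Fox derivatives. To make this precise I would pass to the double derivations $\partial_i:A\to A\otimes A$. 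Once the double-derivation version is established, the one-sided statement is recovered by multiplying out.

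\emph{Step 2 (Jacobian and trace).} For any derivation $w$, define the Jacobian $J(w)_{ij}=\partial_j(w(x_i))$ and set $\mathrm{Div}(w):=\bigl|\sum_i \mu\circ\sigma\,(J(w)_{ii})\bigr|$, where $\mu$ is multiplication and $\sigma$ is the flip. This matches the convention in which $\mathrm{div}$ above picks out $|x_i\,d^R_i(\cdot)|$. From Step 1 one gets
\[
J([u,v])=u\cdot J(v)-v\cdot J(u)+[J(u),J(v)]_{\ast},
\]
with the commutator taken in matrices over $A\otimes A^{\mathrm{op}}$. Taking the cyclic trace kills the commutator term, since $|\,\mathrm{tr}(MN-NM)\,|=0$ in $|A|$. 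Hence $\mathrm{Div}$ is a $1$-cocycle on all of $\mathrm{Der}(A)$.

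\emph{Step 3 (compare $\mathrm{Div}$ and $\mathrm{div}$ on tangential derivations).} For $u=(a_0,a_1)$ we have $u(x_i)=x_ia_i-a_ix_i$. Applying the Leibniz rule and cyclicity gives
\[
\mathrm{Div}(u)=\mathrm{div}(u)-\textstyle\sum_i \bigl|\varepsilon(a_i)\bigr|\,|\,x_i|+c(u),
\]
where $c(u)$ is an explicitly computable correction term. I then check by hand that $c$ and the linear term are themselves coboundaries, or that they vanish on brackets in the appropriate way. Checking this uses that $a_i$ is only defined up to polynomials in $x_i$, and that these ambiguities do not change $|x_i d^R_i(a_i)|$. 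This gives the cocycle property of $\mathrm{div}$ on $\mathrm{tDer}$.

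The main obstacle is Step 2, and specifically getting the bimodule bookkeeping in the chain rule right so that the commutator term really is a commutator of matrices with entries in $A\otimes A^{\mathrm{op}}$, and hence cyclically traceless. If this becomes unwieldy, I would instead verify the identity directly on generators. Both sides are bilinear, so one can take $a_0,a_1$ to be monomials and compare the coefficients of each cyclic word. Alternatively, since the statement is exactly the one in \cite{Alekseev2012}, the final fallback is to cite their proof.
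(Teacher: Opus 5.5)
Your proposal does not work as written: Step 2 is false, and Step 3 cannot compensate for it. The paper gives no argument of its own and only cites \cite{Alekseev2012}, so your route has to stand alone.

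\textbf{Step 2 fails.} Your chain rule is fine, and it shows that the Jacobian entries multiply in $A^{\mathrm{op}}\otimes A$ via $(a'\otimes a'')\star(b'\otimes b'')=b'a'\otimes a''b''$. The universal trace on that algebra is $|\cdot|\otimes|\cdot|$, with values in $|A|\otimes|A|$, not in $|A|$. The single trace $|\mu\circ\sigma(-)|$ is not a trace there: it sends the commutator of $a'\otimes a''$ and $b'\otimes b''$ to $|a''b''b'a'|-|b''a''a'b'|$, which is generally nonzero.

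Concretely, your $\mathrm{Div}(w)$ is the sum, over $i$ and over occurrences of $x_i$ in $w(x_i)$, of the cyclic word obtained by deleting that occurrence. Take $u(x_0)=x_1x_0$, $u(x_1)=0$ and $v(x_0)=0$, $v(x_1)=x_1x_0^2x_1$. Then $\mathrm{Div}([u,v])=3|x_0^2x_1^2|$, whereas $u\cdot\mathrm{Div}(v)-v\cdot\mathrm{Div}(u)=|x_0^2x_1^2|+2|x_0x_1x_0x_1|$. So $\mathrm{Div}$ is not a $1$-cocycle on $\mathrm{Der}(A)$.

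\textbf{Step 3 is circular.} For $u(x_i)=[x_i,a_i]$ one has $\mathrm{Div}(u)=0$ identically. Deleting the first letter of $x_ia_i$ and the last letter of $a_ix_i$ both give $a_i$, and the remaining deletions agree cyclically. So your $\mathrm{Div}$ does not ``match'' $\mathrm{div}$ at all. Your correction term is forced to be $c(u)=-\mathrm{div}(u)$ plus the linear term, and checking that $c$ is a cocycle is the lemma itself.

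\textbf{A possible repair: keep both traces.} Set $\mathrm{Div}^{(2)}(w)=\sum_i(|\cdot|\otimes|\cdot|)(\partial_i w(x_i))\in|A|\otimes|A|$; this is a $1$-cocycle on $\mathrm{Der}(A)$ by exactly your Step 2 argument. For derivations with $w(A)\subset\ker\varepsilon$, which includes tangential ones, the map $\varepsilon\otimes\mathrm{id}:|A|\otimes|A|\to|A|$ is equivariant. Moreover $(\varepsilon\otimes\mathrm{id})\circ\partial_i=d^R_i$. Hence $w\mapsto\sum_i|d^R_i(w(x_i))|$ is a cocycle on $\mathrm{tDer}$.

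Using $d^R_i(fg)=d^R_i(f)g+\varepsilon(f)d^R_i(g)$, for $u(x_i)=[x_i,a_i]$ you get $\sum_i|d^R_i(u(x_i))|=\sum_i|a_i-\varepsilon(a_i)|-\mathrm{div}(u)$. The map $u\mapsto\sum_i|a_i-\varepsilon(a_i)|$ is itself a cocycle, because $[u,v]$ has components $u(b_i)-v(a_i)+[a_i,b_i]$ and $|[a_i,b_i]|=0$. Hence $\mathrm{div}$ is a difference of two cocycles.

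\textbf{A further error.} Your claim that replacing $a_i$ by $a_i+f(x_i)$ leaves $|x_i d^R_i(a_i)|$ unchanged is false: it shifts it by $|f(x_i)-f(0)|$. So the statement has to be read on tuples $(a_0,a_1)$ with the bracket above, or with a normalization of the $a_i$ that this bracket preserves, as in \cite{Alekseev2012}.
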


Consider an algebra morphism from $k\langle X\rangle $ to $k\langle X\rangle \otimes k\langle X\rangle$
\[
\tilde{\Delta}:=(1\otimes S)\Delta:k\langle X\rangle \to k\langle X\rangle\otimes k\langle X\rangle^{\mathrm{op}},
\]
$k\langle X \rangle^{\mathrm{op}}$ is the algebra with the opposite product.

\begin{Prop}[\cite{GTgenus0}, Proposition 3.5]\label{Prop:kv_right}

The following diagram is commutative. The map
    $H$ and $\widetilde{\Delta}$ is a $(\mathrm{sDer}(\mathbb{L}(x_0,x_1),[-,-])$ Lie algebra module map, $\tilde{\Delta}$ is injective.
    
    \begin{equation*}
    \begin{tikzcd}
      \mathrm{sDer}(\mathbb{L}(x_0,x_1) \arrow{r}{H}\arrow{d}{\mathrm{div}}& \mid k\langle X\rangle \mid \arrow{d}{\delta_{\mathrm{necklace}}}\\
      \mid k\langle X\rangle \mid\arrow{r}{\widetilde{\Delta}}&\mid k\langle X\rangle \mid\wedge \mid k\langle X\rangle \mid        
    \end{tikzcd}
    \end{equation*}
\end{Prop}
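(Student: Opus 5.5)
The plan is to reduce both compositions to explicit formulas in terms of the Hamiltonian, and then compare them term by term on cyclic words. Let $u=(a_0,a_1)\in\mathrm{sDer}(\mathbb{L}(x_0,x_1))$ be homogeneous, and let $h=H^{-1}(u)=|x_0a_0+x_1a_1|$ be its Hamiltonian. By the preceding lemma ([GTgenus0, Lemma 8.3]) we may write $a_i=d^R_{x_i}N(h)$. Substituting this into the definition of the divergence gives
\[
\mathrm{div}(u)=\bigl|x_0\,d^R_0d^R_0N(h)+x_1\,d^R_1d^R_1N(h)\bigr|.
\]
So both $\widetilde{\Delta}\circ\mathrm{div}$ and $\delta_{\mathrm{necklace}}$ become linear functions of the single cyclic element $h$. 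Commutativity of the square is then the statement that two explicit linear maps out of the image of $H^{-1}$ agree.

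Next I would make both sides explicit on monomials. The necklace cobracket of a cyclic word $|s_1\cdots s_m|$ is a sum over ordered pairs of positions $i\neq j$ with $s_i=s_j$ of $\pm\,|s_{i+1}\cdots s_{j-1}|\wedge|s_{j+1}\cdots s_{i-1}|$, with the sign fixed by the convention of [GTgenus0].

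For $\widetilde{\Delta}$, I would use that $a_0$ and $a_1$ are Lie series, hence primitive, so $S(a_i)=-a_i$. Expanding $x_i\,d^R_id^R_i$ on the cyclic-invariant element $N(h)$ amounts to choosing two occurrences of the letter $x_i$ in a rotation of the necklace. The coproduct $(1\otimes S)\Delta$ then distributes the remaining letters into a left part and a reversed right part. Primitivity should force the only surviving terms to be those where the two chosen occurrences cut the necklace into its two complementary arcs.

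The core step is a bijection between two sets of terms. On one side are the terms in $\widetilde{\Delta}(\mathrm{div}(u))$, indexed by a pair of equal letters together with a splitting. On the other side are the terms of $\delta_{\mathrm{necklace}}(h)$, indexed by a pair of equal letters. I would verify that the signs match, using the $(-1)^{\deg}$ coming from $S$ on the reversed arc and the antisymmetry of $\wedge$. The specialness condition $u(x_\infty)=0$, which is what makes $h$ well defined and $H$ invertible, is used precisely to guarantee that every term is captured by $N(h)$. Injectivity of $\widetilde{\Delta}$ is immediate, since $\Delta$ is injective and $1\otimes S$ is invertible.

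For the module-map claims, $H$ is an isomorphism of Lie algebras by [GTgenus0, Lemma 8.3], so it intertwines the actions. The map $\widetilde{\Delta}$ commutes with derivations from $\mathrm{sDer}$, because these are induced by Lie-series substitutions, which are primitive and commute with $\Delta$ and $S$.

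I expect the main obstacle to be the bookkeeping in the bijection of the core step. The issue is that $\widetilde{\Delta}$ lands in $|k\langle X\rangle|\otimes|k\langle X\rangle|$, and a priori it produces many non-cutting terms. If the direct cancellation of these terms via primitivity of $a_i$ becomes unwieldy, I would instead check the identity on generators of $\mathrm{sDer}$. This alternative has two parts. First, both $\widetilde{\Delta}\circ\mathrm{div}$ and $\delta_{\mathrm{necklace}}\circ H$ are $1$-cocycles: $\mathrm{div}$ is one by [Alekseev2012], and $\delta$ is one by the Lie bialgebra property. Second, the two cocycles agree on the low-degree derivations, and one would then argue by induction on degree. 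The first step of this route is clean; the induction is the part I am least sure goes through without further input.
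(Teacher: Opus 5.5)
\textbf{There is a genuine gap: the core step of your plan does not work as described.}

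First, $x_i\,d^R_id^R_iN(h)$ does not choose two arbitrary occurrences of $x_i$. The operator $d^R_id^R_i$ only sees rotations beginning with $x_ix_i$, i.e.\ \emph{cyclically adjacent} equal letters. By contrast, $\delta_{\mathrm{necklace}}(h)$ involves every pair of equal letters. Second, $\Delta$ is the shuffle coproduct, so $\widetilde{\Delta}$ splits a word of $\mathrm{div}(u)$ along arbitrary subsets of letters, not along arcs. The fact $S(a_i)=-a_i$ does not help, because $\widetilde{\Delta}$ is applied to $x_i\,d^R_i(a_i)$, and $d^R_i(a_i)$ is not primitive.

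Consequently there is no term-by-term matching indexed by pairs of letters of $h$. In fact the identity $\delta_{\mathrm{necklace}}(h)=|\widetilde{\Delta}|(\mathrm{div}\,H(h))$ fails for single cyclic words. For example, $H(|x_0x_1x_0x_1|)$ has divergence $0$. Yet in the convention forced by the square ($\delta_{\mathrm{necklace}}$ merges the two equal letters, lowering degree by one), $\delta_{\mathrm{necklace}}(|x_0x_1x_0x_1|)$ is a nonzero multiple of $|x_0x_1|\wedge(|x_0|+|x_1|)$.

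Now take the degree-$3$ element $u=([x_1,[x_0,x_1]],-[x_0,[x_0,x_1]])$ of $\mathrm{sDer}(\mathbb{L}(x_0,x_1))$. Its Hamiltonian is $4(|x_0x_1x_0x_1|-|x_0^2x_1^2|)$ and $\mathrm{div}(u)=-|x_0x_1^2|-|x_1x_0^2|$. The terms $|x_0x_1|\wedge|x_j|$ of $\delta_{\mathrm{necklace}}(h)$ come from the non-adjacent pairs of the first monomial. In $\widetilde{\Delta}(\mathrm{div}\,u)$, the same terms come from subset-splittings of the divergence of the second monomial. So the Lie condition acts across monomials of $h$, and your sketch has no mechanism for that. (The paper itself gives no argument here; the statement is quoted from \cite{GTgenus0}.)

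What is missing is a double-cut description of the cobracket, together with a Fox-calculus identity for Lie elements. Cutting $h$ at one occurrence of $x_i$ gives the cyclic derivative $a_i=d^R_iN(h)$. Mind the normalisation: for $h=|x_0a_0+x_1a_1|$ of degree $m$ this produces $m\,a_i$, so your identification is off by that factor. Cutting $a_i$ at a second occurrence gives $\partial_ia_i$, where $\partial_i(w)=\sum_{w=w_1x_iw_2}w_1\otimes w_2$. Hence $\delta_{\mathrm{necklace}}(h)=D-\tau D$ with $D=\sum_i|(x_i\otimes1)\partial_ia_i|=\tau\sum_i|(1\otimes x_i)\partial_ia_i|$, the two expressions corresponding to the two orders of cutting.

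Primitivity enters through the identity $\partial_i(a)=(S\otimes1)\Delta(d^R_ia)$ for $a\in\mathbb{L}(x_0,x_1)$. It holds on generators and is preserved under $a=[b,c]$, because $d^R_i(bc)=d^R_i(b)\,c$ and $\Delta c=c\otimes1+1\otimes c$. Substitute it, write $\widetilde{\Delta}(x_ic)=(x_i\otimes1)\widetilde{\Delta}(c)-\widetilde{\Delta}(c)(1\otimes x_i)$, and use cocommutativity of $\Delta$. Then the two halves of $\sum_i|\widetilde{\Delta}(x_i\,d^R_ia_i)|$ become exactly $D$ and $-\tau D$. This is a direct computation, not a bijection.

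Your fallback does not rescue the argument. Cocycles that agree on generators do agree everywhere. But $\mathrm{sDer}(\mathbb{L}(x_0,x_1))$ has no elements with components of degree $2$, so the degree-$3$ element above is already indecomposable. The induction would therefore need the general computation at every new generator.

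Finally, injectivity on $k\langle X\rangle$ does not by itself give injectivity on $|k\langle X\rangle|$. Use $(1\otimes\varepsilon)\circ|\widetilde{\Delta}|=\mathrm{id}$ to get injectivity into $|k\langle X\rangle|\otimes|k\langle X\rangle|$. After antisymmetrising, injectivity fails, since $|\widetilde{\Delta}|(|x_0x_1|)$ is a symmetric tensor.
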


\begin{Lemma}
The map $\mathrm{sd}:\mathbb{L}(x_0,x_1)\to \mathrm{tDer}(\mathbb{L}(x_0,x_1), \psi\to (\psi(-x_0-x_1,x_0),\psi(-x_0-x_1,x_1))$, restricts to an injective map
\begin{equation*}
\mathbb{L}^{\mathrm{sym}}(x_0,x_1)\to \mathrm{sDer}^{\mathrm{sym}}(\mathbb{L}(x_0,x_1)   
\end{equation*}
where
\[
\mathbb{L}^{\mathrm{sym}}(x_0,x_1):=\{\psi\in \mathbb{L}(x_0,x_1)\mid \mathrm{sd}(\psi)\in \mathrm{sDer}(\mathbb{L}(x_0,x_1)\},
\]
\[
\mathrm{sDer}^{\mathrm{sym}}(\mathbb{L}(x_0,x_1)=\{\psi\in \mathrm{sDer}(\mathbb{L}(x_0,x_1)\mid \sigma_{0,1}^{-1}\circ \psi\circ \sigma_{0,1}=\psi\}      
\]
and $\sigma_{0,1}:\mathbb{L}(x_0,x_1)\to \mathbb{L}(x_0,x_1)$ is the Lie algebra map $x_0\mapsto x_1,x_1\mapsto x_0.$
\end{Lemma}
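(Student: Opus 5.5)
The lemma has two parts: $\mathrm{sd}$ maps $\mathbb{L}^{\mathrm{sym}}(x_0,x_1)$ into $\mathrm{sDer}^{\mathrm{sym}}(\mathbb{L}(x_0,x_1))$, and this map is injective. I will prove them in that order, working degree by degree since all maps preserve degree.

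\emph{Landing in the symmetric part.} Let $\psi\in \mathbb{L}^{\mathrm{sym}}(x_0,x_1)$. By definition $\mathrm{sd}(\psi)=u=(a_0,a_1)$ is a special derivation, with $a_0=\psi(-x_0-x_1,x_0)$ and $a_1=\psi(-x_0-x_1,x_1)$. It remains to check $\sigma_{0,1}^{-1}\circ u\circ \sigma_{0,1}=u$, and it suffices to do so on the generators. First,
\[
\sigma_{0,1}^{-1}u\sigma_{0,1}(x_0)=\sigma_{0,1}(u(x_1))=\sigma_{0,1}([x_1,a_1])=[x_0,\sigma_{0,1}(a_1)].
\]
Since $x_\infty=-x_0-x_1$ is fixed by $\sigma_{0,1}$, we get $\sigma_{0,1}(a_1)=\psi(-x_0-x_1,x_0)=a_0$, so the right-hand side equals $u(x_0)$. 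The computation for $x_1$ is identical with the roles of $0$ and $1$ exchanged. So the only input is that the substitution defining $\mathrm{sd}$ is $\sigma_{0,1}$-equivariant, and this part is formal.

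\emph{Injectivity.} Suppose $\mathrm{sd}(\psi)=0$. Then $\psi(-x_0-x_1,x_0)=0$ in $\mathbb{L}(x_0,x_1)$. The Lie algebra map $\mathbb{L}(x,y)\to \mathbb{L}(x_0,x_1)$ given by $x\mapsto -x_0-x_1$, $y\mapsto x_0$ is an isomorphism, because the linear change of generators is invertible. Hence $\psi=0$.

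One subtlety is that the derivation attached to $(a_0,a_1)$ only determines $a_i$ modulo the centralizer of $x_i$, which is $k x_i$. The map $\mathrm{sd}$ records the pair $(a_0,a_1)$ itself, so the argument above goes through as stated. If one instead identifies tangential derivations with the derivations they induce, injectivity needs degree $\ge 2$: in that case $[x_0,\psi(-x_0-x_1,x_0)]=0$ forces $\psi(-x_0-x_1,x_0)\in k x_0$, which is impossible in degree $\ge 2$, so again $\psi=0$. In degree $1$ I would either restrict to $\mathbb{L}^{\ge 2}$ or, if the paper intends the pair convention, note that none is needed; this convention is the step I expect to need care, while the rest is routine.
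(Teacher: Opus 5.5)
Your argument is correct, and there is no competing route to compare it with: the paper states this lemma with no proof at all. Your two observations are exactly the routine verification the paper leaves implicit.
\begin{itemize}
\item Substitution into $\psi$ commutes with $\sigma_{0,1}$ because $x_\infty=-x_0-x_1$ is $\sigma_{0,1}$-fixed. This gives $\sigma_{0,1}(a_1)=a_0$, and hence the conjugation invariance.
\item The assignment $x\mapsto -x_0-x_1$, $y\mapsto x_0$ is an invertible change of free generators, which gives injectivity.
\end{itemize}

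Your caveat about conventions is a genuine point, not pedantry. The paper introduces $\mathrm{tDer}$ as a set of derivations but then writes $u=(a_1,a_2)$. Under the derivation reading, the lemma as stated fails in degree $1$. For $\psi=y$ one gets $\mathrm{sd}(\psi)=(x_0,x_1)$, which is special and symmetric but induces the zero derivation. So injectivity needs either the pair convention of Alekseev--Torossian or a restriction to degree $\ge 2$. You correctly identify both options. This is harmless for the paper, which only applies the lemma to $\varphi\in\mathbb{L}^{\ge 3}(x,y)$.
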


\begin{Def}[\cite{Alekseev2012}]
    The Kashiwara-Vergne Lie algebra $\mathfrak{krv}_n$ is the Lie subalgebra of $\mathfrak{sder}_n$ consisting of elements such that
    $
    \mathrm{div}(e)=\sum_{i=0}^m|f_i(x_i)|$
    for some formal power series $f_0(s),f_1(s),\ldots, f_n(s)\in k[[x_i]].$
\end{Def}

\begin{Th}[\cite{Alekseev2012,GTgenus0}]\label{th:AKKN}
Let $e\in \mathrm{sDer}(\mathbb{L}(x_0,x_1))$,
\begin{enumerate}
    \item $e$ in $\mathfrak{krv}_2$ if and only if $e\circ \mathrm{div}=\mathrm{div}\circ e$\\
    \item $e$ in $\mathfrak{krv}_2$ if and only if $e\circ \delta_\mathrm{necklace}=\delta_\mathrm{necklace}\circ e$\\
\end{enumerate}
\end{Th}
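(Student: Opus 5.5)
The plan is to read both conditions in Theorem \ref{th:AKKN} as statements about $\mathrm{div}(e)$, using the cocycle property of $\mathrm{div}$ for (1) and the Lie bialgebra compatibility of $(|k\langle X\rangle|,\{-,-\}_{\mathrm{necklace}},\delta_{\mathrm{necklace}})$ together with Proposition \ref{Prop:kv_right} for (2). In both cases the condition should reduce to $\mathrm{div}(e)$ being an invariant (Casimir-type) element. The key structural input is then that such invariants are exactly the spans of the elements $|f(x_i)|$, $i\in\{0,1,\infty\}$.

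For (1), I interpret $e\circ\mathrm{div}=\mathrm{div}\circ\mathrm{ad}_e$ on $\mathrm{sDer}(\mathbb{L}(x_0,x_1))$, i.e. $\mathrm{div}([e,u])=e\cdot\mathrm{div}(u)$ for all special $u$. The 1-cocycle identity (Lemma of \cite{Alekseev2012}) gives
\begin{equation*}
\mathrm{div}([e,u])=e\cdot\mathrm{div}(u)-u\cdot\mathrm{div}(e).
\end{equation*}
So the condition is equivalent to $u\cdot\mathrm{div}(e)=0$ for every $u\in\mathrm{sDer}$. If $e\in\mathfrak{krv}_2$, then $\mathrm{div}(e)=\sum_i|f_i(x_i)|$. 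Each $|f_i(x_i)|$ is killed by special derivations, since $u(x_i)=[x_i,a_i]$ (and $u(x_\infty)=0$), so $u$ acts on powers of $x_i$ by conjugation, which is trivial on cyclic words. For the converse I need that the subspace of $|k\langle X\rangle|$ annihilated by all of $\mathrm{sDer}$ is spanned by the $|x_i^n|$, $i=0,1,\infty$. I would prove this degree by degree. I would test against the simple special derivations $u=(a_0,a_1)$ with $a_0,a_1$ chosen among low-degree elements, e.g. $u$ Hamiltonian for $|x_0^p x_1^q|$ via $H$. I would then argue that a cyclic word not of the form $|x_i^n|$ has a nonzero bracket with some such Hamiltonian. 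The cleanest route is to use that $H$ identifies $(|A|/k,\{-,-\}_{\mathrm{necklace}})$ with $\mathrm{sDer}$, so the invariants are exactly the center of the necklace Lie algebra. That center is known to be spanned by $|x_0^n|,|x_1^n|,|x_\infty^n|$ (the boundary Casimirs, as in \cite{GTgenus0}).

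For (2), write $e=H(h)$, so that $e\cdot a=\{h,a\}_{\mathrm{necklace}}$. The Lie bialgebra cocycle condition gives
\begin{equation*}
\delta(\{h,a\})=h\cdot\delta(a)-a\cdot\delta(h).
\end{equation*}
Hence $\delta\circ e-e\circ\delta$ evaluated on $a$ equals $-a\cdot\delta(h)$. By Proposition \ref{Prop:kv_right}, $\delta(h)=\widetilde\Delta(\mathrm{div}(e))$, and $\widetilde\Delta$ is an injective module map. So commutation holds iff $a\cdot\widetilde\Delta(\mathrm{div}\,e)=\widetilde\Delta(a\cdot\mathrm{div}\,e)=0$ for all $a$. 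By injectivity this is equivalent to $a\cdot\mathrm{div}(e)=0$ for all $a$, which is the same invariance condition as in (1) and is concluded the same way.

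The main obstacle is the converse direction: identifying the $\mathrm{sDer}$-invariants (equivalently the center of the necklace bracket) in $|k\langle X\rangle|$ with the span of the $|f_i(x_i)|$. I would first try to quote this from \cite{Alekseev2012,GTgenus0}. Failing that, I would prove it by computing $\{|x_0|,-\}$ and $\{|x_1|,-\}$ on a cyclic word and showing that their joint kernel in each degree is spanned by $|x_0^n|,|x_1^n|,|x_\infty^n|$, via a leading-term argument in a lexicographic order on cyclic words. A secondary point is that (1) and (2) a priori refer to different actions: $\mathrm{div}$ is defined on $\mathrm{tDer}$ while the equivariance is only tested against $\mathrm{sDer}$. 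I must check that the module maps in Proposition \ref{Prop:kv_right} are compatible with the restriction to $\mathrm{sDer}$.
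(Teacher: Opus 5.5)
The paper gives no proof of this statement; it is quoted from \cite{Alekseev2012,GTgenus0}. Your plan is the natural one: use the two cocycle identities to reduce both parts to the claim that $\mathrm{div}(e)$ is central in $(|k\langle X\rangle|,\{-,-\}_{\mathrm{necklace}})$, and then use that this centre is spanned by $|x_0^n|,|x_1^n|,|x_\infty^n|$.

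Part (1) goes through as you describe, with two provisos. First, say explicitly that the test derivations $u$ range over $\mathrm{sDer}(k\langle X\rangle)=H(|k\langle X\rangle|)$. That is what identifies the invariants with the centre; the invariants of the smaller algebra $\mathrm{sDer}(\mathbb{L}(x_0,x_1))$ could a priori be larger. Second, the centre theorem has to be imported. Your fallback proof of it cannot work. With the paper's formula, $H(|x_0|)=(1,0)$ and $H(|x_1|)=(0,1)$ are the zero derivation, and in fact everything of degree $\le 2$ is central, since $H(|x_0x_1|)=-\mathrm{ad}_{x_0+x_1}$. So the joint kernel of $\{|x_0|,-\}$ and $\{|x_1|,-\}$ is everything. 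You must bracket with elements of degree $\ge 3$; already $|x_0^2x_1|$ is non-central.

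The genuine gap is in (2). The bialgebra identity gives $a\cdot\delta(h)=0$ for \emph{every} $a\in|k\langle X\rangle|$, that is, invariance under all of $\mathrm{sDer}(k\langle X\rangle)$. You then use $a\cdot\widetilde\Delta(z)=\widetilde\Delta(a\cdot z)$ for all such $a$, with $z=\mathrm{div}(e)$. But $\widetilde\Delta=(1\otimes S)\Delta$ only intertwines derivations with primitive values, which is why Proposition \ref{Prop:kv_right} is stated for $\mathrm{sDer}(\mathbb{L}(x_0,x_1))$ only.

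Here is a concrete failure. Take $a=|x_0^2x_1|$; then $H(a)(x_0)=[x_0^2,x_1]$, and $H(a)$ anticommutes with $S$ on cyclic words. For $z=|x_0^2x_1^2|$ one finds $a\cdot z=|x_0^2x_1x_0x_1^2|-|x_0x_1x_0^2x_1^2|\neq 0$. The $k\cdot 1\otimes|k\langle X\rangle|$-components of $\widetilde\Delta(a\cdot z)$ and $a\cdot\widetilde\Delta(z)$ are $-(1\otimes a\cdot z)$ and $1\otimes a\cdot z$, so they differ.

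Restricting to Lie-type $a$ does not rescue the argument: it would only give $\mathrm{sDer}(\mathbb{L})$-invariance of $z$, not centrality. Injectivity also fails as stated. After antisymmetrisation $\widetilde\Delta$ becomes $\widetilde\Delta\circ(1-S)$, which kills every $S$-invariant cyclic word, for example $|x_0x_1|$.

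The fix is to look only at the component of $\delta(h)$ in $|k\langle X\rangle|\otimes k\cdot 1$, which is $(z-Sz)\otimes 1$ up to normalisation.
\begin{itemize}
\item Since $a\cdot 1=0$ and no other component can feed into this one, invariance of $\delta(h)$ gives $a\cdot(z-Sz)=0$ for all $a$.
\item For $e\in\mathrm{sDer}(\mathbb{L}(x_0,x_1))$ one has $Sz=-z$. This follows from Lemma \ref{lem:div_mu}: $S(H^{-1}e)=H^{-1}e$ when the $a_i$ are Lie, and merging adjacent letters commutes with reversal.
\item Hence $2z$, and so $z$, is central.
\end{itemize}
The forward direction needs no equivariance at all, because $\widetilde\Delta(|x_i^n|)$ lies in the span of the $|x_i^p|\otimes|x_i^q|$, which are invariant.
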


\begin{Cor}
    $\mathfrak{krv}_2$ is a Lie algebra with the Ihara bracket.
\end{Cor}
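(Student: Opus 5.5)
The statement to prove is the Corollary: $\mathfrak{krv}_2$ is closed under the Ihara bracket, i.e. under the bracket of special derivations $[-,-]$ in $\mathrm{sDer}(\mathbb{L}(x_0,x_1))$. I would deduce it directly from characterization (1) of Theorem \ref{th:AKKN}, using that $\mathrm{div}$ is a Lie algebra $1$-cocycle by the lemma of \cite{Alekseev2012}.

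First I make the commutation condition precise. For $e\in\mathrm{sDer}$, "$e\circ\mathrm{div}=\mathrm{div}\circ e$" means that the map $\mathrm{div}:\mathrm{sDer}\to|k\langle X\rangle|$ intertwines the adjoint action $\ad_e=[e,-]$ on $\mathrm{sDer}$ with the action of $e$ on cyclic words. Written out, this is
\[
\mathrm{div}([e,u])=e\cdot\mathrm{div}(u)\qquad\text{for all } u\in\mathrm{sDer}.
\]
The cocycle identity reads
\[
\mathrm{div}([e,u])=e\cdot\mathrm{div}(u)-u\cdot\mathrm{div}(e).
\]
So the condition in Theorem \ref{th:AKKN}(1) is equivalent to $u\cdot\mathrm{div}(e)=0$ for all $u\in\mathrm{sDer}$. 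I would keep this reformulation in mind but phrase the closure argument through the intertwining property, which needs no further work.

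Now take $e_1,e_2\in\mathfrak{krv}_2$ and let $u\in\mathrm{sDer}$. By the Jacobi identity,
\[
\ad_{[e_1,e_2]}=\ad_{e_1}\ad_{e_2}-\ad_{e_2}\ad_{e_1}.
\]
The action of $\mathrm{sDer}$ on $|k\langle X\rangle|$ is a Lie algebra action, because $|-|$ is a module morphism. Together with intertwining for $e_1$ and $e_2$ this gives
\[
\mathrm{div}([[e_1,e_2],u])=e_1\cdot e_2\cdot\mathrm{div}(u)-e_2\cdot e_1\cdot\mathrm{div}(u)=[e_1,e_2]\cdot\mathrm{div}(u).
\]
Hence $[e_1,e_2]$ commutes with $\mathrm{div}$, and by Theorem \ref{th:AKKN}(1) it lies in $\mathfrak{krv}_2$. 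Since $\mathfrak{krv}_2$ is clearly a linear subspace of $\mathrm{sDer}$, this proves the Corollary.

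As a cross-check, one can run the same argument with Theorem \ref{th:AKKN}(2) and $\delta_{\mathrm{necklace}}$. By Proposition \ref{Prop:kv_right}, $H$ and $\widetilde{\Delta}$ are module maps, and intertwining again closes under commutators.

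The main obstacle is interpretive rather than computational: the phrase "$e\circ\mathrm{div}=\mathrm{div}\circ e$" has to be read as equivariance with respect to the adjoint action on the source. If it were read instead as $u\cdot\mathrm{div}(e)=0$, or as the defining condition $\mathrm{div}(e)=\sum_i|f_i(x_i)|$, I would argue directly. The cocycle identity gives
\[
\mathrm{div}([e_1,e_2])=e_1\cdot\mathrm{div}(e_2)-e_2\cdot\mathrm{div}(e_1).
\]
Each term on the right is a sum of cyclic words of the form $|f(x_i)|$ acted on by a special derivation. For tangential derivations, $e\cdot|f(x_i)|=|[x_i,a_i]f'(x_i)\text{-type terms}|$, and these vanish by cyclicity, since $|[x_i,a]x_i^n|=0$. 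So $\mathrm{div}([e_1,e_2])=0$, which is of the required form. This fallback relies only on the definition of $\mathfrak{krv}_2$, so the Corollary holds under either reading.
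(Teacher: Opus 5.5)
Your proposal is correct and follows essentially the paper's route: the paper gives no separate proof and intends the Corollary to follow from Theorem \ref{th:AKKN}. Your main argument does exactly that, by reading the condition as equivariance of $\mathrm{div}$ (or $\delta_{\mathrm{necklace}}$) and noting that elements intertwining these actions are closed under commutators; your fallback via the cocycle identity, using that special derivations kill each $|f(x_i)|$, is the standard Alekseev--Torossian argument and is also valid.
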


The (algebraic) \textit{reduced coaction} $\mu$ is a linear map from $k\langle X\rangle$ to $k\langle X\rangle$ defined as follows:
    \begin{equation*}
        \begin{split}
           & \mu(x_0)=\mu(x_1)=0, \\
           & \mu(k_1\,k_2\dots\, k_n):=\sum^{n-1}_{i=1}k_1\dots k_{i-1}(k_i\odot k_{i+1})k_{i+2}\dots k_n,
        \end{split}
    \end{equation*}
where $k_1,\dots,k_n\in \{x_0,x_1\}$ and $(x_i\odot x_{j}):=\delta_{x_i,x_j}x_i$, for $x_i,x_j\in \{x_0,x_1\}$. The reduced coaction is closely related to the divergence map, their explicit relation is the following.

\begin{Lemma}\label{lem:div_mu}
	If $|a|\in |A|/k\cdot 1$ is homogeneous of degree $m$, then
	\begin{equation*}
		\text{div}(H(|a|))=\frac{1}{m-1}| (\mu(N(|a|)))|.
	\end{equation*}
\end{Lemma}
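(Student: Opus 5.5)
The plan is a direct computation on words: express both sides through a single "merge the first two letters" operator, then use the cyclic invariance of $N(|a|)$ to compare the two.

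Fix $|a|$ homogeneous of degree $m\ge 2$ and put $b:=N(|a|)$, a cyclic invariant element of degree $m$. Let $R$ be the rotation operator on words, $R(k_1k_2\cdots k_m)=k_2\cdots k_mk_1$, extended linearly. Since $b$ is the sum of all cyclic rotations of the words of $a$, we have $R(b)=b$.

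For $1\le i\le m-1$, let $\mu_i$ be the linear operator that merges positions $i,i+1$:
\[
\mu_i(k_1\cdots k_m)=k_1\cdots k_{i-1}(k_i\odot k_{i+1})k_{i+2}\cdots k_m .
\]
With this notation $\mu=\sum_{i=1}^{m-1}\mu_i$ on degree $m$ words.

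\emph{Step 1: the left-hand side equals $|\mu_1(b)|$.} By definition $H(|a|)=(d^R_{x_0}b,\,d^R_{x_1}b)$. Hence
\[
\mathrm{div}(H(|a|))=\bigl|x_0\,d^R_0d^R_0(b)+x_1\,d^R_1d^R_1(b)\bigr|.
\]
On a single word $w=k_1k_2k_3\cdots k_m$, the expression $d^R_jd^R_j(w)$ equals $k_3\cdots k_m$ if $k_1=k_2=x_j$, and vanishes otherwise. Therefore
\[
x_0d^R_0d^R_0(w)+x_1d^R_1d^R_1(w)=\delta_{k_1,k_2}\,k_1k_3\cdots k_m=\mu_1(w),
\]
and by linearity $\mathrm{div}(H(|a|))=|\mu_1(b)|$.

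\emph{Step 2: the right-hand side equals $(m-1)|\mu_1(b)|$.} For $1\le i\le m-1$ the merged pair $k_i,k_{i+1}$ does not wrap around the end of the word. Hence $\mu_i(w)$ and $\mu_1(R^{i-1}w)$ are cyclic rotations of each other, so $|\mu_i(w)|=|\mu_1(R^{i-1}w)|$. Summing over $i$ and using $R^{i-1}b=b$ gives
\[
|\mu(b)|=\sum_{i=1}^{m-1}|\mu_1(R^{i-1}b)|=(m-1)\,|\mu_1(b)|.
\]
Combining this with Step 1 and dividing by $m-1$ yields the formula of Lemma~\ref{lem:div_mu}.

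The only points needing care are these. First, $m\ge 2$ is needed to divide by $m-1$; this holds because degree $\le 1$ classes give zero or are removed modulo $k\cdot 1$. Second, one must check the index bookkeeping in Step 2: the excluded wrap-around merge (positions $m$ and $1$) is exactly what makes the count $m-1$ rather than $m$. I expect this identification $|\mu_i(w)|=|\mu_1(R^{i-1}w)|$ to be the only step requiring attention; everything else is a direct unwinding of the definitions of $d^R$, $N$, $H$ and $\mathrm{div}$.
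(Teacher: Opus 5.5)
Your proof is correct. The paper states this lemma with no proof at all; it is only invoked later, in the proof of Theorem~\ref{th:krv_commutatvie}, as part of a ``direct computation''. Your two steps supply exactly that omitted verification: first, unwinding $d^R_jd^R_j$ gives $\mathrm{div}(H(|a|))=|\mu_1(N(|a|))|$; second, the rotation invariance $R(N(|a|))=N(|a|)$ shows that each of the $m-1$ non-wrapping merges contributes the same cyclic word.

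One small correction to your side remark. The degree-one classes $|x_0|,|x_1|$ are not removed by quotienting by $k\cdot 1$. For them both $\mathrm{div}(H(|a|))$ (because $d^R_j(1)=0$) and $\mu(N(|a|))$ vanish. So the identity holds for all $m$ in the form $(m-1)\,\mathrm{div}(H(|a|))=|\mu(N(|a|))|$, and the displayed version should simply be read for $m\ge 2$.
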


For simplification of notations, let 
$$J:=\brun_{2,2}=\ker \mathrm{pr}_3\cap \ker \mathrm{pr}_4.$$ 

There are the following split short exact sequences
\begin{align*}
    &0\to \dk_{2,2}(D)\to \mathfrak{t}_4\xrightarrow{\mathrm{pr}_3\circ \mathrm{pr}_4}kx_{12}\to 0,\\
    &0\to J\to \dk_{2,2}(D)\xrightarrow{\mathrm{pr}_3\oplus \mathrm{pr}_4} \mathbb{L}(t_{14},t_{24})\oplus \mathbb{L}(t_{13},t_{23})\to 0,\\
    &0\to J/[J,J]\to \dk_{2,2}(D)/[J,J]\to \xrightarrow{\mathrm{pr}_3\oplus \mathrm{pr}_4} \mathbb{L}(t_{14},t_{24})\oplus \mathbb{L}(t_{13},t_{23})\to 0,
\end{align*}
those short exact sequence is split with the section the identity map.

\begin{Lemma}
    The following is an isomorphism of vector space 
    \begin{equation*}
       \mathrm{La}:\quad J/[J,J]\simeq k\langle X\rangle,
    \end{equation*}
\end{Lemma}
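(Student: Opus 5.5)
The plan is to identify $J$ concretely as an ideal in a free Lie algebra and then use Lazard elimination. First I will check that $J=\ker\pr^3_4\cap\ker\pr^4_4$ sits inside $\ker\pr^4_4$. By the proof of Proposition~\ref{prop:lambda_13}, $\ker\pr^4_4$ is the free Lie algebra $\mathbb{L}(t_{14},t_{24},t_{34})$. Restricting $\pr^3_4$ to it sends $t_{14}\mapsto t_{14}$, $t_{24}\mapsto t_{24}$ and $t_{34}\mapsto 0$. Here the target is identified with $\mathbb{L}(t_{13},t_{23})\subset\mathfrak{t}_3$ after relabelling strand $4$ as $3$; it is free on the images of $t_{14}$ and $t_{24}$. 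Hence $\pr^3_4|_{\ker\pr^4_4}$ is the map $\mathbb{L}(a,b,c)\to\mathbb{L}(a,b)$ killing $c$, where $a=t_{14}$, $b=t_{24}$, $c=t_{34}$. Its kernel is the ideal generated by $t_{34}$, which matches the description $(t_{34})$ already used in the proof of Proposition~\ref{prop:lambda_13}. So $J=(t_{34})\subset\mathbb{L}(t_{14},t_{24},t_{34})$.

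Next I will invoke Lazard elimination. In a free Lie algebra $\mathbb{L}(a,b,c)$, the ideal generated by $c$ is itself a free Lie algebra. Its free generating set is
\[
\{\ad_{u_1}\ad_{u_2}\cdots\ad_{u_r}(c)\mid r\ge 0,\ u_i\in\{a,b\}\},
\]
and equivalently $(c)\cong\mathbb{L}\bigl(U\mathbb{L}(a,b)\otimes kc\bigr)=\mathbb{L}(k\langle a,b\rangle\cdot c)$, with $\mathbb{L}(a,b)$ acting by the adjoint action. For any free Lie algebra $\mathbb{L}(V)$ one has $\mathbb{L}(V)/[\mathbb{L}(V),\mathbb{L}(V)]\cong V$. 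Applying this gives
\[
J/[J,J]\cong k\langle a,b\rangle\otimes kc\cong k\langle X\rangle .
\]
Concretely, I define $\mathrm{La}$ on the classes of the free generators by
\[
\ad_{u_1}\cdots\ad_{u_r}(t_{34})\ \longmapsto\ u_1\cdots u_r,\qquad t_{14}\mapsto x_0,\ t_{24}\mapsto x_1 ,
\]
and extend linearly. Equivalently, $\mathrm{La}$ is induced by the $U\mathbb{L}(t_{14},t_{24})$-module map $J/[J,J]\to k\langle X\rangle$ sending $t_{34}\mapsto 1$.

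The main point needing care is the first step: that the kernel of $\pr^3_4$ restricted to $\ker\pr^4_4$ is exactly the ideal $(t_{34})$ and nothing larger. This requires $t_{14},t_{24}$ to map to free generators of $\mathbb{L}(t_{13},t_{23})$. I will verify this using the decomposition $\mathfrak{t}_3\simeq\mathbb{L}(t_{13},t_{23})\rtimes kt_{12}$ recalled in Section~3. Once that is settled, the rest is the standard elimination theorem together with the abelianization of a free Lie algebra. As a sanity check on the grading, the degree-$n$ part of $J/[J,J]$ should then have dimension $2^{n-1}$, matching words of length $n-1$ in $k\langle X\rangle$.
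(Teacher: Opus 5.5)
Your proposal is correct and follows the paper's route: you identify $J=(t_{34})$ inside the free Lie algebra $\ker\pr^4_4=\mathbb{L}(t_{14},t_{24},t_{34})$ and apply Lazard elimination, giving $J/[J,J]\cong k\langle t_{14},t_{24}\rangle\otimes kt_{34}\cong k\langle X\rangle$ via $\ad_{f(t_{14},t_{24})}t_{34}\mapsto f(x_0,x_1)$, exactly as the paper does. Your explicit check that $\pr^3_4$ restricted to $\ker\pr^4_4$ has kernel exactly the ideal $(t_{34})$ supplies a step the paper takes for granted.
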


\begin{proof}
By the Lazad elimination theorem, $J/[J,J]\simeq k\langle t_{14},t_{24}\rangle \otimes t_{34}\simeq k\langle X\rangle$, the first isomorphism identify the element $f(t_{14},t_{24})t_{34}$ to $\ad_{f(t_{14},t_{24})}t_{34}$, the second one identify the $f(t_{14},t_{24})t_{34}$ with $f(x_0,x_1)$  in $k\langle X\rangle$.  
\end{proof}

For any $w\in k\langle X\rangle$, let $\omega\cdot:k\langle X\rangle\to k\langle X\rangle, f\mapsto wf$ denote the left multiplication map, the vector space $J/[J,J]$ has a $k\langle X\rangle$ left strucure induced from $\omega \cdot$, on generators $w\in \{x_0,x_1\}$, it is  
\begin{equation}
    \ad_{w}:= \mathrm{La}^{-1}(w\cdot) \mathrm{La}  
\end{equation}

The vector space $k\langle X\rangle$ has a $(k\langle X\rangle\otimes k\langle X\rangle)$ bimodule structure,
\begin{equation}\label{eq:bimodule}
	   (f \otimes g) a (h \otimes k) = \varepsilon(f)\varepsilon(k) \, gah.
	\end{equation}

The map $\mathrm{La}$ intertwines the left module structure and bimodule structure in the following way. On generators $w\in \{x_0,x_1\}$, it is 
\begin{align*}
    \ad_w f\mapsto (w\otimes 1)\mathrm{La(f)}- \mathrm{La(f)}(1\otimes w)
\end{align*}

Through the isomorphism $\mathrm{La}$ and the bimodule structure, the map $\mathrm{pent}:\mathbb{L}(x_0,x_1)\to  \mathfrak{t}_3\to J/[J,J]\to k\langle X\rangle$, it is explictly calculated as follows, see Lemma 4.21 in \cite{howarth} or Proposition 4.7 \cite{Kuno2025}. 

\begin{Lemma}\label{lem:pent_J}
In $k\langle X\rangle$,
\begin{align*}
&\mathrm{La}\circ\pt^1_{krv}(\psi(t_{13},t_{23}))=(d^R_1\psi)(x_0+x_1,0)-d^R_{1}(\psi)-d^R_1(\psi)(x_1,0)-\mu(\psi).\\
&\mathrm{La}\circ\mathrm{pent}^1_{krv}(\psi(t_{12},t_{23}))=(d^R_1\psi(-x_0-x_1,x_1))(x_0+x_1,0)-d^R_{1}(\psi(-x_0-x_1,x_1))\\
&=-d^R_1(\psi(-x_0-x_1,x_1))(x_1,0)-\mu(\psi(-x_0-x_1,x_1)).
\end{align*}        
\end{Lemma}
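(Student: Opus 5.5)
The plan is to compute the five terms of $\pt(\psi(t_{13},t_{23}))$ one at a time modulo $[J,J]$, and then transport the result to $k\langle X\rangle$ through $\mathrm{La}$. The second formula will follow from the first by the earlier lemma $\pt(\varphi(t_{12},t_{23}))=\pt(\varphi(-t_{13}-t_{23},t_{23}))$, applied with $\psi$ replaced by $\psi(-x_0-x_1,x_1)$. So everything reduces to the $(t_{13},t_{23})$ formula.

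\textbf{Step 1: isolate the $J$-component.} The total lies in $J$ by Lemma \ref{lem:projection}, and we have the split exact sequence
\[
0\to J/[J,J]\to \dk_{2,2}(D)/[J,J]\to \mathbb{L}(t_{14},t_{24})\oplus\mathbb{L}(t_{13},t_{23})\to 0 .
\]
Hence the $J/[J,J]$-component of $\pt$ is the sum over the five terms of (term minus its $\mathrm{pr}_3$- and $\mathrm{pr}_4$-parts).

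Inside $\ker\mathrm{pr}_4=\mathbb{L}(t_{14},t_{24},t_{34})$ the ideal $J$ is generated by $t_{34}$. Modulo $[J,J]$, only the part linear in $t_{34}$ survives. By Lazard elimination, a word $f(t_{14},t_{24})$ acting by $\ad$ on $t_{34}$ corresponds under $\mathrm{La}$ to $f(x_0,x_1)$. Writing $\psi=x_0d^R_0\psi+x_1d^R_1\psi$ and extracting the linear-in-$t_{34}$ part should give the following contributions:
\begin{itemize}
\item $\psi(t_{24},t_{34})$ contributes $-d^R_1(\psi)(x_1,0)$, after the sign change coming from converting left to right derivatives via the bimodule structure \eqref{eq:bimodule}.
\item $-\psi(t_{14}+t_{24},t_{34})$ contributes $(d^R_1\psi)(x_0+x_1,0)$, with the same sign bookkeeping.
\item $\psi(t_{14},t_{24}+t_{34})$ contributes the term linear in the second slot, which is $-d^R_1\psi$.
\end{itemize}

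\textbf{Step 2: the terms involving $t_{13},t_{23}$.} The pair $\psi(t_{13},t_{23})-\psi(t_{13}+t_{14},t_{23}+t_{24})$ lies in $\ker\mathrm{pr}_3$ only after combination, so it must be pushed into $\mathbb{L}(t_{14},t_{24},t_{34})$. I will use the $\mathfrak t_4$ relations $[t_{13},t_{14}+t_{34}]=0$, $[t_{23},t_{24}+t_{34}]=0$, $[t_{13},t_{24}]=0$ and $[t_{23},t_{14}]=0$. These determine $\ad_{t_{13}},\ad_{t_{23}}$ on $J/[J,J]$ in terms of $\ad_{t_{14}},\ad_{t_{24}}$ and $t_{34}$.

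I then expand $\psi(t_{13}+t_{14},t_{23}+t_{24})$ word by word and keep the terms with exactly one $t_{34}$-insertion modulo $[J,J]$. A consecutive pair of equal letters $(t_{i3},t_{i4})$ collapses through these relations. This collapse is exactly the contraction $k_i\odot k_{i+1}=\delta_{k_i,k_{i+1}}k_i$, so summing over positions should produce $-\mu(\psi)$.

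\textbf{Main obstacle.} The hard part is Step 2: getting the exact signs, and checking that mixed pairs ($t_{13}$ next to $t_{24}$) cancel rather than leave residual terms. My first approach is to verify the identity on low-degree words, say $\psi=[x_0,[x_0,x_1]]$, against the bimodule rule $\ad_w f\mapsto (w\otimes1)f-f(1\otimes w)$. After that I will run an induction on word length using the derivation property of $\pt$ with respect to brackets modulo $[J,J]$. If needed, I will fall back on the computation cited from \cite{howarth} (Lemma 4.21) or \cite{Kuno2025} (Proposition 4.7).
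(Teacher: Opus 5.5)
Your skeleton is sound: split off the $J/[J,J]$-component of each of the five terms via the section, keep the part linear in $t_{34}$, and deduce the second formula from the first via $\varphi(t_{12},t_{23})=\varphi(-t_{13}-t_{23},t_{23})$. The paper itself does no computation here; it quotes \cite{howarth} (Lemma 4.21) and \cite{Kuno2025} (Proposition 4.7). As a proof, your proposal has two genuine gaps.

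The first is the sign bookkeeping in Step 1, which you assert rather than derive, and the stated justification is wrong. With the normalization $\mathrm{La}(\ad_f t_{34})=f$ that you and the paper use, $\psi(t_{24},t_{34})$ maps to $(d^L_1\psi)(x_1,0)$. For a Lie element, $d^L_1\psi=S(d^R_1\psi)$ with $S$ the antipode, not $-d^R_1\psi$. For $\psi=[x_0,[x_0,x_1]]$ you get $x_1^2$, not $-x_1^2$. Carried out with this normalization, the computation gives
\[
\mathrm{La}\,\pt(\psi(t_{13},t_{23}))=d^L_1\psi+(d^L_1\psi)(x_1,0)-(d^L_1\psi)(x_0+x_1,0)+\mu(\psi).
\]
The displayed formula of the lemma is $-S$ of this. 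To see it, use that $S$ commutes with linear substitutions, and that $\mu\circ S=-S\circ\mu$, so $\mu(\psi)$ is $S$-invariant for Lie $\psi$. Your four claimed contributions are exactly those for the normalization $-S\circ\mathrm{La}$, so you must fix that normalization explicitly; this is harmless for the vanishing conditions used later. The bimodule rule \eqref{eq:bimodule} cannot supply the sign: as written it gives $(w\otimes 1)a=0$ for every letter $w$.

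The second and main gap is Step 2, which is the whole content of the lemma. You leave it as a heuristic plus a fallback to the citation. The proposed induction via a ``derivation property of $\pt$'' has nothing to run on, since $\pt$ is linear but not a derivation.

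What works is an induction on the defect
\[
\Psi(\psi)=\psi(t_{13}+t_{14},t_{23}+t_{24})-\psi(t_{14},t_{24})-\psi(t_{13},t_{23}),
\]
which is exactly the $J$-component of minus the fifth term.
\begin{enumerate}
\item You need one relation beyond those you listed, $[t_{34},t_{13}+t_{14}]=0$. Combined with $[t_{13},t_{14}+t_{34}]=0$, it gives $[t_{13},t_{34}]=[t_{34},t_{14}]$ and $[t_{13},t_{14}]=[t_{14},t_{34}]$, and similarly for $t_{23}$.
\item Under $\mathrm{La}$, $t_{14},t_{24}$ act on $J/[J,J]$ by left multiplication by $x_0,x_1$, while $t_{13},t_{23}$ act by right multiplication by $-x_0,-x_1$.
\item The cross brackets satisfy $\mathrm{La}[b(t_{13},t_{23}),a(t_{14},t_{24})]=\sum_i d^L_i(a)\,x_i\,d^R_i(b)$; here $[t_{13},t_{24}]=[t_{23},t_{14}]=0$ kill the mixed letters.
\item Expanding $[\psi',\psi''](t_{13}+t_{14},t_{23}+t_{24})$ then gives
\[
\Psi([\psi',\psi''])=[\psi',\Psi(\psi'')]-[\psi'',\Psi(\psi')]-\sum_{i}\left(d^L_i\psi'\,x_i\,d^R_i\psi''-d^L_i\psi''\,x_i\,d^R_i\psi'\right).
\]
\item Since $\mu(uv)=\mu(u)v+u\mu(v)+\sum_i d^L_i(u)\,x_i\,d^R_i(v)$ and $\Psi(x_i)=0=\mu(x_i)$, induction on bracket length yields $\mathrm{La}\,\Psi(\psi)=-\mu(\psi)$.
\end{enumerate}
This makes your ``collapse of equal letters'' heuristic precise: the contraction $\odot$ comes from the cross brackets between the $t_{i3}$-parts and the $t_{i4}$-parts.
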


The vector space $J/[J,J]$ contains a vector subspace
\begin{align*}
W_J:=\{f(\ad_{t_{24}})t_{34}|f\in k[x]\},    
\end{align*}
the abelianization map in defined as
\begin{align*}
(-)^{\mathrm{ab}}_J:&\mathbb{L}(x_0,x_1)\to \mathfrak{t}_3\xrightarrow{ (-)_{234}} W_J \subset J/[J,J]\\
&\psi\mapsto\psi(t_{13},t_{23})\mapsto \psi(t_{24},t_{34})
\end{align*}

Using the isomorphism $W_J\simeq k[x]$, the image of $(-)^{\mathrm{ab}}_J$ is in $k[x]$, for later use, we also introduce the maps
\begin{align*}
    &\delta_0:k[x]\to k\langle X\rangle, f(x)\mapsto x_0f(x_0+x_1)-x_0f(x_0),\\
    &\delta_1:k[x]\to k\langle X\rangle, f(x)\mapsto x_1f(x_0+x_1)-x_1f(x_1),\\    &\delta_{\mathrm{sym}}:k[x]\to k\langle X\rangle, f(x)\mapsto (x_0+x_1)f(x_0+x_1)-x_0f(x_0)-x_1f(x_1),
\end{align*}
and the maps
\begin{align*}
    &\tp_1:=x_1\cdot\mathrm{La}\circ\pt^1_{krv}-\delta_1\circ(-)^{\mathrm{ab}}_J,\quad \tp_0:=\sigma_{0,1}\circ(x_1\cdot\mathrm{La}\circ\pt^1_{krv})-\delta_0\circ(-)^{\mathrm{ab}}_J\\
    &\tp_{\mathrm{sym}}:=\tp_0+\tp_1
\end{align*}

Recall that $\mathbb{L}^{\mathrm{sym}}(x_0,x_1)$ is a $(\mathbb{L}^{\mathrm{sym}}(x_0,x_1),[-,-]_{\mathrm{Ihara}})$ Lie algebra module with the adjoint action. And $|k\langle X\rangle|/k\cdot 1$ is a $(\mathbb{L}^{\mathrm{sym}}(x_0,x_1),[-,-]_{\mathrm{Ihara}})$ Lie algebra module, for any $a$ in $|k\langle X\rangle|/k\cdot 1$, the module structure is defined by
\begin{align*}
    \psi \cdot a:=\{H\circ \mathrm{sd}(\psi),a\}_{\mathrm{necklace}}.
\end{align*}

\begin{Th}\label{th:krv_commutatvie}
For an homogeneous element $\psi$ of degree $m$ in $\mathbb{L}^{\mathrm{sym}}(x_0,x_1)$, the following diagram commutes, the morphisms are $(\mathbb{L}^{\mathrm{sym}}(x_0,x_1),\langle-,-\rangle_{\mathrm{Ihara}})$ Lie algebra module map.
    \begin{equation*}
    \begin{tikzcd}
     \mathbb{L}^{\mathrm{sym}}(x_0,x_1)\arrow{r}{\mathrm{sd}}\arrow{d}{\tp_{\mathrm{sym}}}&\mathrm{sDer}(\mathbb{L}(x_0,x_1))\arrow{d}{\circ_+(\tp_1,\tp_0)}\arrow{r}{\id}&\mathrm{sDer}(\mathbb{L}(x_0,x_1))\arrow{d}{\mathrm{div}}\arrow{r}{\mathrm{H}}&\mid k\langle X\rangle \mid /k\cdot 1\arrow{d}{\delta_{\mathrm{necklace}}}\\[3em]
     k\langle X\rangle\arrow{r}{\id}&k\langle X\rangle \arrow{r}{\frac{1}{m}|-|}& \mid k\langle X\rangle\mid \arrow{r}{\widetilde{\Delta}}&\mid k\langle X\rangle \mid \wedge \mid k\langle X\rangle \mid
    \end{tikzcd}
    \end{equation*}
\end{Th}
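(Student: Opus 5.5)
The diagram has three squares, and I will treat them separately. The right-hand square is exactly Proposition \ref{Prop:kv_right}: it says $\widetilde{\Delta}\circ\mathrm{div}=\delta_{\mathrm{necklace}}\circ H$ on $\mathrm{sDer}(\mathbb{L}(x_0,x_1))$, and that both $H$ and $\widetilde{\Delta}$ are module maps. So the work is concentrated in the first two squares.

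\textbf{Middle square.} Here I must show that for a special derivation $u=(a_0,a_1)=\mathrm{sd}(\psi)$ of degree $m$ we have
\[
\mathrm{div}(u)=\tfrac{1}{m}\,\big|\circ_+(\tp_1,\tp_0)(u)\big|,
\]
where $\circ_+(\tp_1,\tp_0)(u)$ is $\tp_1$ applied to the $x_1$-component plus $\tp_0$ applied to the $x_0$-component. I will proceed as follows.
\begin{enumerate}
\item Use Lemma \ref{lem:pent_J} to write $x_1\cdot\mathrm{La}\circ\pt^1_{krv}(\psi(t_{12},t_{23}))$ explicitly. With $a_1=\psi(-x_0-x_1,x_1)$ it equals
\[
x_1(d^R_1a_1)(x_0+x_1,0)\;-\;x_1d^R_1(a_1)\;-\;x_1(d^R_1a_1)(x_1,0)\;-\;x_1\mu(a_1).
\]
\item Identify the two boundary terms $x_1(d^R_1a_1)(x_0+x_1,0)$ and $x_1(d^R_1a_1)(x_1,0)$ with $\delta_1$ applied to the one-variable abelianization $(\psi)^{\mathrm{ab}}_J$, and show that they cancel against $\delta_1\circ(-)^{\mathrm{ab}}_J$. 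The $\tp_0$ part is handled the same way by applying $\sigma_{0,1}$.
\item After this cancellation, $\tp_{\mathrm{sym}}$ reduces, modulo terms of the shape $|x_if(x_i)|$ and $|(x_0+x_1)f(x_0+x_1)|$, to
\[
-x_0d^R_0(a_0)-x_1d^R_1(a_1)-\big(x_0\mu(a_0)+x_1\mu(a_1)\big).
\]
This uses the symmetry $\sigma_{0,1}^{-1}\circ u\circ\sigma_{0,1}=u$ from $\mathbb{L}^{\mathrm{sym}}$.
\item Take cyclic classes and compare with $\mathrm{div}(u)=|x_0d^R_0a_0+x_1d^R_1a_1|$ using Lemma \ref{lem:div_mu}, $\mathrm{div}(H|a|)=\frac{1}{m-1}|\mu N(|a|)|$, with Hamiltonian $|x_0a_0+x_1a_1|$ of degree $m$. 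Since $|N(|a|)|=m|a|$ and cyclic rotation intertwines $\mu$ with the derivatives $d^R_i$, the $\mu$-terms and $d^R$-terms should combine into $m\cdot\mathrm{div}(u)$. This is where the normalization $\frac1m$ comes from.
\end{enumerate}

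\textbf{Left square.} This is essentially definitional. $\circ_+(\tp_1,\tp_0)(\mathrm{sd}\,\psi)$ is precisely $\tp_1(\psi)+\tp_0(\psi)=\tp_{\mathrm{sym}}(\psi)$, once one checks that $\tp_0$ evaluated on the $x_0$-component $\psi(-x_0-x_1,x_0)$ is $\sigma_{0,1}$ of the $x_1$ computation. That check uses $\mathrm{sd}(\psi)\in\mathrm{sDer}$.

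\textbf{Equivariance.} Commutativity together with the module property of $\mathrm{sd}$, $H$ and $\widetilde{\Delta}$ (Proposition \ref{Prop:kv_right}) transfers the module structure. $\mathrm{div}$ is a 1-cocycle rather than a module map, so I would deduce equivariance of the bottom row from injectivity of $\widetilde{\Delta}$ together with the necklace-module structure on $|k\langle X\rangle|/k\cdot1$. If needed, I would absorb the cocycle correction into the boundary terms $\delta$.

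\textbf{Main obstacle.} I expect the hardest step to be step 4: matching $|x_1\mu(a_1)+x_0\mu(a_0)|$ together with the $d^R$ terms against $m\cdot\mathrm{div}(u)$ exactly, with no leftover $|f(x_i)|$ or $|f(x_0+x_1)|$ pieces. I would first test it on low-degree examples, e.g.\ $\psi=[x_0,[x_0,x_1]]$. I would then prove it by expanding $N(|x_0a_0+x_1a_1|)$ word by word, tracking which cyclic rotations produce adjacent equal letters for $\mu$.
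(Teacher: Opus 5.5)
Your decomposition is the paper's. The right square is Proposition \ref{Prop:kv_right}, and the other two squares are a direct computation from Lemma \ref{lem:pent_J} and Lemma \ref{lem:div_mu}. Steps 1--3 are the right computation.

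\textbf{The gap is Step 4.} It carries the entire content of the middle square, and you only assert that the terms ``should combine''. The bookkeeping you sketch for it is also off.
\begin{itemize}
\item The Hamiltonian $|x_0a_0+x_1a_1|$ has degree $m+1$, not $m$. The factor $\frac1m$ comes from the $\frac{1}{\deg-1}$ in Lemma \ref{lem:div_mu}, not from $|N(|a|)|=m|a|$. With degree $m$, the lemma would give $\frac{1}{m-1}$.
\item You use the hypothesis $\psi\in\mathbb{L}^{\mathrm{sym}}$ in the wrong place. Step 3 and the left square only need $\sigma_{0,1}(a_1)=a_0$, $\sigma_{0,1}d^R_1=d^R_0\sigma_{0,1}$ and $\sigma_{0,1}\mu=\mu\sigma_{0,1}$, which hold for every $\psi$ by the definition of $\mathrm{sd}$. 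Speciality is needed precisely in Step 4.
\item Do not work ``modulo $|x_if(x_i)|$, $|(x_0+x_1)f(x_0+x_1)|$''. Those terms are exactly the Kashiwara--Vergne ambiguity. The $\delta_i\circ(-)^{\mathrm{ab}}_J$ corrections are built to cancel the boundary terms exactly, so no such quotient is needed.
\end{itemize}

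\textbf{The missing idea is short.}
\begin{enumerate}
\item From the definition of $\mu$, for any $a$ without constant term, $\mu(x_ia)=x_i\,d^R_i(a)+x_i\,\mu(a)$. The only new adjacent pair is $x_i$ followed by the first letter of $a$.
\item Hence your Step 3 expression is exactly $-\mu(w)$ with $w:=x_0a_0+x_1a_1$. No tracking of cyclic rotations is needed.
\item Speciality $[x_0,a_0]+[x_1,a_1]=0$ means $w=a_0x_0+a_1x_1$. So $c_{x_iv}(w)=c_v(a_i)=c_{vx_i}(w)$ for every word $v$, and $w$ is rotation invariant.
\item Therefore $w=N(h)$ with $h=\frac{1}{m+1}|w|$ of degree $m+1$, and $H(h)=(d^R_0w,d^R_1w)=(a_0,a_1)=\mathrm{sd}(\psi)$.
\item Lemma \ref{lem:div_mu} then gives $|\mu(w)|=m\,\mathrm{div}(\mathrm{sd}\,\psi)$ exactly.
\end{enumerate}

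\textbf{Watch the sign.} With Lemma \ref{lem:pent_J} as you transcribed it, this yields $\frac1m|\tp_{\mathrm{sym}}(\psi)|=-\mathrm{div}(\mathrm{sd}\,\psi)$, not $+\mathrm{div}$. The paper's display there is garbled, so you must fix that sign convention for the square to commute on the nose rather than up to sign.

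\textbf{Equivariance.} ``Absorbing the cocycle correction into the $\delta$ terms'' is not a valid move. $\mathrm{div}$ and $\delta_{\mathrm{necklace}}$ are $1$-cocycles. They intertwine the actions only when the divergences involved are killed by the action, for example on $\mathfrak{krv}_2$. The paper gives no argument for this module-map clause either.
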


\begin{proof}
The commutativity of the right most square follows from the Proposition \ref{Prop:kv_right}. 

The commtuativity of the Left most square and the middle square is by direct computation using Lemma \ref{lem:pent_J} and \ref{lem:div_mu}.
\end{proof}

\begin{Lemma}
The map $\mathrm{sd}\circ H:\mathbb{L}^{\mathrm{sym}}(x_0,x_1)\to |k\langle X\rangle|/k\cdot 1$ is an injective $(\mathbb{L}^{\mathrm{sym}}(x_0,x_1),\langle-,-\rangle_{\mathrm{Ihara}})$ Lie algebra module map. 
\end{Lemma}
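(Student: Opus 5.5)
The statement has three parts: the composite is a module map, it is well defined, and it is injective. I read the map as first applying $\mathrm{sd}$ and then $H$, so it is $H\circ\mathrm{sd}:\mathbb{L}^{\mathrm{sym}}(x_0,x_1)\to |k\langle X\rangle|/k\cdot 1$. I treat each property separately, using the two facts already available: $\mathrm{sd}$ is injective on $\mathbb{L}^{\mathrm{sym}}(x_0,x_1)$ with image in $\mathrm{sDer}(\mathbb{L}(x_0,x_1))$, and $H$ is a Lie algebra isomorphism between $(|A|/k\cdot 1,\{-,-\}_{\mathrm{necklace}})$ and $(\mathrm{sDer}(A),[-,-])$ (\cite{GTgenus0}, Lemma 8.3).

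\emph{Injectivity.} By the lemma on $\mathrm{sd}$, its restriction to $\mathbb{L}^{\mathrm{sym}}(x_0,x_1)$ is injective into $\mathrm{sDer}(\mathbb{L}(x_0,x_1))\subset \mathrm{sDer}(A)$. To apply $H$ here I need it in the direction $\mathrm{sDer}(A)\to|A|/k\cdot 1$. Lemma 8.3 names $H$ for the other direction, so I use its inverse, $u=(a_1,a_2)\mapsto |x_0a_1+x_1a_2|$, which is also an isomorphism. A composite of injections is injective. Concretely, if $|x_0\psi(-x_0-x_1,x_0)+x_1\psi(-x_0-x_1,x_1)|=0$ modulo constants, then $\mathrm{sd}(\psi)=0$ and so $\psi=0$.

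\emph{Module map.} The Ihara bracket on $\mathbb{L}^{\mathrm{sym}}(x_0,x_1)$ is, by construction, the bracket transported along $\mathrm{sd}$: $\mathrm{sd}\langle\psi_1,\psi_2\rangle_{\mathrm{Ihara}}=[\mathrm{sd}\psi_1,\mathrm{sd}\psi_2]$. I would check this from the definitions first, since this is exactly what makes $\mathrm{sd}$ a Lie algebra map. Composing with the Lie isomorphism $H^{-1}$ then gives
\[
H^{-1}\mathrm{sd}\langle\psi_1,\psi_2\rangle_{\mathrm{Ihara}}=\{H^{-1}\mathrm{sd}\psi_1,H^{-1}\mathrm{sd}\psi_2\}_{\mathrm{necklace}}=\psi_1\cdot H^{-1}\mathrm{sd}(\psi_2),
\]
and the right-hand side is exactly the module structure on $|k\langle X\rangle|/k\cdot 1$ defined just before Theorem \ref{th:krv_commutatvie}, where the paper writes the action with $H$.

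\emph{Main obstacle.} The key step is showing that $\mathrm{sd}$ intertwines the Ihara bracket with the commutator of special derivations, and that the image is closed under the bracket. The first thing I would try is to take this as the definition of $\langle-,-\rangle_{\mathrm{Ihara}}$ on $\mathbb{L}^{\mathrm{sym}}$, using the standard identity that the Ihara bracket corresponds to the bracket of derivations $x_0\mapsto[x_0,\cdot]$. The change of variables $x\mapsto -x_0-x_1$ appearing in $\mathrm{sd}$ is an automorphism, so it does not affect this. For closure I would use that $\mathrm{sDer}^{\mathrm{sym}}$ is closed under commutators, because conjugation by $\sigma_{0,1}$ is an automorphism.
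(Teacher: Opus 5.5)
Your injectivity argument is exactly the paper's proof. That proof is the single sentence that $\mathrm{sd}$ is injective and $H$ is an isomorphism. The paper does not argue the module-map property at all; it treats it as built into the definition $\psi\cdot a=\{H\circ\mathrm{sd}(\psi),a\}_{\mathrm{necklace}}$. You are right that this tacitly requires two things: $\mathbb{L}^{\mathrm{sym}}$ must be closed under $\langle-,-\rangle_{\mathrm{Ihara}}$, and $\mathrm{sd}$ must carry the Ihara bracket to the commutator. Your justification of that step has a gap.

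Write elements of the source as $\psi(x,y)$. Taking the transported bracket as the definition of $\langle-,-\rangle_{\mathrm{Ihara}}$ on $\mathbb{L}^{\mathrm{sym}}$ is circular. The Ihara bracket is a fixed bracket on all of $\mathbb{L}(x,y)$, and the content of the claim is that it agrees with the transported one.

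More seriously, the claim that the substitution $x\mapsto -x_0-x_1$ ``does not affect this'' is false. It would make $\mathrm{sd}$ a Lie homomorphism on all of $\mathbb{L}(x,y)$, and it is not. For $\psi_1=x\in\mathbb{L}^{\mathrm{sym}}$ and $\psi_2=[x,y]$ one has $\langle\psi_1,\psi_2\rangle_{\mathrm{Ihara}}=0$. But $\mathrm{sd}(\psi_1)=\mathrm{ad}_{x_0+x_1}$, and $[\mathrm{sd}(\psi_1),\mathrm{sd}(\psi_2)]=-\mathrm{ad}_w$ with $w=[x_0,[x_0,x_1]]-[x_1,[x_0,x_1]]\neq 0$.

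The reason is that $\mathrm{sd}(\psi)$ is tangential in both generators and uses two different substitutions. The Ihara derivation $d_\psi$ of $\mathbb{L}(x,y)$, given by $x\mapsto 0$, $y\mapsto[y,\psi]$ (so that $[d_{\psi_1},d_{\psi_2}]=d_{\langle\psi_1,\psi_2\rangle}$), instead kills a generator. Take $\alpha:x\mapsto -x_0-x_1$, $y\mapsto x_1$. The conjugate $\alpha d_\psi\alpha^{-1}$ sends $x_1\mapsto[x_1,\psi(-x_0-x_1,x_1)]$ and $x_0\mapsto-[x_1,\psi(-x_0-x_1,x_1)]$. It therefore equals $\mathrm{sd}(\psi)$ exactly when $[x_0,\psi(-x_0-x_1,x_0)]+[x_1,\psi(-x_0-x_1,x_1)]=0$, i.e.\ when $\psi\in\mathbb{L}^{\mathrm{sym}}$. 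So the defining condition of $\mathbb{L}^{\mathrm{sym}}$ is the missing input, not a change-of-variables remark.

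Your closure argument has a related gap. Closure of $\mathrm{sDer}^{\mathrm{sym}}$ under commutators is not enough by itself. You would also need $\mathrm{sd}(\mathbb{L}^{\mathrm{sym}})=\mathrm{sDer}^{\mathrm{sym}}$, which is true since $(u,\sigma_{0,1}(u))=\mathrm{sd}(\psi)$ for $\psi(a,b)=u(b,-a-b)$. Even then you only get closure under the transported bracket.

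A clean finish is as follows. Set $\beta=\sigma_{0,1}\circ\alpha$. Then $\psi\in\mathbb{L}^{\mathrm{sym}}$ if and only if $\alpha d_\psi\alpha^{-1}=\beta d_\psi\beta^{-1}$, i.e.\ if and only if $\alpha d_\psi\alpha^{-1}$ is $\sigma_{0,1}$-invariant; this is where your $\sigma_{0,1}$ idea belongs. Hence $\mathbb{L}^{\mathrm{sym}}$ is the equalizer of two Lie homomorphisms out of $(\mathbb{L}(x,y),\langle-,-\rangle_{\mathrm{Ihara}})$, so it is a subalgebra. On it, $\mathrm{sd}=\alpha d_{(-)}\alpha^{-1}$ is a Lie homomorphism. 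Composing with the Lie isomorphism $H$ then gives your displayed identity.
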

\begin{proof}
    $\mathrm{sd}$ is injective and $H$ is an isomorphism.
\end{proof}

\begin{Lemma}
    The map $\tilde{\Delta}\circ |-|:k\langle X\rangle \to \mid k\langle X\rangle \mid\wedge \mid k\langle X\rangle \mid   $ is a $(\mathbb{L}^{\mathrm{sym}}(x_0,x_1),\langle-,-\rangle_{\mathrm{Ihara}})$ Lie algebra module map.
\end{Lemma}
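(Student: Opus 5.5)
The plan is to write $\widetilde{\Delta}\circ|-|$ as a composition of maps that are each equivariant for the special derivation Lie algebra. Then pull everything back along $\mathrm{sd}$.

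\textbf{Step 1: fix the module structures.} On $k\langle X\rangle$, the element $\psi\in\mathbb{L}^{\mathrm{sym}}(x_0,x_1)$ acts by the derivation $\mathrm{sd}(\psi)$. On $|k\langle X\rangle|$ it acts by the induced endomorphism. On $|k\langle X\rangle|\wedge|k\langle X\rangle|$ it acts diagonally, $u\cdot(a\wedge b)=u(a)\wedge b+a\wedge u(b)$.

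Two compatibility facts are needed:
\begin{itemize}
\item $\mathrm{sd}\colon(\mathbb{L}^{\mathrm{sym}},\langle-,-\rangle_{\mathrm{Ihara}})\to(\mathrm{sDer}(\mathbb{L}(x_0,x_1)),[-,-])$ is a Lie algebra morphism. This is the standard identity $\mathrm{sd}(\langle\psi,\psi'\rangle)=[\mathrm{sd}(\psi),\mathrm{sd}(\psi')]$. I would check it on generators, using that both sides are tangential derivations determined by their values on $x_0$ and $x_1$.
\item This action on cyclic words agrees with the one defined before Theorem~\ref{th:krv_commutatvie}, namely $\psi\cdot a=\{H(\mathrm{sd}\psi),a\}_{\mathrm{necklace}}$. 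This follows from the defining property of the Hamiltonian isomorphism $H$ (\cite{GTgenus0}, Lemma 8.3): the necklace bracket with $H(u)$ is the action of the derivation $u$ on $|k\langle X\rangle|$.
\end{itemize}
Since module maps pull back along Lie algebra morphisms, it then suffices to prove that $\widetilde{\Delta}\circ|-|$ is an $\mathrm{sDer}$-module map.

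\textbf{Step 2: equivariance of $|-|$.} A derivation $u$ maps $[k\langle X\rangle,k\langle X\rangle]$ into itself, because $u([f,g])=[u f,g]+[f,u g]$. Hence $|u(f)|=u(|f|)$, which is the statement already recorded in the paper that $|-|$ is a $\mathrm{Der}$-module morphism.

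\textbf{Step 3: equivariance of $\widetilde{\Delta}$.} Write $u=\mathrm{sd}(\psi)$. Its values $u(x_i)=[x_i,a_i]$ are Lie elements, hence primitive. Two consequences follow:
\begin{itemize}
\item $(u\otimes1+1\otimes u)\circ\Delta=\Delta\circ u$. Both sides are derivations into $k\langle X\rangle^{\otimes 2}$ and they agree on $x_0,x_1$.
\item $S\circ u=u\circ S$. Both sides are anti-derivations and they agree on generators, since $S$ is $-1$ on Lie elements.
\end{itemize}
Therefore $(1\otimes S)\Delta$ intertwines $u$ with $u\otimes1+1\otimes u$. This identity descends to cyclic words in each tensor factor and to the antisymmetrization. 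This recovers the equivariance part of Proposition~\ref{Prop:kv_right}, which I would simply cite.

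The composite of these equivariant maps is the claimed module map. The step I expect to need the most care is Step 1: checking that the Ihara bracket matches the commutator of the derivations $\mathrm{sd}(\psi)$ on the symmetric subspace, and reconciling the necklace-bracket definition of the action with the derivation action. I would first try citing the known Ihara/$\mathfrak{sder}$ compatibility from \cite{Alekseev2012}, and only otherwise verify it on generators.
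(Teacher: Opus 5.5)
Your proposal is correct and follows essentially the route the paper leaves implicit (the lemma is stated there without proof): $|-|$ is a $\mathrm{Der}$-module map, $\widetilde{\Delta}$ is an $\mathrm{sDer}$-module map by Proposition~\ref{Prop:kv_right}, and everything is pulled back along $\mathrm{sd}$. Your Step 1 makes explicit what the paper takes for granted. For the Ihara compatibility, with the usual sign convention, note that for $\psi\in\mathbb{L}^{\mathrm{sym}}(x_0,x_1)$ the derivation $\mathrm{sd}(\psi)$ is exactly Ihara's derivation $a\mapsto 0$, $b\mapsto[b,\psi(a,b)]$, both in the free generators $(a,b)=(-x_0-x_1,x_0)$ and in $(a,b)=(-x_0-x_1,x_1)$; the identity on $x_0$ and $x_1$ is then immediate, and it also shows that $\mathbb{L}^{\mathrm{sym}}(x_0,x_1)$ is closed under the Ihara bracket.
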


\begin{Def}
    The symmetric part of the Kashiwara-Vergne Lie algebra $\mathfrak{krv}^{\mathrm{sym}}_2$ is the invariant Lie subalgebra of $\mathfrak{krv}_2$ by the involution
    \begin{align*}
        \Theta:(u(x_0,x_1),v(x_0,x_1))\mapsto (v(x_0,x_1),u(x_0,x_1))
    \end{align*}
\end{Def}

\begin{Th}
    \begin{enumerate}
        \item For $e\in \mathbb{L}(x_0,x_1)$, $e\in \mathfrak{krv}^{\mathrm{sym}}_2$ if and only if
        \begin{align*}
            \tp_{\mathrm{sym}}\cdot\ad_e-\mathrm{sd}(e)\circ \tp_{\mathrm{sym}}\in \ker \mid-\mid.
        \end{align*}
        \item  $u\in \mathrm{sDer}(\mathbb{L}(x_0,x_1))$, $u\in \mathfrak{krv}_2$ if and only if 
        \begin{align*}
         \left(\circ_+(\tp_1,\tp_0)\right)\circ u-u\circ  \left(\circ_+(\tp_1,\tp_0)\right)\in \ker |-|
        \end{align*}
    \end{enumerate}
\end{Th}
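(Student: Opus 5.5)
The plan is to deduce both parts from Theorem \ref{th:krv_commutatvie} together with Theorem \ref{th:AKKN}. The key fact I will use is that in the commutative diagram, the composite along the bottom row from $k\langle X\rangle$ to $|k\langle X\rangle|\wedge|k\langle X\rangle|$ is $\widetilde{\Delta}\circ\frac{1}{m}|-|$. The map $\widetilde{\Delta}$ is injective, so the only loss of information in the bottom row comes from the cyclic projection $|-|$. This explains why the statements are phrased modulo $\ker|-|$.

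For part (2), take $u\in\mathrm{sDer}(\mathbb{L}(x_0,x_1))$. Theorem \ref{th:AKKN}(1) says $u\in\mathfrak{krv}_2$ if and only if $u\circ\mathrm{div}=\mathrm{div}\circ u$, with both sides read as maps on $\mathrm{sDer}$ (i.e.\ the cocycle condition $\mathrm{div}[u,v]=u\cdot\mathrm{div}(v)-v\cdot\mathrm{div}(u)$ forces $\mathrm{div}(u)$ to be of the form $\sum|f_i(x_i)|$).

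The middle square of Theorem \ref{th:krv_commutatvie} gives $\mathrm{div}=\frac{1}{m}|-|\circ\circ_+(\tp_1,\tp_0)$ on homogeneous elements of degree $m$. I will substitute this into the commutation relation, working degree by degree to absorb the factor $\frac1m$. Here the degree shifts must be tracked carefully: $u\circ$ raises degree by $\deg u$, so the factors $\frac1m$ differ on the two sides. I plan to handle this by multiplying through by the appropriate degrees, using that $|-|$ is a $\mathrm{Der}$-module map.

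Since $|-|$ is a $\mathrm{Der}$-module morphism, it commutes with $u$. The relation then becomes
\[
\bigl|\circ_+(\tp_1,\tp_0)\circ u-u\circ\circ_+(\tp_1,\tp_0)\bigr|=0,
\]
which is exactly membership in $\ker|-|$.

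Part (1) follows by restricting along $\mathrm{sd}$. For $e\in\mathbb{L}^{\mathrm{sym}}$, the left square identifies $\circ_+(\tp_1,\tp_0)\circ\mathrm{sd}$ with $\tp_{\mathrm{sym}}$. The action of $\mathrm{sd}(e)$ on $\mathbb{L}^{\mathrm{sym}}$ is the Ihara bracket $\ad_e$, by the module-map statement. I will then check that $\mathrm{sd}(e)\in\mathfrak{krv}_2$ is equivalent to the relation in (1). For the symmetric part, $\Theta$-invariance of $\mathrm{sd}(e)=(e(-x_0-x_1,x_0),e(-x_0-x_1,x_1))$ is automatic from its form.

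The main obstacle is justifying that commutation with $\mathrm{div}$, which is only known up to $|-|$, really characterizes $\mathfrak{krv}_2$. I will argue this through the 1-cocycle property of $\mathrm{div}$, using that the kernel of the cyclic projection on the relevant images is the only possible defect. Failing that, I will pass through $\delta_{\mathrm{necklace}}$ via Theorem \ref{th:AKKN}(2), where injectivity of $\widetilde{\Delta}$ closes the argument.
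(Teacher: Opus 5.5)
Your route (Theorem \ref{th:krv_commutatvie} plus Theorem \ref{th:AKKN}) is the same as the paper's one-sentence proof. However, the step where you clear the factors $\frac1m$ by ``multiplying through by the appropriate degrees'' does not give the relation you then write down, and that step is where all the content sits.

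Put $T:=\circ_+(\tp_1,\tp_0)$. For homogeneous $u,v\in\mathrm{sDer}(\mathbb{L}(x_0,x_1))$ let $m_u,m_v$ be their degrees, so that $u\cdot$ raises degree by $m_u$. The middle square says $\lvert T(v)\rvert=m_v\,\mathrm{div}(v)$. In other words, $\lvert T\rvert$ is $\mathrm{div}$ followed by the operator $E$ that multiplies a homogeneous cyclic word by its degree. This $E$ is not equivariant: $E(u\cdot c)-u\cdot E(c)=m_u\,u\cdot c$. Using that $\lvert-\rvert$ is a $\mathrm{Der}$-module map together with the cocycle identity $\mathrm{div}[u,v]=u\cdot\mathrm{div}(v)-v\cdot\mathrm{div}(u)$, what you actually get is
\[
\bigl\lvert T([u,v])-u\cdot T(v)\bigr\rvert=m_u\,u\cdot\mathrm{div}(v)-(m_u+m_v)\,v\cdot\mathrm{div}(u).
\]
Theorem \ref{th:AKKN}(1), read through the cocycle identity, says $u\in\mathfrak{krv}_2$ iff $v\cdot\mathrm{div}(u)=0$ for all $v$. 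Clearing denominators in $\mathrm{div}([u,v])=u\cdot\mathrm{div}(v)$ gives the weighted identity $m_v\lvert T([u,v])\rvert=(m_u+m_v)\lvert u\cdot T(v)\rvert$, not $\lvert T([u,v])-u\cdot T(v)\rvert=0$.

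To get from one to the other you would need $m_u\,u\cdot\mathrm{div}(v)=0$ for all $v$, i.e.\ $[u,\mathrm{sDer}]\subset\ker\mathrm{div}$. That is not part of the $\mathfrak{krv}_2$ condition, and you give no argument for it. Conversely, the unweighted relation does not visibly force $v\cdot\mathrm{div}(u)=0$, so neither direction of (2) is established.

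Your fallback through $\delta_{\mathrm{necklace}}$ does not help. Since $\widetilde{\Delta}$ is injective, the right square only re-expresses the same condition on $\mathrm{div}$, while the weight mismatch lives in the middle square. What your argument does prove is the version with the degree-normalized lift $\widehat{T}(v):=\frac{1}{m_v}T(v)$. Then $\lvert\widehat{T}([u,v])-u\cdot\widehat{T}(v)\rvert=-v\cdot\mathrm{div}(u)$, and vanishing for all $v\in\mathrm{sDer}$ is exactly the criterion of Theorem \ref{th:AKKN}(1). The paper's one-line proof passes over the same point.

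Part (1) has a second, smaller gap. Its test elements range only over $\mathrm{sd}(\mathbb{L}^{\mathrm{sym}})$, i.e.\ $\sigma_{0,1}$-symmetric special derivations. Theorem \ref{th:AKKN}(1), by contrast, characterizes $\mathfrak{krv}_2$ by commutation with $\mathrm{div}$ against all of $\mathrm{sDer}$. So your ``I will then check'' needs an actual argument that symmetric special derivations already force $\mathrm{div}(\mathrm{sd}(e))$ to be central. Also, $\Theta$-invariance of $\mathrm{sd}(e)$ is automatic only if $\Theta$ is read as conjugation by $\sigma_{0,1}$, as in the definition of $\mathrm{sDer}^{\mathrm{sym}}$. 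For the literal swap $(u,v)\mapsto(v,u)$ written in the paper's definition, it is not automatic.
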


\begin{proof}
    This follows from the Theorem \ref{th:AKKN} and \ref{th:krv_commutatvie}.
\end{proof}

\begin{Lemma}
   $\mathrm{sd}(e)\circ \delta_{\mathrm{sym}}\in \ker |-|$ and $(-)^{\mathrm{ab}}_J\circ \ad_e=0$
\end{Lemma}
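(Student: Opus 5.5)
The two assertions are independent. I would prove both by checking generators and then extending by linearity, using only the definitions of $\delta_{\mathrm{sym}}$, $\mathrm{sd}$ and $(-)^{\mathrm{ab}}_J$.

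\emph{First assertion.} Fix $e$ with $\mathrm{sd}(e)=u=(a_0,a_1)\in\mathrm{sDer}(\mathbb{L}(x_0,x_1))$. Then $u(x_0)=[x_0,a_0]$, $u(x_1)=[x_1,a_1]$ and $u(x_0+x_1)=0$. Since $\delta_{\mathrm{sym}}$ is linear and $k[x]$ is spanned by monomials, it is enough to take $f(x)=x^{n-1}$, so that
\[
\delta_{\mathrm{sym}}(x^{n-1})=(x_0+x_1)^n-x_0^n-x_1^n .
\]
I treat the three terms separately.
\begin{itemize}
\item Because $u$ is a derivation killing $x_0+x_1$, we get $u((x_0+x_1)^n)=0$.
\item For the power of $x_0$, the Leibniz rule gives $u(x_0^n)=\sum_{i}x_0^{i}[x_0,a_0]x_0^{n-1-i}=[x_0^n,a_0]$, because the sum telescopes.
\item In the same way, $u(x_1^n)=[x_1^n,a_1]$.
\end{itemize}
Commutators vanish in $|k\langle X\rangle|$, so $|u(\delta_{\mathrm{sym}}(f))|=0$ for every $f$. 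Here $u(g)$ means the derivation extended to $k\langle X\rangle$.

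\emph{Second assertion.} I would first describe $(-)^{\mathrm{ab}}_J$ concretely. The Lie algebra $J=\ker\mathrm{pr}_3\cap\ker\mathrm{pr}_4$ is the ideal generated by $t_{34}$ inside $\mathbb{L}(t_{14},t_{24},t_{34})$. Modulo $[J,J]$, only the part of an element that is linear in $t_{34}$ survives. For $\psi\in\mathbb{L}(t_{24},t_{34})$ that linear part is the $W_J$-component $f(\ad_{t_{24}})t_{34}$. Through $W_J\simeq k[x]$, the polynomial $f$ records the coefficients of $\psi$ on the depth-one words $x_0^{n-1}x_1$.

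With this description, $(-)^{\mathrm{ab}}_J\circ\ad_e=0$ says that $\ad_e\psi$ has no part linear in $t_{34}$. For the Lie-bracket term $[e,\psi]$ this holds because $e(t_{24},t_{34})$ lies in $J$: $e$ has degree $\ge 2$ and $\mathbb{L}(t_{24})$ is concentrated in degree one. The other factor $\psi(t_{24},t_{34})$ lies in $J+k\,t_{24}$. Hence the bracket lies in $[J,J]$, except possibly for $[e,c\,t_{24}]$, which can only occur when $\psi$ has a degree-one component. That term is excluded by working in $\mathbb{L}^{\ge 2}$, which is the setting of the relevant degree-$m$ elements.

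\emph{Main obstacle.} If $\ad_e$ means the Ihara action, there are additional derivation terms $\mathrm{sd}(e)(\psi)$ and $\mathrm{sd}(\psi)(e)$. These replace a letter $x_i$ by $[x_i,e(-x_0-x_1,x_i)]$, and after the substitution $x_0\mapsto t_{24}$, $x_1\mapsto t_{34}$ they could a priori create new depth-one terms. I would handle them by exploiting the symmetry of $\psi\in\mathbb{L}^{\mathrm{sym}}$: the two halves of $\mathrm{sd}(e)$ are interchanged by $\sigma_{0,1}$, and I expect this to cancel any surviving terms linear in $t_{34}$ in pairs. I would check that cancellation on the depth-one part directly, using the centrality relation $\varphi(t_{12},t_{23})=\varphi(-t_{13}-t_{23},t_{23})$ to move between the two presentations of $\mathfrak{t}_3$. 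This is the step I am least sure of, and a direct depth-one coefficient computation is the fallback.
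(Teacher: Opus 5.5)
\paragraph{Review.} Your treatment of the first assertion is the paper's argument. The special-derivation property kills $(x_0+x_1)f(x_0+x_1)$, and on $x_0^n$ and $x_1^n$ the tangential derivation acts by commutators $[x_i^n,a_i]$, which vanish in $|k\langle X\rangle|$. The gap is in the second assertion. In the paper, $\ad_e$ is the adjoint action of $(\mathbb{L}^{\mathrm{sym}}(x_0,x_1),\langle-,-\rangle_{\mathrm{Ihara}})$; this is the module structure used in the preceding theorem. So the derivation terms of the Ihara bracket are not optional. For those terms you only offer an expected pairwise cancellation via $\sigma_{0,1}$, and you do not carry it out. As written, your argument therefore proves the statement only for the ordinary bracket $[e,\psi]$.

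\paragraph{The missing idea.} No cancellation, and no use of the symmetry of $\psi$, is needed: each term separately has $x_1$-degree at least $2$. For $e\in\mathbb{L}^{\ge 2}$ one has $e(-x_0,x_0)=0=e(-x_0,0)$, since both are brackets evaluated in the abelian line $k\,x_0$. So setting $x_1=0$ kills $e(-x_0-x_1,x_0)$ and $e(-x_0-x_1,x_1)$; that is, both components of $\mathrm{sd}(e)$ have $x_1$-degree $\ge 1$.

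It follows that $\mathrm{sd}(e)(x_0)=[x_0,e(-x_0-x_1,x_0)]$ and $\mathrm{sd}(e)(x_1)=[x_1,e(-x_0-x_1,x_1)]$ both raise the $x_1$-degree by at least one, so $\mathrm{sd}(e)$ strictly increases depth. The same holds for the classical derivation $D_e$ ($x_0\mapsto 0$, $x_1\mapsto[x_1,e]$), if you prefer that form of the Ihara bracket, because $e$ itself has $x_1$-degree $\ge 1$.

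Since $e$ and $\psi$ have no depth-zero part, each of $[e,\psi]$, $\mathrm{sd}(e)(\psi)$ and $\mathrm{sd}(\psi)(e)$ has $x_1$-degree $\ge 2$. Hence $c_{x_0^{n}x_1}(\ad_e\psi)=0$ for all $n$, which is exactly the paper's one-line argument. After the substitution $x_0\mapsto t_{24}$, $x_1\mapsto t_{34}$, the element has $t_{34}$-degree $\ge 2$ and so lies in $[J,J]$, by the description of $J/[J,J]$ you already gave. Your plain-bracket argument ($e(t_{24},t_{34})\in J$, $\psi(t_{24},t_{34})\in J+k\,t_{24}$) is the special case of this degree count, and extending the count to the derivation terms closes the gap.
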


\begin{proof}
    The image of $\delta_{\mathrm{sym}}$ is of the form $(x_0+x_1)f(x_0+x_1)-x_0f(x_0)-x_1f(x_1)$, $\mathrm{sd}(e)$ acts on the first part by zero because of the speical derivation property, it acts on the second and third part by the adjoint action which are in $\ker |-|$.

    For the second map, the coefficients of $c_{x_0^nx_1}(\mathrm{ad}_e(\psi))=0$ for any $n$, therefor $(-)^{\mathrm{ab}}_J$ is zero.
\end{proof}

\begin{Cor}
 For $e\in \mathbb{L}(x_0,x_1)$, $e\in \mathfrak{krv}^{\mathrm{sym}}_2$ if and only if
 \begin{equation*}
 \left(x_1\cdot \pt+x_0\cdot \sigma_{0,1}(\pt)\right)\ad_e-\mathrm{sd}(e)\circ \left(x_1\cdot \pt+x_0\cdot \sigma_{0,1}(\pt)\right)\in \ker \mid-\mid.    
 \end{equation*}
\end{Cor}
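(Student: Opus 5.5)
The plan is to deduce the Corollary from the preceding Theorem, part (1), by simplifying $\tp_{\mathrm{sym}}$ inside the defining condition with the preceding Lemma. By the Theorem, $e\in\mathfrak{krv}^{\mathrm{sym}}_2$ if and only if
\begin{equation*}
\tp_{\mathrm{sym}}\cdot\ad_e-\mathrm{sd}(e)\circ\tp_{\mathrm{sym}}\in\ker|-|.
\end{equation*}
By definition,
\begin{equation*}
\tp_{\mathrm{sym}}=x_1\cdot\mathrm{La}\circ\pt^1_{krv}+\sigma_{0,1}\circ(x_1\cdot\mathrm{La}\circ\pt^1_{krv})-(\delta_0+\delta_1)\circ(-)^{\mathrm{ab}}_J .
\end{equation*}
First observe that $\sigma_{0,1}(x_1\cdot F)=x_0\cdot\sigma_{0,1}(F)$, since $\sigma_{0,1}$ is an algebra automorphism exchanging $x_0$ and $x_1$. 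Hence the first two terms give exactly $x_1\cdot\pt+x_0\cdot\sigma_{0,1}(\pt)$, where $\pt$ stands for $\mathrm{La}\circ\pt^1_{krv}$.

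Next I would compare $\delta_0+\delta_1$ with $\delta_{\mathrm{sym}}$. From the formulas, $\delta_0 f+\delta_1 f=\delta_{\mathrm{sym}}f$ for every $f\in k[x]$: the terms $x_0f(x_0+x_1)+x_1f(x_0+x_1)$ combine to $(x_0+x_1)f(x_0+x_1)$. So
\begin{equation*}
\tp_{\mathrm{sym}}=\bigl(x_1\cdot\pt+x_0\cdot\sigma_{0,1}(\pt)\bigr)-\delta_{\mathrm{sym}}\circ(-)^{\mathrm{ab}}_J .
\end{equation*}
Substituting into the condition, the correction term contributes
\begin{equation*}
\delta_{\mathrm{sym}}\circ(-)^{\mathrm{ab}}_J\circ\ad_e-\mathrm{sd}(e)\circ\delta_{\mathrm{sym}}\circ(-)^{\mathrm{ab}}_J .
\end{equation*}
By the preceding Lemma, $(-)^{\mathrm{ab}}_J\circ\ad_e=0$, so the first piece vanishes identically. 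The second piece lies in $\ker|-|$, because that Lemma gives $\mathrm{sd}(e)\circ\delta_{\mathrm{sym}}\in\ker|-|$, applied to $f=(\psi)^{\mathrm{ab}}_J$. Since $\ker|-|$ is a subspace, the correction can be added or removed without changing membership. The condition of the Theorem is therefore equivalent to the stated one.

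The main obstacle is checking that the Lemma's statements are used with the right quantifiers. The claim $\mathrm{sd}(e)\circ\delta_{\mathrm{sym}}\in\ker|-|$ must hold pointwise for all inputs $f$. I would verify directly that $\mathrm{sd}(e)$ kills $(x_0+x_1)f(x_0+x_1)$, using $\mathrm{sd}(e)(x_\infty)=0$ and the derivation property. It should also send $x_if(x_i)$ to a commutator $[x_if(x_i),a_i]$, which is zero in cyclic words. This needs $e\in\mathbb{L}^{\mathrm{sym}}$, so that $\mathrm{sd}(e)$ is special. The rest is bookkeeping.
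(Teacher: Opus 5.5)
Your proposal is correct and is essentially the paper's own argument. The paper gives no written proof, but it places the Corollary right after the Lemma stating $(-)^{\mathrm{ab}}_J\circ\ad_e=0$ and $\mathrm{sd}(e)\circ\delta_{\mathrm{sym}}\in\ker|-|$, and the intended deduction is what you do: apply part (1) of the preceding Theorem, split $\tp_{\mathrm{sym}}$ using $\delta_0+\delta_1=\delta_{\mathrm{sym}}$ and $\sigma_{0,1}(x_1\cdot F)=x_0\cdot\sigma_{0,1}(F)$, and discard the correction term. Your caveat that $\mathrm{sd}(e)$ must be special, i.e.\ $e\in\mathbb{L}^{\mathrm{sym}}$, for the Lemma to apply is legitimate, and the paper leaves this assumption implicit as well.
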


\subsection{The symmetric $\mathfrak{krv}^{\mathrm{sym}}_2$ and $\pt^1_{krv}$}

For general $n,m$, the emergent ideal $c_{m,n}$ of $\dk_{m,n}$ and  the emergent quotient $\mathrm{edk}_{m,n}$ were introduced and studied by D. Bar-Natan and Y. Kuno \cite{Kuno2025}, the ideal $c_{m,n}$ is defined to be the ideal generated by the generators $t_{m+1,m+n}$, $\ldots$,$t_{m+n-1,m+n}$,
\begin{equation*}
\mathrm{c}_{m,n}=(t_{m+1,m+n},\ldots,t_{m+n-1,m+n}),\quad \mathrm{edk}_{m,n}=\dk_{m,n}/[c_{m,n},c_{m,n}].
\end{equation*}

The Lie algebra $\dk_{2,2}$ is generated by $t_{14},t_{24},t_{34},t_{23},t_{13}$ with the relations,
\begin{align}\label{eq:4T}
&[t_{13},t_{34}]=[t_{34},t_{14}],\quad &[t_{23},t_{34}]=[t_{34},t_{24}],\quad &[t_{13},t_{24}]=0,\\
&[t_{14},t_{13}]=-[t_{14},t_{34}],\quad & [t_{23},t_{24}]=-[t_{24},t_{34}],\quad &[t_{14},t_{23}]=0.
\end{align}

In the case of $n=2$, the $ \brun_{m,2}$ coincides with the emergent ideal,
\begin{align*}
    J:=\brun_{2,2}=c_{2,2}=(t_{34}).
\end{align*}

In \cite{Kuno2025}, the Lie algebra $\mathfrak{grt}^{\mathrm{em}}_1$ is introduced and it consists of elements $\psi(x,y)\in \mathbb{L}_k(x,y)$ satisfying the condition
\begin{equation}\label{eq:emergent_1}
    \begin{split}
     &\psi(y,0)-\psi(x+y,0)=0,\\
     &d^R_y\psi)(x,y)+(d^R_y\psi)(y,0)-(d^R_y\psi)(x+y,0)-\mu(\psi)=0,\\   
    \end{split}
\end{equation}
\begin{equation}\label{eq:emergent_2}
[x,\psi(y,x)]+[y,\psi(x,y)]=0.
\end{equation}

\begin{Th}[Theorem 1.1 in \cite{Kuno2025}]\label{th:Kuno}
The map 
\begin{equation}
\nu^{\mathrm{em}}: \mathfrak{grt}^{\mathrm{em}}_1\to (\mathfrak{krv}^{\mathrm{sym}})_{\ge 2}
\end{equation}
is a graded linear isomorphism
\end{Th}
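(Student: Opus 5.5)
The plan is to read the isomorphism $\nu^{\mathrm{em}}$ through the Brunnian picture of this section. I take $\nu^{\mathrm{em}}(\psi)=\mathrm{sd}(\psi)=(\psi(-x_0-x_1,x_0),\psi(-x_0-x_1,x_1))$, up to the change of variables of the centre decomposition $\mathfrak{t}_3\simeq k(t_{12}+t_{13}+t_{23})\oplus\mathbb{L}(t_{12},t_{23})$. Under this identification I will show that the defining equations of $\mathfrak{grt}^{\mathrm{em}}_1$ become three conditions: the special derivation condition, the $\Theta$-symmetry, and the vanishing of $\pt^1_{krv}$. I will then show that these conditions cut out exactly $\mathfrak{krv}^{\mathrm{sym}}_2$ in degrees $\ge 2$.

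\textbf{Step 1: dictionary between the equations.} Compare the second line of \eqref{eq:emergent_1} with Lemma \ref{lem:pent_J}, which gives $\mathrm{La}\circ\pt^1_{krv}(\psi(t_{13},t_{23}))$ as $(d^R_1\psi)(x_0+x_1,0)-d^R_1\psi-(d^R_1\psi)(x_1,0)-\mu(\psi)$. Up to sign conventions this is the left-hand side of that equation. The first line of \eqref{eq:emergent_1} concerns only the one-variable specialisations, and I expect it to be the vanishing of the abelianization $(-)^{\mathrm{ab}}_J$ part, which lies in $W_J\simeq k[x]$. So \eqref{eq:emergent_1} should be equivalent to $\pt^1_{krv}(\psi)=0$ in $J/[J,J]$.

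Next, \eqref{eq:emergent_2} is exactly the statement $\mathrm{sd}(\psi)(x_\infty)=0$. Hence it says $\mathrm{sd}(\psi)\in\mathrm{sDer}$, i.e.\ $\psi\in\mathbb{L}^{\mathrm{sym}}(x_0,x_1)$. Finally, $\Theta$-invariance is automatic for pairs of the form $\mathrm{sd}(\psi)$, by the lemma on $\mathrm{sd}$ mapping into $\mathrm{sDer}^{\mathrm{sym}}$. That lemma also gives injectivity of $\nu^{\mathrm{em}}$.

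\textbf{Step 2: image lands in $\mathfrak{krv}^{\mathrm{sym}}_2$.} For $\psi$ of degree $m$ in $\mathbb{L}^{\mathrm{sym}}$, Theorem \ref{th:krv_commutatvie} gives
\[
\mathrm{div}(\mathrm{sd}(\psi))=\tfrac{1}{m}\,|\tp_{\mathrm{sym}}(\psi)|=\tfrac1m\bigl|x_1\,\mathrm{La}\,\pt^1_{krv}(\psi)+x_0\,\sigma_{0,1}\mathrm{La}\,\pt^1_{krv}(\psi)\bigr|-\tfrac1m|\delta_{\mathrm{sym}}(\psi^{\mathrm{ab}}_J)|.
\]
If $\pt^1_{krv}(\psi)=0$, only the $\delta_{\mathrm{sym}}$ term survives. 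That term equals $|(x_0+x_1)f(x_0+x_1)-x_0f(x_0)-x_1f(x_1)|$, which is of the form $\sum_i|f_i(x_i)|$ with $x_2:=x_0+x_1$. This is precisely the $\mathfrak{krv}_2$ condition.

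\textbf{Step 3: surjectivity, the main obstacle.} Let $u\in\mathfrak{krv}^{\mathrm{sym}}_{2}$ have degree $\ge2$. By $\Theta$-symmetry together with $u(x_\infty)=0$, I would write $u=\mathrm{sd}(\psi)$ for some $\psi$: set $\psi(x,y):=u_1$ after the substitution inverse to $(x_0,x_1)\mapsto(-x_0-x_1,x_1)$, then check the second component using $\Theta$. It remains to recover $\pt^1_{krv}(\psi)=0$.

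The krv condition only yields that $|x_1P+x_0\sigma_{0,1}P|$ is a sum of one-variable cyclic words, where $P=\mathrm{La}\,\pt^1_{krv}(\psi)$. So the difficulty is lifting an identity in $|k\langle X\rangle|$ to one in $k\langle X\rangle$, i.e.\ controlling $\ker|-|$. First, I would absorb the one-variable part into a suitable correction of the abelianization term. Second, I would use the injectivity of $\widetilde{\Delta}$ from Proposition \ref{Prop:kv_right}, together with the explicit shape of $P$, to show $P=0$. The relevant features of $P$ are that it is built from $d^R_1\psi$ and $\mu(\psi)$, and that its symmetrisation determines it because $x_1P$ has a fixed last-letter structure.

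If that argument fails, the fallback is to compare graded dimensions degree by degree. I would use the known isomorphism $\mathfrak{krv}_2\simeq\mathrm{sDer}$-cocycle description of Theorem \ref{th:AKKN}, which turns the surjectivity question into a dimension count.
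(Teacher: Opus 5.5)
\textbf{Verdict: there is a genuine gap, in Step 3 (surjectivity).} The paper does not prove this statement; it quotes it from \cite{Kuno2025}. Your Steps 1--2 give an independent argument that $\nu^{\mathrm{em}}$ is well defined and injective, but Step 3 is the real content of the theorem, and it is not established. What you need is to recover $P:=\mathrm{La}\,\pt^1_{krv}(\varphi)=0$ in $k\langle X\rangle$ from $\mathrm{div}(u)\in|k[[x_0]]|+|k[[x_1]]|+|k[[x_0+x_1]]|$. None of the tools you list does this.
\begin{itemize}
\item Injectivity of $\widetilde{\Delta}$ is a statement purely about cyclic words, so it gives no control of $\ker|-|$.
\item The ``letter structure'' of $x_1P$ (a fixed \emph{first} letter, not last) does not help by itself. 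For instance, $|x_1P|$ does not determine $P$: for $P=x_0x_1-x_1x_0$ one has $|x_1P|=|x_1x_0x_1|-|x_1x_1x_0|=0$.
\item The dimension-count fallback is unavailable. The graded dimensions of $\mathfrak{krv}_2$ and of $\mathfrak{grt}^{\mathrm{em}}_1$ are not known, and Theorem \ref{th:AKKN} is a characterization, not a dimension formula.
\end{itemize}

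\textbf{The missing idea is cyclic invariance.} It lets you lift the trace identity back to $k\langle X\rangle$.

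Write $u=\mathrm{sd}(\varphi)=(a_0,a_1)$ with $a_1=\varphi(-x_0-x_1,x_1)$ and $a_0=\sigma_{0,1}(a_1)$, and set $b=x_0a_0+x_1a_1$.
\begin{enumerate}
\item Rotating the first letter to the end sends $b$ to $a_0x_0+a_1x_1$. So $u$ being special, $[x_0,a_0]+[x_1,a_1]=0$, says exactly that $b$ is cyclically invariant.
\item $\mu$ preserves cyclic invariance, because $\mu N(|w|)$ is the sum of $N$ applied to the cyclic contractions of $w$.
\item $|-|$ is injective on cyclically invariant elements, since $N(|c|)=\deg(c)\,c$ for such $c$.
\item Put $A(s):=(d^R_1a_1)(s,0)$. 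With the signs as printed in Lemma \ref{lem:pent_J}, one computes
$x_1P+x_0\sigma_{0,1}(P)=\delta_{\mathrm{sym}}(A)-\mu(b)$.
The right-hand side is cyclically invariant, and so therefore is the left-hand side.
\item Hence the krv condition lifts from cyclic words to the identity
$x_1P+x_0\sigma_{0,1}(P)\in k[[x_0]]+k[[x_1]]+k[[x_0+x_1]]$ in $k\langle X\rangle$ itself.
\item Applying $d^R_1$ kills the $x_0$-initial part and gives $P=\alpha(x_1)+\beta(x_0+x_1)$ for one-variable series $\alpha,\beta$.
\item Since $a_1$ is a Lie series of degree $\ge 2$, one checks directly from Lemma \ref{lem:pent_J} that $P(x_0,0)=0=P(0,x_1)$. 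These two specializations force $P=0$.
\end{enumerate}
In short, the computation behind Theorem \ref{th:krv_commutatvie} must be kept in $k\langle X\rangle$ before applying $|-|$. Your plan applies $|-|$ first and then has no way back.

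\textbf{A smaller error in Step 1.} The first line of \eqref{eq:emergent_1} is vacuous for Lie series of degree $\ge 2$, because $\psi(y,0)=\psi(x+y,0)=0$. It is not the vanishing of $(-)^{\mathrm{ab}}_J$. That abelianization must be allowed to be nonzero, since it produces the one-variable term $\delta_{\mathrm{sym}}(\varphi^{\mathrm{ab}}_J)$ your Step 2 relies on.
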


By the equivalence of the ideal $\brun_{2,2}$ and the emergent ideal $c_{2,2}$ in \cite{Kuno2025}, the following Lemma is a reformulation of the Proposition 4.7 of \cite{Kuno2025} into the notation of $\mathfrak{t_{4}}$.
\begin{Lemma}[\cite{Kuno2025}{Proposition 4.7}]
 The equation \eqref{eq:emergent_1} is equivalent to the equation
 \begin{equation*}
\pt(\psi(t_{13},t_{23}))=0\in \mathrm{dk}_{2,2}/[J,J].    
 \end{equation*}
\end{Lemma}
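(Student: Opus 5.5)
The statement to prove is the reformulation of \cite[Proposition 4.7]{Kuno2025}. The plan is to compute $\pt(\psi(t_{13},t_{23}))$ modulo $[J,J]$ explicitly and then read off the two conditions of \eqref{eq:emergent_1} as the vanishing of its two components. For this I use the split exact sequence
\[0\to J/[J,J]\to \dk_{2,2}(D)/[J,J]\xrightarrow{\pr_3\oplus\pr_4}\mathbb{L}(t_{14},t_{24})\oplus\mathbb{L}(t_{13},t_{23})\to 0.\]
Since the sequence is split, $\pt(\psi(t_{13},t_{23}))=0$ in $\dk_{2,2}/[J,J]$ is equivalent to two conditions: (a) its images under $\pr_3$ and $\pr_4$ vanish, and (b) its component in $J/[J,J]\simeq k\langle X\rangle$, transported by $\mathrm{La}$, vanishes.

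For (a), I apply $\pr_3$ and $\pr_4$ to the explicit five-term formula for $\pt(\varphi(t_{13},t_{23}))$ term by term. As in the proof of Lemma \ref{lem:projection}, the terms cancel except for a possible contribution coming from the degree-one part of $\psi$. Concretely, setting $t_{34}=0$ (i.e.\ $\pr_4$) gives
\[\psi(t_{13},t_{23})+\psi(t_{14},t_{24})+\psi(t_{24},0)-\psi(t_{14}+t_{24},0)-\psi(t_{13}+t_{14},t_{23}+t_{24}).\]
The projection onto the quotient leaves exactly the terms $\psi(t_{24},0)-\psi(t_{14}+t_{24},0)$, up to the identification of generators. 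These terms vanish iff $\psi(y,0)=\psi(x+y,0)$, which is the first line of \eqref{eq:emergent_1}; this condition simply says that the linear-in-$x$ part of $\psi$ vanishes. The same bookkeeping applies to $\pr_3$.

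For (b), I invoke Lemma \ref{lem:pent_J}, which gives
\[\mathrm{La}\circ\pt^1_{krv}(\psi(t_{13},t_{23}))=(d^R_1\psi)(x_0+x_1,0)-d^R_1\psi-(d^R_1\psi)(x_1,0)-\mu(\psi).\]
Up to an overall sign and the renaming $x_0=x$, $x_1=y$, this is the second line of \eqref{eq:emergent_1}. To justify the lemma I would track each of the five terms in $J/[J,J]$ via the Lazard elimination isomorphism:
\begin{itemize}
\item terms not involving $t_{34}$ contribute nothing to the $J$-component;
\item the terms $\psi(t_{14},t_{24}+t_{34})$ and $-\psi(t_{14}+t_{24},t_{34})$ are expanded to first order in $t_{34}$, giving the $d^R_1$ terms;
\item the $\mu$ term arises from rewriting brackets such as $[t_{13},t_{34}]=[t_{34},t_{14}]$ with the 4T relations \eqref{eq:4T}, using the bimodule structure \eqref{eq:bimodule}.
\end{itemize}

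The main obstacle is the compatibility in (b). The $J$-component depends on the chosen splitting, and the first-order extraction in $t_{34}$ must be matched with the identification $f(t_{14},t_{24})t_{34}\mapsto f(x_0,x_1)$. I would first verify this in low degree (degree $3$) to fix the signs, and then argue by linearity and the left module rule $\ad_w\mapsto (w\otimes1)(-)-(-)(1\otimes w)$. Since (a) and (b) are independent components of the split sequence, combining them yields the equivalence.
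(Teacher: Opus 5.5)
\textbf{There is a genuine gap, and it sits in step (b), where you match the $J/[J,J]$-component with the second line of \eqref{eq:emergent_1}.}

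Your framework is the right one. The paper gives no computation here; it only cites Proposition 4.7 of \cite{Kuno2025} together with $J=\brun_{2,2}=c_{2,2}$. Splitting $\pt(\psi(t_{13},t_{23}))$ into its image in $\dk_{2,2}/J\simeq\mathbb{L}(t_{14},t_{24})\oplus\mathbb{L}(t_{13},t_{23})$ and its $J/[J,J]$-part is what such a proof should do.

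Part (a) is right in substance, but the labels are off. $\pr_4$ kills $t_{14},t_{24},t_{34}$ and annihilates the pentagon identically. The surviving term $\psi(t_{24},0)-\psi(t_{14}+t_{24},0)=-c_x t_{14}$ comes from $\pr_3$. Your worry about dependence on the splitting is moot: once the first line holds, the element lies canonically in $J/[J,J]$.

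\textbf{The matching in (b) is false.} In the variables $x_0,x_1$, the expression of Lemma~\ref{lem:pent_J} and the printed second line of \eqref{eq:emergent_1} are
\begin{align*}
&(d^R_1\psi)(x_0+x_1,0)-d^R_1\psi-(d^R_1\psi)(x_1,0)-\mu(\psi),\\
&d^R_1\psi+(d^R_1\psi)(x_1,0)-(d^R_1\psi)(x_0+x_1,0)-\mu(\psi).
\end{align*}
The $d^R$-terms are opposite, while $-\mu(\psi)$ has the same sign in both. Their sum is $-2\mu(\psi)$, so they are not negatives of each other unless $\mu(\psi)=0$.

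This is not cosmetic. Take $\psi=[x,[x,y]]=xxy-2xyx+yxx$, so that $d^R_y\psi=xx$ and $\mu(\psi)=xy+yx$.
\begin{itemize}
\item The Lemma's expression is $(x+y)^2-x^2-y^2-(xy+yx)=0$.
\item The printed line gives $x^2+y^2-(x+y)^2-(xy+yx)=-2(xy+yx)\neq 0$.
\item Using only $[t_{13},t_{24}]=[t_{14},t_{23}]=0$, $[t_{13},t_{14}]=[t_{14},t_{34}]$ and $[t_{23},t_{34}]=-[t_{24},t_{34}]$, one checks that $\pt(\psi(t_{13},t_{23}))=0$ exactly in $\dk_{2,2}$. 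This is expected: $\psi=\varphi(-x-y,y)$ for the degree-$3$ element $\varphi=[x,[x,y]]+[y,[x,y]]$ of $\mathfrak{grt}_1$.
\end{itemize}
So, with the paper's definitions of $d^R$ and $\mu$, the equivalence as printed is false, and no argument can establish it. The second line must carry $+\mu(\psi)$; equivalently, the condition is the vanishing of the expression in Lemma~\ref{lem:pent_J}. The degree-$3$ check you planned ``to fix the signs'' is exactly what exposes this. You need to carry it out and correct the condition, rather than assert an overall-sign match.

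\textbf{Your sketch of why Lemma~\ref{lem:pent_J} holds also misplaces the source of $\mu$.}
\begin{itemize}
\item The $t_{34}$-free terms $\psi(t_{13},t_{23})+\psi(t_{14},t_{24})-\psi(t_{13}+t_{14},t_{23}+t_{24})$ do not vanish in $J/[J,J]$. The cross brackets $[t_{13},t_{14}]$ and $[t_{23},t_{24}]$ lie in $J$, and these terms are precisely what produce $\mu(\psi)$. For $\psi=[x,[x,y]]$ they give $[t_{24},[t_{14},t_{34}]]+[t_{14},[t_{24},t_{34}]]\neq 0$.
\item The three terms containing $t_{34}$ produce the three $d^R_1$-terms. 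This includes $\psi(t_{24},t_{34})$, which your list omits.
\end{itemize}
Following your first bullet literally would lose the $\mu$-term entirely.
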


For an element $\mathbb{L}(x,y)$, we have the relation $\varphi(t_{12},t_{23})=\varphi(-t_{13}-t_{23},t_{23})$, let $\psi(t_{13},t_{23})=\varphi(-t_{13}-t_{23},t_{23})$, the equation \eqref{eq:emergent_2} translate into
\begin{equation*}
[x,\varphi(-x-y,x)]+[y,\varphi(-x-y,y)]=0.    
\end{equation*}

As $\dk_{2,2}/[J,J]=\dk_{2,2}/J\oplus J/[J,J]$ and in the following, we show that the part of $\pt(\varphi(-t_{13}-t_{23},t_{23})$ in $\dk/J$ is always zero  for $\psi\in \mathbb{L}(x,y)$, which is another formulation of the Lemma \ref{lem:projection}.

\begin{Prop}
The algebra $U(\dk_{2,2})/(t_{34})$ is isomorphic to $k\langle\langle x,y\rangle\rangle\otimes k\langle\langle x,y\rangle\rangle$, under this isomorphism, for any $\varphi(-t_{13}-t_{23},t_{23})\in U(\dk_{2,2})$, $\varphi$ is a Lie series if and only if $\pt(\varphi(-t_{13}-t_{23},t_{23}))=0\in U(\dk_{2,2})/(t_{34})$
\end{Prop}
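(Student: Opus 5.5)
The plan is to compute the quotient explicitly and then reduce the vanishing of $\pt$ modulo $(t_{34})$ to Friedrichs' criterion, that a series is a Lie series if and only if it is primitive. Throughout, I read $\varphi$ as an associative series in $k\langle\langle x,y\rangle\rangle$ without constant term, so that the statement makes sense for non-Lie $\varphi$.

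\emph{Step 1: identify the quotient.} The Lie algebra $\dk_{2,2}$ is generated by $t_{13},t_{23},t_{14},t_{24},t_{34}$ subject to the relations \eqref{eq:4T}. Setting $t_{34}=0$, these relations become
\[
[t_{13},t_{24}]=0,\quad [t_{14},t_{23}]=0,\quad [t_{14},t_{13}]=0,\quad [t_{23},t_{24}]=0 .
\]
The two remaining relations $[t_{13},t_{34}]=[t_{34},t_{14}]$ and $[t_{23},t_{34}]=[t_{34},t_{24}]$ become trivial. Hence
\[
\dk_{2,2}/(t_{34})\cong \mathbb{L}(t_{13},t_{23})\times \mathbb{L}(t_{14},t_{24}).
\]
Passing to completed enveloping algebras gives $U(\dk_{2,2})/(t_{34})\cong k\langle\langle x,y\rangle\rangle\otimes k\langle\langle x',y'\rangle\rangle$, with $t_{13}\mapsto x\otimes 1$, $t_{23}\mapsto y\otimes 1$, $t_{14}\mapsto 1\otimes x'$ and $t_{24}\mapsto 1\otimes y'$. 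Under this identification, substituting a sum $t_{13}+t_{14}$ into a series is exactly the coproduct $\Delta$, where $x,y$ are primitive. I must check that the two-sided ideal generated by $t_{34}$ in $U(\dk_{2,2})$ is the ideal induced from the Lie ideal $(t_{34})$; this holds because $U(\mathfrak g/\mathfrak a)=U(\mathfrak g)/U(\mathfrak g)\mathfrak a$.

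\emph{Step 2: reduce pent.} Put $\psi(x,y):=\varphi(-x-y,y)$, so that $\varphi(t_{12},t_{23})=\psi(t_{13},t_{23})$ by the first Lemma of Section 3. The substitution $x\mapsto -x-y$ is an algebra automorphism of $k\langle\langle x,y\rangle\rangle$ preserving Lie series, so it suffices to prove the claim for $\psi$. Using the explicit formula for $\pt(\psi(t_{13},t_{23}))$ and killing $t_{34}$, I get
\[
\pt\equiv \psi\otimes 1+1\otimes\psi+1\otimes\bigl(\psi(y',0)-\psi(x'+y',0)\bigr)-\Delta(\psi)\pmod{(t_{34})}.
\]

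\emph{Step 3: both directions.} If $\psi$ is a Lie series of degree $>1$, then $\psi(y,0)$ and $\psi(x+y,0)$ are Lie series in a single variable of degree $\ge 2$, hence zero. Since $\psi$ is primitive, the right-hand side vanishes.

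Conversely, suppose the right-hand side vanishes. Applying $\varepsilon\otimes\mathrm{id}$ gives $\psi(y',0)=\psi(x'+y',0)$. Comparing the coefficient of $x'y'^{\,n-1}$ forces every coefficient of $y^n$ in $\psi$ to vanish, so $\psi(y,0)=0$. The identity then reduces to $\Delta\psi=\psi\otimes1+1\otimes\psi$, and Friedrichs' theorem says $\psi$ is a Lie series; hence so is $\varphi$.

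The main obstacle I expect is Step 1: checking that no hidden relation survives, so that the quotient really is the full tensor product and not a further quotient of it. I would verify this by exhibiting the surjection onto $\mathbb{L}(t_{13},t_{23})\times\mathbb{L}(t_{14},t_{24})$ defined by $t_{34}\mapsto0$, and checking that the relations \eqref{eq:4T} are respected, which is immediate. Together with the presentation above, this shows the map is an isomorphism.
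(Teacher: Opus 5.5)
Your proposal is correct and follows the paper's own proof: use the 4T relations to identify $U(\dk_{2,2})/(t_{34})$ with $k\langle\langle x,y\rangle\rangle\otimes k\langle\langle x,y\rangle\rangle$, note that substituting $t_{13}+t_{14}$ and $t_{23}+t_{24}$ becomes the coproduct $\Delta$, and conclude by Friedrichs' primitivity criterion. You are more careful than the paper in two places. The paper silently drops the boundary terms $\psi(t_{24},0)-\psi(t_{14}+t_{24},0)$, and your use of $\varepsilon\otimes\mathrm{id}$ to kill them in the converse fills that gap. Your restriction to degree $>1$ in the forward direction is genuinely needed, since a nonzero linear $x$-term in $\psi$ makes the reduced pentagon equal to a nonzero multiple of $1\otimes x'$.
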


\begin{proof}
By checking the 4T relations \eqref{eq:4T} of $\mathrm{dk}_{2,2}$, we have the isomorphisms
\begin{align*}
   U(\dk_{2,2})/(t_{34})&\simeq k\langle\langle x,y\rangle\rangle\otimes k\langle\langle x,y\rangle\rangle,\\
   t_{14},t_{24},t_{13},t_{23}&\mapsto x\otimes 1, y\otimes 1, 1\otimes x, 1\otimes y.
\end{align*}

We then compute the image of $\alpha(\varphi(-t_{13}-t_{23},t_{23}))=0$ under the above isomorphism, it is equal to
\begin{equation*}
\varphi(-t_{13}-t_{14}-t_{23}-t_{24},t_{23}+t_{24})=\varphi(-t_{13}-t_{23},t_{23})+\varphi(-t_{14}-t_{24},t_{24})
\end{equation*}
which is the same as
\begin{equation*}
   \Delta(\varphi(-x-y,x))=\varphi(-x-y,x)\otimes 1+1\otimes\varphi(-x-y,x). 
\end{equation*}
\end{proof}

\begin{Th}\label{th:krv}
Let $\varphi\in \mathbb{L}^{\ge 3}(x,y)$, the element $(\varphi(-x-y,x),\varphi(-x-y,y))$ is in $\mathfrak{krv}^{\mathrm{sym}}_2$ if and only if it satisfies the following two equations
\begin{itemize}
    \item $[x,\varphi(-x-y,x)]+[y,\varphi(-x-y,y)]=0$.
    \item $\pt^1_{krv}(\varphi)=0.$
\end{itemize}
\end{Th}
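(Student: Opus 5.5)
The plan is to reduce the statement to the Bar-Natan--Kuno isomorphism, Theorem \ref{th:Kuno}, through the change of variables $\psi(t_{13},t_{23})=\varphi(-t_{13}-t_{23},t_{23})$. Since $t_{12}+t_{13}+t_{23}$ is central in $\mathfrak{t}_3$, this substitution gives $\pt(\varphi(t_{12},t_{23}))=\pt(\psi(t_{13},t_{23}))$ in $\mathfrak{t}_4$. Hence $\pt^1_{krv}(\varphi)$ is the image of $\pt(\psi(t_{13},t_{23}))$ in $J/[J,J]$, where $J=\brun_{2,2}=(t_{34})$. By Lemma \ref{lem:projection}, $\pt(\psi(t_{13},t_{23}))$ already lies in $J$. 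The preceding Proposition gives the same fact in $\dk_{2,2}/[J,J]=\dk_{2,2}/J\oplus J/[J,J]$: the $\dk_{2,2}/J$ component vanishes because $\varphi$ is a Lie series. Therefore $\pt^1_{krv}(\varphi)=0$ is equivalent to $\pt(\psi(t_{13},t_{23}))=0$ in $\dk_{2,2}/[J,J]$. By the reformulation of \cite{Kuno2025}, Proposition 4.7 quoted above, this is in turn equivalent to the system \eqref{eq:emergent_1} for $\psi$.

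Next, I translate the second defining condition of $\mathfrak{grt}^{\mathrm{em}}_1$. Substituting $\psi(x,y)=\varphi(-x-y,y)$ into \eqref{eq:emergent_2} gives $[x,\varphi(-x-y,x)]+[y,\varphi(-x-y,y)]=0$, as already noted before the Proposition. Together with the first step, this shows that $\varphi$ satisfies the two bulleted equations if and only if $\psi\in\mathfrak{grt}^{\mathrm{em}}_1$. The degree assumption $\varphi\in\mathbb{L}^{\ge 3}$ places us in the range where Theorem \ref{th:Kuno} applies. I will also check that the linear conditions in \eqref{eq:emergent_1} that involve $\psi(y,0)$ hold automatically, or are absorbed, for Lie series of degree at least $2$.

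Finally, I identify the map $\nu^{\mathrm{em}}$ of Theorem \ref{th:Kuno} with $\psi\mapsto(\psi(x,y)\text{-derived pair})$, namely the tangential derivation $(\varphi(-x-y,x),\varphi(-x-y,y))=\mathrm{sd}(\varphi)$. I expect this identification to be the main obstacle. One has to unwind the definition of $\nu^{\mathrm{em}}$ in \cite{Kuno2025}, which sends $\psi$ to the derivation $x\mapsto[x,\psi(y,x)]$, $y\mapsto[y,\psi(x,y)]$ up to sign conventions and the swap $\Theta$, and match it with $\mathrm{sd}$ under the substitution $\psi(x,y)=\varphi(-x-y,y)$. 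If the variable conventions differ, I will compose with the automorphism $\sigma_{0,1}$ or $\Theta$; both preserve $\mathfrak{krv}^{\mathrm{sym}}_2$. Once the match is made, both directions follow. If $\varphi$ satisfies the two equations, then $\psi\in\mathfrak{grt}^{\mathrm{em}}_1$, so $\mathrm{sd}(\varphi)=\nu^{\mathrm{em}}(\psi)\in\mathfrak{krv}^{\mathrm{sym}}_2$. Conversely, if $\mathrm{sd}(\varphi)\in\mathfrak{krv}^{\mathrm{sym}}_2$, surjectivity of $\nu^{\mathrm{em}}$ gives some $\psi'\in\mathfrak{grt}^{\mathrm{em}}_1$ with $\nu^{\mathrm{em}}(\psi')=\mathrm{sd}(\varphi)$, and injectivity of $\mathrm{sd}$ then forces $\psi'=\psi$. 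Hence $\varphi$ satisfies both equations.
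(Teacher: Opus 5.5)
Your proposal is correct and follows the paper's route: the paper gives no separate proof, and the theorem comes from assembling the Bar-Natan--Kuno isomorphism (Theorem \ref{th:Kuno}), the reformulation of their Proposition 4.7 as $\mathrm{pent}(\psi(t_{13},t_{23}))=0$ in $\mathfrak{dk}_{2,2}/[J,J]$, and the substitution $\psi(t_{13},t_{23})=\varphi(-t_{13}-t_{23},t_{23})$, which turns \eqref{eq:emergent_2} into $[x,\varphi(-x-y,x)]+[y,\varphi(-x-y,y)]=0$. The step you flag as the main obstacle is $\nu^{\mathrm{em}}(\psi)=(\psi(y,x),\psi(x,y))=\mathrm{sd}(\varphi)$; the paper uses exactly this identification implicitly, and it holds because $\psi(x,y)=\varphi(-x-y,y)$.
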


\begin{Cor}
     Suppose that $[x,\varphi(-x-y,x)]+[y,\varphi(-x-y,y)]=0$, then $\psi\in \mathfrak{dmr}_0$ implies $\psi\in \mathfrak{krv}^{\mathrm{sym}}_2$.
\end{Cor}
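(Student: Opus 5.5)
The plan is to chain Theorem \ref{th:dmr}, Theorem \ref{th:relations}(4) with $k=1$, and Theorem \ref{th:krv}. Throughout, $\varphi\in\mathbb{L}^{\ge 3}(x,y)$ and $\psi$ denote the same Lie series; the statement uses both letters.

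First, let $\psi\in\mathfrak{dmr}_0$. By Theorem \ref{th:dmr} this is equivalent to $\pt^1_{dmr}(\psi)=0$, i.e. $\pt(\psi(t_{12},t_{23}))$ vanishes in $\brun_{1,3}(D)/\Gamma_1(\brun_{1,3}(D))$. Here the index must be read consistently: $\Gamma_1$ denotes the commutator subalgebra, so the quotient is $I/[I,I]$, matching the use of $I/[I,I]$ in the proof of Theorem \ref{th:dmr}.

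Second, apply the second implication of Theorem \ref{th:relations}(4) at level $k=1$. Its content is that $\brun_{1,3}(D)\subset\brun_{2,2}(D)$ is an inclusion of Lie algebras. Such an inclusion sends $[I,I]$ into $[J,J]$, hence induces a map $I/[I,I]\to J/[J,J]$. This map is compatible with $\pt$, because both $\pt^1_{dmr}$ and $\pt^1_{krv}$ are the single element $\pt(\varphi(t_{12},t_{23}))\in\brun_4(D)$ projected to the respective quotients. It follows that $\pt^1_{krv}(\psi)=0$.

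Third, combine this with the hypothesis $[x,\varphi(-x-y,x)]+[y,\varphi(-x-y,y)]=0$. Both bullet conditions of Theorem \ref{th:krv} now hold. Since $\varphi\in\mathbb{L}^{\ge 3}$, that theorem gives $(\varphi(-x-y,x),\varphi(-x-y,y))\in\mathfrak{krv}^{\mathrm{sym}}_2$, which is what "$\psi\in\mathfrak{krv}^{\mathrm{sym}}_2$" means via the map $\mathrm{sd}$.

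The main obstacle is making sure the two filtration indices agree. Theorem \ref{th:dmr} is phrased via $I/[I,I]$, while Theorem \ref{th:krv} relies on Kuno's Proposition 4.7 phrased in $\dk_{2,2}/[J,J]$. I would check that both correspond to the same $k=1$ level in the definitions of $\pt^1_{dmr}$ and $\pt^1_{krv}$. I would also verify that the $\dk_{2,2}/J$ component of $\pt$ vanishes automatically; this is supplied by the preceding Proposition, or equivalently by Lemma \ref{lem:projection}.
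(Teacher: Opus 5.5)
Your proposal is correct and is the paper's own argument. The paper writes out no separate proof, but its introduction says the corollary follows from exactly your chain: Theorem~\ref{th:dmr} gives $\pt^1_{dmr}(\psi)=0$; the inclusion $\brun_{1,3}(D)\subset\brun_{2,2}(D)$ behind Theorem~\ref{th:relations}(4) gives $\pt^1_{krv}(\psi)=0$; and the ``if'' direction of Theorem~\ref{th:krv}, combined with the hexagon hypothesis, concludes. Your care with the indexing is a correct repair of the paper's loose notation: you read $\Gamma_1$ as the commutator subalgebra, so the level-one quotients are $I/[I,I]$ and $J/[J,J]$, and you justify item (4) at $k=1$ directly via $[I,I]\subset[J,J]$, although Theorem~\ref{th:relations} is only stated for $k>1$.
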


\section{N triviaty problem for the pentagon braid $\mathrm{Pent}(\Phi)$}

Recall that for any $\Phi\in \Hom_{\mathrm{PaB}(3)}((12)3,1(23))$, the pentagon braid associated to $\Phi$ is 
\begin{align}
    \mathrm{Pent}(\Phi)=\Phi^{2,3,4}\Phi^{1,23,4}\Phi^{1,2,3}(\Phi^{12,3,4})^{-1}(\Phi^{1,2,34})^{-1},
\end{align}
it is an element in $\Hom_{\mathrm{PaB}(4)}\left((((12)3)4),(((12)3)4)\right)$, and it is a parenthesized Brunnian braid. Suppose $\mathrm{Pent}(\Phi)$ is in $G$ and there is a filtration $\mathcal{F}_k(G)$ of $G$, we want to consider the problem of the $n$ triviality problem of $\mathrm{Pent}(\Phi)$, namely for which $n$, $\mathrm{Pent}(\Phi)$ is $\mathcal{F}_n(G)$ and not in $\mathcal{F}_{n+1}(G)$. 

The following three filtrations , 
\begin{enumerate}
    \item $F_{m+n-1}\cap \bmix\ge \Gamma_2(F_{m+n-1})\cap \bmix\ge \Gamma_3(F_{m+n-1})\cap \bmix\ge \ldots.$\\
    \item $\pmix\cap \bmix\ge \pmix \cap \bmix \ge \pmix\cap \bmix\ge \ldots$\\
    \item 
    $F_{m+n-1}\ge \mathrm{Brun}_{n,m}(D)\ge \Gamma_2(\mathrm{Brun}_{n,m}(D))\ge \ldots \Gamma_i(\mathrm{Brun}_{n,m}(D))\ge \ldots$
\end{enumerate}
are considered, for the first one, we use the Magnus expansion to give a criterion, for the second one, we use the Kontsevich integral to give a criterion. For the last, we do not know the anserwer, we use the tools from Jonhson-Morita theory \cite{JM} to introduce a invariant of $\pmix$ similar to the Milnor invariant which detecks the brunnian link. 

\subsection{Magnus expansion}
Let $x_1,\ldots,x_{n+m-1}$ be the free generators of the group $F_{m+n-1}$ and set $X_i=x_i-1\in \Z F_{m+n-1}$, the Magnus expansion is an injective map
\begin{equation*}
    \mathcal{M}:F_{m+n-1}\to \Z \langle\langle X_1,\ldots,X_{n+m-1}\rangle\rangle,\quad x_i\mapsto 1+X_i
\end{equation*}

\begin{Lemma}
    For $\omega\in F_{m+n-1}$, the pwer series $\mathcal{M}(w)-1$ starts with degree $k$ if and only if $\omega$ in $ \Gamma_k(F_{m+n-1})$ and not in $\Gamma_{k+1}(F_{m+n-1})$.
\end{Lemma}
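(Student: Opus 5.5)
This is the classical Magnus theorem, and I would prove it through the dimension subgroups of the Magnus expansion. Let $A=\Z\langle\langle X_1,\ldots,X_{n+m-1}\rangle\rangle$ and let $A_{\ge k}$ be the two-sided ideal of series with no terms of degree $<k$. Define the dimension subgroups $D_k=\{w\in F_{m+n-1}\mid \mathcal{M}(w)-1\in A_{\ge k}\}$. The statement then says exactly that $D_k=\Gamma_k(F_{m+n-1})$ for all $k\ge 1$. Indeed, granting this equality, $\mathcal{M}(w)-1$ starts in degree exactly $k$ iff $w\in D_k\setminus D_{k+1}$, that is, iff $w\in \Gamma_k\setminus\Gamma_{k+1}$.

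The inclusion $\Gamma_k\subseteq D_k$ is an induction on $k$. The case $k=1$ is trivial. For the inductive step, take $u\in D_i$ and $v\in D_j$ and write $\mathcal{M}(u)=1+a$ and $\mathcal{M}(v)=1+b$. Using $\mathcal{M}(u)^{-1}=1-a+a^2-\cdots$, one computes
\[
\mathcal{M}([u,v])-1=\mathcal{M}(u)^{-1}\mathcal{M}(v)^{-1}\bigl(ab-ba\bigr),
\]
which lies in $A_{\ge i+j}$. Hence $[D_i,D_j]\subseteq D_{i+j}$. Since $D_k$ is a subgroup and $\Gamma_{k+1}=[F,\Gamma_k]$, the inclusion follows.

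The reverse inclusion $D_k\subseteq\Gamma_k$ is the main obstacle. I would argue by induction, supposing $w\in D_k$ and $w\in\Gamma_j$ with $j<k$, and aiming to show $w\in\Gamma_{j+1}$. The map $w\mapsto$ (degree-$j$ part of $\mathcal{M}(w)-1$) induces a group homomorphism $\theta_j:\Gamma_j/\Gamma_{j+1}\to A_j$; it is additive because cross terms land in degree $\ge 2j$. Summing over $j$, these maps assemble into a graded Lie ring map $\theta:\mathrm{gr}(F)\to A$, sending $x_i\mapsto X_i$ and commutators to commutators. By the Hall--Witt (Magnus--Witt) theorem, $\mathrm{gr}(F)$ is the free Lie ring on the $x_i$. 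Its image lies in the free Lie ring inside $A$, and the map from the free Lie ring to $\Z\langle X\rangle$ is injective, since $\Z\langle X\rangle$ is its universal enveloping algebra and PBW holds over $\Z$ for free Lie rings, whose graded pieces are free abelian. So $\theta_j$ is injective. Because $w\in D_k$ and $j<k$, $\theta_j(w)=0$, and therefore $w\in\Gamma_{j+1}$. Iterating up to $j=k-1$ gives $w\in\Gamma_k$.

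Finally, the injectivity of $\mathcal{M}$ itself handles the boundary case $\mathcal{M}(w)=1$. Since $\bigcap_k\Gamma_k(F)=1$ for free groups (also a consequence of the above), every $w\ne 1$ has a well-defined starting degree. If one prefers not to reprove the freeness of $\mathrm{gr}(F)$ and PBW over $\Z$, I would simply cite Magnus--Karrass--Solitar, where $D_k(F)=\Gamma_k(F)$ is proved.
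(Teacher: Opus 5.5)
The paper gives no proof of this lemma. It is quoted as the classical theorem of Magnus that the dimension subgroups of a free group coincide with its lower central series, as in Magnus--Karrass--Solitar. Your proposal is a reconstruction of that standard proof rather than an alternative to one in the paper, and it is correct.

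Both inclusions check out:
\begin{itemize}
\item The identity $\mathcal{M}([u,v])-1=\mathcal{M}(u)^{-1}\mathcal{M}(v)^{-1}(ab-ba)$ gives $[D_i,D_j]\subseteq D_{i+j}$. Since each $D_k$ is a subgroup and $D_1=F$, this yields $\Gamma_k\subseteq D_k$.
\item The leading-term maps $\theta_j$ are well defined, because $\Gamma_j\subseteq D_j$ and $\Gamma_{j+1}\subseteq D_{j+1}$. They are additive, since $2j>j$, and bracket-preserving, since the unit prefactors are $1$ plus higher-order terms. So injectivity of $\theta$ gives $D_k\subseteq\Gamma_k$ by your iteration over $j<k$.
\end{itemize}

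One point is worth tightening. You do not need to import the freeness of $\mathrm{gr}(F)$ (Hall--Witt) as an input, and doing so is mildly circular. The usual textbook proofs of that freeness statement go through $D_k=\Gamma_k$ itself.

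It suffices that $\mathrm{gr}(F)$ is generated as a Lie ring by the classes of the $x_i$. This follows from bilinearity of the commutator map $F/\Gamma_2\times\Gamma_j/\Gamma_{j+1}\to\Gamma_{j+1}/\Gamma_{j+2}$. Hence the free Lie ring $\mathcal{L}$ on the $x_i$ surjects onto $\mathrm{gr}(F)$. Composing this surjection with $\theta$ gives the canonical map $\mathcal{L}\to\Z\langle X_1,\ldots,X_{n+m-1}\rangle$, which is injective by Witt's theorem (equivalently PBW over $\Z$ for the $\Z$-free Lie ring $\mathcal{L}$). Therefore the surjection is an isomorphism and $\theta$ is injective.

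Arranged this way, the only nontrivial input is Witt's embedding of the free Lie ring into the free associative ring. The freeness of $\mathrm{gr}(F)$, and $\bigcap_k\Gamma_k(F)=1$ via injectivity of $\mathcal{M}$, then come out as by-products rather than prerequisites.
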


Let $\pi_1(p(D)^i_{m+n})$ be the map from $F_{m+n-1}$ to $F_{m+n-2}$ that is defined by
\begin{equation*}
    x_j\to 1, \rm{if}~i=j; x_j\to x_j, ~\rm{if}~j<i; x_j\to x_{j-1},~\rm{if}~j>i.
\end{equation*}
and $\pr^i_{n+m}$ be the map $\Z\langle \langle X_1,\ldots,X_{n+m-1}\rangle\rangle$ to $\Z\langle \langle X_1,\ldots,X_{n+m-1}\rangle\rangle$, that is defined by
\begin{equation*}
    X_j\to 1, \rm{if}~i=j; X_j\to X_j, ~\rm{if}~j<i; X_j\to X_{j-1},~\rm{if}~j>i.
\end{equation*}  
\begin{Lemma}
  The Magnus expansion intertwines with the projection maps, namely the following diagram commutes, for $n+1\le i\le m+n$.
  \begin{equation*}
      \begin{tikzcd}
          F_{m+n-1}\arrow{r}{\mathcal{M}}\arrow{d}{\pi_1(p(D)^i_{m+n}}&\Z\langle\langle X_1,\ldots,X_{n+m-1}\rangle\rangle\arrow{d}{\pr^i_{n+m}}\\
          F_{m+n-2}\arrow{r}{\mathcal{M}}&\Z\langle\langle X_1,\ldots,X_{n+m-2}\rangle\rangle
      \end{tikzcd}
  \end{equation*}
\end{Lemma}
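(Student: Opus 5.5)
The plan is to reduce the commutativity of the square to a check on the free generators $x_1,\ldots,x_{n+m-1}$ of $F_{m+n-1}$, using that every map in the square is multiplicative.

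First I fix the reading of $\pr^i_{n+m}$. Since $X_i=x_i-1$ and $x_i\mapsto 1$ under $\pi_1(p(D)^i_{m+n})$, the intended assignment must be $X_i\mapsto 0$. The printed ``$X_j\to 1$ if $i=j$'' I read as $1+X_i\mapsto 1$, i.e.\ $X_i\mapsto 0$. With this convention, $\pr^i_{n+m}$ is the unique continuous unital $\Z$-algebra homomorphism
\[
\Z\langle\langle X_1,\ldots,X_{n+m-1}\rangle\rangle\to \Z\langle\langle X_1,\ldots,X_{n+m-2}\rangle\rangle
\]
sending $X_i\mapsto 0$, $X_j\mapsto X_j$ for $j<i$, and $X_j\mapsto X_{j-1}$ for $j>i$. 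This is well defined on formal power series because each generator goes to $0$ or to a degree-one element. Hence the map does not lower degree, sends each homogeneous piece into a single degree, and so extends from the polynomial algebra to the degree completion.

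Next I observe that both composites $\mathcal{M}\circ\pi_1(p(D)^i_{m+n})$ and $\pr^i_{n+m}\circ\mathcal{M}$ are group homomorphisms from $F_{m+n-1}$ into the multiplicative group of units of $\Z\langle\langle X_1,\ldots,X_{n+m-2}\rangle\rangle$. The Magnus expansion is multiplicative and lands in the units, since $(1+X_j)^{-1}=\sum_{r\ge0}(-X_j)^r$. Composing with a group homomorphism on the left, or with a unital ring homomorphism on the right, preserves this. Because $F_{m+n-1}$ is free on $x_1,\ldots,x_{n+m-1}$, the two homomorphisms agree once they agree on each $x_j$.

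The generator check splits into three cases.
\begin{itemize}
\item For $j=i$: $\mathcal{M}(\pi_1(p(D)^i_{m+n})(x_i))=\mathcal{M}(1)=1$, and $\pr^i_{n+m}(1+X_i)=1+0=1$.
\item For $j<i$: both sides equal $1+X_j$.
\item For $j>i$: $\mathcal{M}(x_{j-1})=1+X_{j-1}=\pr^i_{n+m}(1+X_j)$.
\end{itemize}
So the square commutes.

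There is no real obstacle in this argument. The only delicate point is the convention issue just discussed: with the literal assignment $X_i\mapsto 1$ the square would fail at the generator $x_i$, since $\pr^i_{n+m}(1+X_i)$ would be $2$ rather than $1$.
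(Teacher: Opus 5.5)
The paper states this lemma without proof. Your argument is the evident one it leaves implicit, and it is correct: both composites are homomorphisms from the free group $F_{m+n-1}$ into the units of the completed ring, so it suffices to compare them on $x_1,\ldots,x_{n+m-1}$.

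Your reading of $\pr^i_{n+m}$ as $X_i\mapsto 0$, with target in $n+m-2$ variables, is the right repair of the misprint. The literal assignment $X_i\mapsto 1$ breaks the square at $x_i$, and it would not even extend to formal power series.

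One caveat concerns the statement, not your proof. For $i=m+n$ there is no generator $x_{m+n}$ in $F_{m+n-1}$, so the printed formulas do not give maps into $F_{m+n-2}$. The meaningful range is $n+1\le i\le m+n-1$, which is also the range used in Proposition \ref{prop:Magnus_expansion}, and on that range your argument is complete.
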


\begin{Prop}\label{prop:Magnus_expansion}
    $\mathrm{Pent}(\Phi)$ is in $\Gamma_k(F_{m+n-1})\cap \bmix$ and not in $\Gamma_{k+1}(F_{m+n-1})\cap \bmix$ if and only if $\mathcal{M}(\mathrm{Pent}(\Phi))-1$ starts with term of degree $k$ element $\phi_k$ and $\phi_k$ in $\cap^{n+m-1}_{i=n+1}\ker \pr^i_{n+m}$.
\end{Prop}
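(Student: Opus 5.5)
The plan is to combine the two lemmas just stated: the Magnus expansion detects the lower central series of the free group, and it intertwines the strand-deleting maps $\pi_1(p(D)^i_{m+n})$ with the algebra maps $\pr^i_{n+m}$. Throughout, regard $\mathrm{Pent}(\Phi)$ as an element of $F_{m+n-1}$ (the kernel of forgetting the last strand). It is an element of $\bmix$ by the Brunnian property of the pentagon braid established before: after forgetting any free strand each factor pair cancels, which is the group-level analogue of Lemma \ref{lem:projection}. So the condition "$\mathrm{Pent}(\Phi)\in \bmix$" is automatic, and the statement reduces to identifying the exact lower central series degree of $\mathrm{Pent}(\Phi)$ in $F_{m+n-1}$ together with a property of its leading Magnus term.

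For the forward direction, first apply the lemma characterizing $\Gamma_k(F_{m+n-1})\setminus \Gamma_{k+1}(F_{m+n-1})$ via the Magnus expansion. This shows that $\mathcal{M}(\mathrm{Pent}(\Phi))-1=\phi_k+(\text{higher})$ with $\phi_k\neq 0$ homogeneous of degree $k$. Next, for each $n+1\le i\le n+m-1$, use the commutative square
\[
\pr^i_{n+m}\circ\mathcal{M}=\mathcal{M}\circ \pi_1(p(D)^i_{m+n}).
\]
Since $\mathrm{Pent}(\Phi)\in\ker \pi_1(p(D)^i_{m+n})$, this gives $\pr^i_{n+m}(\mathcal{M}(\mathrm{Pent}(\Phi)))=1$. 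The maps $\pr^i_{n+m}$ send $X_i$ to $0$ (the definition's ``$X_j\to 1$'' must be read as sending the group generator to $1$, i.e.\ $X_i\mapsto 0$; I would state this correction explicitly). With that reading they are graded algebra maps, so taking the degree-$k$ component yields $\pr^i_{n+m}(\phi_k)=0$. Hence $\phi_k\in\bigcap_{i=n+1}^{n+m-1}\ker\pr^i_{n+m}$.

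For the converse, suppose $\mathcal{M}(\mathrm{Pent}(\Phi))-1$ starts in degree $k$ with leading term $\phi_k$ in the stated intersection of kernels. The Magnus lemma then places $\mathrm{Pent}(\Phi)$ in $\Gamma_k\setminus\Gamma_{k+1}$. Membership in $\bmix$ is already known, so $\mathrm{Pent}(\Phi)\in(\Gamma_k\cap\bmix)\setminus(\Gamma_{k+1}\cap\bmix)$. In this direction the kernel condition on $\phi_k$ is a consistency condition rather than an extra input.

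The only real subtlety is the graded compatibility of $\pr^i_{n+m}$. One has to check that the substitution $X_i\mapsto 0$, $X_j\mapsto X_j$ or $X_{j-1}$ preserves degree, so that the vanishing of the whole series $\pr^i_{n+m}(\mathcal{M}(\mathrm{Pent}(\Phi))-1)$ forces each homogeneous component to vanish. It also has to be checked that the index range $n+1\le i\le n+m-1$ matches the free strands defining $\bmix$ inside $F_{m+n-1}$. I expect this bookkeeping of indices and conventions, not any deep argument, to be the main obstacle.
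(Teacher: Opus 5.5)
Your proposal is correct and matches the paper's intended argument: the paper gives no separate proof and places the proposition directly after the Magnus-expansion lemma and the intertwining lemma, and combining them as you do is all that is required, with the known Brunnian property of $\mathrm{Pent}(\Phi)$ making the kernel condition on $\phi_k$ automatic. Your correction that $\pr^i_{n+m}$ must send $X_i\mapsto 0$ rather than $1$ is also right, since otherwise the square with $\mathcal{M}$ would not commute.
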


\subsection{Kontsevich integral} For the parentheized pure braid, Kontsevich integral $\mathcal{Z}$ is a universal vassiliev invariant, it depends on the parenthenzation, for the $\mathrm{Pent}(\Phi)$, we could assign different parenthenzation by rewriting the pentagon equation and we get different Pentagon braid, they are equivalent if we impose some other symmetric conditions. 

\begin{Lemma}[\cite{VI}]
    The kontevich integral $\mathcal{Z}$ is group like and commutes with projections, for $n+1\le i\le m+n$,
    \begin{equation*}
        \begin{tikzcd}
            \pmix\arrow{r}{\mathcal{Z}}\arrow{d}{{\pi_1(p(D)^i_{m+n}}}& U(\mathfrak{dk}_{n,m}(D)\arrow{d}{\pr^i_{n+m}}\\     \mathrm{PB}_{n,m-1}(D)\arrow{r}{\mathcal{Z}}& U(\mathfrak{dk}_{n,m-1}(D))      
        \end{tikzcd}
    \end{equation*}
\end{Lemma}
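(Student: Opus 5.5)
The plan is to realise $\mathcal{Z}$ concretely and reduce both claims to properties of the Knizhnik--Zamolodchikov form. For a pure braid $\beta\in\mathrm{PB}_{n+m}(D)$ with chosen parenthesizations of source and target, $\mathcal{Z}(\beta)$ is the (regularized) holonomy of the flat connection
\begin{equation*}
\Omega_{n+m}=\frac{1}{2\pi i}\sum_{1\le i<j\le n+m} t_{ij}\, d\log(z_i-z_j)
\end{equation*}
on $\mathrm{Conf}(D,n+m)$. The holonomy is taken between the tangential base points attached to the parenthesizations, and it takes values in the completion of $U(\mathfrak{t}_{n+m})$. Flatness of $\Omega_{n+m}$ is exactly the infinitesimal braid relations of $\mathfrak{t}_{n+m}$ (Arnold's relations), so the holonomy depends only on the homotopy class of $\beta$. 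I will then prove three things in order: group-likeness, compatibility with strand deletion, and the fact that the image of $\mathrm{PB}_{n,m}(D)$ lands in $U(\dk_{n,m}(D))$.

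\emph{Group-likeness.} The holonomy is the Chen series $\sum_k\int \Omega\cdots\Omega$. Its coefficients lie in the Lie algebra $\mathfrak{t}_{n+m}$, and each $t_{ij}$ is primitive. Hence the holonomy $G$ satisfies $dG=G\,\Omega$ with $\Omega$ primitive, so both $\Delta(G)$ and $G\otimes G$ solve the same linear ODE with the same initial value $1\otimes 1$, and therefore coincide. The regularization at the tangential base points multiplies by factors of the form $\exp(\log(\epsilon)\, t)$ with $t$ in the Lie algebra. Associators $\Phi_{KZ}$ are group-like by the same argument. So $\mathcal{Z}(\beta)$ is group-like.

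\emph{Compatibility with deleting strand $i$.} The key identity is
\begin{equation*}
\pr^i(\Omega_{n+m})=p_i^{*}\,\Omega_{n+m-1},
\end{equation*}
where $p_i=p[D]^i_{n+m}$ forgets the $i$-th point. This holds because $\pr^i$ kills exactly the terms $t_{ij}\,d\log(z_i-z_j)$ and relabels the rest. Since $\pr^i$ is an algebra map, it commutes with iterated integrals, and holonomy of a pulled-back connection along $\gamma$ equals holonomy along $p_i\circ\gamma$. This gives $\pr^i\circ\mathcal{Z}=\mathcal{Z}\circ\pi_1(p[D]^i_{n+m})$ on unregularized paths.

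\emph{Regularization step (main obstacle).} I expect this to be the step requiring the most care. I must check that deleting the $i$-th letter of a parenthesized word gives the induced parenthesization, and that the asymptotic normalizations match under $\pr^i$. I would handle it by the standard $\mathrm{PaB}$ argument: write $\beta$ as a composite of elementary parenthesized braids $R^{\pm1}$ and associators, on which $\mathcal{Z}$ is $e^{\pm t/2}$ and $\Phi_{KZ}^{\pm1}$ inserted via the cabling maps $g\mapsto g^f$ recalled in Section 3. For these generators the compatibility reduces to the cabling identities $(g^f)^{\pr^i}=g^{f\circ(\text{deletion})}$ together with $\Phi_{KZ}(0,t)=\Phi_{KZ}(t,0)=1$, which covers the case when a deleted strand is an entire block of an associator.

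\emph{Landing in $U(\dk_{n,m}(D))$.} For $\beta\in\mathrm{PB}_{n,m}(D)$, deleting the $m$ moving strands gives the trivial braid, so by the compatibility just proved, the composite of the corresponding projections sends $\mathcal{Z}(\beta)$ to $1$. A group-like element killed by a surjective Hopf map with a Lie section is the exponential of an element of the kernel Lie algebra, which is $\dk_{n,m}(D)$. Hence $\mathcal{Z}(\beta)\in U(\dk_{n,m}(D))$, and the square in the Lemma commutes for $n+1\le i\le n+m$.
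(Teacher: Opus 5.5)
Your proposal is correct. The paper gives no argument of its own here (the lemma is simply quoted from \cite{VI}), and what you wrote is the standard proof behind that citation: $\mathcal{Z}$ as regularized KZ holonomy, group-likeness from primitivity of $\Omega_{n+m}$, strand-deletion compatibility from $\pr^i(\Omega_{n+m})=p_i^{*}\Omega_{n+m-1}$ together with the unitality $\Phi_{KZ}(0,t)=\Phi_{KZ}(t,0)=1$, and membership in (the completion of) $U(\dk_{n,m}(D))$ by taking logarithms.

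Two small precisions. Flatness of $\Omega_{n+m}$ needs the infinitesimal braid relations \emph{together with} Arnold's relations among the forms $d\log(z_i-z_j)$. Also, the regularized holonomy is a \emph{limit} of group-like elements, so you should note that group-like elements form a closed set in the degree-completed algebra.
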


\begin{Prop}\label{prop:Kontsevich_integral}
    $\mathrm{Pent}(\Phi)$ is in $\Gamma_k(\pmix)\cap \bmix$ and not in $\Gamma_{k+1}(\pmix)\cap \bmix$ if and only if $\mathcal{Z}(\mathrm{Pent}(\Phi))-1$ starts with term of degree $k$ element $\phi_k$ and $\phi_k$ is primitive and in $\brun_{n,m}(D)$.
\end{Prop}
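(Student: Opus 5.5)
The plan mirrors the Magnus expansion criterion (Proposition \ref{prop:Magnus_expansion}), with the Kontsevich integral in place of $\mathcal{M}$ and $U(\dk_{n,m}(D))$ in place of $\Z\langle\langle X_1,\ldots,X_{n+m-1}\rangle\rangle$. The basic input is the standard fact that $\mathcal{Z}$ is a universal Vassiliev invariant for pure braids, hence a filtered expansion. Concretely, for $\beta\in\pmix$ we have $\beta\in\Gamma_k(\pmix)$ if and only if $\mathcal{Z}(\beta)-1$ lies in degree $\ge k$. This holds because $\mathcal{Z}$ induces the isomorphism $\mathrm{gr}\,\pmix\otimes k\simeq\dk_{n,m}(D)$, and the group-like element $\mathcal{Z}(\beta)=\exp(\log\mathcal{Z}(\beta))$ has leading term equal to the class of $\beta$ in $\Gamma_k/\Gamma_{k+1}$. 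I would first record this as a lemma, using the isomorphism between the graded of the lower central series and $\dk_{n,m}(D)$ recalled in Section 2, together with group-likeness from the Lemma of \cite{VI}.

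\textbf{Forward direction.} Suppose $\mathrm{Pent}(\Phi)\in\Gamma_k(\pmix)\cap\bmix$ but not in $\Gamma_{k+1}(\pmix)\cap\bmix$. Since $\mathrm{Pent}(\Phi)\in\bmix$ in any case, it does not lie in $\Gamma_{k+1}(\pmix)$. By the lemma, $\mathcal{Z}(\mathrm{Pent}(\Phi))-1$ starts in degree exactly $k$ with some $\phi_k\neq 0$. Group-likeness says $\log\mathcal{Z}$ is primitive, and its lowest-degree part coincides with $\phi_k$, so $\phi_k$ is primitive, i.e.\ $\phi_k\in\dk_{n,m}(D)$. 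For Brunnianity, apply the commuting square of \cite{VI}: $\pr^i_{n+m}\mathcal{Z}(\mathrm{Pent}(\Phi))=\mathcal{Z}(\pi_1(p(D)^i_{m+n})\mathrm{Pent}(\Phi))=\mathcal{Z}(1)=1$ for $n+1\le i\le n+m$. Since $\pr^i_{n+m}$ preserves degree, its degree-$k$ part gives $\pr^i_{n+m}(\phi_k)=0$. Hence $\phi_k\in\cap_i\ker\pr^i_{n+m}=\brun_{n,m}(D)$.

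\textbf{Converse.} Suppose $\mathcal{Z}(\mathrm{Pent}(\Phi))-1$ starts in degree $k$ with a primitive $\phi_k\in\brun_{n,m}(D)$. By the lemma, $\mathrm{Pent}(\Phi)\in\Gamma_k\setminus\Gamma_{k+1}$, and Brunnianity is automatic. Here the conditions on $\phi_k$ are consistency conditions rather than extra input. If instead the statement is read for a general element of $\pmix$, then Brunnianity of the element cannot be deduced from its leading term alone. In that case one should use the relative filtration and invoke \cite{Brun_Lie}, which identifies $\mathrm{gr}^R\bmix$ with $\brun_{n,m}(D)$, to see that the relevant classes are exactly those landing in $\brun_{n,m}(D)$.

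\textbf{Main obstacle.} The delicate point is the filtered-expansion lemma for the mixed group $\pmix$ rather than for $\mathrm{PB}_n$. I would deduce it from the $\mathrm{PB}_{n+m}$ case via the split inclusion $\pmix\hookrightarrow\mathrm{PB}_{n+m}(D)$. The splitting makes $\Gamma_k(\pmix)=\pmix\cap\Gamma_k(\mathrm{PB}_{n+m})$, which follows from the iterated almost-direct product structure. A second point is the parenthesization dependence of $\mathcal{Z}$: one must fix the parenthesization $(((12)3)4)$ so that $\mathrm{Pent}(\Phi)$ is a genuine loop and $\mathcal{Z}$ is multiplicative on it.
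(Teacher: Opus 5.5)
Your argument is correct and matches the approach the paper intends. The paper writes no separate proof; it only records the lemma from \cite{VI} that $\mathcal{Z}$ is group-like and commutes with the strand-deletion maps $\pr^i_{n+m}$, which are exactly your inputs for primitivity of $\phi_k$ and for $\phi_k\in\brun_{n,m}(D)$, with universality of $\mathcal{Z}$ (the analogue of the Magnus-expansion lemma) detecting the lower-central-series degree. Your reduction of that universality step to $\mathrm{PB}_{n+m}(D)$ via $\Gamma_k(\pmix)=\pmix\cap\Gamma_k(\mathrm{PB}_{n+m}(D))$ is sound, and the same almost-direct product structure also supplies the torsion-freeness of $\Gamma_k/\Gamma_{k+1}$ you implicitly need, so that a nonzero class stays nonzero after tensoring with $k$.
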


\subsection{Johnson-Morita theory}

For the extended $N$ series $K_*$, the group $G$ acts on $K_{*}$ if $g(K_n)=K_n$ for $g\in G, n\ge 0.$ The extended N series is special if $K_1$ is a nonabelian free group and the filtration determined by $K_0,K_1$, i.e 
\begin{equation*}
K_0\ge K_1=\Gamma_1K_1\ge K_2:= \Gamma_2K_2\ge K_3:=\Gamma_3K_3\ldots    
\end{equation*}
where $\Gamma_kK_1$ is the lower central series of $K_1$ and $G(K_n)=(K_n)$ because $K_n,n\ge 2$ is characteristic in $K_1$. 

The Brunnian filtration $F_{m+n-1}\ge \mathrm{Brun}_{m,n}(D)\ge \Gamma_2(\mathrm{Brun}_{m,n}(D))\ge \ldots \Gamma_i(\mathrm{Brun}_{m,n}(D))\ge \ldots$ is an extended $N$ series determined by $F_{m+n-1}$ and $\mathrm{Brun}_{m,n}(D)$. We now describe the group action of $\mathrm{PB}_{n,m}(D)$ on this filtration, it is induced by the short exact sequence,
\begin{equation*}
    0\to F_{n+m-1}\to \mathrm{PB}_{m,n}(D)\xrightarrow{\pi_1(p[D]^{n+m}_{n+m}}\mathrm{PB}_{m,n-1}(D)\to 0,
\end{equation*}
this gives rise to the semi-direct product decomposition $\mathrm{PB}_{m,n}\simeq \mathrm{PB}_{m,n-1}\rtimes_{Ad}F_{n+m-1}$. The group $\mathrm{PB}_{m,n-1}(D)$ acts on $F_{n+m-1}$ by tangential automorphism, that is
\begin{equation*}
    \mathrm{PB}_{m,n-1}(D)\le \rm{TAut}(F_{n+m-1}),
\end{equation*}
where the group $\rm{TAut}(F_{n+m-1})$ consists of $f\in \rm{Aut}(F_{n+m-1})$ such that $f(X_i)=U_iX_iU^{-1}_i, U_i\in F_{n+m-1}$, where $X_i$ are the generating set of $F_{n+m-1}.$  The tangential action of $\mathrm{PB}_{m,n-1}$ on $F_{n+m-1}$ lifts to an action on the Brunnian filtration.

We recall the tools from \cite{JM} for this special filtration. There are 4 different descending filtrations of the group $G:=\mathrm{PB}_{m,n-1}(D)$ induced by its action,
\begin{enumerate}
    \item $\mathcal{F}^{K_{*}}_m(G):=\{g\in G\mid [g,K_n]\subset K_{m+n}$,\text{for $n\ge 0$}.\\
    \item $G^0_m:=\{g\in G\mid [g,K_0]\subset K_m\}=\ker \left(G\to \rm{Aut}(K_0/K_m)\right)$.\\
    \item $G^1_m:=\{g\in G\mid [g,K_1]\subset K_{m+1}\}=\ker \left(G\to \rm{Aut}(K_0/K_{m+1})\right)$.\\
    \item $G_m:=G^0_m\cap G^1_m=\{g\in G\mid [g,K_0]\subset K_m, [g,K_1]\subset K_{m+1}\}$
\end{enumerate}

\begin{Prop}\label{Prop:identi_4_filtrations}
   The relations between those four different filtrations is
    \begin{align*}
        \mathcal{F}^{K_{*}}_m(G)=G_m=G_m^1\le G_m^0.
    \end{align*}
\end{Prop}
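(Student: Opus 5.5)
We need to prove Proposition \ref{Prop:identi_4_filtrations}, namely $\mathcal{F}^{K_*}_m(G)=G_m=G^1_m\le G^0_m$, for the special extended $N$-series $K_0=F_{m+n-1}\ge K_1=\mathrm{Brun}_{m,n}(D)\ge K_2=\Gamma_2K_1\ge\cdots$, with $G=\mathrm{PB}_{m,n-1}(D)$ acting by tangential automorphisms.

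We prove the inclusions $\mathcal{F}^{K_*}_m(G)\subseteq G_m\subseteq G^1_m\subseteq \mathcal{F}^{K_*}_m(G)$ and separately $G^1_m\le G^0_m$.

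\textbf{The easy inclusions.} By definition, $\mathcal{F}^{K_*}_m(G)$ asks $[g,K_j]\subset K_{m+j}$ for all $j\ge 0$. Taking $j=0,1$ shows $\mathcal{F}^{K_*}_m(G)\subseteq G^0_m\cap G^1_m=G_m$. Trivially $G_m\subseteq G^1_m$. Hence everything reduces to two claims:
\begin{itemize}
\item[(a)] $G^1_m\subseteq\mathcal{F}^{K_*}_m(G)$;
\item[(b)] $G^1_m\subseteq G^0_m$.
\end{itemize}

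\textbf{Claim (a): induction on $j$.} For $g\in G^1_m$ we have $[g,K_1]\subset K_{m+1}$, and we prove $[g,K_j]\subset K_{m+j}$ for all $j\ge 1$ by induction. Since $K_{j+1}=[K_1,K_j]$ is generated by commutators $[a,b]$ with $a\in K_1$ and $b\in K_j$, it suffices to control $g[a,b]g^{-1}[a,b]^{-1}$. Write $g(a)=a\alpha$ and $g(b)=b\beta$ with $\alpha\in K_{m+1}$ and $\beta\in K_{m+j}$. Then
\[
g([a,b])=[a\alpha,b\beta]\equiv[a,b]\pmod{K_{m+j+1}},
\]
by the commutator identities and the $N$-series property $[K_p,K_q]\le K_{p+q}$. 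One also has to pass from generators to products; this uses $g(xy)(xy)^{-1}=g(x)x^{-1}\cdot x\,(g(y)y^{-1})\,x^{-1}$ together with normality of $K_{m+j+1}$ in $K_0$.

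\textbf{Claim (b): the case $j=0$, the main obstacle.} We must show $[g,K_0]\subset K_m$, which is where the special structure is needed. For a generator $x_i$ of $F_{m+n-1}=K_0$, tangentiality gives $g(x_i)=U_ix_iU_i^{-1}$, so $[g,x_i]=[U_i,x_i]$. The plan is to show that $g\in G^1_m$ forces $U_i\in K_{m}$ (or at least $[U_i,x_i]\in K_m$) up to modifying $U_i$ by elements of the centralizer of $x_i$, which is $\langle x_i\rangle$. The idea is to use elements of $K_1$ built from $x_i$: iterated commutators involving $x_i$ and the other generators, as in the Brunnian generators of \cite{Levinson}. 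From the congruence $g(c)\equiv c \pmod{K_{m+1}}$, we would read off $U_i$ modulo $K_m$ via a Magnus-expansion or associated-graded argument. Concretely, we pass to $\mathrm{gr}$, where $K_1$ corresponds to $\brun_{m,n}(D)$, and use that $\mathrm{ad}_{u}$ on $\mathbb{L}$ is injective away from multiples of $x_i$ to conclude that the leading term of $U_i$ lies in filtration $\ge m$.

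If this fails in general, we follow \cite{JM}: there, for special extended $N$-series, the inclusion $G^1_m\le G^0_m$ is proved via the injectivity of $K_0/K_1\cdot\ldots$ into derivations. We would adapt that argument, citing it as the step carrying the real content.

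Finally, combining (a) and (b) gives $G^1_m\subseteq G^0_m\cap G^1_m=G_m\subseteq G^1_m$. Together with the easy inclusions this yields $\mathcal{F}^{K_*}_m(G)=G_m=G^1_m\le G^0_m$.
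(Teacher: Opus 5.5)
Your reduction is fine. The inclusions $\mathcal{F}^{K_*}_m(G)\subseteq G_m\subseteq G^1_m$ are immediate. Your induction for $j\ge 1$ (expanding $[a\alpha,b\beta]$, using $[K_p,K_q]\le K_{p+q}$ and normality of $K_{m+j+1}$ in $K_0$) correctly shows that $g\in G^1_m$ satisfies $[g,K_j]\subset K_{m+j}$ for all $j\ge 1$.

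\textbf{The gap.} All the content of the proposition is your claim (b), $G^1_m\le G^0_m$, and it is not proved. The paper disposes of this step by citing \cite{JM}, Theorem 10.2. Your fallback points at the same source, but only as an unspecified adaptation, and your own sketch does not work because it uses the wrong filtration. Here $K_m=\Gamma_m(\mathrm{Brun}_{m,n}(D))$ is the lower central series of $K_1$. By contrast, $\brun_{m,n}(D)$ and the injectivity of $\mathrm{ad}$ in $\mathbb{L}$ refer to the lower central series of $F_{m+n-1}$ (or of $\pmix$). Knowing that the leading term of $U_i$ has degree $\ge m$ there gives information about $\Gamma_m(F_{m+n-1})$, not about membership in $\Gamma_m(\mathrm{Brun}_{m,n}(D))$. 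This is exactly the distinction between filtrations (1) and (3) that the paper says it cannot resolve in its last section. You also give no mechanism for reading $U_i$ off the congruences $g(c)\equiv c \bmod K_{m+1}$, and tangentiality plays no role in a correct argument.

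\textbf{The missing idea: a centralizer argument.}
\begin{enumerate}
\item For $x\in K_0$ put $u=g(x)x^{-1}$. For $c\in K_1$ one has $g(xcx^{-1})=u\,x\,g(c)\,x^{-1}u^{-1}$. Apply $g\in G^1_m$ to both $c$ and $xcx^{-1}\in K_1$, using $K_{m+1}\trianglelefteq K_0$. This gives $[u,y]\in K_{m+1}$ for every $y\in K_1$. So what is needed is that the centralizer of $K_1/K_{m+1}$ in $K_0/K_{m+1}$ is $K_m/K_{m+1}$.
\item First, $u$ centralizes $K_1/[K_1,K_1]$ in $K_0/[K_1,K_1]$. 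Hence $u\in K_1$ by the Auslander--Lyndon theorem, applied to $F=K_0=F_{m+n-1}$. That theorem says: for $1\neq N\trianglelefteq F$ with $F$ free non-cyclic, $N/[N,N]$ is self-centralizing in $F/[N,N]$.
\item Second, since $K_1$ is free non-abelian, $\mathrm{gr}(\Gamma_\bullet K_1)$ is a free Lie ring of rank $\ge 2$. In it, a nonzero homogeneous element of degree $j\ge 1$ fails to commute with some degree-one element. So if $u\in\Gamma_jK_1\setminus\Gamma_{j+1}K_1$ with $j<m$, there is $y\in K_1$ with $[u,y]\notin\Gamma_{j+2}K_1\supseteq K_{m+1}$, a contradiction. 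Hence $u=[g,x]\in K_m$.
\end{enumerate}

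Freeness of $K_0$, not only of $K_1$, is essential here. Take $K_0=F_2\times\mathbb{Z}$, $K_1=F_2$, $K_m=\Gamma_mF_2$, and let $g$ fix $F_2$ and invert the generator $t$ of $\mathbb{Z}$. Then $g$ lies in every $G^1_m$, yet $[g,t]=t^{-2}\notin K_1$. So whichever reference you rely on, what makes (b) true in this setting is that $K_0=F_{m+n-1}$ is free.
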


\begin{proof}
    As $K_1$ is a non-abelian free group, this follows from the Theorem 10.2 of \cite{JM}.
\end{proof}

The first three filtrations which are identified as the same can be seen as the generalized Johnson filtrations of the group $G$. Now we study the associated graded Lie algebra of the Brunnian filtration and define the Jonhson morphisms.

\subsubsection{Preliminary}
An extended graded Lie algebra (eg-Lie algebra) $L_{\bullet}=(L_m)_{m\ge 0}$ introduced in \cite{JM} consists of
\begin{itemize}
    \item a group $L_{0}$,\\
    \item a graded Lie algebra $L_{+}=(L_m)_{m\ge 1}$,\\
    \item an action $(g,x)\mapsto {}^gx$ of $L_{0}$ on $L_{+}$.\\
\end{itemize}

\begin{Ex}
For the filtration $K_{*}$, we could associate an eg-Lie algebra $\overline{K}_{\bullet}=\rm{gr}(K_{*})=K_m/K_{m+1}$ for all $m\ge 0$. The Lie bracket $[\cdot,\cdot]:\overline{K}_m\times \overline{K}_n\to \overline{K}_{m+n}$ is given by 
\begin{equation*}
[aK_{m+1},bK_{n+1}]=[a,b]K_{m+n-1}
\end{equation*}
for $m,n\ge 1$ and the action of $\overline{K}_0=F_{m+n}/A_{m,n}$ on $\overline{K}_m=\Gamma_mA_{m,n}/\Gamma_{m+1}A_{m,n}$ is given by
\begin{equation*}
{}^{(aK_1)}(bK_{m+1})=({}^ab)K_{m+1}
\end{equation*}
\end{Ex}

\begin{Def}
Let $m\ge 1$, a derivation $d=(d_i)_{i\ge 0}$ of an eg-Lie algebra $L_{\bullet}$ of degree $m$ is a family of maps $d_i:L_i\to L_{m+i}$ satisfying the following conditions
\begin{enumerate}
    \item $d_{+}=(d_i)_{i\ge 1}$ is a derivation of the graded Lie algebra $L_{+}$, the $d_i$ for $i\ge 1$ are homomorphisms such that
    \begin{equation*}
    d_{i+j}([a,b])=[d_i(a),b]+[a,d_j(b)]
    \end{equation*}
    for $a\in L_i,b\in L_j,i,j\ge 1$.\\
    \item  The map $d_0:L_0\to L_m$ is a 1-cocycle, in other words, we have
    \begin{equation*}
    d_0(ab)=d_0(a)+{}^a(d_0(b))
    \end{equation*}
    for $a,b\in L_0.$\\
    \item  We have
    \begin{equation*}
        d_i({}^ab)=[d_0(a),{}^ab]+{}^a(d_i(b))
    \end{equation*}
    for $a\in L_0,b\in L_i,i\ge 1$.
\end{enumerate}
\end{Def}

For $m\ge 1$, let $\rm{Der}_m(L_{\bullet})$ be the group of derivations of $L_{\bullet}$ of degree $m$, set $\rm{Der}_{+}(L_{\bullet})=(\rm{Der}_m(L_{\bullet}))_{m\ge 1}.$

\begin{Th}[\cite{JM}, Theorem 5.2]
We have a graded Lie algebra structure on $\rm{Der}_{+}(L_{\bullet})$ such that for $m,n\ge 1$, the Lie bracket
\begin{equation*}
    [\cdot,\cdot]:\rm{Der}_{m}(L_{\bullet})\times \rm{Der}_n(L_{\bullet})\to \rm{Der}_{m+n}(L_{\bullet})
\end{equation*}
is given by
\begin{equation*}
[d,d']_i(a)=\begin{cases}
d_n(d'_0(a))-d'_m(d_0(a))-[d_0(a),d'_0(a)],\quad &(i=0,a\in L_0)\\
d_{n+i}(d'_i(a))-d'_{m+i}(d_i(a)),&(i\ge 1,a\in L_i)
\end{cases}
\end{equation*}
\end{Th}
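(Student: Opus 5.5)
The statement to prove is that $\mathrm{Der}_{+}(L_{\bullet})$, with the displayed bracket, is a graded Lie algebra. The plan is a direct verification in three stages:
\begin{enumerate}
\item $[d,d']$ is again a derivation of degree $m+n$;
\item the bracket is bilinear and antisymmetric;
\item the Jacobi identity holds.
\end{enumerate}
Throughout I use only the axioms (1)–(3) of an eg-Lie algebra derivation and the eg-Lie algebra structure: the group $L_0$ acts on $L_+$ by graded Lie algebra automorphisms.

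\textbf{Closure.} For $i\ge 1$, the maps $[d,d']_i=d_{n+i}d'_i-d'_{m+i}d_i$ form the usual commutator of two graded derivations of $L_+$, so condition (1) is immediate.

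For condition (2), I expand $[d,d']_0(ab)$ using the cocycle identities for $d_0$ and $d'_0$, together with axiom (3) applied to $d_n$ acting on ${}^a d'_0(b)$ and to $d'_m$ acting on ${}^a d_0(b)$. This gives
\[
d_n({}^a d'_0(b))=[d_0(a),{}^a d'_0(b)]+{}^a d_n(d'_0(b)).
\]
The cross terms $[d_0(a),{}^ad'_0(b)]$ and $-[d'_0(a),{}^ad_0(b)]$ must be cancelled by expanding the correction term $-[d_0(ab),d'_0(ab)]$. Bilinearity of the bracket, together with the fact that the action is by Lie automorphisms, leaves exactly
\[
[d,d']_0(a)+{}^a[d,d']_0(b).
\]
Condition (3) for $[d,d']$ is checked the same way. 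Expanding $d_{n+i}d'_i({}^ab)$ twice with axiom (3) produces terms $[d_0(a),[d'_0(a),{}^ab]]$ and $[d_n d'_0(a),{}^ab]$. After antisymmetrizing in $d,d'$, the Jacobi identity in $L_+$ turns the two nested double brackets into $-[[d_0(a),d'_0(a)],{}^ab]$. This is precisely what the correction term $-[d_0,d'_0]$ in $[d,d']_0$ requires.

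\textbf{Antisymmetry.} This is clear from the formula, since $[d_0(a),d'_0(a)]$ is antisymmetric in $d,d'$. Additivity of each $[d,d']_i$ is inherited from that of $d$ and $d'$, so the bracket is bilinear.

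\textbf{Jacobi identity.} For $i\ge1$ it is the Jacobi identity for commutators of endomorphisms. For $i=0$, I expand the cyclic sum of $[[d,d'],d'']_0(a)$. The terms of the form $d_\bullet d'_\bullet d''_0(a)$ cancel as for commutators. The remaining terms involve brackets $[d_\bullet d'_0(a),d''_0(a)]$ and the double brackets $[[d_0(a),d'_0(a)],d''_0(a)]$. The double brackets vanish by the Jacobi identity in $L_+$. The mixed terms cancel once one uses that $d_{n+p}$ is a derivation of $L_+$: the term $-d''_{m+n}([d_0(a),d'_0(a)])$ splits into two brackets, which pair off against the terms $[d''_\bullet d_0(a),d'_0(a)]$ coming from the other cyclic rotations.

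\textbf{Main obstacle.} The hard part is the bookkeeping in the closure check, specifically axiom (3) for $[d,d']$ and the cocycle condition. There one must track the degree indices carefully ($d_n$ versus $d_{n+i}$) and use that ${}^a(\cdot)$ commutes with brackets. Since the statement is exactly \cite{JM}, Theorem 5.2, one may also simply cite it; the plan above reconstructs the verification.
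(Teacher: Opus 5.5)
Your verification is correct. The paper gives no argument for this statement; it simply quotes \cite{JM}, Theorem 5.2. So your three-step check (closure under the derivation axioms, bilinearity and antisymmetry, Jacobi) is a self-contained replacement for the citation rather than a competing route, and the computations go through as you describe.

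In the cocycle check, the cross terms $[d_0(a),{}^ad'_0(b)]$ and $-[d'_0(a),{}^ad_0(b)]$ are cancelled exactly by the expansion of $-[d_0(ab),d'_0(ab)]$. In the check of axiom (3), the mixed terms $[d'_0(a),{}^ad_i(b)]$ and $[d_0(a),{}^ad'_i(b)]$ occur in both $d_{n+i}d'_i({}^ab)$ and $d'_{m+i}d_i({}^ab)$ and cancel. What remains is $[d'_0(a),[d_0(a),{}^ab]]-[d_0(a),[d'_0(a),{}^ab]]=-[[d_0(a),d'_0(a)],{}^ab]$, as you say.

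What the explicit check buys over the citation is that it isolates the hypotheses actually used. These are the Jacobi identity in $L_+$ and, crucially, that $L_0$ acts by graded Lie algebra automorphisms. The latter is needed to rewrite ${}^a[d_0(b),d'_0(b)]$ as $[{}^ad_0(b),{}^ad'_0(b)]$. It is part of the definition in \cite{JM} but is dropped in the paper's paraphrase of an eg-Lie algebra, so it is good that you state it explicitly.

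There is one sign slip in your Jacobi step. The term coming from the correction is $+d''_{m+n}([d_0(a),d'_0(a)])$, not $-d''_{m+n}([d_0(a),d'_0(a)])$. This is because $-[d_0(a),d'_0(a)]$ sits inside $[d,d']_0(a)$, which enters $[[d,d'],d'']_0(a)$ with a further minus sign.

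With the correct sign, its two pieces are $[d''_m d_0(a),d'_0(a)]$ and $[d_0(a),d''_n d'_0(a)]$. They cancel against $-[d''_m d_0(a),d'_0(a)]$ from $[[d'',d],d']_0(a)$ and $+[d''_n d'_0(a),d_0(a)]$ from $[[d',d''],d]_0(a)$. With the sign you wrote they would double instead of pairing off.

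Finally, if you work with Lie rings over $\mathbb{Z}$, record $[d,d]=0$ rather than only antisymmetry. This is immediate from $[x,x]=0$ in $L_+$.
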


The derivation graded Lie algebra $\rm{Der}_{+}(L_{\bullet})$ extends to an eg-Lie algebra $\rm{Der}_{\bullet}(L_{\bullet})$ by setting $\rm{Der}_0(L_{\bullet})=\rm{Aut}(L_{\bullet})$ and an action 
\begin{equation*}
\rm{Der}_0(L_{\bullet})\times \rm{Der}_m(L_{\bullet})\to \rm{Der}_{m}(L_{\bullet}),\quad (f,d)\mapsto {}^fd
\end{equation*}
for $m\ge 1$ by
\begin{equation*}
({}^fd)_i(a)=f_{m+i}d_if^{-1}_i(a),\quad (i\ge 0,a\in L_i)
\end{equation*}

In the case (our case), the positive part $L_{+}$ of $L_{\bullet}$ is a free Lie algebra generated by its degree 1 part. And set
\begin{multline*}
    D_0(L_{\bullet})=\{(d_0,d_1)\in \rm{Aut}(L_0)\times \rm{Aut}(L_1)|d_1(a^b)={}^{d_0(a)}(d_1(b))\quad \text{for $a\in L_0,b\in L_1$},
\end{multline*}
For $m\ge 1$, define an abelian group $D_m(L_{\bullet})$ by
\begin{multline}
    D_m(L_{\bullet})=\{(d_0,d_1)\in Z^1(L_0,L_m)\times \Hom(L_1,L_{m+1})\\
    |d_1(a^b)=[d_0(a),a^b]+{}^a(d_1(b)) \text{for$a\in L_0,b\in L_1$}
\end{multline}
where $Z^1(L_0,L_m)$ denotes the group of $L_m$-valued 1 cocycles on $L_0$:
\begin{equation*}
Z^1(L_0,L_{m})=\{d_0:L_0\to L_m\mid d_0(ab)=d_0(a)+{}^a(d_0(b)) \text{for $a,b\in L_0$}
\end{equation*}

For each $m\ge 0$, there is an isomorphism
\begin{equation*}
t_m: \rm{Der}_m(L_{\bullet})\to D_m(L_{\bullet}),\quad (d_i)_{i\ge 0}\mapsto (d_0,d_1)
\end{equation*}

\subsubsection{In our setting}
As $K_1=\bmix$ is a non-abelian free group, $\bar{K}_{+}=(K_m/K_{m+1})_{m\ge 1}$ is the free Lie algebra on $\bar{K}_1=K_1^{\mathrm{ab}}$. We have the identification
\begin{equation*}
    (K_m/K_{m+1})\simeq \rm{Lie}_m(K^{\rm{ab}}_1), m\ge 1,
\end{equation*}
where $\rm{Lie}_m(K^{\rm{ab}}_1)$ denotes the degree $m$ part of the free Lie algebra $\mathfrak{fr}_k(K_1^{\rm{ab}})$ generated by the $K^{\rm{ab}}_1$.

\begin{Prop}\cite{JM}
There are the following homomorphisms:
\begin{itemize}
    \item $\tau_0:G_0\to \mathrm{Aut}(\bar{K}_{\bullet})$ which maps each $g\in G_0$ to $\tau_0(g)=(\tau_0(g)_i:\bar{K}_i\to \bar{K}_i)_{i\ge 0}$ defined by
    \begin{align*}
        \tau_0(g)_i(aK_{i+1})=({}^ga)K_{i+1}.
    \end{align*}
    \item For $m\ge 1$, there is a homomorphism  $\tau_m:G_m\to \mathrm{Der}_m(\bar{K}_{\bullet})$ which maps each $g\in G_m$ to $\tau_m(g)=(\tau_m(g)_i:\bar{K}_i\to \bar{K}_{m+i})_{i\ge 0}$ defined by
    \begin{align*}
        \tau_m(g)_i(aK_{i+1})=[g,a]K_{m+i+1}.
    \end{align*}
\end{itemize}

Those homomorphisms induces a injective homomorphism of the associated graded part,
\begin{align*}
    \bar{\tau}_m:\bar{G}_m\to \mathrm{Der}_m(\bar{K}_{\bullet}).
\end{align*}
    
\end{Prop}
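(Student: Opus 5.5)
The statement is the Johnson–Morita proposition from \cite{JM}, specialized to the special extended $N$-series $K_0=F_{m+n-1}\ge K_1=\bmix\ge K_2=\Gamma_2(\bmix)\ge\ldots$, together with the action of $G=\mathrm{PB}_{m,n-1}(D)$ by tangential automorphisms. I will verify each assertion directly from the definitions of $G_m$ and of $\mathrm{Der}_m(\bar K_\bullet)$, and use Proposition \ref{Prop:identi_4_filtrations} ($G_m=G^1_m=\mathcal{F}^{K_*}_m(G)$) so that the cleanest description of $G_m$ is always available.

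\emph{Well-definedness.} For $g\in G_0=G$, the automorphism $a\mapsto {}^ga$ preserves every $K_i$. This holds because $G$ acts on the Brunnian filtration: the tangential action commutes with the projections $\pi_1(p[D]^i_{m+n})$ for the moving strands, hence preserves $\bmix$, and the $\Gamma_k$ are characteristic. So $\tau_0(g)_i$ is a well-defined automorphism of $\bar K_i$. It respects brackets on $\bar K_+$ and is compatible with the $\bar K_0$-action, since ${}^g({}^ab)={}^{{}^ga}({}^gb)$. For $m\ge1$ and $g\in G_m=\mathcal{F}^{K_*}_m(G)$, we have $[g,K_i]\subset K_{m+i}$. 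To see that $aK_{i+1}\mapsto [g,a]K_{m+i+1}$ is independent of the representative, I will use the identity $[g,ab]=[g,a]\,{}^a[g,b]$. Applied with $b\in K_{i+1}$, this gives $[g,b]\in K_{m+i+1}$, which is normal. The same identity also shows additivity in $a$ modulo $K_{m+i+1}$ when $i\ge1$, because ${}^a x\equiv x$ modulo $K_{m+i+1}$ for $a\in K_i$. For $i=0$ it gives exactly the cocycle condition $d_0(ab)=d_0(a)+{}^a d_0(b)$.

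\emph{Derivation axioms.} Condition (1) follows from the Hall–Witt/commutator expansion
\[[g,[a,b]]\equiv[[g,a],b][a,[g,b]] \pmod{K_{m+i+j+1}},\]
valid for $a\in K_i$, $b\in K_j$. Condition (3) follows from the same product identity applied to ${}^ab=aba^{-1}$. Rather than check every axiom, it suffices to check the reduced data $(d_0,d_1)\in D_m(\bar K_\bullet)$ and then invoke the isomorphism $t_m$. This is where freeness of $K_1$ is used: $\bar K_+$ is free on $\bar K_1$, so $d_1$ determines the whole $d_+$.

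\emph{Homomorphism property.} The identity
\[[gh,a]=g[h,a]g^{-1}\,[g,a]\]
shows $\tau_m(gh)\equiv\tau_m(g)+\tau_m(h)$. The point is that $g$ acts trivially on $K_{m+i}/K_{m+i+1}$ when $g\in G_m$, $m\ge1$, because $[g,K_{m+i}]\subset K_{2m+i}$.

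\emph{Injectivity on the graded pieces.} By construction $\ker\tau_m=\{g\mid [g,K_i]\subset K_{m+i+1}\ \forall i\}=\mathcal{F}^{K_*}_{m+1}(G)=G_{m+1}$, again using Proposition \ref{Prop:identi_4_filtrations}. Hence $\bar\tau_m:G_m/G_{m+1}\to\mathrm{Der}_m(\bar K_\bullet)$ is injective.

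The main obstacle is the identification $\ker\tau_m=G_{m+1}$. It rests on the equality $\mathcal{F}^{K_*}=G^1$, which needs $K_1$ nonabelian free (Theorem 10.2 of \cite{JM}). I expect this to hold here because $\bmix$ is a subgroup of the free group $F_{m+n-1}$, hence free by Nielsen–Schreier, and it is nonabelian whenever it contains two non-commuting Brunnian braids.
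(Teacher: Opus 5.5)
Your argument is correct. It is the standard commutator-calculus proof from \cite{JM}, which the paper only cites and does not prove itself. One small correction: the step $\ker\tau_m=G_{m+1}$ needs only $\mathcal{F}^{K_*}_{m+1}(G)=G^0_{m+1}\cap G^1_{m+1}$. That equality holds for any extended $N$-series with $K_i=\Gamma_i(K_1)$, by induction on $i$ using the same expansion you wrote for condition (1), so this step does not rest on freeness of $K_1$; freeness enters only in the further claim $G_m=G^1_m$, which you do not need.
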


In this case, $K_1$ is a free group, as discussed in the last section, we have the identification $t_\bullet$ of the eg-Lie algebra $\rm{Der}_{\bullet}(\bar{K}_{\bullet})$ and $\rm{D}_{\bullet}(\bar{K}_{\bullet})$. By composing with $t_\bullet$, the generalized Johnson homomorphism has two components because of the description of $D_m(L_{\bullet})$,
\begin{align*}
&(t_0\tau_0)^0:G_0\to \rm{Aut}(K_0/K_1),\\
&(t_0\tau_0)^1:G_0\to \rm{Aut}(K^{\rm{ab}}_1);\\
&(t_m\tau_m)^0:G_m\to Z^1 \left(K_0/K_1,\rm{Lie}_m(K^{\rm{ab}}_1)\right),\\
&(t_m\tau_m)^1:G_m\to \Hom(K^{\rm{ab}}_1, \rm{Lie}_{m+1}(K^{ab}_1));
\end{align*}
where $Z^1(K_0/K_1,\rm{Lie}_m(K^{\rm{ab}}_1))$ denotes the group of $\rm{Lie}_m(K^{\rm{ab}}_1)$-valued 1 cocycles on $K_0/K_1$ and is defined to be
\begin{multline*}
Z^1(K_0/K_1,\rm{Lie}_m(K^{\rm{ab}}_1))=\{d_0:K_0/K_1\to \rm{Lie}_m(K^{\rm{ab}}_1)|\\d_0(ab)=d_0(a)+{}^a(d_0(b))
\text{for $a,b\in K_1/K_1$}\}.
\end{multline*}

The maps $((t_m\tau_m)^0,(t_m\tau_m)^1)$ induces an injective eg-Lie algebra morphism of the associated graded part.

\begin{Prop}\label{prop: Johnson_homo}
We have an injective eg-Lie algebra morphism from 
\begin{equation*}
\overline{G}_{\bullet}\to D_{\bullet}(\overline{K}_{\bullet})
\end{equation*}    
\end{Prop}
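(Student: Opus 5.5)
The plan is to deduce Proposition \ref{prop: Johnson_homo} directly from the Johnson--Morita machinery of \cite{JM}, specialised to the extended $N$-series $K_*$ given by the Brunnian filtration $F_{m+n-1}\ge \bmix\ge \Gamma_2(\bmix)\ge\ldots$ acted on by $G=\mathrm{PB}_{m,n-1}(D)$.

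\textbf{Step 1: the graded object $\overline{G}_\bullet$ is an eg-Lie algebra.} The action of $G$ on $F_{m+n-1}$ is by tangential automorphisms, and these commute with the projections $\pi_1(p[D]^i_{m+n})$ up to the induced action on the smaller configuration spaces. Hence $G$ preserves $K_1=\bmix$. Because each $K_k=\Gamma_k K_1$ is characteristic in $K_1$, the group $G$ preserves the whole filtration. This lets us form the four filtrations $\mathcal{F}^{K_*}_m(G)$, $G^0_m$, $G^1_m$, $G_m$. By Proposition \ref{Prop:identi_4_filtrations}, which applies since $K_1$ is a non-abelian free group, we have $\mathcal{F}^{K_*}_m(G)=G_m$. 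In particular $(G_m)_{m\ge0}$ is an extended $N$-series, so its associated graded $\overline{G}_\bullet=(G_m/G_{m+1})$ is an eg-Lie algebra, with $\overline{G}_0=G/G_1$ acting by conjugation.

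\textbf{Step 2: the Johnson maps are injective morphisms into $\mathrm{Der}_\bullet$.} For $m\ge1$ and $g\in G_m$, the Johnson map $\tau_m(g)_i(aK_{i+1})=[g,a]K_{m+i+1}$ is well defined and lands in $\mathrm{Der}_m(\overline{K}_\bullet)$. Conditions (1)--(3) of the definition of a derivation follow from the Hall--Witt and commutator identities, exactly as in \cite{JM}.

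The kernel of $\tau_m$ is $G_{m+1}$, by the definition $G_{m+1}=\{g: [g,K_i]\subset K_{m+1+i}\ \forall i\}$. Therefore $\bar\tau_m$ is injective on $G_m/G_{m+1}$. For $m=0$, the kernel of $\tau_0$ is $G_1$, so $\bar\tau_0:G_0/G_1\to\mathrm{Aut}(\overline{K}_\bullet)$ is injective as well.

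To see that the $\bar\tau_m$ assemble into a morphism of eg-Lie algebras, we must check two compatibilities. First, $\tau_{m+n}([g,h])=[\tau_m(g),\tau_n(h)]$ using the bracket of \cite{JM}, Theorem 5.2. Second, $\tau_m(kgk^{-1})={}^{\tau_0(k)}\tau_m(g)$. I expect to take both from \cite{JM} (their Theorem on generalized Johnson homomorphisms) rather than re-derive them. The one point genuinely needing care is the $i=0$ component, where the extra term $-[d_0(a),d'_0(a)]$ appears.

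\textbf{Step 3: transfer to $D_\bullet$.} Since $\overline{K}_+$ is the free Lie algebra on $\overline{K}_1=K_1^{\mathrm{ab}}$ (Magnus--Witt for the free group $K_1$), a derivation of degree $m$ is determined by its components $(d_0,d_1)$, subject to the stated compatibility. Thus $t_m:\mathrm{Der}_m(\overline{K}_\bullet)\to D_m(\overline{K}_\bullet)$ is an isomorphism. Transporting the eg-Lie structure along $t_\bullet$ and composing gives the injective eg-Lie morphism $t_\bullet\circ\bar\tau_\bullet:\overline{G}_\bullet\to D_\bullet(\overline{K}_\bullet)$.

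The main obstacle is the freeness input: we need $K_1=\bmix$ to be a free group, and in fact non-abelian. This is needed both for Proposition \ref{Prop:identi_4_filtrations} and for identifying $\overline{K}_+$ with a free Lie algebra. I would argue it from the fact that $\bmix$ is a subgroup of the free group $F_{m+n-1}$, the kernel of $\pi_1(p[D]^{n+m}_{n+m})$, and is therefore free by Nielsen--Schreier. It is non-abelian because it contains iterated commutators such as $[[x_i,x_j],x_k]$ of the relevant generators.
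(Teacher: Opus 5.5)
Your proposal is correct and follows the paper's route: the paper's proof is just a citation of Section~10 of \cite{JM}, and you unpack that same machinery. Specifically, you use the identification $\mathcal{F}^{K_*}_m(G)=G_m$, the injectivity of the induced maps $\bar\tau_m$ with their eg-Lie compatibility, and the isomorphism $t_\bullet$, adding a Nielsen--Schreier argument for the freeness of $K_1$ that the paper leaves implicit. One small slip: to lie in $K_1$, your non-abelianness witness must be a commutator involving all of $x_{m+1},\dots,x_{m+n-1}$, and degenerate cases where $K_1\cong\mathbb{Z}$ (e.g.\ $m=0$, $n=2$) must be excluded. This does not affect the argument.
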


\begin{proof}
    Follow from \cite{JM} section 10.
\end{proof}

\bibliographystyle{abbrv}
\bibliography{double_shuffle}

\end{document}